\numberwithin{equation}{section}
\newtheorem{theorem}{Theorem}[section]
\newtheorem{lemma}[theorem]{Lemma}
\newtheorem{proposition}[theorem]{Proposition}
\theoremstyle{definition}
\newtheorem{definition}[theorem]{Definition}
\newtheorem{remark}[theorem]{Remark}
\numberwithin{equation}{section}
\DeclareMathOperator{\Ho}{H}
\DeclareMathOperator{\id}{Id}
\DeclareMathOperator{\diam}{diam}
\DeclareMathOperator{\vol}{vol}
\DeclareMathOperator{\Ca}{Cap}
 \DeclareMathOperator{\Co}{Cov}
\DeclareMathOperator{\A}{A}
\DeclareMathOperator{\Dim}{\mathrm{\mathtt{dim}}}
\DeclareMathOperator{\cat}{cat}
\DeclareMathOperator{\ess}{ess}
\DeclareMathOperator{\ind}{ind}
\newcommand{\ds}{\displaystyle}
\author{Alexandru Krist\'aly}
\address{Department of Economics\\
	Babe\c s-Bolyai University\\
	400591 Cluj-Napoca, Romania \&  Institute of Applied Mathematics\\
 \'Obuda University\\
 1034 Budapest, Hungary}
  \email{alex.kristaly@econ.ubbcluj.ro; kristaly.alexandru@nik.uni-obuda.hu}
\author{Zhongmin Shen}
\address{
Department of Mathematical Sciences\\
Indiana University-Purdue University Indianapolis\\
Indiana, U.S.A.}
\email{zshen@math.iupui.edu}
\author{Lixia Yuan}
\address{
School of Mathematics and Physics\\
Shanghai Normal University\\
Shanghai, China}
\email{yuanlixia@shnu.edu.cn}
\author{Wei Zhao}
\address{
Department of Mathematics\\
East China University of Science and Technology\\
Shanghai, China}
\email{szhao\underline{ }wei@yahoo.com}
\keywords{Eigenvalue; eigenfunction; Finsler manifold; Sobolev space; Lusternik-Schnirelmann category;  Krasnoselskii genus; essential dimension;  Lebesgue covering dimension}
\subjclass[2010]{Primary 53B40, Secondary 58C40, 58E05}
\begin{document}

\title[]{Nonlinear spectrums  of Finsler manifolds}

\begin{abstract}
In this paper we investigate the spectral problem in Finsler geometry. Due to the nonlinearity of the Finsler-Laplacian operator, we introduce \textit{faithful dimension pairs} by means of which the spectrum of a compact reversible Finsler metric measure manifold is defined.
Various upper and lower bounds of such eigenvalues are provided in the spirit of Cheng, Buser and Gromov, which extend in several aspects the results of Hassannezhad, Kokarev and Polterovich.  Moreover, we construct several faithful dimension pairs based on Lusternik-Schnirelmann category, Krasnoselskii genus and essential dimension, respectively; however, we also show that the Lebesgue covering dimension pair is not faithful. As an application, we show that the Bakry-\'Emery spectrum of a closed weighted Riemannian manifold can be characterized by the faithful Lusternik-Schnirelmann dimension pair.
\end{abstract}
\maketitle

\section{Introduction}

According to S.-S. Chern \cite{Chern}, 'Finsler geometry is just Riemannian geometry without the quadratic restriction'. Chern's statement is fairly confirmed as most of the well-known results from Riemannian geometry -- by suitable modifications -- have their Finslerian accompanying, e.g. Hopf-Rinow, Hadamard-Cartan and Bonnet-Myers theorems as well as Rauch and Bishop-Gromov comparison principle,  see Bao, Chern and Shen \cite{BCS}. However, genuine differences occur between the two geometries; let us recall just three of them. First, unlike the Hopf classification in Riemannian geometry, no full characterization is available for Finsler manifolds having constant flag curvature; in fact, various subclasses of Finsler manifolds seem to play a crucial role in such a description (as Minkowski, Berwald, Landberg, Randers spaces), see e.g. Shen \cite{Shen-1, Shen-2}. Second, unlike in inner spaces,  affine 2-disks in normed Minkowski spaces  are not area-minimizing among rational rational chains having the same boundary, see Burago and  Ivanov \cite{BI}.
Another unexpected phenomenon arises in the theory of  Sobolev spaces; indeed, while Sobolev spaces over complete Riemannian manifolds have the expected properties (separability, reflexivity, embeddings, etc), see Hebey \cite{H},  it turns out that Sobolev spaces over non-compact Finsler manifolds should not have even a vector space structure, see Krist\'aly and Rudas \cite{K-R}.

The aim of the present paper is to investigate the spectral problem on compact reversible Finsler manifolds.  The main difficulty relies on the \textit{nonlinearity}  of the Finsler-Laplace operator  unless the Finsler manifold is Riemannian. To be more precise, let us consider a Finsler metric measure manifold $(M,F,d\mathfrak{m})$ (shortly, FMMM), i.e., $(M,F)$ is a reversible Finsler manifold endowed with a smooth measure $d\mathfrak{m}$. Let  $(x^i)$ be a local coordinate for $M$ and $(x^i,\eta^i)$ be the induced   coordinates for $T^*M$. Set $d\mathfrak{m} = \sigma(x) dx^1 \dots dx^n$ and $ g^{*ij} (x,\eta) := \frac{1}{2}[F^{*2}]_{\eta^i\eta^j} (x, \eta)$, where $F^*$ is the co-Finsler metric on the cotangent bundle $T^*M$, see Section \ref{section2}.
The \textit{Finsler-Laplace operator} $\Delta$ on $(M,F,d\mathfrak{m})$  is given by
\[
\Delta u(x) = \frac{1}{\sigma(x)} \frac{\partial}{\partial x^i} \Big [ \sigma (x) g^{*ij} (x, du)\frac{\partial u}{\partial x^j} (x) \Big ],\text{ if }du(x)\neq0,\ u\in C_0^\infty(M).
\]
The dependence of $g^{*ij} (x, \eta)$ by $\eta$ clearly implies the nonlinearity of $u\mapsto \Delta u$, unless $(M,F)$ is Riemannian, see e.g. Shen \cite[Example 3.2.1]{Sh1}.
 The \textit{spectrum} of $(M, F, d\mathfrak{m})$ is defined to be the set of numbers
  $\lambda$ such that the nonlinear
  equation
\begin{equation*}
-\Delta u = \lambda u,\tag{1.1} \label{Laplacian}
\end{equation*}
has a nontrivial solution; in such a case,  $\lambda$ is an {\it eigenvalue} of $\Delta$ or $(M, F, d\mathfrak{m})$. From the Morse-theoretical point of view of the spectrum, see e.g. Gromov \cite{G}, equation (\ref{Laplacian}) is precisely the Euler-Lagrange equation of the {\it canonical energy functional}  $E$ given by
\begin{equation*}
E (u): =E_F (u)= \frac{ \ds\int_M [F^{*} (du)]^2   d\mathfrak{m} }{ \ds\int_M u^2   d\mathfrak{m}} ,\ \ \forall\,u\in\mathscr{X}\backslash\{0\},
\end{equation*}
where  $\mathscr{X}$ is the Sobolev space consisting of  $H^{1}$ functions on $M$ (with $u|_{\partial M }=0$ if $\partial M \not= \emptyset$); therefore, the spectrum of $(M, F, d\mathfrak{m})$ is the set of critical points of $E$.
We notice that the spectral problem on Riemannian manifolds has been intensively studied, see e.g. Chavel \cite{Ch}; in particular, the Beltrami-Laplace operator  $u\mapsto \Delta u$ in (\ref{Laplacian}) is linear and the approach of Gromov \cite{G} can be fully applied in order to state qualitative results for the spectrum of compact Riemannian manifolds.


%
%


Following the abstract idea of Gromov \cite{G},  the nonlinear character of the Finsler-Laplace operator $\Delta$ on a generic compact FMMM $(M,F,d\mathfrak{m})$ heavily motivates the introduction of a  \textit{dimension-like function} $\Dim $  on a collection  $\mathscr{C}$  of certain subsets of
$$\mathcal{S} := \left\{ u\in \mathscr{X} : \ \ds\int_M u^2 d\mathfrak{m} =1 \right\}$$
in order to capture an infinite sequence of eigenvalues of $\Delta$.
To do so, for every positive integer $k$, set
\begin{equation*}
 \lambda_k := \sup\Big \{ \lambda \geq 0 : \ \Dim E^{-1}[0, \lambda ] < k \Big \}, \tag{1.2}\label{spectrum2}
 \end{equation*}
where $\Dim E^{-1}[0, \lambda ] := \sup\{ \Dim (A): \ A \in \mathscr{C}, \; A \subset E^{-1}[0, \lambda ] \}$; the set $\{ \lambda_k \}_{k=1}^{\infty}$ is called the  $({\mathscr C}, \Dim)$-{\it spectrum}.  As expected, the set of eigenvalues  $\{ \lambda_k\}_{k=1}^{\infty}$  defined by (\ref{spectrum2}) might not be the set of all critical values of $E$.
Even more,  a generic $({\mathscr C}, \Dim)$-spectrum may have a completely different behavior w.r.t. the spectrum of $(M,F, d\mathfrak{m})$, see e.g. Proposition \ref{flawex1} for a nontrivial example where the $({\mathscr C}, \Dim)$-spectrum is a singleton. Accordingly, a challenging question is to identify dimension pairs $({\mathscr C}, \Dim)$ whose spectrum inherits the expected features of the spectrum of $(M,F, d\mathfrak{m})$. A possible way is to introduce {\it faithful dimension pairs} $(\mathscr{C}, \Dim)$, see Definition \ref{faithful}, which requires that the $({\mathscr C}, \Dim)$-spectrum and the Courant spectrum coincide for \textit{every} (test) Riemannian metric $g$ acting on $M$, the measure being the canonical one $d\vol_{{g}}$. It turns out that faithful dimension pairs occur quite often; we construct several ones based on Lusternik-Schnirelmann category, Krasnoselskii genus and essential dimension, respectively, see Section \ref{examplest}. Our first result establishes a close relationship between  the spectrum of $(M,F, d\mathfrak{m})$ and the $(\mathscr{C}, \Dim)$-spectrum of a faithful dimension pair; to state it, we consider the Sobolev space
\[
\mathscr{X}_0=\left\{u\in H^1(M):u|_{\partial M}=0\ \text{if}\ \partial M \not= \emptyset  \ \text{or}\  \ds\int_M u \; d\mathfrak{m} =0\ \text{if}\ \partial M = \emptyset\right\}.
\]

\begin{theorem}\label{th1.1}
Let $(M,F,d\mathfrak{m})$ be a compact {\rm FMMM}. For any faithful dimension pair $(\mathscr{C},{\rm \Dim})$, every number $\lambda_k$ in its  spectrum
 belongs to the spectrum of $(M, F, d\mathfrak{m})$, or equivalently,
there exists $u\in \mathscr{X}_0\backslash\{0\}$ or $u=\text{\rm const.}\neq0$ with
\[
-\Delta u=\lambda_k u \text{ in the weak sense}.
\]
 Moreover,
the spectrum $\{\lambda_k\}_{k=1}^{\infty}$  has the following properties$:$
\begin{align*}
&0=\lambda_1<\lambda_2\leq  \ldots \leq \lambda_k\leq\ldots \nearrow +\infty,\text{ if }\partial M=\emptyset;\\
&0<\lambda_1\leq\lambda_2\leq  \ldots \leq \lambda_k\leq\ldots \nearrow +\infty,\text{ if }\partial M\neq\emptyset,
\end{align*}
where the first positive eigenvalue is given by
\[
\left\{
\begin{array}{llll}
 \lambda_2=\inf_{u\in \mathscr{X}_0\backslash\{0\}}E(u),&\text{ if }\partial M=\emptyset;\\
 \\
 \lambda_1=\inf_{u\in \mathscr{X}_0\backslash\{0\}} E(u),&\text{ if }\partial M\neq\emptyset.
\end{array}
\right.
\]
\end{theorem}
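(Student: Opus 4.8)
The plan is to run a nonlinear min-max argument for the functional $E$ on the manifold $\mathcal{S}$, using the abstract $(\mathscr{C},\Dim)$ machinery, and then upgrade the critical points obtained this way to genuine weak solutions of $-\Delta u=\lambda_k u$. First I would record the variational setup: $E$ is bounded below by $0$ on $\mathscr{X}\setminus\{0\}$, is even, and — here the compactness of $M$ is essential — satisfies the Palais–Smale condition at every level, since the Sobolev embedding $H^1(M)\hookrightarrow L^2(M)$ is compact on a compact manifold (the Finsler norm $F^*$ being uniformly comparable to any reference Riemannian norm by $2$-homogeneity and reversibility). With PS in hand, the standard deformation lemma applies to the sublevel sets $E^{-1}[0,\lambda]$, and a monotonicity/continuity property of $\Dim$ along deformations (part of the definition of a dimension-like function) shows that if $\lambda_k$ as defined in (1.2) is \emph{not} a critical value, then one could deform $E^{-1}[0,\lambda_k+\varepsilon]$ into $E^{-1}[0,\lambda_k-\varepsilon]$ without lowering $\Dim$, contradicting the jump in $\Dim E^{-1}[0,\lambda]$ at $\lambda_k$. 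Hence each $\lambda_k$ is a critical value of $E$ on $\mathcal{S}$, and unwinding the Lagrange-multiplier condition for a critical point $u\in\mathcal{S}$ of the Rayleigh quotient gives exactly $-\Delta u=\lambda_k u$ in the weak sense, with $u\in\mathscr{X}_0\setminus\{0\}$ (orthogonality to constants when $\partial M=\emptyset$ comes from testing the equation against $1$, using $\int_M \Delta u\, d\mathfrak{m}=0$) or $u$ a nonzero constant when $\lambda_k=0$.

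Next I would pin down the bottom of the spectrum. The constant functions lie in $E^{-1}[0,0]$ when $\partial M=\emptyset$, and $\{{\rm const}.\}$ is a one-point (hence $\Dim=1$, by the normalization axiom for dimension-like functions) subset in $\mathscr{C}$; conversely $E(u)=0$ forces $F^*(du)=0$ a.e., hence $u$ constant, so $\Dim E^{-1}[0,0]=1$ and therefore $\lambda_1=0<\lambda_2$. When $\partial M\neq\emptyset$ the Dirichlet condition rules out nonzero constants, so $E>0$ on all of $\mathscr{X}\setminus\{0\}$ and $\lambda_1>0$. The identification of the \emph{first positive} eigenvalue with $\inf_{\mathscr{X}_0\setminus\{0\}}E$ follows because any element of $\mathscr{X}_0\setminus\{0\}$ spans, together with the constants (in the closed case), a set on which $E$ is bounded by $E(u)$ and whose $\Dim$ is at least $2$ (resp. at least $1$ in the boundary case) — this is where I would invoke the faithfulness of $(\mathscr{C},\Dim)$ or, more directly, the defining properties of $\Dim$ to guarantee that finite-dimensional spheres built from such functions have the correct dimension count; combined with the min-max characterization (1.2) this squeezes $\lambda_2$ (resp. $\lambda_1$) to equal the asserted infimum, and the infimum is attained by the PS/compact embedding argument.

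Finally, the properties $\lambda_k\leq\lambda_{k+1}$ and $\lambda_k\nearrow+\infty$: monotonicity is immediate from (1.2) since $\{\lambda:\Dim E^{-1}[0,\lambda]<k\}\supseteq\{\lambda:\Dim E^{-1}[0,\lambda]<k+1\}$. For divergence to $+\infty$, suppose $\lambda_k\leq C$ for all $k$; then $\Dim E^{-1}[0,C+1]\geq k$ for every $k$, i.e. $\sup_{A\in\mathscr{C},\,A\subset E^{-1}[0,C+1]}\Dim A=\infty$. But $E^{-1}[0,C+1]$ is, after normalization, a subset of the unit sphere of the closed subspace of $H^1$ on which the Rayleigh quotient is $\leq C+1$; by the compact embedding this sublevel set is precompact in $L^2$ and, via a covering/finite-$\varepsilon$-net argument, any $A\in\mathscr{C}$ inside it must have bounded $\Dim$ (using the axiom that $\Dim$ of a set covered by finitely many contractible — or small — pieces is controlled). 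This contradiction gives $\lambda_k\to+\infty$.

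\medskip

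\noindent\textbf{Main obstacle.} The delicate point is not the deformation argument but verifying that the chosen notion of $\Dim$ interacts correctly with the PS deformation and with the compactness of sublevel sets — precisely, that $\Dim$ is monotone, invariant under the odd homeomorphisms produced by the deformation lemma, normalized so that a point has $\Dim=1$, and \emph{finite on $L^2$-precompact symmetric sets}. These are exactly the conditions packaged into a \emph{faithful dimension pair} in Definition~\ref{faithful}, so the real work is to check that Definition~\ref{faithful} supplies each of them; once that is done, the spectral picture above is forced.
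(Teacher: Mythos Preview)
Your deformation argument for showing that each $\lambda_k$ is a critical value is correct and matches the paper (this is Theorem~\ref{properties1}(iii), via the Homotopy Lemma~\ref{maintheorem2-0} and axiom (D4)). The identification $\lambda_2\le E(u)$ for $u\in\mathscr{X}_0$ via ${\rm Span}\{1,u\}$ is also right.

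The genuine gap is in how you use faithfulness. You claim that Definition~\ref{faithful} packages structural axioms such as ``$\Dim$ is finite on $L^2$-precompact symmetric sets'' and ``$\Dim$ of a set covered by finitely many small pieces is controlled''. It does not. A faithful dimension pair is \emph{defined} by the single requirement that its $(\mathscr{C},\Dim)$-spectrum coincide with the Courant spectrum for \emph{every} Riemannian test metric $(M,g,d\vol_g)$; there is no subadditivity axiom, and a general dimension pair need not satisfy one (axioms (D1)--(D4) in Definition~\ref{defdim1} give only nonnegativity, monotonicity, $\Dim(V\cap\mathcal{S})\ge k$, and APH-invariance). Consequently your proof of $\lambda_k\to\infty$ via a covering/$\varepsilon$-net bound on $\Dim$ has no footing, and your proof of $\lambda_2>0$ from ``$\Dim E^{-1}[0,0]=1$'' does not rule out $\Dim E^{-1}[0,\varepsilon]\ge 2$ for every $\varepsilon>0$.

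The paper's route is different and uses faithfulness literally. Take the average Riemannian metric $\hat g$ from~(\ref{averageRieman}); since $M$ is compact, $F^{*2}$ and $\hat g$ are uniformly comparable (\ref{averagenorm}) and $d\mathfrak{m}$ and $d\vol_{\hat g}$ are comparable (\ref{v14-3.3}), so the min-max value $\lambda_k$ for $(M,F,d\mathfrak{m})$ dominates a fixed positive multiple of the min-max value $\tilde\lambda_k^{\hat g}$ for $(M,\sqrt{\hat g},d\vol_{\hat g})$. Faithfulness now says $\tilde\lambda_k^{\hat g}=\lambda_k^{\Delta_{\hat g}}$, the standard Riemannian eigenvalue, and Riemannian spectral theory gives $\lambda_2^{\Delta_{\hat g}}>0$ and $\lambda_k^{\Delta_{\hat g}}\to\infty$. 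This comparison (proof of Theorem~\ref{properties2}) is the missing idea; once you have $\lambda_2>0$, the eigenfunction for $\lambda_2$ lies in $\mathscr{X}_0$ and the reverse inequality $\lambda_2\ge\inf_{\mathscr{X}_0}E$ follows.
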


In the sequel, our interest is to provide upper and lower bound estimates for the eigenvalues associated with a fixed dimension pair. First, we provide a Cheng type estimate for generic dimension pairs, i.e., the  eigenvalues $\lambda_k$'s are bounded from above by a term involving bounds of the weighted Ricci curvature ${\bf Ric}_N$ (cf. Ohta and Sturm \cite{O}) and diameter of the FMMM.

\begin{theorem}\label{th1.2}Given $N\in [n,\infty)\cap \mathbb N$, $K\in \mathbb{R}$ and $d>0$,
let $(M, F, d\mathfrak{m})$ be an $n$-dimensional
closed  {\rm FMMM} with
\[
 {\bf Ric}_N \geq (N-1)K, \ \ \ \ \diam(M)= d.
\]
Then there exists $C_1=C_1(N)>0$  depending only on $N$ such that for any dimension pair $(\mathscr{C},\Dim)$ the corresponding eigenvalues $\lambda_k$'s  satisfy
\[
\lambda_{k}\leq  \frac{(N-1)^2}{4}|K|+C_1(N)\left(\frac{k}{d}\right)^2.
\]
\end{theorem}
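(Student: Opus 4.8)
We would like to bound $\lambda_k$ from above by $\Lambda_k:=\frac{(N-1)^2}{4}|K|+C_1(N)(k/d)^2$, and by the very definition \eqref{spectrum2} of the spectrum it is enough to exhibit a set $A\in\mathscr C$ with $A\subset E^{-1}[0,\Lambda_k]$ and $\Dim A\ge k$: then $\Dim E^{-1}[0,\lambda]\ge k$ for every $\lambda\ge\Lambda_k$ (monotonicity of $\lambda\mapsto\Dim E^{-1}[0,\lambda]$), so $\lambda_k\le\Lambda_k$. The plan is to build such an $A$ as the unit sphere of a $k$-dimensional space spanned by test functions with pairwise disjoint supports, each concentrated near a point where a Cheng-type ball estimate applies.

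\emph{Step 1 (reduction to a one-ball estimate).} Suppose $\varphi_1,\dots,\varphi_k\in\mathscr X\setminus\{0\}$ are nonnegative with pairwise disjoint supports and $E(\varphi_i)\le\Lambda$ for all $i$. After $L^2(d\mathfrak m)$-normalising each $\varphi_i$, the centrally symmetric $(k-1)$-sphere $A:=\{\sum_i a_i\varphi_i:\sum_i a_i^2=1\}\subset\mathcal S$ satisfies $E\le\Lambda$ on $A$: disjointness of supports together with the positive $1$-homogeneity of $F^*$ gives $\int_M[F^*(d\sum_i a_i\varphi_i)]^2 d\mathfrak m=\sum_i a_i^2\int_M[F^*(d\varphi_i)]^2 d\mathfrak m$ and $\int_M(\sum_i a_i\varphi_i)^2 d\mathfrak m=\sum_i a_i^2$, whence $E(\sum_i a_i\varphi_i)\le\max_i E(\varphi_i)\le\Lambda$. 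By the defining axioms of a dimension pair one has $A\in\mathscr C$ and $\Dim A\ge k$. So it suffices to construct $k$ such test functions with $\Lambda=\Lambda_k$.

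\emph{Steps 2--3 (disjoint balls and the model profile).} Using reversibility, pick $p,q$ with $d_F(p,q)=\diam M=d$, a minimal geodesic joining them, points $x_1,\dots,x_k$ on it spaced at mutual distance $d/k$, and radius $r:=d/(4k)$; since subarcs of minimal geodesics are minimal, $d_F(x_i,x_j)\ge d/k>2r$ for $i\ne j$, so the forward balls $B_i:=\{d_F(x_i,\cdot)<r\}$ are pairwise disjoint. On $B_i$ set $\varphi_i:=f\circ d_F(x_i,\cdot)$, where $f:[0,r]\to[0,\infty)$ is the radial profile of a first Dirichlet eigenfunction of the geodesic ball of radius $r$ in the $N$-dimensional simply connected space form of curvature $K$; then $\varphi_i$ is Lipschitz and supported in $\overline{B_i}$, hence $\varphi_i\in\mathscr X$. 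Using that $d_F(x_i,\cdot)$ is Lipschitz with $F^*(d\,d_F(x_i,\cdot))=1$ $\mathfrak m$-a.e.\ (the cut locus being $\mathfrak m$-null) and the coarea formula, with $\rho(t):=\tfrac{d}{dt}\mathfrak m(\{d_F(x_i,\cdot)<t\})$ one obtains the one-dimensional weighted Rayleigh quotient
\[
E(\varphi_i)=\frac{\int_0^r f'(t)^2\rho(t)\,dt}{\int_0^r f(t)^2\rho(t)\,dt}.
\]
By Ohta's Bishop--Gromov comparison under ${\bf Ric}_N\ge(N-1)K$, $t\mapsto\rho(t)/\rho_{N,K}(t)$ is non-increasing, where $\rho_{N,K}$ is the model volume density; the classical Sturm--Liouville comparison then bounds the quotient by the first Dirichlet eigenvalue $\Lambda^D_{N,K}(r)$ of the $r$-ball in the model, and Cheng's space-form ball estimate gives $\Lambda^D_{N,K}(r)\le\frac{(N-1)^2}{4}|K|+c(N)/r^2$ (for $K>0$ one simply discards the $|K|$-term and compares with the Euclidean ball; for $K\le0$ this is the hyperbolic case). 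Since $1/r^2=16(k/d)^2$, setting $C_1(N):=16\,c(N)$ and invoking Step 1 finishes the proof.

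\emph{Expected obstacle.} The packing argument and the disjoint-support additivity of the nonlinear functional $E$ (immediate from the homogeneity of $F^*$) are routine. The real work lies in Step 3: justifying the reduction to the one-dimensional weighted quotient on a Finsler manifold --- the a.e.\ identity $F^*(d\,d_F(x_i,\cdot))\equiv1$, the coarea identity $\rho=\tfrac{d}{dt}\mathfrak m(B_t)$, and the measure-negligibility of the cut locus --- and then the Sturm--Liouville comparison fed by Ohta's weighted Bishop--Gromov inequality. These are exactly the ingredients used by Hassannezhad--Kokarev--Polterovich in the weighted Riemannian setting, transplanted to Finsler comparison geometry; no new phenomenon is expected, but the Finslerian regularity bookkeeping is the delicate point.
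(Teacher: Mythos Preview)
Your proposal is correct and follows essentially the same route as the paper: build the $k$-dimensional test space from radial Cheng-type profiles supported in $k$ pairwise disjoint balls along a diameter, use axiom (D3) to get $\Dim A\ge k$, and bound $E$ on $A$ by the first Dirichlet eigenvalue $\lambda_1^\Delta(B^N_K(r))$ of the model ball, then invoke Cheng's numerical estimate for the latter. The only cosmetic difference is in the one-ball comparison: the paper works in polar coordinates and integrates by parts ray-by-ray using the Laplacian comparison $\Delta r\le \tfrac{d}{dr}\log\mathfrak s_K^{N-1}(r)$ directly (their Lemma~\ref{Chenglemma}), whereas you package the same content as ``coarea $+$ Bishop--Gromov density monotonicity $+$ Sturm--Liouville comparison''; these are equivalent formulations of Cheng's argument.
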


Theorem \ref{th1.2} extends the estimates of Cheng \cite[Corollary 2.3]{C} and Hassannezhad,  Kokarev and Polterovich \cite[Theorem 1.3.1]{HKP} to Finsler manifolds.   The above estimate is asymptotically optimal, i.e. one cannot replace $\left(\frac{k}{d}\right)^2$ by $\left(\frac{k}{d}\right)^{2-\varepsilon}$ for any $\varepsilon>0$; indeed, in the $n$-dimensional unit sphere $\mathbb S^n$ with its canonical metric, for any faithful dimension pair we have $\lambda_k=k(k+n-1)$, $k\in \mathbb N.$ Moreover, Theorem \ref{th1.2} also handles the case in Proposition \ref{flawex1}, where the $({\mathscr C}, \Dim)$-spectrum contains only one element.

Unlike in the Riemannian setting,  various measures can be introduced on a Finsler manifold  whose behavior may be genuinely different. Two such frequently used measures are the   Busemann-Hausdorff measure $d\mathfrak{m}_{BH}$ and  Holmes-Thompson measure $d\mathfrak{m}_{HT}$, see Alvarez-Paiva and  Berck\cite{AlB} and Alvarez-Paiva and Thompson \cite{AlT}. These two measures become the canonical Riemannan measure whenever the Finsler metric is Riemannian. Let $\Lambda_F\geq 1$ be the uniformity constant of $(M,F)$, with  $\Lambda_F= 1$  if and only if $F$ is Riemannian (cf. Egloff \cite{E}). The following result provides a Gromov type estimate, see \cite{G3, G2}.

\begin{theorem}\label{GromovtypBH}Given $K\in \mathbb R$ and $d>0$,
let $(M,F,d\mathfrak{m})$ be an $n$-dimensional closed  {\rm FMMM} with
\[
\mathbf{Ric}\geq (n-1)K,  \ \ \ \ \diam(M)= d,
\]
where $d\mathfrak{m}$ is either the Busemann-Hausdorff measure or the Holmes-Thompson measure.
Then there  exists  $C_2=C_2(n)>0$  depending only on $n$ such that for any faithful dimension pair $(\mathscr{C},\Dim)$ the corresponding eigenvalues $\lambda_k$'s  satisfy
\textcolor[rgb]{0.00,0.00,0.00}{\[
\lambda_{k+1}\geq \frac{C_2^{1+d\sqrt{|K|}}}{\Lambda^{24n}_F d^2}\, k^\frac{2}{n},\ \forall\,k\in \mathbb{N}.
\]}

\end{theorem}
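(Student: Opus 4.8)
The plan is to reduce the estimate to the classical Riemannian Gromov bound. Two bridges are available: (i) the uniformity constant, which makes $F$ bi-Lipschitz to a genuine Riemannian metric $g$ on $M$ with comparable measure and energy; and (ii) the faithfulness of $(\mathscr{C},\Dim)$, by which its spectrum on $(M,g,d\vol_g)$ \emph{is} the Courant spectrum, so that only a linear eigenvalue problem is left to bound from below and the nonlinearity of $\Delta$ never enters. The remaining input is that the Finslerian curvature hypothesis, through the Finsler Bishop--Gromov theorem (cf. \cite{BCS,Sh1}), endows $(M,F,d\mathfrak{m})$ — hence, via (i), also $(M,g,d\vol_g)$ — with the volume-packing and $L^2$-Poincar\'e structure on balls that drives Gromov's argument.

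First I would fix the auxiliary metric: let $g_x$ be the average of the fundamental tensor $g_{ij}(x,\cdot)$ of $F$ over the indicatrix $S_xM$, a smooth Riemannian metric on $M$. The definition of $\Lambda_F$ gives $\Lambda_F^{-1}g_x(v,v)\le F^2(x,v)\le\Lambda_F g_x(v,v)$ on $TM$, dually $\Lambda_F^{-1}|\xi|_{g^*}^2\le [F^*(x,\xi)]^2\le\Lambda_F|\xi|_{g^*}^2$ on $T^*M$, and, comparing the indicatrix of $F$ with that of $g$, $\Lambda_F^{-n/2}\,d\vol_g\le d\mathfrak{m}\le\Lambda_F^{n/2}\,d\vol_g$ for $d\mathfrak{m}$ either the Busemann--Hausdorff or the Holmes--Thompson measure. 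Combining these three comparisons, $E_F(u)\ge\Lambda_F^{-(n+1)}E_g(u)$ for every admissible $u$, where $E_g$ is the Rayleigh quotient of $(M,g,d\vol_g)$; equivalently $E_F^{-1}[0,\lambda]\subset E_g^{-1}[0,\Lambda_F^{n+1}\lambda]$, and so the definition (\ref{spectrum2}) yields
\[
\lambda_k\ \ge\ \Lambda_F^{-(n+1)}\,\lambda_k^{g},
\]
where $\lambda_k^{g}$ denotes the $(\mathscr{C},\Dim)$-spectrum of $(M,g,d\vol_g)$.

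Next, faithfulness of $(\mathscr{C},\Dim)$ gives $\lambda_k^{g}=\lambda_k^{\mathrm{Cou}}(M,g)$, the $k$-th variational (Courant) eigenvalue of the Laplace--Beltrami operator of $g$. To bound this below I would invoke Gromov's packing argument \cite{G3,G2}, whose only inputs are a ball-packing estimate and a scale-invariant $L^2$-Poincar\'e inequality on metric balls. These do not come from a curvature bound on $g$ — there is none at hand — but from the Finslerian data: since $\mathbf{Ric}\ge(n-1)K$ and $\diam(M)=d$, the Finsler Bishop--Gromov volume comparison (which for the Busemann--Hausdorff and Holmes--Thompson measures itself carries powers of $\Lambda_F$) bounds, for every $r\in(0,d]$, a maximal $r$-separated net in $(M,F)$ by $C(n)^{1+d\sqrt{|K|}}\Lambda_F^{O(n)}(d/r)^n$ points and controls the overlap of the doubled balls, while the usual integration-along-minimizing-geodesics argument (here the reversibility of $F$ is used) turns the same comparison into $\int_{B(x,r)}(u-\bar u)^2\,d\mathfrak{m}\le C(n)\Lambda_F^{O(n)}e^{c(n)\sqrt{|K|}r}r^2\int_{B(x,2r)}[F^*(du)]^2\,d\mathfrak{m}$. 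By the bi-Lipschitz comparison (i) these pass, up to further powers of $\Lambda_F$, to $g$-balls with $d\vol_g$ and $|\nabla_g u|$. Feeding them into Gromov's argument on $(M,g)$ produces $\lambda_{k+1}^{\mathrm{Cou}}(M,g)\ge \dfrac{C_2(n)^{1+d\sqrt{|K|}}}{\Lambda_F^{O(n)}d^2}(k+1)^{2/n}$; combined with the displayed inequality and $(k+1)^{2/n}\ge k^{2/n}$, and after collecting all powers of $\Lambda_F$ into the single exponent $24n$ and renaming the $n$-dependent constant $C_2$, this is the asserted bound.

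I expect two places to demand real work. The first is quantitative: tracking how $\Lambda_F$ enters (a) the Finsler Bishop--Gromov comparison for the Busemann--Hausdorff and Holmes--Thompson measures and (b) the transfer of the packing and Poincar\'e inequalities from $F$ to $g$, and checking that the cumulative powers — which reach order $n^2$ inside the covering count — collapse to order $n$ upon taking the $2/n$-th power, landing at $\Lambda_F^{24n}$; the form $C_2^{1+d\sqrt{|K|}}$ of the exponential factor, rather than $e^{c\sqrt{|K|}d}$, likewise has to be read off from the hyperbolic model volume in the packing step. The second is conceptual but essential: one must use exactly faithfulness — not merely the axioms of a generic dimension pair — to replace $\lambda_k^{g}$ by the Courant eigenvalue above, which is the single point where the hypothesis is spent, and is consistent with the fact that the (non-faithful) Lebesgue covering dimension pair is not covered by the theorem.
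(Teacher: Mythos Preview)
Your reduction is exactly the paper's: pass to the average Riemannian metric $\hat g$, use the bi-Lipschitz comparisons $\Lambda_F^{-1}\hat g\le F^2\le\Lambda_F\hat g$ and $\Lambda_F^{-n/2}d\vol_{\hat g}\le d\mathfrak m\le\Lambda_F^{n/2}d\vol_{\hat g}$ to get $\lambda_k\ge\Lambda_F^{-(n+1)}\lambda_k^{\hat g}$, and then spend faithfulness once to replace $\lambda_k^{\hat g}$ by the Courant eigenvalue of $(M,\hat g,d\vol_{\hat g})$. Where you diverge is in the geometric lower bound for that Courant eigenvalue. You propose the ball--Poincar\'e route: an $L^2$-Poincar\'e inequality on $F$-balls via integration along minimizing geodesics (segment inequality), transferred to $\hat g$, fed into Gromov's packing argument. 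The paper instead runs a Buser-style argument: it covers $M$ by Dirichlet regions $D_i$ of a complete $\varepsilon$-package, takes the $u_i$ in the max--min principle to be $\chi_{D_i}$, and bounds $E(f)$ from below on each $D_i$ by its \emph{Finsler Cheeger constant} $\mathbbm h(\mathrm{int}\,D_i)$, which it then estimates for starlike domains directly from the Finsler Bishop--Gromov comparison (Lemma \ref{distorlemma-0}); the distortion bound $|\tau|\le\log\Lambda_F^{\,n}$ for Busemann--Hausdorff/Holmes--Thompson measures (Lemma \ref{uniformBuHo}) is what converts the abstract $\Theta$-dependence into $\Lambda_F^{24n}$. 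Your approach is viable and arguably closer to the Cheeger--Colding toolkit the paper uses elsewhere (Section \ref{sectnew2}), but note two things: first, the segment inequality you invoke is stated in the paper only under $\mathbf{Ric}_N$ (Theorem \ref{CheegerColding}), so under the unweighted $\mathbf{Ric}$ bound you would need to redo it with the distortion-controlled comparison of Lemma \ref{distorlemma-0}(i); second, your remark that the $\Lambda_F$-powers ``reach order $n^2$'' in the covering count is an overestimate here --- the $n^2$ exponent appears only in the Croke-type inequality used for Theorem \ref{Busertype}, not in the diameter-based packing of Theorem \ref{GromovtypBH}, where all $\Lambda_F$-powers stay at order $n$ throughout.
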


We notice that the faitfulness of the dimension pair in Theorem \ref{GromovtypBH} is indispensable; see again  Proposition \ref{flawex1}. For a closed Riemannian manifold (endowed with its canonical measure), Theorem \ref{GromovtypBH} reduces to the estimate given by Gromov \cite[Appendix C]{G2} and   Hassannezhad,  Kokarev and Polterovich \cite[Theorem 1.2.1]{HKP}, while  Weyl's asymptotic law (see e.g. Chavel \cite[p.9]{Ch}) implies the asymptotic optimality of the latter estimate. Moreover,
Theorem \ref{GromovtypBH} can be extended to arbitrary measures, see Theorem \ref{thefisrttheorem}, where a weaker estimate is obtained on the right hand side of the above inequality containing quantitative information on the \textit{distortion} of  $(M,F,d\mathfrak{m})$.  In addition, for some special faithful dimension pairs, we obtain  better estimates which are not only independent of the uniformity constant $\Lambda_F$ but also valid for arbitrary measures, see Theorem \ref{thesecondtheorem}.

We also provide a Buser type estimate; hereafter, $\mathfrak{i}_M$ stands for the injectivity radius of $(M,F)$.
\begin{theorem}\label{Busertype}
Given $K\in \mathbb R$ and $V>0$, let $(M,F,d\mathfrak{m})$ be an $n$-dimensional closed {\rm FMMM} with
\[
\mathbf{Ric}\geq (n-1)K,  \ \ \ \ \mathfrak{m}(M)=V,
\]
 where $d\mathfrak{m}$ is either the Busemann-Hausdorff measure or the Holmes-Thompson measure.
Then there exist  $C_3=C_3(n)>0$ and $C_4=C_4(n)>0$ both depending only on $n$ such that for  any faithful dimension pair $(\mathscr{C},\Dim)$ the corresponding eigenvalues $\lambda_k$'s  satisfy
\[
\lambda_{k+1}\geq \frac{C_3}{\Lambda^{32n}_F }\, \left(\frac{k}{V}\right)^\frac{2}{n},\ \forall\,k\geq C_4 \max\left\{  \mathfrak{i}_M^{-n}, |K|^{\frac{n}{2}}   \right\} \Lambda_F^{5n^2}V.
\]
\end{theorem}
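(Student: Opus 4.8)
The plan is to derive the Buser-type estimate from the Gromov-type estimate of Theorem \ref{GromovtypBH} by trading the diameter bound for a volume bound, using a Buser-style isoperimetric/packing argument. The key observation is that under $\mathbf{Ric}\geq (n-1)K$ together with a lower bound on the injectivity radius $\mathfrak{i}_M$, one can bound the diameter in terms of the volume $V$ (on balls of radius $\leq \mathfrak{i}_M$ the Bishop--Gromov comparison and the fact that small metric balls are genuinely embedded give two-sided volume bounds, so a maximal $\epsilon$-separated set cannot be too large, hence $\diam(M)$ is controlled). Concretely, I would first establish a Finslerian Bishop--Gromov-type volume comparison for the Busemann--Hausdorff and Holmes--Thompson measures under $\mathbf{Ric}\geq(n-1)K$, which is available in the literature (Ohta, Shen) up to a power of the uniformity constant $\Lambda_F$; the $\Lambda_F$-powers are precisely what produces the exponents $32n$ and $5n^2$ in the statement.

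Second, I would run the covering argument: fix $r>0$ (to be chosen as a small multiple of $\min\{\mathfrak{i}_M,|K|^{-1/2}\}$ so that geodesic balls of radius $r$ behave almost Euclideanly) and take a maximal set of points $\{x_1,\dots,x_m\}$ that are $2r$-separated. The balls $B(x_j,r)$ are pairwise disjoint and embedded, so $m\cdot \inf_j \mathfrak{m}(B(x_j,r)) \leq V$; combined with the lower volume bound $\mathfrak{m}(B(x_j,r))\geq c(n)\Lambda_F^{-?}\,r^n$ this yields $m\leq C(n)\Lambda_F^{?}\,V r^{-n}$. On the other hand, the balls $B(x_j,2r)$ cover $M$, and any two points of $M$ can be joined through a chain of at most $m$ overlapping such balls, giving $\diam(M)\leq C\, m\, r \leq C(n)\Lambda_F^{?}\,V r^{1-n}$. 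Plugging this diameter estimate into Theorem \ref{GromovtypBH}, and noting that with $r\asymp\min\{\mathfrak{i}_M,|K|^{-1/2}\}$ the factor $C_2^{1+d\sqrt{|K|}}$ is bounded below by a constant depending only on $n$ precisely when $d\sqrt{|K|}$ is controlled — which is exactly the regime $k\geq C_4\max\{\mathfrak{i}_M^{-n},|K|^{n/2}\}\Lambda_F^{5n^2}V$ forces, since that threshold guarantees $k^{1/n}$ dominates the relevant scale — produces
\[
\lambda_{k+1}\geq \frac{C(n)}{\Lambda_F^{24n}\,\diam(M)^2}\,k^{2/n}\geq \frac{C(n)}{\Lambda_F^{24n}}\cdot\frac{r^{2(n-1)}}{\Lambda_F^{?}V^2}\,k^{2/n},
\]
and then one absorbs the $r$-powers and reorganizes: the point is that $k$ large forces us to work at a scale $\rho\asymp (V/k)^{1/n}\leq r$ rather than at the fixed scale $r$, which replaces $V^2 r^{-2(n-1)}$ by the cleaner $(V/k)^{2/n}$ after rescaling the covering, ultimately giving $\lambda_{k+1}\geq C_3(n)\Lambda_F^{-32n}(k/V)^{2/n}$.

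The main technical obstacle I anticipate is bookkeeping the powers of the uniformity constant $\Lambda_F$ correctly through the chain of comparisons — the Finsler volume comparison, the reverse triangle inequality losses (reversibility helps but the non-symmetric structure of $F$ still costs factors of $\Lambda_F$ in relating forward and backward balls), and the final rescaling — so that they combine to exactly $\Lambda_F^{32n}$ in the bound and $\Lambda_F^{5n^2}$ in the threshold rather than something worse. A secondary subtlety is that the argument should only yield a meaningful (positive) constant once $k$ is large enough that the working scale $\rho=(V/k)^{1/n}$ is genuinely below both the injectivity radius and the curvature scale $|K|^{-1/2}$; tracking that this is equivalent to the stated lower bound on $k$, and checking that in this regime the exponential factor $C_2^{1+d\sqrt{|K|}}$ from Theorem \ref{GromovtypBH} does not degrade the constant (because effectively $d\sqrt{|K|}\lesssim 1$ at the relevant scale), is where the constants $C_3,C_4$ get pinned down. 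The faithfulness hypothesis enters only through the use of Theorem \ref{GromovtypBH}, so no new input on the dimension pair is needed.
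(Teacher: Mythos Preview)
Your plan has a genuine gap: you try to invoke Theorem \ref{GromovtypBH} as a black box, but the factor $C_2^{1+d\sqrt{|K|}}$ there depends on the \emph{actual diameter} $d=\diam(M)$, which is a fixed geometric quantity independent of $k$. No lower bound on $k$ can make $d\sqrt{|K|}$ small, so the sentence ``effectively $d\sqrt{|K|}\lesssim 1$ at the relevant scale'' is simply false as stated. Your chaining bound $d\leq Cm r$ only makes things worse: with $r\asymp\min\{\mathfrak{i}_M,|K|^{-1/2}\}$ and $m\asymp \Lambda_F^{5n^2}Vr^{-n}$, the exponent $d\sqrt{|K|}$ can be arbitrarily large when $V$ is large, and this exponential loss (since $C_2<1$) swamps everything. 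Switching to a smaller scale $\rho\asymp(V/k)^{1/n}$ does not help either, because $d$ does not change with the scale of the covering.

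What the paper does is different in exactly this point. It does \emph{not} quote Theorem \ref{GromovtypBH}; instead it re-enters the Dirichlet-region/Cheeger-constant machinery and applies Lemma \ref{infact} on each Dirichlet region of an $\varepsilon$-package with $R=2\varepsilon$. This yields the sharper intermediate inequality
\[
\overline{\lambda}_{m}\;\geq\;\frac{C'^{\,1+\sqrt{|K|}\,\varepsilon}}{\Lambda_F^{20n+4}}\,\frac{1}{\varepsilon^{2}},
\]
with the exponential in $\sqrt{|K|}\,\varepsilon$, \emph{not} $\sqrt{|K|}\,d$. The packing number is then controlled by the Croke-type volume lower bound (Lemma \ref{Crokineq}), giving $m\leq \Lambda_F^{5n^2}V/(C\varepsilon^n)$ for $\varepsilon\leq\mathfrak{i}_M/2$. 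Choosing $\varepsilon=\varepsilon(k)=(\Lambda_F^{5n^2}V/(Ck))^{1/n}$ makes $m\leq k$, and the threshold $k\geq C_4\max\{\mathfrak{i}_M^{-n},|K|^{n/2}\}\Lambda_F^{5n^2}V$ is precisely what forces $\varepsilon(k)\leq\min\{\mathfrak{i}_M/2,|K|^{-1/2}\}$, so that $\sqrt{|K|}\,\varepsilon(k)\leq 1$ and the exponential collapses to a dimensional constant. The $\Lambda_F^{32n}$ then comes from combining $\Lambda_F^{20n+4}$ with the $\Lambda_F^{5n^2\cdot(2/n)}=\Lambda_F^{10n}$ arising from $\varepsilon(k)^{-2}$. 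In short, the missing idea in your proposal is that the curvature exponential must be localized to the packing scale $\varepsilon$ before the volume comparison is made; trying to control the global diameter after the fact cannot recover this.
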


As an application, we show that for every closed weighted Riemannian manifold $(M,g, e^{-f}d\vol_g)$ the Lusternik-Schnirelmann spectrum is precisely the spectrum of the Bakry-\'Emery Laplacian, see Theorem \ref{wegitedLS}; the proof is based on the fact that $(M,g, e^{-f}d\vol_g)$ can be viewed as an FMMM $(M,F,d\mathfrak{m})$ with the metric $F=\sqrt{g}$ and measure $d\mathfrak{m}=e^{-f}d\vol_g$, respectively.

The paper is organized as follows.
In Section \ref{section2} we recall/prove those notions/results which are indispensable in our study (Finsler geometry, Sobolev spaces, energy functionals). In Section \ref{section3} we introduce the spectrum of the dimension pairs and we construct several faithful dimension pairs. In Section  \ref{section4} we prove the Cheng type upper estimate (proof of Theorem \ref{Chees}), while in Section \ref{section5} lower bound estimates are given for the eigenvalues (proofs of Theorems  \ref{thefisrttheorem}, \ref{compBuse} and \ref{thesecondtheorem}). In Section \ref{section6} we prove Theorem \ref{wegitedLS} by joining the Lusternik-Schnirelmann spectrum with the spectrum of the Bakry-\'Emery Laplacian (proof of Theorem \ref{wegitedLS}). In Section   \ref{section7} we prove some  technical results which are used throughout the previous sections.



\section{Preliminaries}\label{section2}
\subsection{Elements from Finsler geometry}\label{finslerknowdege} In this section, we recall some definitions and properties about Finsler manifolds;  see Bao, Chern and Shen \cite{BCS} and Shen \cite{Sh1} for more details.

\subsubsection{Finsler manifolds}

 Let $M$ be a connected
$n$-dimensional smooth manifold and $TM=\bigcup_{x \in M}T_{x}
M $ be its tangent bundle. The pair $(M,F)$ is a \textit{reversible Finsler
	manifold} if   $F:TM\to [0,+\infty)$ satisfies
the conditions:

\smallskip

(a) $F\in C^{+\infty}(TM\setminus\{ 0 \});$

(b) $F(x,\lambda y)=|\lambda| F(x,y)$ for all $\lambda\in \mathbbm{R}$ and $(x,y)\in TM;$

(c) $g_{ij}(x,y)=[\frac12F^{2}%
]_{y^{i}y^{j}}(x,y)$ is positive definite for all $(x,y)\in
TM\setminus\{ 0 \}$, where $F(x,y):=F(y^i\frac{\partial}{\partial x^i}|_x)$.

\smallskip

\noindent The Euler theorem yields $F(x,y)=\sqrt{g_{ij}(x,y)y^iy^j}$ for any $y\in TM\backslash\{0\}$.
Moreover, $(g_{ij}(x,y))$ can be defined at $y=0$ if and only if it is independent of $y$, in which case $F$ is Riemannian.

 Set $S_xM:=\{y\in T_xM:F(x,y)=1\}$ and $SM:=\cup_{x\in M}S_xM$. The  {\it uniformity constant} $\Lambda_F$ (cf. Egloff \cite{E}) is defined by
\[
\Lambda_F:=\underset{X,Y,Z\in SM}{\sup}\frac{g_X(Y,Y)}{g_Z(Y,Y)},\ \ \ \ \text{ where }g_X(Y,Y)=g_{ij}(x,X)\,Y^iY^j.
\]
Clearly, ${\Lambda_F} \geq 1$ with equality if and only if $F $ is Riemannian.

The {\it average Riemannian metric} $\hat{g}$ on $M$ induced by $F$ is defined as
\[
\hat{g}(X,Y):=\frac{1}{\nu(S_xM)}\ds\int_{S_xM}g_y(X,Y)d\nu_x(y),\ \forall\,X,Y\in T_xM,\tag{2.1}\label{averageRieman}
\]
where $\nu(S_xM)=\ds\int_{S_xM}d\nu_x(y)$, and $d\nu_x$ is the canonical Riemannian measure on $S_xM$ induced by $F$.
Simple estimates yield
\[
\Lambda^{-1}_F\cdot F^2(X)\leq \hat{g}(X,X)\leq \Lambda_F \cdot F^2(X), \forall\,X\in TM.\tag{2.2}\label{averagenorm}
\]

The {\it co-Finsler $($dual$)$ metric} $F^*$ on $M$ is
defined by
\begin{equation*}
F^*(\eta):=\underset{X\in T_xM\backslash \{0\}}{\sup}\frac{\eta(X)}{F(X)}, \ \
\forall \eta\in T_x^*M,
\end{equation*}
which is  a Finsler metric on $T^*M$.
The {\it Legendre transformation} $\mathfrak{L} : TM \rightarrow T^*M$ is defined
by
\begin{equation*}
\mathfrak{L}(X):=\left \{
\begin{array}{lll}
g_X(X,\cdot), & \ \ \ \text{ if }X\neq0, \\
\\
0, & \ \ \ \text{ if }X=0.%
\end{array}
\right.
\end{equation*}
In particular, $F^*(\mathfrak{L}(X))=F(X)$.
Given $f \in C^1(M)$, the
{\it gradient} of $f$ is defined as $\nabla f = \mathfrak{L}^{-1}(df)$. Thus,  $df(X) = g_{\nabla f} (\nabla f,X)$. We remark that $\nabla$ is usually nonlinear, i.e., $\nabla (f+h)\neq \nabla f+\nabla h$.

Let $\zeta:[0,1]\rightarrow M$ be a Lipschitz continuous path. The length of $\zeta$ is defined by
\[
L_F(\zeta):=\int^1_0 F(\dot{\zeta}(t))dt.
\]
Define the {\it distance function} $d_F:M\times M\rightarrow [0,+\infty)$ by
$d_F(x_1,x_2):=\inf L_F(\sigma)$,
where the infimum is taken over all
Lipschitz continuous paths $\zeta:[a,b]\rightarrow M$ with
$\zeta(a)=x_1$ and $\zeta(b)=x_2$.
Given $R>0$, the {\it $R$-ball} centered at $p$ is defined as $B_p(R):=\{x\in M:\, d_F(p,x)<R\}$.

A smooth curve $t\mapsto \gamma(t)$ in $M$ is called a (constant speed) \textit{geodesic} if it satisfies
\[
\frac{d^2\gamma^i}{dt^2}+2G^i\left(\frac{d\gamma}{dt}\right)=0,
\]
where
\begin{align*}
G^i(y):=\frac14 g^{il}(y)\left\{2\frac{\partial g_{jl}}{\partial x^k}(y)-\frac{\partial g_{jk}}{\partial x^l}(y)\right\}y^jy^k\tag{2.3}\label{geoedesiccon}
\end{align*}
is the geodesic coefficient.
We always use $\gamma_y(t)$ to denote  the geodesic with $\dot{\gamma}_y(0)=y$.

A reversible Finsler manifold $(M,F)$ is {\it complete} if  every geodesic $t\mapsto \gamma(t)$, $0< t<1$, can be extended to a geodesic defined on $-\infty< t<+\infty$.
The \textit{cut value} $i_y$ of $y\in S_xM$ is defined by
\[
i_y:=\sup\{t: \text{ the geodesic }\gamma_y|_{[0,t]} \text{ is globally minimizing}  \}.
\]
The \textit{injectivity radius} at $x$ is defined as $\mathfrak{i}_x:=\inf_{y\in S_xM} i_y$.  According to Bao, Chern and Shen \cite{BCS}, if $(M,{F})$ is  complete, then ${\mathfrak{i}_x}>0$ for any point $x\in M$. The \textit{injectivity radius of} $M$ is defined by
$\mathfrak{i}_M:=\inf_{x\in M} \mathfrak{i}_x$; if $M$ is compact, then $\mathfrak{i}_M>0$.
The \textit{cut locus} of $x$ is defined as
\[
\text{Cut}_x:=\left\{\gamma_y(i_y):\,y\in S_xM \text{ with }i_y<+\infty \right\}.
\]
In particular, $\text{Cut}_x$ is closed and has null measure.

\subsubsection{Measures and curvatures}

A triple $(M,F, d\mathfrak{m})$ is called an {FMMM} (i.e., {\it Finsler metric measure manifold}), if $(M,F)$ is a reversible Finsler manifold endowed with a smooth measure $d\mathfrak{m}$.
In a local coordinate system $(x^i)$, use $\sigma(x)$ to denote the density function
of $d\mathfrak{m}$, i.e.,
\[
d\mathfrak{m}=:\sigma(x)dx^1 \dots  dx^n.\tag{2.4}\label{density2.2}
\]
In particular,
the \textit{Busemann-Hausdorff measure} $d\mathfrak{m}_{BH}$ and the \textit{Holmes-Thompson measure} $d\mathfrak{m}_{HT}$ (cf. \cite{AlB,AlT}) are defined by
\begin{align*}
&d\mathfrak{m}_{BH}:=\frac{\vol(\mathbb{B}^{n})}{\vol(B_xM)}dx^1\dots dx^n,\\
 &d\mathfrak{m}_{HT}:=\left(\frac1{\vol(\mathbb{B}^{n})}\ds\ds\int_{B_xM}\det g_{ij}(x,y)dy^1 \dots dy^n \right) dx^1 \dots dx^n,
\end{align*}
where $B_xM:=\{y\in T_xM: F(x,y)<1\}$ and $\mathbb{B}^{n}$ is the usual Euclidean $n$-dimensional unit ball.

Given a $C^2$-function $f$, set $\mathcal {U}=\{x\in M:\, df|_x\neq0\}$. The \textit{Laplacian} of $f\in C^2(M)$ is defined on $\mathcal {U}$ by
\begin{align*}
\Delta f:=\text{div}(\nabla f)=\frac{1}{\sigma(x)}\frac{\partial}{\partial x^i}\left(\sigma(x)g^{*ij}(df|_x)\frac{\partial f}{\partial x^j}\right),\tag{2.5}\label{formofORlAPLA}
\end{align*}
where $(g^{*ij})$ is the fundamental tensor of $F^*$ and $\sigma(x)$ is defined in (\ref{density2.2}).  As in Ohta and Sturm \cite{O}, we define
the {\it distributional Laplacian} of $u\in H^1_{\text{loc}}(M)$
in the weak sense by
\[
\ds\ds\int_M v{\Delta} u d\mathfrak{m}=-\ds\ds\int_M\langle\nabla u, dv\rangle d\mathfrak{m} \text{ for all }v\in C^\infty_0(M),
\]
where $\langle\nabla u, dv\rangle:= dv(\nabla u)$ at $x\in M$ denotes the canonical pairing between $T_x^*M$ and $T_xM.$

Define the {\it distortion} $\tau$ and the {\it S-curvature} $\mathbf{S}$ of $(M,F,d\mathfrak{m})$ as
\begin{equation*}
\tau(y):=\log \frac{\sqrt{\det g_{ij}(x,y)}}{\sigma(x)}, \ \ \mathbf{S}(y):=\left.\frac{d}{dt}\right|_{t=0}[\tau(\dot{\gamma}_y(t))],\ \text{ for $y\in T_xM\backslash\{0\}$},
\end{equation*}
where $\gamma_y(t)$ is a geodesic with $\dot{\gamma}(0)=y$.

\begin{lemma}[Yuan and Zhao {\cite{YZ}}]\label{uniformBuHo}Let $(M,F,d\mathfrak{m})$ be an $n$-dimensional {\rm FMMM} with finite uniformity constant $\Lambda_F$.
If $d\mathfrak{m}$ is the Busemann-Hausdorff measure or the Holmes-Thompson measure, then $e^{\tau(y)}\in [\Lambda_F^{-n},\Lambda_F^{n}]$ for any $y\in TM\backslash\{0\}$.
\end{lemma}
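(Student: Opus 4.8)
The plan is to compute the distortion explicitly for each of the two canonical measures and then control the resulting quantity using the uniformity constant $\Lambda_F$. Fix $x\in M$ and a local coordinate system $(x^i)$ around $x$, and recall that by definition $\tau(y)=\log\big(\sqrt{\det g_{ij}(x,y)}/\sigma(x)\big)$, so that $e^{\tau(y)}=\sqrt{\det g_{ij}(x,y)}/\sigma(x)$ depends on the fixed point $x$ only through the density $\sigma(x)$ of $d\mathfrak{m}$ in the chosen chart. The key point is that since both $\tau$ and the $\log$-ratio are homogeneous of degree zero in $y$ (the metric coefficients $g_{ij}(x,y)$ are $0$-homogeneous in $y$), it suffices to prove the bound for $y\in S_xM$.

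First I would treat the Busemann--Hausdorff case. Here $\sigma_{BH}(x)=\vol(\mathbb B^n)/\vol(B_xM)$, where $B_xM=\{y\in T_xM: F(x,y)<1\}$ and the volume is the Lebesgue measure in the coordinate chart. The uniformity constant gives, for any fixed reference direction $X\in S_xM$, the two-sided bound $\Lambda_F^{-1} g_X(Y,Y)\le F^2(Y)\le \Lambda_F\, g_X(Y,Y)$ for all $Y$; consequently $B_xM$ is squeezed between the $g_X$-ellipsoids of $g_X$-radii $\Lambda_F^{-1/2}$ and $\Lambda_F^{1/2}$, which yields $\Lambda_F^{-n/2}\sqrt{\det g_{ij}(x,X)}^{-1}\vol(\mathbb B^n)\le \vol(B_xM)\le \Lambda_F^{n/2}\sqrt{\det g_{ij}(x,X)}^{-1}\vol(\mathbb B^n)$. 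Hence $\sqrt{\det g_{ij}(x,X)}\le e^{\tau_{BH}(X)}=\sqrt{\det g_{ij}(x,X)}\cdot \vol(B_xM)/\vol(\mathbb B^n)$ lies in $[\Lambda_F^{-n/2},\Lambda_F^{n/2}]\subset [\Lambda_F^{-n},\Lambda_F^{n}]$ once one also uses that $\det g_{ij}(x,\cdot)$ itself varies over $S_xM$ by a factor at most $\Lambda_F^{\,n}$ (again from the uniformity inequality applied to an orthonormal-type frame), so that replacing the reference direction $X$ by an arbitrary $y\in S_xM$ costs at most another factor $\Lambda_F^{\pm n/2}$. Collecting the exponents gives $e^{\tau_{BH}(y)}\in[\Lambda_F^{-n},\Lambda_F^{n}]$.

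Next, for the Holmes--Thompson measure one has $\sigma_{HT}(x)=\vol(\mathbb B^n)^{-1}\int_{B_xM}\det g_{ij}(x,y)\,dy$, and the same ellipsoidal squeezing of $B_xM$ together with the fact that the integrand $\det g_{ij}(x,y)$ is comparable on $B_xM$ to its value at any fixed direction up to a factor $\Lambda_F^{\pm n}$ shows that $\sigma_{HT}(x)$ is comparable to $\sqrt{\det g_{ij}(x,X)}$ (for a reference $X$) up to powers of $\Lambda_F$; dividing as before yields $e^{\tau_{HT}(y)}=\sqrt{\det g_{ij}(x,y)}/\sigma_{HT}(x)\in[\Lambda_F^{-n},\Lambda_F^{n}]$. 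I expect the main technical nuisance to be bookkeeping the exponents of $\Lambda_F$ so that the final window is exactly $[\Lambda_F^{-n},\Lambda_F^{n}]$ rather than something larger; this is handled by being careful that the comparison of ellipsoid volumes over an $n$-dimensional ball contributes $\Lambda_F^{\pm n/2}$ (a square root per dimension from $F^2\asymp g_X$), and that the remaining comparison of $\det g_{ij}$ across directions contributes the complementary $\Lambda_F^{\pm n/2}$, summing to $\Lambda_F^{\pm n}$. Since this is precisely the statement of \cite{YZ}, one may alternatively simply cite that reference for the sharp constants; I would include the short ellipsoid argument above as the self-contained justification.
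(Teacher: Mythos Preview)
The paper does not prove this lemma; it is simply quoted from \cite{YZ}. So there is no ``paper's proof'' to compare against, and your self-contained ellipsoid argument is exactly the kind of justification one would supply. The strategy---squeeze $B_xM$ between two $g_y$-ellipsoids of radii $\Lambda_F^{\pm 1/2}$ and read off the distortion---is correct and standard.

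Two remarks on the bookkeeping. For the Busemann--Hausdorff measure your argument actually gives the stronger bound $e^{\tau_{BH}(y)}\in[\Lambda_F^{-n/2},\Lambda_F^{n/2}]$: since the reference direction $X$ is arbitrary, take $X=y$ from the outset and the ``extra'' factor you mention never appears. For the Holmes--Thompson measure, however, your exponent count is off. The comparison of $\det g_{ij}(x,\cdot)$ across directions contributes $\Lambda_F^{\pm n}$ (each of the $n$ eigenvalue ratios lies in $[\Lambda_F^{-1},\Lambda_F]$), not $\Lambda_F^{\pm n/2}$ as you wrote; combined with the $\Lambda_F^{\pm n/2}$ from the ellipsoid volumes this yields only $e^{\tau_{HT}(y)}\in[\Lambda_F^{-3n/2},\Lambda_F^{3n/2}]$. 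Reaching the stated window $[\Lambda_F^{-n},\Lambda_F^{n}]$ for HT requires an additional ingredient (for instance the known inequality $d\mathfrak{m}_{HT}\ge d\mathfrak{m}_{BH}$ handles one side), or---as you yourself note---a direct citation of \cite{YZ}. Since the exponent feeds into the constants of Theorems~\ref{GromovtypBH} and~\ref{Busertype}, you should either supply that extra step or be explicit that you are quoting the sharp constant.
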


The {\it Riemannian curvature} $R_y$ of $F$ is a family of linear transformations on tangent spaces. More precisely, set
$R_y:=R^i_k(y)\frac{\partial}{\partial x^i}\otimes dx^k$, where
\begin{align*}
R^i_{\,k}(y)&:=2\frac{\partial G^i}{\partial x^k}-y^j\frac{\partial^2G^i}{\partial x^j\partial y^k}+2G^j\frac{\partial^2 G^i}{\partial y^j \partial y^k}-\frac{\partial G^i}{\partial y^j}\frac{\partial G^j}{\partial y^k},
\end{align*}
where $G^i$'s are the geodesic coefficients defined in (\ref{geoedesiccon}).
The  {\it Ricci curvature} of $y\neq 0$
is defined by
$\mathbf{Ric}(y):= \frac{R^i_{i}(y)}{F^2(y)}$.  According to Ohta and Sturm \cite{O}, given $y\in SM$,
 the {\it weighted Ricci curvature} is defined by
\begin{align*}\mathbf{Ric}_N(y)=\left\{
\begin{array}{lll}
\mathbf{Ric}(y)+\left.\frac{d}{dt}\right|_{t=0}\mathbf{S}(\gamma_y(t))-\frac{\mathbf{S}^2(y)}{N-n}, && \text{ for }N\in (n,+\infty),\\
\\
\underset{L\downarrow n}{\lim}\mathbf{Ric}_L(y), && \text{ for }N=n,\\
\\
\mathbf{Ric}(y)+\left.\frac{d}{dt}\right|_{t=0}\mathbf{S}(\gamma_y(t)),  && \text{ for }N=+\infty.
\end{array}
\right.
\end{align*}
In particular,  bounding
$\mathbf{Ric}_n$ from below makes sense only if $\mathbf{S}=0$.

\subsubsection{Laplacian and volume comparison theorems}

If $M$ is complete, then  there exists a  polar coordinate system at every point of $M$ (cf. Zhao and Shen \cite{ZS}). Fixing an arbitrary point $p\in M$,
let $(r,y)$ be the {\it polar coordinate system} at $p$ and write
\[
d\mathfrak{m}=: \hat{\sigma}_p(r,y)\,dr\, d\nu_p(y),\tag{2.6}\label{volumeforminpolar}
\]
where $r$ is the distance from $p$ and $d\nu_p(y)$ is the Riemannian measure on $S_pM$  induced by $F$.

For any fixed $y\in S_pM$, we have
\[
\Delta r=\frac{\partial}{\partial r}\log( \hat{\sigma}_p(r,y)), \text{ for }0<r<i_y.\tag{2.7}\label{Laplacainr}
\]
 In particular,
\[
\lim_{r\rightarrow 0^+}\frac{ \hat{\sigma}_p(r,y)}{r^{n-1}}=e^{-\tau(y)}.\tag{2.8}\label{v14-2.1}
\]

In this paper,  $A_{n,K}(r)$ (resp. $V_{n,K}(r)$) denotes the area (resp., volume) of sphere (resp., ball) with radius $r$ in the Riemannian space form of constant curvature $K$, that is,
\[
A_{n,K}(r)=\vol(\mathbb{S}^{n-1})\mathfrak{s}^{n-1}_K(r),\ V_{n,K}(r)=\vol(\mathbb{S}^{n-1})\int^r_0\mathfrak{s}^{n-1}_K(t)dt,\tag{2.9}\label{volume2.5K}
\]
where $\mathfrak{s}_K$ is the unique
solution to $f'' + Kf = 0$ with $f(0) = 0$ and $f'(0) = 1$.
For the Ricci curvature, we have the following result; see Zhao and Shen \cite[Theorem 1.2, Remark 3.5]{ZS} for the proof.
\begin{lemma}\label{distorlemma-0}
Let $(M,F,d\mathfrak{m})$ be an $n$-dimensional complete {\rm FMMM} and let $(r,y)$ be the polar coordinate system at $p$.
\begin{itemize}
	\item[(i)] If $\mathbf{Ric}\geq (n-1)K$, then for any $y\in S_pM$, the function
	\[
	r\mapsto f_y(r):=\frac{\hat{\sigma}_p(r,y)}{e^{-\tau(\dot{\gamma}_y(r))}\mathfrak{s}^{n-1}_K(r)}
	\]
	is monotonically non-increasing  and converges to $1$ as $r\rightarrow 0^+.$
	\item[(ii)] If $\mathbf{Ric}\geq (n-1)K$ and $|\tau|\leq \log \Theta$, then
	\[
	\frac{\mathfrak{m}(B_p(R))}{\mathfrak{m}(B_p(r))}\leq \Theta^2
	\frac{V_{n,K}(R)}{V_{n,K}(r)},\ \forall\, 0<r\leq R.
	\]
\end{itemize}
\end{lemma}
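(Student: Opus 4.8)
The plan is to run the Finsler Bishop--Gromov scheme: reduce $\hat\sigma_p$ to a genuinely Riemannian Jacobian along radial geodesics, establish a pointwise Riccati/Sturm inequality for it, compare with the model, and integrate over the indicatrix $S_pM$. Fix $p$, the polar coordinate system $(r,y)$ of (2.6), and $y\in S_pM$. For $0<t<i_y$ pick a $g_y$-orthonormal basis $\{e_1=y,e_2,\dots,e_n\}$ of $T_pM$, let $J_i$ ($i\ge2$) be the Jacobi fields along $\gamma_y$ with $J_i(0)=0$, $D_tJ_i(0)=e_i$, and let $\mathcal J(t,y)$ be the Gram determinant density of $\{J_2(t),\dots,J_n(t)\}$ measured by $g_{\dot\gamma_y(t)}$. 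The Gauss lemma for the osculating Riemannian metric $g_{\dot\gamma_y}$ along $\gamma_y$, together with the definition of the distortion $\tau$, gives $\hat\sigma_p(t,y)=e^{-\tau(\dot\gamma_y(t))}\mathcal J(t,y)$; in particular, by (2.8), $\mathcal J(t,y)/t^{n-1}\to1$ as $t\to0^+$, and $f_y(t)=\mathcal J(t,y)/\mathfrak s_K^{\,n-1}(t)$.

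The heart of the proof is the differential inequality for $\mathcal J$. Set $m(t):=\partial_t\log\mathcal J(t,y)$; by (2.7) and the flow identity $\partial_t\tau(\dot\gamma_y(t))=\mathbf S(\dot\gamma_y(t))$ this equals $\Delta r(\dot\gamma_y(t))+\mathbf S(\dot\gamma_y(t))$. Writing the Jacobi fields in a parallel $g_{\dot\gamma_y}$-orthonormal frame as a matrix $\mathbb J(t)$ and $U(t):=(D_t\mathbb J)\mathbb J^{-1}$, the Jacobi equation $D_tD_t\mathbb J+R_{\dot\gamma_y}\mathbb J=0$ yields the matrix Riccati equation $U'+U^2+\mathcal R=0$ with $\operatorname{tr}\mathcal R=\mathbf{Ric}(\dot\gamma_y(t))$; since the $J_i$ vanish at $p$, a Wronskian identity makes $U(t)$ $g_{\dot\gamma_y(t)}$-self-adjoint, so $\operatorname{tr}(U^2)\ge m(t)^2/(n-1)$. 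Taking traces and using $\mathbf{Ric}\ge(n-1)K$,
\[
m'(t)+\frac{m(t)^2}{n-1}\le-\mathbf{Ric}(\dot\gamma_y(t))\le-(n-1)K .
\]
Equivalently, $u(t):=\mathcal J(t,y)^{1/(n-1)}$ satisfies $u''+Ku\le0$ on $(0,i_y)$ with $u(t)=t+o(t)$ at $0^+$; a Sturm comparison with $\mathfrak s_K$ (which solves $\mathfrak s_K''+K\mathfrak s_K=0$ with the same asymptotics) shows $t\mapsto u(t)/\mathfrak s_K(t)$, hence $f_y=(u/\mathfrak s_K)^{n-1}$, is non-increasing, with $f_y(0^+)=1$. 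This proves (i); beyond the cut value we put $\hat\sigma_p(t,y)=0$, which preserves monotonicity.

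For (ii), combine (i) with $|\tau|\le\log\Theta$, i.e. $\Theta^{-1}\le e^{\tau}\le\Theta$ (so $\Theta\ge1$): for $0<r\le R$, using $\hat\sigma_p\equiv0$ past $i_y$ when necessary,
\[
\frac{\hat\sigma_p(r,y)}{\mathfrak s_K^{\,n-1}(r)}=e^{-\tau(\dot\gamma_y(r))}f_y(r)\ge\Theta^{-1}f_y(R)\ge\Theta^{-2}\frac{\hat\sigma_p(R,y)}{\mathfrak s_K^{\,n-1}(R)} .
\]
This $\Theta^2$-relaxed monotonicity of $t\mapsto\hat\sigma_p(t,y)/\mathfrak s_K^{\,n-1}(t)$ upgrades, via the elementary ratio-of-integrals lemma (and $\Theta\ge1$), to $\int_0^R\hat\sigma_p(t,y)\,dt\le\Theta^2\,\frac{V_{n,K}(R)}{V_{n,K}(r)}\int_0^r\hat\sigma_p(t,y)\,dt$. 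Integrating over $y\in S_pM$ against $d\nu_p$ and recalling $\mathfrak m(B_p(\rho))=\int_{S_pM}\int_0^{\rho}\hat\sigma_p(t,y)\,dt\,d\nu_p(y)$ (with $\hat\sigma_p(t,y)=0$ for $t\ge i_y$) yields the stated inequality.

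The main obstacle is the Riccati step: one must verify that the genuinely Riemannian Jacobian along a Finsler geodesic obeys the Riemannian-type Riccati/Jacobi inequality with the Finsler Ricci curvature, and that the operator $U$ is $g_{\dot\gamma_y}$-self-adjoint so that Cauchy--Schwarz applies, which rests on the Finsler index lemma and the structure of the linearized geodesic flow governed by $R_{\dot\gamma_y}$. One must also treat the cut locus: $r$ is smooth only off $\Cut_p$, which is closed of null measure, and since $\hat\sigma_p$ vanishes past $i_y$ the pointwise estimates and the subsequent integrations go through unchanged (alternatively one argues with barriers, or in the distributional sense of (2.5)). The ODE/Sturm comparison and the integration lemma are routine.
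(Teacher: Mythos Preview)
The paper does not actually prove this lemma; it is quoted from Zhao and Shen \cite{ZS} (see the sentence preceding the lemma), so there is no in-paper argument to compare against. Your outline is the standard Finsler Bishop--Gromov scheme and is correct: the identification $\hat\sigma_p(t,y)=e^{-\tau(\dot\gamma_y(t))}\mathcal J(t,y)$ reduces (i) to the purely Riemannian Riccati/Sturm comparison for the osculating Jacobian $\mathcal J$, with the self-adjointness of $U$ and of $R_{\dot\gamma_y}$ w.r.t.\ $g_{\dot\gamma_y}$ supplying the Cauchy--Schwarz step; the limit $f_y(0^+)=1$ is exactly (2.8).

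One small clean-up for (ii): rather than invoking a ``$\Theta^2$-relaxed'' ratio-of-integrals lemma, it is tidier to use the genuine monotonicity of $f_y$ from (i) directly. Since $\hat\sigma_p=e^{-\tau}f_y\,\mathfrak s_K^{\,n-1}$ with $e^{-\tau}\in[\Theta^{-1},\Theta]$, one has $\int_0^R\hat\sigma_p\le\Theta\int_0^R f_y\,\mathfrak s_K^{\,n-1}$ and $\int_0^r\hat\sigma_p\ge\Theta^{-1}\int_0^r f_y\,\mathfrak s_K^{\,n-1}$; now the \emph{standard} Gromov lemma (true monotonicity of $f_y$) gives $\int_0^R f_y\,\mathfrak s_K^{\,n-1}\big/\int_0^r f_y\,\mathfrak s_K^{\,n-1}\le V_{n,K}(R)/V_{n,K}(r)$, and combining yields the $\Theta^2$ factor cleanly. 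Integration over $S_pM$ then finishes as you indicate.
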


For the weighted Ricci curvature, Ohta and  Sturm \cite{O} obtained the following result.

\begin{lemma}\label{Rho}
Let $(M, F, d\mathfrak{m})$ be an $n$-dimensional complete {\rm FMMM}. If for some  $N \in [n,+\infty)$ and $K\in \mathbb R$, the weighted Ricci
curvature satisfies $\mathbf{Ric}_N\geq (N-1)K$, then the Laplacian of the
distance function $r(x) = d_F(p, x)$ from any given point $p\in M$ can be estimated as
\[
\Delta r\leq \frac{d}{dr}\left(\log\mathfrak{s}^{N-1}_K(r)\right),
\]
which holds pointwisely on $M\backslash (\text{Cut}_p\cup \{p\})$ and in the sense of distributions on $M\setminus\{p\}$.

Hence,
for any $x\in M$ and $0<r\leq R$,
\[
\frac{\mathfrak{m}(B_x(R))}{\mathfrak{m}(B_x(r))}\leq\frac{V_{N,K}(R)}{V_{N,K}(r)}\leq e^{(N-1)R\sqrt{|K|}}\left( \frac{R}{r} \right)^N.
\]
\end{lemma}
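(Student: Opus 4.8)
The plan is to follow the by-now standard Bochner--Riccati approach to Laplacian comparison, as developed for weighted Ricci curvature by Ohta and Sturm, adapted to the Finsler setting via the polar coordinate identity \eqref{Laplacainr}. First I would fix $p\in M$, a unit vector $y\in S_pM$, and work along the minimizing geodesic $\gamma_y$ on the interval $0<r<i_y$, where the distance function $r(x)=d_F(p,x)$ is smooth. Writing $\varphi(r):=\Delta r(\gamma_y(r))=\tfrac{\partial}{\partial r}\log\hat\sigma_p(r,y)$, the key point is the (weighted) Riccati/Bochner inequality along $\gamma_y$: differentiating and using $\mathbf{Ric}_N\geq (N-1)K$ together with the definition of $\mathbf{Ric}_N$ (which encodes how the $S$-curvature term $\mathbf S$ interacts with the density $\sigma$), one obtains
\[
\varphi'(r)+\frac{\varphi(r)^2}{N-1}+(N-1)K\leq 0,\qquad 0<r<i_y,
\]
with the initial asymptotics $\varphi(r)\sim (N-1)/r$ as $r\to0^+$, which follows from \eqref{v14-2.1}. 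Comparing this Riccati differential inequality with the model quantity $\psi_K(r):=\tfrac{d}{dr}\log\mathfrak s_K^{N-1}(r)=(N-1)\mathfrak s_K'(r)/\mathfrak s_K(r)$, which satisfies the corresponding equality $\psi_K'+\psi_K^2/(N-1)+(N-1)K=0$ with the same blow-up rate at $0$, a standard ODE comparison argument (Sturm-type) yields $\varphi(r)\leq\psi_K(r)$ for all $0<r<i_y$. This is exactly the pointwise estimate $\Delta r\leq \tfrac{d}{dr}(\log\mathfrak s_K^{N-1}(r))$ on $M\setminus(\mathrm{Cut}_p\cup\{p\})$.

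Next I would upgrade the pointwise bound to a distributional inequality on $M\setminus\{p\}$. The obstruction here is the cut locus $\mathrm{Cut}_p$: across it $r$ is only Lipschitz and $\Delta r$ may carry a singular (negative) part. The classical resolution, which carries over verbatim to the Finsler setting because $\mathrm{Cut}_p$ is closed and has null measure (as recalled in the excerpt), is a Calabi-type argument: one shows that the singular part of the distributional Laplacian of $r$ is non-positive, so for any nonnegative $v\in C_0^\infty(M\setminus\{p\})$ one has $\int_M v\,\Delta r\,d\mathfrak m \leq \int_{M\setminus\mathrm{Cut}_p} v\,\Delta r\,d\mathfrak m$, and then the pointwise bound finishes it. Since this is precisely the content of Ohta--Sturm \cite{O}, I would simply cite it rather than redo the barrier-function argument.

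Finally, the volume doubling estimate follows by integration. Using the polar decomposition \eqref{volumeforminpolar}, write $\mathfrak m(B_x(R))=\int_{S_xM}\int_0^{\min\{R,i_y\}}\hat\sigma_x(t,y)\,dt\,d\nu_x(y)$, and by \eqref{Laplacainr} the Laplacian bound just proved gives $\tfrac{\partial}{\partial t}\log\hat\sigma_x(t,y)\leq\tfrac{d}{dt}\log\mathfrak s_K^{N-1}(t)$, so $t\mapsto \hat\sigma_x(t,y)/\mathfrak s_K^{N-1}(t)$ is non-increasing (extending $\hat\sigma_x(t,y):=0$ for $t\geq i_y$). The Bishop--Gromov-type monotonicity lemma for such ratios then yields $\mathfrak m(B_x(R))/\mathfrak m(B_x(r))\leq V_{N,K}(R)/V_{N,K}(r)$ for $0<r\leq R$, and the last inequality $V_{N,K}(R)/V_{N,K}(r)\leq e^{(N-1)R\sqrt{|K|}}(R/r)^N$ is an elementary computation comparing $\mathfrak s_K$ with the model $\mathfrak s_0(t)=t$ (using $\mathfrak s_K(t)\leq t\,e^{\sqrt{|K|}\,t}$ type bounds and $\mathfrak s_K(t)\geq$ a comparable lower bound on the relevant range, handled separately for $K\leq0$ and $K>0$). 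The main obstacle throughout is the cut-locus issue in the second paragraph; everything else is ODE comparison and integration. Since the statement is quoted directly from Ohta--Sturm \cite{O}, the cleanest route is to present the Riccati comparison and the volume integration in the Finsler notation of this paper and invoke \cite{O} for the distributional step.
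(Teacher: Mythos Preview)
The paper does not give its own proof of this lemma: it is stated as a result of Ohta and Sturm \cite{O} and simply cited. Your sketch is the standard Bochner--Riccati argument that underlies that reference, and it is correct in outline and spirit; you also correctly anticipate that the right thing to do is invoke \cite{O} for the distributional step across the cut locus.

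One minor correction worth flagging: from \eqref{v14-2.1} the density $\hat\sigma_p(r,y)$ behaves like $e^{-\tau(y)}r^{n-1}$ near $r=0$, so the initial asymptotics of $\varphi(r)=\partial_r\log\hat\sigma_p(r,y)$ is $(n-1)/r$, not $(N-1)/r$. This does not affect the comparison, since $n\leq N$ gives $\varphi(r)\leq\psi_K(r)$ for small $r$ anyway, and the Riccati inequality then propagates the bound forward; but the sentence ``with the same blow-up rate at $0$'' is not quite accurate and should be replaced by the observation that $\varphi$ is dominated by $\psi_K$ near the origin.
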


Moreover, we have an extension of the so-called "segment inequality" of Cheeger and  Colding \cite[Theorem 2.11]{CC}; see Zhao \cite[Theorem 3.1, Remark 3.2]{Z22} for its proof.

\begin{theorem}\label{CheegerColding}
Given $N\in [n,+\infty)$ and  $K\in \mathbb R$, let $(M,F,d\mathfrak{m} )$ be an $n$-dimensional complete  {\rm FMMM} with $\mathbf{Ric}_N\geq (N-1)K$.
Let $A_i$, $i=1,2$ be two bounded open subsets and let $W$ be an open subset such that for each two $x_i\in A_i$, a normal minimal geodesic $\gamma_{x_1x_2}$ from $x_1$ to $x_2$ is contained in $W$. Thus, for any non-negative integrable function $f$ on $W$, we have
\begin{align*}
&\ds\int_{A_1\times A_2}\left(\int^{d_F(x_1,x_2)}_0 f(\gamma_{x_1x_2}(s))ds\right) d\mathfrak{m}_{\times}\\
\leq& C(N,K,d)\left[ \mathfrak{m}(A_1)\diam(A_2)+\mathfrak{m}(A_2)\diam(A_1)\right]\ds\int_W f d\mathfrak{m},
\end{align*}
where $d\mathfrak{m}_{\times}$ is the product measure induced by $d\mathfrak{m}$, $d:=\sup_{x_1\in A_1,\, x_2\in A_2}d_F(x_1,x_2)$ and
\[
C(N,K, d)= \sup_{0<\frac12r\leq s\leq r\leq d}\left(\frac{\mathfrak{s}_{K}(r)}{\mathfrak{s}_{K}(s)}\right)^{N-1}\leq 2^{N-1} e^{(N-1)\frac{\sqrt{|K|}d}{2}}.
\]
\end{theorem}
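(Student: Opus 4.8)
The strategy is to transplant the classical proof of the Riemannian segment inequality of Cheeger and Colding \cite{CC} to the Finsler setting; the three Finslerian ingredients are the polar-coordinate expansion $d\mathfrak{m}=\hat{\sigma}_p(r,y)\,dr\,d\nu_p(y)$ from (\ref{volumeforminpolar}), the identity $\Delta r=\frac{\partial}{\partial r}\log\hat{\sigma}_p(r,y)$ from (\ref{Laplacainr}) together with the weighted Laplacian comparison $\Delta r\leq\frac{d}{dr}\log\mathfrak{s}^{N-1}_K(r)$ of Lemma \ref{Rho}, and the reversibility of $F$, which guarantees that the reverse of a minimal geodesic is again minimal with the same image and length. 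Everything below is performed modulo the null sets $\text{Cut}_x$, on whose complement polar coordinates define a diffeomorphism with Jacobian $\hat{\sigma}_x$.

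First I would split the inner integral at the midpoint of $\gamma_{x_1x_2}$: writing $\rho:=d_F(x_1,x_2)$,
\[
\ds\int_0^{\rho}f(\gamma_{x_1x_2}(s))\,ds=\ds\int_0^{\rho/2}f(\gamma_{x_1x_2}(s))\,ds+\ds\int_{\rho/2}^{\rho}f(\gamma_{x_1x_2}(s))\,ds.
\]
Swapping $A_1\leftrightarrow A_2$ and reversing $\gamma$ interchanges the two corresponding double integrals over $A_1\times A_2$, so it suffices to establish
\[
\ds\int_{A_1\times A_2}\left(\ds\int_0^{d_F(x_1,x_2)/2}f(\gamma_{x_1x_2}(s))\,ds\right)d\mathfrak{m}_{\times}\leq C(N,K,d)\,\mathfrak{m}(A_2)\,\diam(A_1)\ds\int_W f\,d\mathfrak{m};
\]
the other half is bounded by the same quantity with $\mathfrak{m}(A_1)\diam(A_2)$, and the two estimates add up to the claim.

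Fix $x_2\in A_2$ and pass to polar coordinates $(\rho,\omega)$ at $x_2$. For almost every $x_1\in A_1$ one has $x_1=\Exp_{x_2}(\rho\omega)$ with $\rho=d_F(x_1,x_2)\leq d$ and $\rho<i_\omega$, and by reversibility $\gamma_{x_1x_2}(s)=\Exp_{x_2}((\rho-s)\omega)$; the substitution $t=\rho-s$ then turns $\int_0^{\rho/2}f(\gamma_{x_1x_2}(s))\,ds$ into $\int_{\rho/2}^{\rho}f(\Exp_{x_2}(t\omega))\,dt$. Invoking (\ref{volumeforminpolar}), Tonelli's theorem, and interchanging the $\rho$- and $t$-integrations, I obtain
\[
\ds\int_{A_1}\ds\int_0^{d_F(x_1,x_2)/2}f(\gamma_{x_1x_2}(s))\,ds\,d\mathfrak{m}(x_1)=\ds\int_{S_{x_2}M}\ds\int_0^{d}f(\Exp_{x_2}(t\omega))\left(\ds\int_{I_{t,\omega}}\hat{\sigma}_{x_2}(\rho,\omega)\,d\rho\right)dt\,d\nu_{x_2}(\omega),
\]
where $I_{t,\omega}:=\{\rho\in[t,2t]:\Exp_{x_2}(\rho\omega)\in A_1\}$.

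It remains to control the inner $\rho$-integral. By (\ref{Laplacainr}) and Lemma \ref{Rho}, the function $\rho\mapsto\hat{\sigma}_{x_2}(\rho,\omega)/\mathfrak{s}^{N-1}_K(\rho)$ is non-increasing on $(0,i_\omega)$, so for every $\rho\in I_{t,\omega}$ — for which $t\leq\rho\leq 2t$ and $\rho\leq d$ — the very definition of $C(N,K,d)$ yields $\hat{\sigma}_{x_2}(\rho,\omega)\leq C(N,K,d)\,\hat{\sigma}_{x_2}(t,\omega)$; moreover, since the geodesic ray $\rho\mapsto\Exp_{x_2}(\rho\omega)$ is minimizing up to $i_\omega$, any two of its points lying in $A_1$ have $\rho$-values differing by exactly their distance in $M$, whence $|I_{t,\omega}|\leq\diam(A_1)$. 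Thus the inner integral is at most $C(N,K,d)\diam(A_1)\hat{\sigma}_{x_2}(t,\omega)$. Finally, if $I_{t,\omega}\neq\emptyset$ then $\Exp_{x_2}(t\omega)$ lies on a minimal geodesic joining $x_2\in A_2$ to a point of $A_1$, hence — by the hypothesis on $W$ and reversibility — belongs to $W$; consequently the polar map, restricted to $\{(t,\omega):I_{t,\omega}\neq\emptyset\}$, is an almost-everywhere injective parametrization of a subset of $W$ with Jacobian $\hat{\sigma}_{x_2}$, so $\int_{S_{x_2}M}\int_0^d f(\Exp_{x_2}(t\omega))\hat{\sigma}_{x_2}(t,\omega)\,dt\,d\nu_{x_2}(\omega)\leq\int_W f\,d\mathfrak{m}$. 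Combining these bounds and integrating over $x_2\in A_2$ gives the half-estimate; the inequality $C(N,K,d)\leq 2^{N-1}e^{(N-1)\sqrt{|K|}d/2}$ is an elementary computation with $\mathfrak{s}_K$ on $[r/2,r]$. The only delicate points — and where I would spend most of the care — are the measure-theoretic ones: discarding the cut loci, the a.e. injectivity of polar coordinates, and checking that the $t$-slices used above genuinely land inside $W$.
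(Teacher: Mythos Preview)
Your argument is correct and is precisely the Finsler transplant of the Cheeger--Colding proof: split at the midpoint, pass to polar coordinates at the far endpoint, use the monotonicity of $\hat\sigma_{x_2}(\rho,\omega)/\mathfrak{s}_K^{N-1}(\rho)$ coming from (\ref{Laplacainr}) and Lemma~\ref{Rho}, and bound the fiber by $\diam(A_1)$. The paper does not supply its own proof of this theorem but refers to Zhao \cite{Z22}, where exactly this template is carried out; your write-up matches that approach.
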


\subsection{Sobolev spaces and energy functionals}
Let $(M,F,d\mathfrak{m})$ be a compact  FMMM with or without boundary $\partial M$.  Define a   norm $\|\cdot\|_{\Ho}$ on $C^\infty(M)$ with respect to $d\mathfrak{m}$ by
\[
\|u\|_{\Ho}:=\|u\|_{L^2}+\|F^*(d u)\|_{L^2}=\left(\ds\int_M u^2 d\mathfrak{m}\right)^{\frac12}+\left(\ds\int_M F^{*2}(du) d\mathfrak{m}\right)^{\frac12}.
\]
Now set
\begin{align*}
H^1(M):=\overline{C^\infty(M)}^{\|\cdot\|_{\Ho}},\ \ \mathscr{X}:=\overline{C^\infty_0(M)}^{\|\cdot\|_{\Ho}},\ \ \mathscr{X}_0:=\left \{ u\in \mathscr{X} \ :  \ds\int_M u d\mathfrak{m} =0 \ \text{if} \ \partial M= \emptyset \right\}.
\end{align*}
Since $M$ is compact, both $H^1(M)$ and $\mathscr{X}$ are independent of the choices of $F$ and $d\mathfrak{m}$; in particular, $H^1(M)$ is the standard Sobolev space in the sense of Hebey \cite[Definition 2.1]{H}. However, when $M$ is not compact, $H^1(M)$ need not be even a vector space, see Krist\'aly and Rudas \cite{K-R}.

The {\it canonical energy functional} $E$ (i.e., Rayleigh quotient) on $\mathscr{X}\backslash\{0\}$ is defined as
\[
E(u):=
 \frac{\ds\int_MF^{*2}(du)d\mathfrak{m}}{\ds\int_Mu^2 d\mathfrak{m}}=\frac{\|F^*(du)\|_{L^2}^2}{\|u\|_{L^2}^2},\  \forall\,u\in \mathscr{X}\backslash\{0\}.\tag{2.10}\label{Enefunction}
\]
Given $u\in \mathscr{X}\backslash\{0\}$, for any $v\in \mathscr{X}$, we have
\begin{align*}
&DE(u) (v):=\langle v, \,DE(u)  \rangle:=\left.\frac{d}{dt}\right|_{t=0}E(u+tv)=-2\frac{\ds\int_Mv(\Delta u+E(u)\,  u)d\mathfrak{m}}{\ds\int_Mu^2 d\mathfrak{m}}.
\end{align*}
Hence, $DE(u)$ is a linear functional on $\mathscr{X}$. In particular,
 $DE(u)=0$   if and only if
\[
-\Delta u= E(u)u \text{ {in the weak sense}}.
\]

\begin{proposition}\label{srongDE} Let $(M,F,d\mathfrak{m})$ be a compact {\rm FMMM}. Then for any $u\in \mathscr{X}\backslash\{0\}$, $DE(u)$ is a bounded functional and  $u\mapsto DE(u)$
is continuous; hence,
$E\in C^1(\mathscr{X}\backslash\{0\})$.
\end{proposition}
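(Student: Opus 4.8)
The plan is to establish the two assertions — boundedness of $DE(u)$ for fixed $u$, and continuity of $u\mapsto DE(u)$ — and then conclude $E\in C^1(\mathscr{X}\backslash\{0\})$ by the standard criterion that a functional with continuous Gâteaux derivative is continuously Fréchet differentiable. The starting point is the explicit formula
\[
DE(u)(v)=-\frac{2}{\|u\|_{L^2}^2}\int_M v\,(\Delta u+E(u)u)\,d\mathfrak{m}
= \frac{2}{\|u\|_{L^2}^2}\left(\int_M \langle \nabla u, dv\rangle\, d\mathfrak{m} - E(u)\int_M uv\, d\mathfrak{m}\right),
\]
where the second equality uses the weak definition of $\Delta$ together with $\langle \nabla u,dv\rangle = dv(\nabla u)$; this rewriting is the key move, because it replaces the (possibly ill-behaved) term $\Delta u$ by the pairing $\langle \nabla u, dv\rangle$, which is controlled by $F^*(du)\,F(\nabla u) = F^{*2}(du)$ via the Legendre transform identity $F^*(\mathfrak{L}(X))=F(X)$ recalled in Section~\ref{section2}.

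First I would prove boundedness. For fixed $u\in\mathscr{X}\backslash\{0\}$, apply Cauchy--Schwarz in $L^2(M,d\mathfrak{m})$ and the pointwise bound $|\langle\nabla u,dv\rangle|=|dv(\nabla u)|\le F^*(dv)\,F(\nabla u)=F^*(dv)\,F^*(du)$ (the inequality $|\eta(X)|\le F^*(\eta)F(X)$ being immediate from the definition of $F^*$). This gives
\[
|DE(u)(v)|\le \frac{2}{\|u\|_{L^2}^2}\Big(\|F^*(du)\|_{L^2}\,\|F^*(dv)\|_{L^2}+E(u)\,\|u\|_{L^2}\,\|v\|_{L^2}\Big)\le C(u)\,\|v\|_{\Ho},
\]
with $C(u)=\frac{2}{\|u\|_{L^2}^2}\max\{\|F^*(du)\|_{L^2},\,E(u)\|u\|_{L^2}\}$, so $DE(u)$ is a bounded linear functional on $\mathscr{X}$.

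Next, continuity of $u\mapsto DE(u)$. Let $u_j\to u$ in $\mathscr{X}$ with $u\neq 0$; I want $\|DE(u_j)-DE(u)\|_{\mathscr{X}^*}\to 0$. The prefactors $\|u_j\|_{L^2}^{-2}\to\|u\|_{L^2}^{-2}$ and $E(u_j)\to E(u)$ converge (the latter since $\|F^*(du_j)\|_{L^2}\to\|F^*(du)\|_{L^2}$ by the reverse triangle inequality for the seminorm $\|F^*(d\cdot)\|_{L^2}$, which is itself continuous on $\mathscr{X}$). The only genuinely nonlinear piece is $v\mapsto\int_M\langle\nabla u_j,dv\rangle\,d\mathfrak{m}$, and the main obstacle is precisely that $u\mapsto\nabla u$ is \emph{not} linear; so I must show $\nabla u_j\to\nabla u$ in an $L^2$-type sense. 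I would do this via the uniformity constant $\Lambda_F$ (finite since $M$ is compact), using the standard ellipticity/monotonicity estimates for the Legendre transform on a Finsler manifold with bounded $\Lambda_F$: there is $c=c(\Lambda_F)>0$ with $\int_M F^{*2}\big(\mathfrak{L}(\nabla u_j)-\mathfrak{L}(\nabla u)\big)\,d\mathfrak{m}\le c\int_M\langle\nabla u_j-\nabla u,\,du_j-du\rangle\,d\mathfrak{m}$, and the right-hand side is bounded by $\|F^*(d(u_j-u))\|_{L^2}\big(\|F^*(du_j)\|_{L^2}+\|F^*(du)\|_{L^2}\big)\to 0$. Since $\mathfrak{L}(\nabla w)=dw$, this yields $F^*(du_j-du)\to 0$ in $L^2$, but more to the point, after passing to a subsequence with pointwise a.e.\ convergence and a dominated-convergence argument (domination by $2\Lambda_F\big(F^{*2}(du_j)+F^{*2}(du)\big)$ up to the scalar products, using the generalized dominated convergence theorem with the convergent $L^1$-majorants $F^{*2}(du_j)\to F^{*2}(du)$ in $L^1$), one gets $\big|\int_M(\langle\nabla u_j,dv\rangle-\langle\nabla u,dv\rangle)\,d\mathfrak{m}\big|\le \|F^*(dv)\|_{L^2}\,\|F^*(\mathfrak{L}(\nabla u_j)-\mathfrak{L}(\nabla u))\|_{L^2}\to 0$ uniformly in $v$ with $\|v\|_{\Ho}\le 1$ (since a full-sequence statement follows from the subsequence principle once the limit is identified). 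Combining the convergence of the prefactors, of $E(u_j)$, and of this nonlinear term gives $\sup_{\|v\|_{\Ho}\le 1}|DE(u_j)(v)-DE(u)(v)|\to 0$, i.e.\ continuity of $DE$. Finally, a bounded linear Gâteaux derivative that depends continuously on the base point is automatically the Fréchet derivative and makes $E$ of class $C^1$ on the open set $\mathscr{X}\backslash\{0\}$, completing the proof. I expect the Legendre-transform continuity estimate (controlling $\nabla u_j-\nabla u$ by data, using $\Lambda_F<\infty$) to be the only nontrivial ingredient; everything else is Cauchy--Schwarz and elementary limit arguments.
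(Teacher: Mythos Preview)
Your boundedness argument is correct and coincides with the paper's. For continuity, the paper invokes the Ge--Shen estimate
\[
\|F(\nabla u-\nabla v)\|_{L^2}\le C\,\|F^*(du-dv)\|_{L^2},
\]
which is precisely the Legendre-transform monotonicity you are aiming at; so your strategy matches the paper's. However, your execution has a genuine gap: you have the monotonicity inequality written for the wrong quantity.

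The object you must control is $\|F(\nabla u_j-\nabla u)\|_{L^2}$, because the only valid pointwise bound for the pairing is
\[
\big|\langle\nabla u_j-\nabla u,\,dv\rangle\big|\le F^*(dv)\,F(\nabla u_j-\nabla u).
\]
Your displayed inequality bounds instead $F^{*2}\big(\mathfrak{L}(\nabla u_j)-\mathfrak{L}(\nabla u)\big)=F^{*2}(du_j-du)$, which is already known to go to~$0$ and gives nothing new; and your final estimate
$\big|\int\langle\nabla u_j-\nabla u,dv\rangle\big|\le\|F^*(dv)\|_{L^2}\|F^*(du_j-du)\|_{L^2}$
is false in general, since $\mathfrak{L}$ is nonlinear and hence $\mathfrak{L}(\nabla u_j-\nabla u)\neq du_j-du$. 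The dominated-convergence detour does not rescue this: it cannot deliver the required \emph{uniformity in $v$} for operator-norm convergence.

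The fix is to write monotonicity in the correct direction. Uniform convexity (finite $\Lambda_F$) gives $c\,F^2(X-Y)\le\big(\mathfrak{L}(X)-\mathfrak{L}(Y)\big)(X-Y)$; with $X=\nabla u_j$, $Y=\nabla u$ this reads
\[
c\int_M F^2(\nabla u_j-\nabla u)\,d\mathfrak{m}\le\int_M\langle\nabla u_j-\nabla u,\,du_j-du\rangle\,d\mathfrak{m}\le \|F(\nabla u_j-\nabla u)\|_{L^2}\,\|F^*(du_j-du)\|_{L^2},
\]
whence $\|F(\nabla u_j-\nabla u)\|_{L^2}\le C\,\|F^*(du_j-du)\|_{L^2}\to 0$. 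This is exactly the paper's inequality (2.12), and with it your argument goes through.
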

\begin{proof}[Sketch of the proof]

Given $u\in \mathscr{X}\backslash\{0\}$, H\"older's inequality furnishes
\begin{align*}
\|DE(u)\|&=\sup_{v\neq0}\left|\frac{\langle v, \,DE(u) \rangle}{\|v\|_{\Ho}}\right|
\leq \,2\,\frac{\|F^*(dv)\|_{L^2}\|F(\nabla{u})\|_{L^2}+E(u)\cdot \|{u}\|_{L^2}\|v\|_{L^2}}{\|u\|^2_{L^2}\|v\|_{\Ho}}\\
\leq &\,2\,\frac{\max\{\sqrt{E(u)},\, E(u)\}}{\|u\|_{L^2}}.\tag{2.11}\label{v14-2.3}
\end{align*}
Moreover, due to Ge and Shen \cite[(11)]{GS}, a partition of unity argument yields  a  constant $C=C(M)>0$ depending only on $M$ such that
\[
\|F(\nabla u-\nabla v)\|_{L^2}\leq C\cdot \|F^*(d u-d v)\|_{L^2},\ \ \forall u,v\in H^1(M).\tag{2.12}\label{v14-2.4}
\]
Now a direct calculation together with (\ref{v14-2.3}) and (\ref{v14-2.4}) furnishes
\[
\lim_{k\rightarrow \infty}\|u_{k}-u\|_{\Ho}=0\Longrightarrow\lim_{k\rightarrow \infty}\|DE(u_k)-DE(u)\|=0,
\]
where $u_k\neq 0$.
Thus $DE(u)$ is continuous at $u$.
\end{proof}

Recall the following (P.-S.) condition.
\begin{proposition}[Ge and Shen \cite{GS}]\label{PScondition}Given any   $0<\delta<+\infty$,
if $\{u_k\}$ is a sequence in $\mathscr{X}\backslash\{0\}$ with
\[
 \|u_k\|_{L^2}=1,\ E(u_k)\leq \delta,\ \|DE(u_k)\|\rightarrow0,\tag{P.-S.}\label{P.-S.}
\]
then there exists a $($strongly$)$ convergent subsequence in $\mathscr{X}\backslash\{0\}$.
\end{proposition}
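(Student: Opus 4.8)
The plan is to run the classical deformation-free Palais--Smale compactness argument, in which the only genuinely Finslerian ingredient is the uniform convexity of $F^{*2}$ on the cotangent fibres (encoded in $\Lambda_F$, which is finite since $M$ is compact).

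First I would record the obvious a priori bound: from $\|u_k\|_{L^2}=1$ and $E(u_k)\le\delta$ one gets $\|F^*(du_k)\|_{L^2}^2=E(u_k)\le\delta$, hence $\|u_k\|_{\Ho}\le 1+\sqrt\delta$, so $\{u_k\}$ is bounded in $\mathscr{X}$. Since $M$ is compact, $\mathscr{X}$ coincides, as a topological vector space, with the standard $H^1$, and by $(2.2)$ it carries an equivalent Hilbertian norm built from the average Riemannian metric $\hat g$; it is therefore reflexive, and the inclusion $\mathscr{X}\hookrightarrow L^2(M)$ is compact by Rellich--Kondrachov. Passing to a subsequence I obtain $u_k\rightharpoonup u$ weakly in $\mathscr{X}$ and $u_k\to u$ strongly in $L^2(M)$; in particular $\|u\|_{L^2}=1$, so $u\in\mathscr{X}\setminus\{0\}$, and it remains only to upgrade the weak convergence to strong convergence in $\mathscr{X}$.

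Next I would test the identity (valid by the weak definition of $\Delta$ and $\|u_k\|_{L^2}=1$)
\[
DE(u_k)(v)=2\int_M dv(\nabla u_k)\,d\mathfrak{m}-2E(u_k)\int_M v\,u_k\,d\mathfrak{m}
\]
with $v=u_k-u\in\mathscr{X}$. Here $|DE(u_k)(u_k-u)|\le\|DE(u_k)\|\,\|u_k-u\|_{\Ho}\to 0$, since $\|u_k-u\|_{\Ho}$ is bounded and $\|DE(u_k)\|\to 0$, while $E(u_k)\le\delta$ and $\int_M(u_k-u)u_k\,d\mathfrak{m}=1-\langle u,u_k\rangle_{L^2}\to 0$ by $L^2$-convergence; hence $\int_M d(u_k-u)(\nabla u_k)\,d\mathfrak{m}\to 0$. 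On the other hand $v\mapsto\int_M dv(\nabla u)\,d\mathfrak{m}$ is a bounded linear functional on $\mathscr{X}$ (by H\"older it is dominated by $\|F(\nabla u)\|_{L^2}\,\|v\|_{\Ho}$), so weak convergence also yields $\int_M d(u_k-u)(\nabla u)\,d\mathfrak{m}\to 0$. Subtracting the two limits,
\[
\int_M\big\langle\nabla u_k-\nabla u,\ du_k-du\big\rangle\,d\mathfrak{m}\ \longrightarrow\ 0 .
\]

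Finally I would invoke the uniform convexity of $F^{*2}$: writing $\mathfrak{L}^{-1}$ for the fibrewise gradient of $\tfrac12F^{*2}$, whose fibrewise Hessian is $(g^{*ij})$, and using the bound $g^{*}_{\xi}(\omega,\omega):=g^{*ij}(\xi)\omega_i\omega_j\ge\Lambda_F^{-1}F^{*2}(\omega)$ (the definition of the uniformity constant applied to $F^{*}$), integration along the segment from $du(x)$ to $du_k(x)$ gives, for a.e. $x\in M$, the pointwise monotonicity estimate
\[
\big\langle\nabla u_k-\nabla u,\ du_k-du\big\rangle(x)\ \ge\ \Lambda_F^{-1}\,F^{*2}\big(du_k(x)-du(x)\big).
\]
Integrating over $M$ and combining with the previous step gives $\|F^*(du_k-du)\|_{L^2}\to 0$; together with $\|u_k-u\|_{L^2}\to 0$ this is precisely $\|u_k-u\|_{\Ho}\to 0$, so the subsequence converges strongly in $\mathscr{X}\setminus\{0\}$, as claimed. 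I expect the main obstacle to be exactly this last step: $F^{*2}$ is only $C^1$ (not $C^2$) across the zero section, so the Taylor/monotonicity computation must be justified with care on the set where $du_k$ or $du$ vanishes — e.g. by approximating $\tfrac12F^{*2}$ off $0$ by smooth uniformly convex functions, or by quoting a ready-made monotonicity inequality for the Legendre transform (as in Ge--Shen \cite{GS} or Ohta--Sturm \cite{O}); everything else is routine Hilbert-space weak compactness, Rellich--Kondrachov, and $L^2$-convergence bookkeeping.
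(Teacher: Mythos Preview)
The paper does not give its own proof of this proposition: it is stated as a quotation from Ge and Shen \cite{GS} and used as a black box. So there is no in-paper argument to compare against; your proof is supplying what the authors simply import.

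That said, your argument is correct and is essentially the standard Palais--Smale verification for this functional, and it is the approach taken in \cite{GS}. The steps---boundedness in $\mathscr{X}$ from $\|u_k\|_{L^2}=1$ and $E(u_k)\le\delta$, weak compactness via the equivalent Hilbertian $H^1$-structure coming from $\hat g$, Rellich--Kondrachov to get $L^2$-strong convergence (hence $\|u\|_{L^2}=1$), testing $DE(u_k)$ against $u_k-u$, and then the monotonicity of the duality map $\mathfrak{L}^{-1}$ to upgrade to strong $H^1$-convergence---are exactly the standard ones. Your caveat about the failure of $C^2$-smoothness of $\tfrac12 F^{*2}$ at the zero section is well placed; in practice one handles it by noting that if $du(x)=0$ then $\nabla u(x)=0$ and the pointwise inequality becomes the identity $\langle\nabla u_k,du_k\rangle=F^{*2}(du_k)$, while if both differentials are nonzero the segment argument goes through (the set of $x$ where the segment hits $0$ contributes nothing). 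Alternatively, one can quote the ready-made inequality in \cite{GS} (their estimate leading to (\ref{v14-2.4}) in the present paper), which is precisely the monotonicity/uniform-convexity bound you need.
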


\begin{definition}\it
Given any eigenvalue  $\lambda\geq 0$, the {\rm eigenset} $\mathfrak{K}_{\lambda}$ corresponding to $\lambda$ is defined as
\[
\mathfrak{K}_{\lambda}:=\{u\in \mathscr{X}:\,  \|u\|_{L^2}=1,\, {E}(u)=\lambda, \, D {E}(u)=0\}.\tag{2.13}\label{4.10101}
\]
\end{definition}

\begin{lemma}\label{funddomain}
 $\mathfrak{K}_{\lambda}$  is  compact.
\end{lemma}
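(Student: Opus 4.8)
The plan is to prove that $\mathfrak{K}_{\lambda}$ is sequentially compact in the metric space $(\mathscr{X},\|\cdot\|_{\Ho})$; since $\mathscr{X}$ is a Banach space this is equivalent to compactness. So I would fix an arbitrary sequence $\{u_k\}\subset\mathfrak{K}_{\lambda}$ and produce a subsequence converging, in $\|\cdot\|_{\Ho}$, to an element of $\mathfrak{K}_{\lambda}$.

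First I would observe that every $u_k$ automatically satisfies the hypotheses of the Palais--Smale condition of Proposition \ref{PScondition}: by the definition (\ref{4.10101}) of the eigenset we have $\|u_k\|_{L^2}=1$, $E(u_k)=\lambda\le\lambda=:\delta<+\infty$, and $\|DE(u_k)\|=0\to 0$. Hence Proposition \ref{PScondition} yields a subsequence $\{u_{k_j}\}$ converging strongly (in $\|\cdot\|_{\Ho}$) to some $u\in\mathscr{X}\setminus\{0\}$.

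Next I would check that this limit $u$ lies in $\mathfrak{K}_{\lambda}$. Since $\|\cdot\|_{L^2}\le\|\cdot\|_{\Ho}$, the $L^2$-norm is continuous along the convergence, so $\|u\|_{L^2}=\lim_j\|u_{k_j}\|_{L^2}=1$; in particular $u\ne 0$, so the regularity statements of Proposition \ref{srongDE} apply at $u$. The continuity of $E$ on $\mathscr{X}\setminus\{0\}$ then gives $E(u)=\lim_j E(u_{k_j})=\lambda$, and the continuity of $v\mapsto DE(v)$ gives $\|DE(u)\|=\lim_j\|DE(u_{k_j})\|=0$, i.e. $DE(u)=0$. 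Together with (\ref{4.10101}) these three facts show $u\in\mathfrak{K}_{\lambda}$, which completes the argument.

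The only point needing a little care — the ``main obstacle'', such as it is — is that Proposition \ref{srongDE} ($E\in C^1$, continuity of $DE$) is available only on $\mathscr{X}\setminus\{0\}$; this is exactly why one must first record that the strong limit $u$ is nonzero, which in turn is guaranteed by the normalization $\|u_{k_j}\|_{L^2}=1$ surviving the passage to the limit. Apart from this, the proof is a routine combination of the Palais--Smale condition and the $C^1$-smoothness of the Rayleigh quotient established above.
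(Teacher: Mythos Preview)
Your proof is correct and follows exactly the same approach as the paper: apply the Palais--Smale condition (Proposition~\ref{PScondition}) to extract a strongly convergent subsequence, then use the $C^1$-regularity of $E$ from Proposition~\ref{srongDE} to check that the limit lies in $\mathfrak{K}_\lambda$. The only cosmetic point is that when $\lambda=0$ you should pick some $\delta>0$ (e.g.\ $\delta=\lambda+1$) rather than $\delta=\lambda$ when invoking Proposition~\ref{PScondition}, since that proposition requires $0<\delta<+\infty$.
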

\begin{proof}  Given a sequence $\{u_k\}\subset \mathfrak{K}_{\lambda}$, the (\ref{P.-S.}) condition yields that a subsequence $\{u_{k_l}\}$ strongly converge to $u\in \mathscr{X}$. Now Proposition  \ref{srongDE} yields that
$\|u\|_{L^2}=1,\ E(u)= \lambda,\  DE(u)=0$, i.e., $ u\in \mathfrak{K}_{\lambda}$.
Hence, $\mathfrak{K}_{\lambda}$ is compact.
\end{proof}

In the sequel,  $X$ is called   a {\it Banach-Finsler manifold} if $X$ is a Finsler manifold in the sense of Palais (cf. Palais \cite[Definition 2.10, Definition 3.5]{PS} and Struwe \cite[p. 77]{S}); see also Definition \ref{PSFinslerde} (see Appendix \ref{B-Fsec}).

Now let $T\mathscr{X}$ denote the tangent bundle of $\mathscr{X}$ and let $\|\cdot\|$ be the trivial metric structure on $T\mathscr{X}$ induced by $\|\cdot\|_{\Ho}$. Thus,
$(\mathscr{X},\|\cdot\|)$ is a $C^\infty$-Banach-Finsler manifold. Let us introduce the set
\[
{\mathcal {S}}:=\{u\in \mathscr{X}:\,\|u\|_{L^2}=1\}.
\]
In the sequel, the set $\mathcal {S}$ will be our
main object of study rather than $\mathscr{X}$ or $\mathscr{X}_0$. First,
we have the following important result, whose proof will be given in Appendix \ref{B-Fsec}.
\begin{proposition}\label{compS}
 $(\mathcal {S},\|\cdot\|\,|_{T\mathcal {S}})$ is a complete $C^\infty$-Banach-Finsler  manifold and an {\rm AR} $($i.e., absolute retract$)$. Moreover, $i^*E$ is a $C^1$-function on $\mathcal {S}$, where $i:\mathcal {S}\hookrightarrow \mathscr{X}$ is the inclusion.
\end{proposition}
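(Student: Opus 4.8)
The plan is to establish the three assertions of Proposition \ref{compS} in turn: (i) $\mathcal{S}$ is a complete $C^\infty$-Banach--Finsler submanifold of $\mathscr{X}$; (ii) $\mathcal{S}$ is an absolute retract; (iii) $i^*E$ is $C^1$.

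\textbf{Step 1: the submanifold structure.} Consider the map $\Phi:\mathscr{X}\to\mathbb{R}$, $\Phi(u)=\|u\|_{L^2}^2=\int_M u^2\,d\mathfrak{m}$. This is a $C^\infty$ function on the Banach space $\mathscr{X}$ (it is a bounded quadratic form, hence smooth), with derivative $D\Phi(u)(v)=2\int_M uv\,d\mathfrak{m}$. For $u\in\mathcal{S}=\Phi^{-1}(1)$ we have $D\Phi(u)(u)=2\neq 0$, so $D\Phi(u)$ is a nonzero, hence surjective, bounded linear functional; its kernel $T_u\mathcal{S}=\{v\in\mathscr{X}:\int_M uv\,d\mathfrak{m}=0\}$ is a closed hyperplane, thus split (complemented by $\mathbb{R}u$). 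By the implicit function theorem in Banach spaces, $1$ is a regular value of $\Phi$ and $\mathcal{S}$ is a $C^\infty$-Banach submanifold of codimension one. The trivial Finsler structure $\|\cdot\|$ on $T\mathscr{X}$ (induced by $\|\cdot\|_{\Ho}$) restricts to a Finsler structure on $T\mathcal{S}$ in the sense of Palais: on each chart the norm varies continuously (indeed it is constant), and the local equivalence/regularity conditions of \cite[Definition 2.10]{PS} are trivially met. Completeness: $\mathcal{S}$ is a closed subset of the Banach space $\mathscr{X}$ (preimage of the closed set $\{1\}$ under the continuous map $\Phi$), and the Finsler metric on $\mathcal{S}$ induces a distance comparable, on small balls, to the ambient norm $\|\cdot\|_{\Ho}$; hence Cauchy sequences for the Finsler distance are Cauchy in $\mathscr{X}$ and converge to a limit which lies in $\mathcal{S}$ by closedness. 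So $(\mathcal{S},\|\cdot\||_{T\mathcal{S}})$ is a complete $C^\infty$-Banach--Finsler manifold.

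\textbf{Step 2: absolute retract.} The radial retraction $\rho:\mathscr{X}\setminus\{0\}\to\mathcal{S}$, $\rho(u)=u/\|u\|_{L^2}$, is well defined and continuous (the $L^2$-norm is a continuous, in fact $C^\infty$ away from $0$, function on $\mathscr{X}$, and it does not vanish on $\mathscr{X}\setminus\{0\}$ since $\|\cdot\|_{L^2}\le\|\cdot\|_{\Ho}$), and $\rho|_{\mathcal{S}}=\mathrm{id}$, so $\mathcal{S}$ is a retract of the open set $\mathscr{X}\setminus\{0\}$. Since $\mathscr{X}$ is a Banach space, it is an absolute retract (by Dugundji's extension theorem, every convex subset of a locally convex space is an AR), and an open subset of an ANR is an ANR; moreover $\mathscr{X}\setminus\{0\}$ is contractible (it deformation retracts onto $\mathcal{S}$, or directly: being an infinite-dimensional normed space minus a point, it is even homeomorphic to $\mathscr{X}$). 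A contractible ANR is an AR, so $\mathscr{X}\setminus\{0\}$ is an AR; a retract of an AR is an AR; hence $\mathcal{S}$ is an AR. (Alternatively: $\mathcal{S}$ is a $C^\infty$-manifold modelled on the Banach space $T_u\mathcal{S}$, hence metrizable and locally an AR, i.e. an ANR, and it is contractible via the map $(t,u)\mapsto\rho((1-t)u_0+tu)$ for a fixed $u_0\in\mathcal{S}$ and $u\in\mathcal{S}$, the segment $(1-t)u_0+tu$ never passing through $0$ when $u_0$ and $u$ are $L^2$-unit vectors only if... ) — to avoid this subtlety I prefer the retract-of-$(\mathscr{X}\setminus\{0\})$ argument above, which is clean.

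\textbf{Step 3: regularity of $i^*E$.} By Proposition \ref{srongDE}, $E\in C^1(\mathscr{X}\setminus\{0\})$, and $\mathcal{S}\subset\mathscr{X}\setminus\{0\}$; the inclusion $i:\mathcal{S}\hookrightarrow\mathscr{X}$ is $C^\infty$ by Step 1, so $i^*E=E\circ i$ is a composition of $C^1$ maps, hence $C^1$ on $\mathcal{S}$, with $D(i^*E)(u)=DE(u)|_{T_u\mathcal{S}}$.

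\textbf{Main obstacle.} The genuinely delicate point is Step 2, specifically verifying carefully that $\mathcal{S}$ is an AR (not merely an ANR) — one must either invoke the retraction onto $\mathscr{X}\setminus\{0\}$ together with the fact that the latter is an AR, or argue contractibility of $\mathcal{S}$ directly, which requires a homotopy avoiding the origin and is most safely done through $\rho$ rather than straight-line homotopies in $\mathscr{X}$. The submanifold and $C^1$ claims (Steps 1 and 3) are routine given the implicit function theorem and Proposition \ref{srongDE}; the only care needed there is checking that the Palais--Finsler axioms hold for the restricted trivial metric and that completeness transfers from the closedness of $\mathcal{S}$ in $\mathscr{X}$.
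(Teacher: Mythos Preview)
Your Steps 1 and 3 match the paper's argument essentially verbatim: the paper also considers $h(u)=\|u\|_{L^2}^2-1$, checks $Dh\neq 0$ on $\mathcal{S}$, and then invokes Palais' results (Lemma \ref{Banachco}/(i)(ii)) for the submanifold/completeness claims and Proposition \ref{srongDE} for the $C^1$ regularity of $i^*E$.

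Your Step 2, however, takes a genuinely different route. The paper introduces the auxiliary Hilbert inner product $(\cdot,\cdot)$ coming from the average Riemannian metric $\hat g$, observes that the map $u\mapsto u/\|u\|_1$ gives a homeomorphism from $\mathcal{S}$ onto the unit sphere $\mathscr{S}$ of the Hilbert space $(\mathscr{X},(\cdot,\cdot))$, and then invokes Kakutani's theorem that the unit sphere of an infinite-dimensional Hilbert space is contractible; contractible ANR $\Rightarrow$ AR. You instead retract $\mathscr{X}\setminus\{0\}$ onto $\mathcal{S}$ via $u\mapsto u/\|u\|_{L^2}$ and use that $\mathscr{X}\setminus\{0\}$ is an AR (open in an AR, hence ANR, and contractible by the Klee/Bessaga--Pe\l czy\'nski fact that an infinite-dimensional normed space minus a point is homeomorphic to the whole space), then conclude since a retract of an AR is an AR. Both arguments are valid and both ultimately lean on infinite-dimensionality; yours avoids the auxiliary Riemannian inner product but trades Kakutani's elementary construction for the slightly heavier deleting-a-point theorem. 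One small correction: your justification ``$\|\cdot\|_{L^2}$ does not vanish on $\mathscr{X}\setminus\{0\}$ since $\|\cdot\|_{L^2}\le\|\cdot\|_{\Ho}$'' has the implication backwards --- the correct reason is simply that $\|u\|_{L^2}=0$ forces $u=0$ a.e., hence $u=0$ in $H^1$. You were also right to abandon the straight-line homotopy $(t,u)\mapsto\rho((1-t)u_0+tu)$: it fails at $u=-u_0$.
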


The following lemma is based on the homogeneity of $E$.
\begin{lemma}\label{criciequation}
 A function $u\in \mathcal {S}$ is a critical point of $E$ if and only if $u$ is a critical point of $i^*E$, where $i:\mathcal {S}\hookrightarrow \mathscr{X}$ is the inclusion. In particular, either $u=\pm(\mathfrak{m}(M))^{-\frac12}$ or $u\in \mathscr{X}_0\backslash\{0\}$.
\end{lemma}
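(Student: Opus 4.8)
The plan is to exploit the homogeneity of $E$ under scaling, namely that $E(tu) = E(u)$ for all $t \neq 0$, together with the standard Lagrange-multiplier picture for constrained critical points on the unit $L^2$-sphere $\mathcal{S}$. First I would record the two directions of the equivalence. For the easy direction, suppose $u \in \mathcal{S}$ is a critical point of $E$ viewed as a functional on $\mathscr{X}\backslash\{0\}$, i.e. $DE(u) = 0$ as a bounded functional on all of $\mathscr{X}$. Then its restriction to the tangent space $T_u\mathcal{S}$ vanishes a fortiori, and since by Proposition~\ref{compS} the function $i^*E$ is $C^1$ on $\mathcal{S}$ with $D(i^*E)(u) = DE(u)|_{T_u\mathcal{S}}$, we conclude $u$ is a critical point of $i^*E$.

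For the converse — the substantive direction — suppose $u \in \mathcal{S}$ is a critical point of $i^*E$, so $DE(u)(v) = 0$ for every $v \in T_u\mathcal{S}$. The tangent space to $\mathcal{S} = \{\|u\|_{L^2} = 1\}$ at $u$ is $T_u\mathcal{S} = \{v \in \mathscr{X} : \int_M uv\, d\mathfrak{m} = 0\}$, i.e. the $L^2$-orthogonal complement (inside $\mathscr{X}$) of $u$. To upgrade to $DE(u) = 0$ on all of $\mathscr{X}$, I would decompose an arbitrary $v \in \mathscr{X}$ as $v = w + \big(\int_M uv\, d\mathfrak{m}\big)u$ with $w \in T_u\mathcal{S}$, so that $DE(u)(v) = DE(u)(w) + \big(\int_M uv\, d\mathfrak{m}\big)DE(u)(u) = \big(\int_M uv\, d\mathfrak{m}\big)DE(u)(u)$. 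It therefore suffices to show $DE(u)(u) = 0$; but this is exactly the infinitesimal consequence of homogeneity: from $E(u + tu) = E((1+t)u) = E(u)$ for $t$ near $0$ we get $DE(u)(u) = \frac{d}{dt}\big|_{t=0} E(u+tu) = 0$. Hence $DE(u)$ vanishes on all of $\mathscr{X}$, i.e. $u$ is a critical point of $E$.

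For the final assertion, recall that $DE(u) = 0$ is equivalent, via the formula for $DE$ recorded just before Proposition~\ref{srongDE}, to $-\Delta u = E(u)u$ in the weak sense, i.e. $\int_M \langle \nabla u, dv\rangle\, d\mathfrak{m} = E(u)\int_M uv\, d\mathfrak{m}$ for all $v \in \mathscr{X}$ (in the closed case, this is the identity tested against $v \in C^\infty(M)$ after noting constants are admissible). If $\partial M \neq \emptyset$ there is nothing further to say since $\mathscr{X} = \mathscr{X}_0$ by definition. If $\partial M = \emptyset$, test the weak equation against the constant function $v \equiv 1$: the left side is $0$, so $E(u)\int_M u\, d\mathfrak{m} = 0$. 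Thus either $\int_M u\, d\mathfrak{m} = 0$, which places $u \in \mathscr{X}_0$ (and $u \neq 0$ since $\|u\|_{L^2} = 1$), or $E(u) = 0$; in the latter case $\int_M F^{*2}(du)\, d\mathfrak{m} = 0$ forces $du \equiv 0$, so $u$ is constant, and the normalization $\|u\|_{L^2} = 1$ gives $u = \pm(\mathfrak{m}(M))^{-1/2}$.

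The only point requiring a little care — and the place I would be most careful — is the identification $T_u\mathcal{S} = \{v : \int_M uv\, d\mathfrak{m} = 0\}$ and the fact that the constraint functional $u \mapsto \|u\|_{L^2}^2$ is a submersion at every point of $\mathcal{S}$ (equivalently, $u \notin T_u\mathcal{S}$, which holds since $\int_M u\cdot u\, d\mathfrak{m} = 1 \neq 0$), so that the decomposition $v = w + (\int_M uv)u$ is legitimate with $w \in \mathscr{X}$; this is precisely the content of Proposition~\ref{compS}, which guarantees $\mathcal{S}$ is a genuine $C^\infty$-Banach-Finsler submanifold and that $i^*E$ is $C^1$ with differential the restriction of $DE$. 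Everything else is the homogeneity identity $DE(u)(u) = 0$ and a single integration by parts against constants.
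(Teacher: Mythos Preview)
Your proof is correct and follows precisely the route the paper indicates: the paper states the lemma without proof, remarking only that it ``is based on the homogeneity of $E$,'' and your argument is exactly the standard homogeneity-plus-Lagrange-multiplier computation that this hint points to. The decomposition $v = w + (\int_M uv\,d\mathfrak{m})\,u$ together with $DE(u)(u)=0$ from $E((1+t)u)=E(u)$ is the intended mechanism, and your treatment of the final dichotomy via testing against $v\equiv 1$ is the natural way to extract $u\in\mathscr{X}_0$ or $u$ constant.
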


\begin{remark}\rm
 Ge and  Shen \cite{GS} proved that if $DE(u)=0$, then $u\in C^{1,\alpha}(M)$ for some $0<\alpha<1$.
\end{remark}

According to Lemma \ref{criciequation}, there is no difference between $E$ and $i^*E$ from the point of view of critical points in $\mathcal {S}$; so by abuse of notation, we will  use $E$ to denote $i^*E$ in the rest of paper.

A standard argument concerning pseudo-gradient vector fields together with Propositions \ref{PScondition} and \ref{compS}  yields the following  result; we omit its proof since it is the same as  Struwe \cite[Chapter II, Theorem 3.11]{S}.
\begin{lemma}[Homotopy  Lemma]\label{maintheorem2-0}Let $(M,F,d\mathfrak{m})$ be a compact {\rm FMMM}.
Let $\lambda\geq 0$, $\epsilon>0$ and  let $O\subset \mathcal {S}$ be any open neighborhood of the eigenset
$\mathfrak{K}_{\lambda}$ $($see $(\ref{4.10101})).$ Then there exist a number $\epsilon_0\in (0,\epsilon)$ and a continuous $1$-parameter family
of homeomorphisms $ \Phi(\cdot,t)$ of $\mathcal {S}$, $0 \leq t < +\infty$, with the following properties$:$

\begin{itemize}
	\item[(i)] $\Phi(u,t) = u$, if one of the following conditions hold
	\[
	\text{\rm (1)}\, t=0;\ \ \text{\rm (2)}\, DE(u)=0;\ \ \text{\rm (3)}\, |E(u) - \lambda| \geq \epsilon;
	\]
	\item[(ii)] $t\mapsto E(\Phi(u, t))$ is non-increasing  for every $u\in \mathcal {S};$
	\item[(iii)] $\Phi(E_{\lambda+\epsilon_0}\backslash O,1)\subset E_{\lambda-\epsilon_0}$, and $\Phi(E_{\lambda+\epsilon_0}, 1) \subset E_{\lambda-\epsilon_0}\cup O$, where   $E_\delta:=\{u\in \mathcal {S}:\, E(u)<\delta\}$, $\delta>0;$
	\item[(iv)] $\Phi(-u,t)=-\Phi(u,t)$ for every $t\geq 0$ and $u\in \mathcal {S};$
	\item[(v)] $\Phi:\mathcal {S}\times[0,\infty)\rightarrow \mathcal {S}$ has the semi-group property, i.e.,
	$
	\Phi(\cdot,s)\circ \Phi(\cdot,t)=\Phi(\cdot,s+t)$ for every $s,t\geq 0.$
\end{itemize}
\end{lemma}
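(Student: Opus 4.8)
The plan is to obtain $\Phi$ as the global flow of a localized, odd, bounded and locally Lipschitz pseudo-gradient-type vector field for $E$ on $\mathcal{S}$. Since $(\mathcal{S},\|\cdot\|\,|_{T\mathcal{S}})$ is a complete $C^{1}$-Banach-Finsler manifold (Proposition~\ref{compS}), $E$ is a $C^{1}$, even function on it, and $E$ satisfies the (P.-S.) condition (Proposition~\ref{PScondition}), this is precisely the setting of the abstract deformation lemma of Struwe~\cite[Chapter~II, Theorem~3.11]{S}; accordingly, I only indicate the construction and point out where the genuine work lies.

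First I would extract from compactness the following quantitative gap: there exist $\delta_{0}>0$ and $\epsilon_{0}\in(0,\epsilon)$ (the latter to be shrunk below) such that $\|DE(u)\|\geq\delta_{0}$ whenever $|E(u)-\lambda|\leq\epsilon_{0}$ and $u\notin O$. Were this false, one would find $u_{k}\in\mathcal{S}\setminus O$ with $\|u_{k}\|_{L^{2}}=1$, $E(u_{k})\to\lambda$ and $\|DE(u_{k})\|\to0$; by Proposition~\ref{PScondition} a subsequence converges strongly to some $u$ with $\|u\|_{L^{2}}=1$, and Proposition~\ref{srongDE} (continuity of $E$ and of $DE$) gives $E(u)=\lambda$, $DE(u)=0$, i.e. $u\in\mathfrak{K}_{\lambda}\subset O$; since $\mathcal{S}\setminus O$ is closed, this contradicts $u_{k}\notin O$. (If $\mathfrak{K}_{\lambda}=\emptyset$ one takes $O=\emptyset$ and the gap holds on all of $\{|E-\lambda|\leq\epsilon_{0}\}$.)

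Next I would build the vector field. On the open set $\{u\in\mathcal{S}:DE(u)\neq0\}$, Palais' partition-of-unity construction on Banach-Finsler manifolds~\cite{PS} provides a locally Lipschitz pseudo-gradient $\mathcal{V}$, with $\|\mathcal{V}(u)\|\leq2\|DE(u)\|$ and $\langle DE(u),\mathcal{V}(u)\rangle\geq\|DE(u)\|^{2}$. Because $0\notin\mathcal{S}$, the antipodal map is a fixed-point-free $C^{\infty}$-involution of $\mathcal{S}$ under which $E$ is invariant and $DE$ is anti-invariant; replacing $\mathcal{V}$ by its odd part with respect to this involution keeps it a locally Lipschitz pseudo-gradient and makes it odd. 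I would then multiply by three locally Lipschitz cut-offs: $\chi=\chi(E(u)-\lambda)$, valued in $[0,1]$, equal to $1$ on $[-\epsilon_{0},\epsilon_{0}]$ and supported in $(-\epsilon,\epsilon)$; $\psi=\psi(\|DE(u)\|)$, bounded, vanishing near $0$ and bounded below on $[\delta_{0},\infty)$, to normalize; and $\rho$, valued in $[0,1]$, vanishing on a neighborhood $O_{0}$ of $\mathfrak{K}_{\lambda}$ with $\overline{O_{0}}\subset O$ and equal to $1$ on $\mathcal{S}\setminus O$. Setting $W:=-\,\rho\,\chi\,\psi\,\mathcal{V}$ on $\{DE\neq0\}$ and $W:=0$ on $\{DE=0\}$ (the extension is locally Lipschitz since $\psi$ vanishes near $0$), one gets an odd, locally Lipschitz, globally bounded field with $\langle DE(u),W(u)\rangle\leq0$ everywhere and, by the gap above, $\langle DE(u),W(u)\rangle\leq-c$ for a fixed $c=c(\delta_{0},\psi)>0$ on $\{|E-\lambda|\leq\epsilon_{0}\}\setminus O$.

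Finally, the flow $\partial_{t}\Phi(u,t)=W(\Phi(u,t))$, $\Phi(u,0)=u$, exists and is unique for all $t\geq0$ by boundedness and local Lipschitz continuity of $W$ together with completeness of $\mathcal{S}$; each $\Phi(\cdot,t)$ is a homeomorphism depending continuously on $(u,t)$, which yields (v), while (iv) follows from the oddness of $W$, (i) from the vanishing of $W$ at $t=0$ and on $\{DE=0\}\cup\{|E-\lambda|\geq\epsilon\}$, and (ii) from $\tfrac{d}{dt}E(\Phi(u,t))=\langle DE(\Phi),W(\Phi)\rangle\leq0$. Property (iii) is then the travel-time argument: on $\{|E-\lambda|\leq\epsilon_{0}\}\setminus O$ the energy drops at rate $\geq c$, while on $O_{0}$ the flow is frozen, so — after shrinking $\epsilon_{0}$ so that it is small relative to $\delta_{0}$, to $c$, and to $\operatorname{dist}(O_{0},\mathcal{S}\setminus O)$ — a trajectory issuing from $E_{\lambda+\epsilon_{0}}$ and staying off $O$ reaches $E_{\lambda-\epsilon_{0}}$ by time $1$, and every trajectory issuing from $E_{\lambda+\epsilon_{0}}$ is, at time $1$, either in $E_{\lambda-\epsilon_{0}}$ or trapped in $O_{0}\subset O$. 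The only steps that are not purely formal are (a) the construction of a locally Lipschitz and (after symmetrization) odd pseudo-gradient field on the infinite-dimensional $C^{1}$-Banach-Finsler manifold $\mathcal{S}$ — steepest descent being unavailable, since $E$ is merely $C^{1}$ and $\mathscr{X}$ is not Hilbertian — and (b) the bookkeeping among $\epsilon_{0}$, $\delta_{0}$ and the nested neighborhoods $O_{0}\subset O$ required for (iii); both are carried out in detail in Struwe~\cite[Chapter~II, Theorem~3.11]{S}, so I content myself with this sketch.
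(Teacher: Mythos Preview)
Your sketch is correct and follows exactly the approach the paper indicates: the paper omits the proof entirely, stating only that ``a standard argument concerning pseudo-gradient vector fields together with Propositions~\ref{PScondition} and~\ref{compS} yields the following result; we omit its proof since it is the same as Struwe~\cite[Chapter~II, Theorem~3.11]{S}.'' Your outline faithfully unpacks that reference, correctly isolating the (P.-S.)-based gap estimate, the odd locally Lipschitz pseudo-gradient on the complete Banach--Finsler manifold $\mathcal{S}$, and the travel-time argument for~(iii).
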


\section{dimension pairs and eigenvalues}\label{section3}

\subsection{Spectrum of a dimension pair}
Since the Laplacian of a non-Riemannian Finsler manifold is nonlinear (cf. \cite{GS,Sh1}),
it is impossible to define the higher order eigenvalues by the traditional way.
Inspired by Gromov \cite{G}, we carry out a systematic study of eigenvalues by dimension-like functions. In addition, our results complement in several aspects those obtained in Riemannian geometry.

\medskip

\noindent\textbf{Notations.} We will use the following notations throughout the paper:

\begin{itemize}
		\item[(1)] $\mathbb{R}^+:=[0,\infty)$, $\mathbb{N}:=\{0,1,2,\ldots\}$ and $\mathbb{N}^+:=\{1,2,\ldots\}$;
		\item[(2)] $\Dim_C(\cdot)$ denotes the Lebesgue covering dimension (cf. Hurwicz and Wallman \cite{HW});
		\item[(3)] A  homeomorphism $h:\mathcal {S}\rightarrow \mathcal {S}$ is called an  APH (i.e., antipode preserving homeomorphism) whenever $h$ satisfies $h(-u)=-h(u)$ for all $u\in \mathcal {S}$;
		\item[(4)] 	Given a compact FMMM $(M,F, d\mathfrak{m})$, for  any $u,v\in L^2(M)$ we set
		\[
		(u,v)_{L_2}:=\ds\int_M u\,v d\mathfrak{m},\ \ \ \ \|u\|_{L^2}:=\ds\int_M  u^2d\mathfrak{m}.\tag{3.1}\label{innernormL2}
		\]
\end{itemize}

Now we introduce the notion of dimension pairs.

\begin{definition}\label{defdim1}\it An {\rm optional family} $\mathscr{C}$ is a  collection of subsets of $\mathcal {S}$ satisfying the following conditions:

\begin{itemize}
	\item[(i)] $\emptyset\in \mathscr{C};$
	\item[(ii)] Given $k\in \mathbb{N}^+$, for any $k$-dimensional vector subspace $V\subset \mathscr{X}$, one has $V\cap \mathcal {S}\in \mathscr{C};$
	\item[(iii)] For every APH $h:\mathcal {S}\rightarrow \mathcal {S}$, $h(A)\in \mathscr{C}$ for all $A\in \mathscr{C}$.
\end{itemize}
Given an optional family $\mathscr{C}$, a {\rm dimension-like function  $\Dim:\mathscr{C}\to \mathbb{N}\cup\{ +\infty\}$}
satisfies the following  conditions:
\begin{itemize}
	\item[(D1)] {\rm $\Dim$}$(A)\geq 0$ for any $A\in \mathscr{C}$ with equality if and only if $A=\emptyset;$
	\item[(D2)] For any $A_1,A_2\in \mathscr{C}$ with $A_1\subset A_2$, $ \Dim(A_1)\leq \Dim(A_2);$
	\item[(D3)] Given $k\in \mathbb{N}^+$, for any $k$-dimensional vector subspace $V\subset \mathscr{X}$, $\Dim(V\cap \mathcal {S})\geq k;$
	\item[(D4)] For every APH $h:\mathcal {S}\rightarrow \mathcal {S}$, $\Dim(h(A))=\Dim(A)$ for all $A\in \mathscr{C}$.
\end{itemize}
 $(\mathscr{C},\Dim)$ is a {\rm   dimension pair}, if $\mathscr{C}$ is an  optional family and $\Dim$ is a   dimension-like function on $\mathscr{C}$.
 \end{definition}

\begin{remark}\rm Since the inverse of an APH is still an APH,
(D4) is equivalent to the following:

\smallskip

\noindent (D4') For every APH $h:\mathcal {S}\rightarrow \mathcal {S}$, $\Dim(h(A))\geq\Dim(A)$ for all $A\in \mathscr{C}$.

\end{remark}

The spectrum for a dimension pair is defined as follows.
\begin{definition}\label{spect2}\it
Let $(M,F, d\mathfrak{m})$ be a compact {\rm FMMM}.
Given a dimension pair $(\mathscr{C},\Dim)$, the corresponding {\rm eigenvalues} are defined as
\[
\lambda_k:=\sup\left\{  \lambda\in \mathbb{R}^+\cup\{+\infty\}:\, \Dim E^{-1}[0,\lambda]<k \right\},\ \forall\,k\in \mathbb{N}^+,
\]
where
\[
\Dim E^{-1}[0,\lambda]:=\sup\left\{ \Dim(A):\, A\in \mathscr{C},\ A\subset E^{-1}[0,\lambda]\right\}.
\]
The collection $\{\lambda_k\}_{k=1}^\infty$ is called the $(\mathscr{C},\Dim)$-{\rm spectrum}.
\end{definition}

\begin{remark}\rm
In \cite{G}, Gromov defined a dimension-like function $\Dim$ as   a function on a collection of sets $\mathscr{C}$ only satisfying Property (D2).
In this paper, we require that both  an optional family $\mathscr{C}$ and a dimension-like function $\Dim$ satisfy   further properties which provide qualitative properties of the $(\mathscr{C},\Dim)$-spectrum.
\end{remark}

First we have the following min-max principle.
\begin{theorem}[Min-max Principle]\label{minmaxth} Let $(M,F, d\mathfrak{m})$ be a compact  {\rm FMMM}. Given a dimension pair $(\mathscr{C},\Dim)$,  set
\[
\mathscr{C}_k:=\{A\in \mathscr{C}:\, \Dim\,(A)\geq k\},\ \forall\,k\in \mathbb{N}^+.
\]
Then the corresponding eigenvalue satisfies the min-max principle, i.e.,
\[
\lambda_k=\underset{A\in \mathscr{C}_k}{\inf}\underset{u\in A}{\sup}{E(u)},\ \forall\,k\in \mathbb{N}^+.
\]
In particular, $\lambda_k$ is finite for every $k \in \mathbb{N}^+$.
\end{theorem}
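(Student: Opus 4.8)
The plan is to establish the two inequalities
\[
\lambda_k \leq \inf_{A\in\mathscr{C}_k}\sup_{u\in A}E(u)
\qquad\text{and}\qquad
\lambda_k \geq \inf_{A\in\mathscr{C}_k}\sup_{u\in A}E(u),
\]
both directly from the definition of $\lambda_k$ in Definition \ref{spect2}, and then to deduce finiteness as a corollary. Throughout write $m_k:=\inf_{A\in\mathscr{C}_k}\sup_{u\in A}E(u)$.

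\emph{First inequality $(\lambda_k\le m_k)$.} Fix any $A\in\mathscr{C}_k$, so $\Dim(A)\ge k$ by definition of $\mathscr{C}_k$, and set $s:=\sup_{u\in A}E(u)$ (which I may assume finite, otherwise there is nothing to prove for this $A$). Then $A\subset E^{-1}[0,s]$, so by the definition of $\Dim E^{-1}[0,s]$ we get $\Dim E^{-1}[0,s]\ge\Dim(A)\ge k$. Hence for every $\lambda$ with $\Dim E^{-1}[0,\lambda]<k$ we must have $\lambda<s$ — more precisely, since $E^{-1}[0,\lambda]\supset E^{-1}[0,s']$ fails to force the strict bound only when $\lambda\ge s$, a short argument using monotonicity (D2) of $\Dim$ and the definition of $\Dim E^{-1}[0,\cdot]$ shows any such $\lambda$ satisfies $\lambda < s$, giving $\lambda_k=\sup\{\lambda:\Dim E^{-1}[0,\lambda]<k\}\le s$. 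Taking the infimum over $A\in\mathscr{C}_k$ yields $\lambda_k\le m_k$.

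\emph{Second inequality $(\lambda_k\ge m_k)$.} Fix $\lambda<\lambda_k$. Then $\Dim E^{-1}[0,\lambda]<k$, so by the definition of $\Dim E^{-1}[0,\lambda]$ as a supremum, \emph{every} $A\in\mathscr{C}$ with $A\subset E^{-1}[0,\lambda]$ has $\Dim(A)<k$, i.e.\ $A\notin\mathscr{C}_k$. Contrapositively, every $A\in\mathscr{C}_k$ satisfies $A\not\subset E^{-1}[0,\lambda]$, so there is $u\in A$ with $E(u)>\lambda$, whence $\sup_{u\in A}E(u)>\lambda$; taking the infimum over $A\in\mathscr{C}_k$ gives $m_k\ge\lambda$. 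Letting $\lambda\uparrow\lambda_k$ gives $m_k\ge\lambda_k$.

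\emph{Finiteness.} It remains to check $m_k<+\infty$, equivalently $\mathscr{C}_k\neq\emptyset$ together with the existence of one member on which $E$ is bounded. This is where property (iii) of an optional family and (D3) of the dimension-like function come in: choose any $k$-dimensional subspace $V\subset\mathscr{X}$ spanned by $k$ smooth functions in $C_0^\infty(M)$; then $V\cap\mathcal{S}\in\mathscr{C}$ by Definition \ref{defdim1}(ii) and $\Dim(V\cap\mathcal{S})\ge k$ by (D3), so $V\cap\mathcal{S}\in\mathscr{C}_k$. Since $V$ is finite-dimensional and $E$ is continuous (Proposition \ref{srongDE}) and positively homogeneous of degree $0$, the restriction $E|_{V\cap\mathcal{S}}$ is continuous on the compact set $V\cap\mathcal{S}$ (unit sphere of a finite-dimensional space in the $\|\cdot\|_{\Ho}$-topology), hence bounded; therefore $m_k\le\sup_{u\in V\cap\mathcal{S}}E(u)<+\infty$. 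I expect the main (though still mild) obstacle to be the bookkeeping in the first inequality: one must handle carefully the cases where $\sup_{u\in A}E(u)$ is attained versus only approached, and the interplay between the strict inequality $\Dim E^{-1}[0,\lambda]<k$ and the monotonicity of $\lambda\mapsto\Dim E^{-1}[0,\lambda]$, so that the supremum defining $\lambda_k$ is correctly compared with $s$; everything else is a routine unwinding of definitions.
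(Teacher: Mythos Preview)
Your argument is correct and follows essentially the same route as the paper's proof: both establish the equality $\lambda_k = \inf_{A\in\mathscr{C}_k}\sup_{u\in A}E(u)$ by unwinding Definition~\ref{spect2} via the monotonicity of $\lambda\mapsto \Dim E^{-1}[0,\lambda]$, and both deduce finiteness by exhibiting a $k$-dimensional subspace $V\subset\mathscr{X}$ with $V\cap\mathcal{S}\in\mathscr{C}_k$ compact. Your finiteness step is in fact slightly more economical than the paper's: the paper passes through the average Riemannian metric $\hat g$ and chooses $V$ to be the span of the first $k$ Laplace eigenfunctions of $\Delta_{\hat g}$, whereas you simply take any $k$ linearly independent elements of $C_0^\infty(M)$ and use that all norms on a finite-dimensional space are equivalent, which suffices. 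Two small points worth tightening in your write-up: in the first inequality, the cleanest phrasing is that $\Dim E^{-1}[0,s]\ge k$ combined with the monotonicity of $\lambda\mapsto\Dim E^{-1}[0,\lambda]$ forces $\{\lambda:\Dim E^{-1}[0,\lambda]<k\}\subset[0,s)$, hence $\lambda_k\le s$; and in the second inequality, the implication ``$\lambda<\lambda_k\Rightarrow \Dim E^{-1}[0,\lambda]<k$'' again uses that the sublevel set $\{\lambda:\Dim E^{-1}[0,\lambda]<k\}$ is a down-set, which you should state explicitly.
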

\begin{proof} For any $k\in \mathbb{N}^+$,
 Definition \ref{defdim1}  implies $\mathscr{C}_k\neq\emptyset$. Hence $\hat{\lambda}_k:={\inf}_{A\in \mathscr{C}_k}{\sup}_{u\in A}{E(u)}$ is well-defined. We  show first that $\lambda_k=\hat{\lambda}_k$.
In fact, if $\lambda \in \mathbb{R}^+\cup\{+\infty\}$ satisfies $\Dim E^{-1}[0,\lambda]\geq k$, then Definition \ref{spect2} yields
 $\lambda_k\leq \lambda$, which implies
 \begin{align*}
  \lambda_k&\leq \inf\left\{\lambda\in \mathbb{R}^+\cup\{+\infty\}:\, \Dim E^{-1}[0,\lambda]\geq k\right\}\\
  &=\inf\left\{\lambda\in \mathbb{R}^+\cup\{+\infty\}:\,\exists A\in  \mathscr{C}\text{ with }A\subset E^{-1}[0,\lambda] \text{ and } \Dim(A)\geq k\right\}\leq \hat{\lambda}_k.
  \end{align*}
If $\lambda_k=+\infty$, then clearly $\lambda_k\geq \hat{\lambda}_k$. Now suppose
$\lambda_k<+\infty$. Thus, for any $\epsilon>0$, Definition \ref{spect2} furnishes $\Dim E^{-1}[0,\lambda_k+\epsilon]\geq k$, i.e., there exists $A\subset E^{-1}[0,\lambda_k+\epsilon]$ with $A\in \mathscr{C}_k$, which implies $\hat{\lambda}_k={\inf}_{A\in \mathscr{C}_k}{\sup}_{u\in A}{E(u)} \leq \lambda_k+\epsilon$. The arbitrariness of $\epsilon>0$ implies $\hat{\lambda}_k\leq \lambda_k$, thus  $\lambda_k=\hat{\lambda}_k$.

We now prove that $\lambda_k$ is finite. Let $\hat{g}$ be the average Riemannian metric induced by $F$, see (\ref{averageRieman}).
 Denote by $(\cdot,\cdot)$ and $\|\cdot\|_1$  the standard inner product and norm on $H^{1}(M)$  induced by $\hat{g}$, respectively, i.e.,
\[
(u,v):=\ds\int_M uv \,d\vol_{\hat{g}}+\ds\int_M  {\hat{g}}(du,dv)\,d\vol_{\hat{g}},\ \ \ \ \|u\|_1:=\sqrt{(u,u)}.\tag{3.2}\label{3.1}
\]
Since $M$ is compact,  the topology of $(\mathscr{X},\|\cdot\|_1)$ coincides with the one of $(\mathscr{X},\|\cdot\|_{\Ho})$; in particular,  $E$ is continuous in the topology of $(\mathscr{X}, \|\cdot\|_1)$.

Let $\{\lambda^{\Delta_{\hat{g}}}_i\}_{i=1}^\infty$ be the usual spectrum of the Beltrami-Laplacian $\Delta_{\hat{g}}$ and $\{f_i\}_{i=1}^\infty$ be the corresponding eigenfunctions with $\|f_i\|_{L^2}=1$. According to  Craioveanu, Puta and Rassias \cite[p.134]{C4}, for any $u\in \mathscr{X}$, there exist a sequence of constants $\{a_i\}$ such that $u=\sum_{i=1}^\infty a_i f_i$ with
\[
\|u\|^2_{L^2}=\sum_{i=1}^\infty a^2_i,\ \|u\|^2_1=\sum_{i=1}^\infty (1+\lambda^{\Delta_{\hat{g}}}_i)a^2_i<+\infty.\tag{3.3}\label{new new 5.2}
\]
Now set $V:=\text{Span}\{f_1,\ldots,f_k\}$. Due to (\ref{new new 5.2}),
 $V\cap \mathcal {S}\subset \mathscr{X}$ is compact in  $(\mathscr{X}, \|\cdot\|_1)$. Since $V\cap \mathcal {S}\in \mathscr{C}_k$, see (D3), the min-max characterization  furnishes
$
\lambda_k=\underset{A\in \mathscr{C}_k}{\inf}\underset{u\in A}{\sup}{E(u)}\leq \sup_{u\in V\cap \mathcal {S}}E(u)<+\infty.
$
\end{proof}
\begin{remark}\rm \label{infeigen}
If $\Dim$ does not satisfy (D3) in Definition \ref{defdim1}, $\mathscr{C}_k$ could be empty, in which case $\lambda_k=+\infty$.
\end{remark}

\begin{theorem}\label{properties1} Let $(M,F, d\mathfrak{m})$ be a compact {\rm FMMM}. Given a dimension pair $(\mathscr{C},\Dim)$, the corresponding spectrum $\{\lambda_k\}_{k=1}^\infty$ satisfy the following properties$:$

\begin{itemize}
	\item[(i)]  \textbf{\rm (\bf Monotonicity)}
	\begin{align*}
	0= \lambda_1\leq\lambda_2\leq  \ldots \leq \lambda_k\leq\ldots, &\text{ if }\partial M=\emptyset;\\
	0<\lambda_1\leq\lambda_2\leq  \ldots \leq \lambda_k\leq\ldots, &\text{ if }\partial M\neq\emptyset.
	\end{align*}
	In particular, the first eigenvalue is
	\[
	\lambda_1=\inf_{u\in \mathcal {S}} E(u)=\inf_{u\in \mathscr{X}\backslash\{0\}}E(u).
	\]
	\item[(ii)] \textbf{\rm (\bf Riemannian case)} If $F$ is Riemannian and $d\mathfrak{m}$ is the canonical Riemannian measure, then
	\[
	\lambda_k\leq \lambda^\Delta_k,\ \forall\,k\in \mathbb{N}^+,
	\]
	where $\lambda_k^\Delta$ is the usual $k^{\text{th}}$-eigenvalue of the Beltrami-Laplacian $\Delta$ in the Riemannian case.
	\item[(iii)] \textbf{\rm (\bf Existence of eigenfunction)}  For each
	$k\in \mathbb{N}^+$,
	the eigenfunction $u$ corresponding to the eigenvalue $\lambda_k$ always exists, i.e., there exist $u\in \mathscr{X}\backslash\{0\}$ with
	$\Delta u+\lambda_k u=0 \text{ in the weak sense}$.
	In particular, the eigenfunction $u$ satisfies
	\[
	\left\{
	\begin{array}{llll}
	u=\text{const}.\neq0,\ &\text{ if }\lambda_k=0;\\
	\\
	u\in \mathscr{X}_0\backslash\{0\},\ &\text{ if }\lambda_k>0.
	\end{array}
	\right.
	\]
\end{itemize}

\end{theorem}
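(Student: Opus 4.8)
The plan is to treat the three items separately, with item (iii) being the analytic heart of the argument. For item (i), the monotonicity $\lambda_k\le\lambda_{k+1}$ is immediate from the definition of $\lambda_k$ as a supremum over a shrinking condition (equivalently, from the min-max characterization $\lambda_k=\inf_{A\in\mathscr{C}_k}\sup_{u\in A}E(u)$ in Theorem \ref{minmaxth}, since $\mathscr{C}_{k+1}\subset\mathscr{C}_k$). For the first eigenvalue, I would argue: on one hand, property (D3) applied to any one-dimensional subspace $V=\mathbb{R}u_0$ gives $\Dim(V\cap\mathcal{S})\ge 1$, and $V\cap\mathcal{S}=\{\pm u_0/\|u_0\|_{L^2}\}$, so $\lambda_1\le E(u_0)$ for every $u_0\in\mathscr{X}\setminus\{0\}$ by homogeneity of $E$; hence $\lambda_1\le\inf_{\mathscr{X}\setminus\{0\}}E$. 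On the other hand, $\mathscr{C}_1$ consists of nonempty sets by (D1), so $\lambda_1=\inf_{A\in\mathscr{C}_1}\sup_{u\in A}E(u)\ge\inf_{u\in\mathcal{S}}E(u)$ trivially. This yields $\lambda_1=\inf_{\mathcal{S}}E=\inf_{\mathscr{X}\setminus\{0\}}E$. The sign dichotomy then follows from the Poincaré-type inequality: if $\partial M\neq\emptyset$ the space $\mathscr{X}=\overline{C_0^\infty(M)}$ admits a Poincaré inequality $\|u\|_{L^2}^2\le C\|F^*(du)\|_{L^2}^2$ (valid since $M$ is compact and the estimate is insensitive to $F$ up to constants), forcing $\lambda_1>0$; if $\partial M=\emptyset$, the constant function $u_0\equiv(\mathfrak{m}(M))^{-1/2}$ lies in $\mathcal{S}$ with $E(u_0)=0$, so $\lambda_1=0$.

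For item (ii), assume $F$ is Riemannian with $d\mathfrak{m}=d\vol_g$. Take the $L^2$-orthonormal eigenfunctions $f_1,\dots,f_k$ of the (linear) Beltrami-Laplacian and set $V=\mathrm{Span}\{f_1,\dots,f_k\}$. By (D3), $V\cap\mathcal{S}\in\mathscr{C}_k$, so the min-max principle gives $\lambda_k\le\sup_{u\in V\cap\mathcal{S}}E(u)$. For $u=\sum_{i\le k}a_if_i$ with $\sum a_i^2=1$, linearity and orthogonality give $E(u)=\sum_{i\le k}\lambda_i^\Delta a_i^2\le\lambda_k^\Delta$, so $\lambda_k\le\lambda_k^\Delta$.

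Item (iii) is where the work lies, and the hard part is establishing that the min-max value $\lambda_k$ is actually \emph{attained} as an eigenvalue — i.e., that the eigenset $\mathfrak{K}_{\lambda_k}$ (see (\ref{4.10101})) is nonempty. The plan is a standard deformation/contradiction argument using the Homotopy Lemma (Lemma \ref{maintheorem2-0}) together with the invariance properties of $(\mathscr{C},\Dim)$. Suppose $\mathfrak{K}_{\lambda_k}=\emptyset$; then take $O=\emptyset$ in Lemma \ref{maintheorem2-0} to obtain, for suitable $\epsilon_0>0$, an APH $\Phi(\cdot,1):\mathcal{S}\to\mathcal{S}$ (it is an APH by property (iv) of the lemma, antipode-preservation) with $\Phi(E_{\lambda_k+\epsilon_0},1)\subset E_{\lambda_k-\epsilon_0}$. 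By the definition of $\lambda_k$ there exists $A\in\mathscr{C}$ with $A\subset E^{-1}[0,\lambda_k+\epsilon_0]\subset E_{\lambda_k+2\epsilon_0}$ (or $A\subset E_{\lambda_k+\epsilon_0}$ after shrinking $\epsilon_0$) and $\Dim(A)\ge k$. Then $\Phi(A,1)\in\mathscr{C}$ by optional-family property (iii), $\Dim(\Phi(A,1))=\Dim(A)\ge k$ by (D4), and $\Phi(A,1)\subset E^{-1}[0,\lambda_k-\epsilon_0]$. This forces $\Dim E^{-1}[0,\lambda_k-\epsilon_0]\ge k$, contradicting the definition of $\lambda_k$ as the supremum of $\lambda$ with $\Dim E^{-1}[0,\lambda]<k$. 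Hence $\mathfrak{K}_{\lambda_k}\neq\emptyset$: there is $u\in\mathcal{S}$ with $E(u)=\lambda_k$ and $DE(u)=0$, i.e., $-\Delta u=\lambda_k u$ weakly. The final structural statement — that $u$ is a nonzero constant when $\lambda_k=0$ and $u\in\mathscr{X}_0\setminus\{0\}$ when $\lambda_k>0$ — follows from Lemma \ref{criciequation}: any critical point of $E$ on $\mathcal{S}$ is either $\pm(\mathfrak{m}(M))^{-1/2}$ (which has $E$-value $0$) or lies in $\mathscr{X}_0\setminus\{0\}$; and $E(u)>0$ on $\mathscr{X}_0\setminus\{0\}$ whenever $\lambda_k>0$ by the characterization of $\lambda_1$ in item (i). The main obstacle is making the deformation argument airtight with the correct $\epsilon$-bookkeeping (matching the strict inequality $E<\delta$ defining $E_\delta$ against the closed sublevel set $E^{-1}[0,\lambda]$ in the definition of the eigenvalues), which requires a small shrinking step and careful use of parts (i) and (iii) of the Homotopy Lemma.
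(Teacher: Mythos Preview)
Your proposal is correct and follows essentially the same route as the paper: monotonicity via $\mathscr{C}_{k+1}\subset\mathscr{C}_k$, the identification $\lambda_1=\inf_{\mathcal{S}}E$ via one-dimensional test spaces and (D3), the Riemannian comparison via a $k$-dimensional span of eigenfunctions (the paper phrases this through Courant's minimax with an $\epsilon$, but the content is identical), and the existence in (iii) by the deformation/contradiction argument from the Homotopy Lemma with $O=\emptyset$ together with (D4). Your $\epsilon$-bookkeeping worry in (iii) dissolves once you invoke the min-max form of Theorem~\ref{minmaxth} directly: it hands you $A\in\mathscr{C}_k$ with $\sup_A E<\lambda_k+\epsilon_0$, i.e.\ $A\subset E_{\lambda_k+\epsilon_0}$, with no shrinking needed.
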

\begin{proof}
{(i)} For convenience, set
$
\lambda_1^*:=\inf_{u\in \mathcal {S}}E(u)=\inf_{u\in \mathscr{X}\backslash\{0\}} E(u).
$
We claim $\lambda_1=\lambda_1^*$.
First, Theorem \ref{minmaxth} implies $\lambda_1^*\leq \lambda_1$. Furthermore, for each $f\in \mathcal {S}$, we have $A_0:=\{\pm f\} \subset\mathbb{R}f\cap \mathcal {S}\in  \mathscr{C}_1$, which together with the min-max principle yields $\lambda_1=\inf_{A\in \mathscr{C}_1}\sup_{u\in A}E(u)\leq \sup_{u\in A_0}E(u)= E( f)$. Taking the infimum of the right hand side when $f\in \mathcal {S}$, it turns out that
 $\lambda_1\leq\lambda_1^*$.

In the sequel, we study the positivity of $\lambda_1$.
If $\partial M=\emptyset$,
set $A=\{\pm(\mathfrak{m}(M))^{-\frac12}\}\in \mathscr{C}_1$. Thus,
$0\leq \lambda_1\leq \sup_{u\in A}E(u)=0$.
Now suppose  $\partial M\neq\emptyset$.
Let $\hat{g}$ be the average Riemannian metric induced by  $F$. Since $M$ is compact,  there exists a positive constant $C_\mathfrak{m}\geq 1$ such that
\[
C_{\mathfrak{m}}^{-1}\cdot d\vol_{\hat{g}}\leq d\mathfrak{m}\leq C_{\mathfrak{m}}\cdot d\vol_{\hat{g}},\tag{3.4}\label{v14-3.3}
\]
which together with (\ref{averagenorm}) and
the  spectral theory in Riemannian geometry   yields
\[
\lambda_1=\lambda_1^*=  \inf_{u\in \mathscr{X}\backslash\{0\}}  \frac{\ds\int_M F^{*2}(du)d\mathfrak{m}}{\ds\int_M u^2 d\mathfrak{m}}  \geq \frac{1}{\Lambda_F C_\mathfrak{m}^2}\inf_{u\in \mathscr{X}\backslash\{0\}}\frac{\ds\int_M\hat{g}(du,du)d\vol_{\hat{g}}}{\ds\int_Mu^2d\vol_{\hat{g}}}>0.
\]
Since $\mathscr{C}_{k+1}\subset\mathscr{C}_{k}$, the monotonicity of the eigenvalues  follows by Theorem \ref{minmaxth}.

\medskip

{(ii)}
If $F$ is Riemannian, Courant's minimax principle yields
\[
\lambda^\Delta_k=\min_{V\in \mathscr{H}_k}\max_{u\in V\backslash\{0\}}E(u),\tag{3.5}\label{new4.1}
\]
where
$\mathscr{H}_k=\{V\subset \mathscr{X}:\, V\text{ is a linear subspace with }\Dim_C(V)=k\}$.
In particular, for any $\epsilon>0$, there exists a linear space $V$ with $\Dim_C(V)=k$ and  $\max_{u\in V\backslash\{0\}} E(u)<\lambda^\Delta_k+\epsilon$.
Since $\mathcal {S}\cap V\in \mathscr{C}_k$, the min-max principle furnishes
$
\lambda_k\leq \sup_{u\in \mathcal {S}\cap V}E(u)<\lambda^\Delta_k+\epsilon$. The arbitrariness of $\epsilon>0$ implies that $ \lambda_k\leq \lambda^\Delta_k.
$

\medskip

 {(iii)}
We claim that each $\lambda_k$ is a critical value of $E$.
Assume the contrary that  $\lambda_k$ is a regular value, i.e., if $u\in \mathcal {S}$ with $E(u)=\lambda_k$, then $DE(u)\neq0$. Accordingly, the eigenset $\mathfrak{K}_{\lambda_k}$ is empty (cf.\,(\ref{4.10101})). Due to Lemma \ref{maintheorem2-0} ($O=\emptyset$ and $\epsilon=1$),  there exists $\epsilon_0>0$ and a family of APH's $\Phi(\cdot,t):\mathcal {S}\rightarrow \mathcal {S}$, $t\in [0,1],$ such that $\Phi(E_{\lambda_k+\epsilon_0},1)\subset E_{\lambda_k-\epsilon_0}$. For this $\epsilon_0>0$, Theorem \ref{minmaxth} yields an element $A\in \mathscr{C}_k$ with $A\subset E_{\lambda_k+\epsilon_0}$, therefore, $E(\Phi(w,1))< \lambda_k-\epsilon_0$ for every $w\in A$.

By (D4) in Definition \ref{defdim1} one has ${\Phi(A,1)}\in \mathscr{C}_k$ which together with
 Theorem \ref{minmaxth} implies
\[
\lambda_k\leq \sup_{u\in { \Phi(A,1)}}E(u)\leq \lambda_k-\epsilon_0<+\infty,
\]
a contradiction. Therefore, the eigenfunction $u\in \mathcal {S}$ corresponding to $\lambda_k$ does exist; in particular, by Lemma \ref{criciequation} it follows that $u=\text{const}.\neq0$ or $u\in \mathscr{X}_0\backslash\{0\}$.
\end{proof}

\begin{remark}\label{remark-kesobb}\rm According to Chavel \cite[p.9]{Ch}, for a closed Riemannian manifold one has
\[
0=\lambda_1^\Delta<\lambda_2^\Delta\leq \ldots \leq \lambda_k^\Delta\leq \ldots,
\]
in which case  {\it the first eigenvalue} in the classical literature usually means the   first  \textit{positive} eigenvalue, i.e., $\lambda_2^\Delta$. On the other hand, it is easy to check that
\[
\lambda_1^\Delta=0=\inf_{u\in H^{1}(M)\backslash\{0\}}\frac{\ds\int_Mg(\nabla u,\nabla u)d\vol_g}{\ds\int_M u^2d\vol_g}=\inf_{u\in \mathcal {S}}E(u).
\]
Therefore,  Theorem \ref{properties1}/(i) holds in the Riemannian case.
\end{remark}

  Theorem \ref{properties1} implies in particular that
for a compact Riemannian manifold equipped with the canonical Riemannian measure, each eigenvalue of a dimension pair $(\mathscr{C},\Dim)$ is a standard eigenvalue of the Beltrami-Laplacian operator.
 However,  $(\mathscr{C},\Dim)$-spectrum may not contain all the critical values of $E$, see subsection \ref{lebsdiem}.
It should be also remarked that there are dimension pairs such that $0=\lambda_k<\lambda^\Delta_k$,  $k\geq 2$, for every closed Riemannian manifold, see Proposition \ref{flawex1}. In order to avoid such a case, we introduce a "stronger" notion of dimension pairs.

\begin{definition}\label{faithful}\it A dimension pair $(\mathscr{C},\Dim)$  is said to be {\rm faithful}  if
\[
\lambda_k=\lambda_k^\Delta,\ \forall\,k\in \mathbb{N}^+,
\]
 for any compact Riemannian manifold $(M,g)$ endowed with its canonical Riemannian measure $d\vol_g;$
here, $\lambda_k$ is from Definition {\rm \ref{spect2}} considered for the manifold $(M,\sqrt{g},d\vol_{{g}})$, while   $\lambda_k^\Delta$ stands for the usual eigenvalue of the Beltrami-Laplacian $\Delta$ in the Riemannian setting.
\end{definition}

\begin{theorem}\label{properties2}
Let $(M,F,d\mathfrak{m})$ be a compact {\rm FMMM}. For a faithful dimension pair $(\mathscr{C},\Dim)$, the corresponding spectrum satisfies$:$
\begin{itemize}
	\item[(i)] The first {positive} eigenvalue is equal to
	\[
	\left\{
	\begin{array}{llll}
	\lambda_2&=\inf_{u\in \mathscr{X}_0\backslash\{0\}}E(u),\ \text{ if }\partial M=\emptyset;\\
	\lambda_1&=\inf_{u\in \mathscr{X}_0\backslash\{0\}} E(u),\ \text{ if }\partial M\neq\emptyset;
	\end{array}
	\right.
	\]
	\item[(ii)] $\underset{k\rightarrow\infty}{\lim}\lambda_k=+\infty;$
	\item[(iii)] The multiplicity  of each $\lambda_k$ is finite.
\end{itemize}
\end{theorem}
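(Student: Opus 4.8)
The strategy is to reduce each statement to a comparison with the classical Riemannian spectrum of the average metric $\hat g$ (or, more precisely, to a family of auxiliary Riemannian metrics), exploiting faithfulness together with the quantitative equivalences \eqref{averagenorm} and \eqref{v14-3.3}. For part (i), I would first recall from Theorem \ref{properties1}/(i) that $\lambda_1=\inf_{u\in\mathcal S}E(u)$, which equals $0$ when $\partial M=\emptyset$ (attained at the constants) and is strictly positive when $\partial M\neq\emptyset$. In the boundaryless case, the point is that the constant eigenfunctions span a one-dimensional space, so any $A\in\mathscr C_2$ must contain a nonconstant function; the missing ingredient is that $\lambda_2$ is \emph{actually attained} and that its eigenfunction lies in $\mathscr X_0\setminus\{0\}$ — but this is exactly Theorem \ref{properties1}/(iii), which gives an eigenfunction $u$ with $-\Delta u=\lambda_2 u$ and $u\in\mathscr X_0\setminus\{0\}$ since $\lambda_2>0$. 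Conversely $\inf_{\mathscr X_0\setminus\{0\}}E\le E(u)=\lambda_2$, and the reverse inequality $\lambda_2\le\inf_{\mathscr X_0\setminus\{0\}}E$ follows because for any $v\in\mathscr X_0\setminus\{0\}$ the plane $V=\mathrm{Span}\{(\mathfrak m(M))^{-1/2},v\}$ satisfies $V\cap\mathcal S\in\mathscr C_2$ by (D3), so the min-max principle (Theorem \ref{minmaxth}) gives $\lambda_2\le\sup_{w\in V\cap\mathcal S}E(w)=E(v)$, the last equality because $E$ vanishes on constants and the two summands are $L^2$-orthogonal. The case $\partial M\neq\emptyset$ is the same argument with $\lambda_1$ in place of $\lambda_2$, using that there are no nonzero constants in $\mathscr X$ and $\mathscr X_0=\mathscr X$.

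For part (ii), the key is to bound $\lambda_k$ from below by the Riemannian eigenvalues of a fixed metric. Using \eqref{averagenorm} and \eqref{v14-3.3}, for any $u\in\mathscr X\setminus\{0\}$ one has $E(u)=E_F(u)\ge (\Lambda_F C_{\mathfrak m}^2)^{-1}E_{\hat g}(u)$, where $E_{\hat g}$ is the Rayleigh quotient of $(M,\hat g,d\vol_{\hat g})$. I would like to say that this inequality, combined with faithfulness applied to the metric $\hat g$, yields $\lambda_k\ge (\Lambda_F C_{\mathfrak m}^2)^{-1}\lambda_k^{\Delta_{\hat g}}\to+\infty$ by Weyl's law. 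The subtlety — and the main obstacle — is that faithfulness is a statement about the $(\mathscr C,\Dim)$-spectrum of $(M,\sqrt{\hat g},d\vol_{\hat g})$, whereas $\lambda_k$ is the spectrum of $(M,F,d\mathfrak m)$; the optional family $\mathscr C$ and the function $\Dim$ are the same, but the energy functionals differ. One must therefore argue at the level of the min-max characterization: $\mathscr C_k$ is intrinsic (it depends only on $\mathcal S$ and $\Dim$, and $\mathcal S$ as a set is metric-independent since $\|\cdot\|_{L^2}$ for $d\mathfrak m$ and for $d\vol_{\hat g}$ are comparable, so one rescales), and for every $A\in\mathscr C_k$ one has $\sup_A E_F\ge (\Lambda_F C_{\mathfrak m}^2)^{-1}\sup_A E_{\hat g}$. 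Taking infima over $A\in\mathscr C_k$ gives $\lambda_k\ge(\Lambda_F C_{\mathfrak m}^2)^{-1}\,\lambda_k(M,\hat g)$, and by faithfulness $\lambda_k(M,\hat g)=\lambda_k^{\Delta_{\hat g}}$, which diverges. A small care point: the set $\mathcal S=\{\|u\|_{L^2(d\mathfrak m)}=1\}$ and $\mathcal S'=\{\|u\|_{L^2(d\vol_{\hat g})}=1\}$ are not literally equal, but the radial rescaling $u\mapsto u/\|u\|_{L^2(d\vol_{\hat g})}$ is an APH-compatible homeomorphism under which $\mathscr C$ and $\Dim$ are preserved by (iii) of Definition \ref{defdim1} and (D4), so $\mathscr C_k$ transfers and the comparison goes through unchanged.

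For part (iii), I would argue by contradiction using the Homotopy Lemma exactly as in the proof of Theorem \ref{properties1}/(iii). Suppose some value $\lambda$ occurs as $\lambda_k=\lambda_{k+1}=\dots=\lambda_{k+m}$ for all $m$, i.e. the "multiplicity" — properly interpreted, since $\mathfrak K_\lambda$ need not be a linear space — is infinite; the precise statement to prove is that $\Dim\mathfrak K_\lambda<\infty$, equivalently that $\lambda_k<\lambda_{k+1}$ whenever $\mathfrak K_\lambda$ is "small". Here faithfulness is essential and is used as follows: first establish a general deformation fact that if $\Dim(O)\ge j$ for a neighborhood $O$ of $\mathfrak K_\lambda$ that deformation-retracts onto $\mathfrak K_\lambda$, then $\lambda_{k}=\lambda$ forces $\Dim\mathfrak K_\lambda\ge$ (number of coincident indices) $-1$ or so, via the standard argument: pick $A\in\mathscr C_{k+m}$ with $\sup_A E<\lambda+\epsilon_0$, push it down by $\Phi(\cdot,1)$ off any neighborhood $O$ of $\mathfrak K_\lambda$, and conclude $\Dim(\overline{\Phi(A,1)\setminus O})\le k-1$ together with a subadditivity-type bound $\Dim(A)\le\Dim(\overline{A\setminus O})+\Dim(\text{something involving }O)$. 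The honest difficulty is that a generic $\Dim$ need \emph{not} be subadditive or continuous, so one cannot run this argument for arbitrary dimension pairs; the resolution is to invoke faithfulness to compare with $\hat g$ once more — $\mathfrak K_\lambda$ for $(M,F,d\mathfrak m)$ is $C^{1,\alpha}$ by the Remark after Lemma \ref{criciequation}, hence compact (Lemma \ref{funddomain}), and finiteness of its "dimension" should be extracted from the fact that on the Riemannian side the eigenspaces are genuinely finite-dimensional, plus a local-in-$\mathcal S$ comparison near $\mathfrak K_\lambda$. I expect this last part — making "multiplicity" precise for a nonlinear eigenvalue and deducing its finiteness without assuming subadditivity of $\Dim$ — to be the genuinely hard step, and I would handle it by first proving it for the concrete faithful pairs (LS-category, genus, essential dimension) where the needed subadditivity/continuity is available, and then remarking that for a general faithful pair one compares the spectrum with that of $\hat g$ via the sandwiched Rayleigh quotients to see that the indices cannot all coincide past the Riemannian multiplicity of $\lambda_k^{\Delta_{\hat g}}$.
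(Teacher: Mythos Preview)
Your approach to parts (i) and (ii) is correct and essentially matches the paper's. The paper also compares $E_F$ with the Rayleigh quotient of $(M,\hat g,d\vol_{\hat g})$ via \eqref{averagenorm} and \eqref{v14-3.3}, obtains $\lambda_k\ge(\Lambda_F C_{\mathfrak m}^2)^{-1}\lambda_k^{\Delta_{\hat g}}$ from faithfulness, and then runs exactly your argument with $\mathrm{Span}\{1,u\}$ for (i). Your flag about $\mathcal S_{d\mathfrak m}$ versus $\mathcal S_{d\vol_{\hat g}}$ is a legitimate subtlety that the paper passes over in silence; your resolution via the radial rescaling is the right idea, though note that Definition~\ref{defdim1} speaks only of APHs of $\mathcal S$ to itself, so strictly one appeals to scale-invariance of the Rayleigh quotient together with the fact that $\mathscr X$ itself is measure-independent. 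One small circularity: in (i) you invoke $\lambda_2>0$ before establishing it; the paper fixes this by proving the comparison inequality \emph{first}, which simultaneously yields $\lambda_2>0$ and $\lambda_k\to\infty$.

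Where you go astray is part (iii). You treat finiteness of multiplicity as ``the genuinely hard step'' and propose a deformation argument requiring subadditivity of $\Dim$, which you correctly note is unavailable for a general pair. But none of this is needed: once (ii) is established, (iii) is immediate. The multiplicity of $\lambda_k$ is by definition the number of indices $j$ with $\lambda_j=\lambda_k$; since each $\lambda_j$ is finite (Theorem~\ref{minmaxth}) and $\lambda_j\to+\infty$, only finitely many indices can share the same value. The paper dispatches (iii) in a single sentence this way. Your deformation approach would at best prove the stronger statement $\Dim_{LS}(\mathfrak K_\lambda)<\infty$ for the concrete pairs, but that is neither what (iii) asserts nor what is required.
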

\begin{proof}
Let $\hat{g}$ be the average Riemannian metric induced by $F$ and $k\in \mathbb N^+$. Since $(\mathscr{C},\Dim)$ is a faithful dimension pair, the usual eigenvalue ${\lambda}^{\Delta_{\hat{g}}}_k$ of $(M,\sqrt{\hat g},d\vol_{\hat{g}})$ is equal to
\[
\tilde \lambda^{{\hat{g}}}_k=\inf_{A\in \mathscr{C}_k}\sup_{u\in A}\frac{\ds\int_M \hat{g}(du,du)d\vol_{\hat{g}}}{\ds\int_M u^2 d\vol_{\hat{g}}}.
\]
The latter fact together with (\ref{averagenorm}) and (\ref{v14-3.3}) implies that
$$\lambda_k=\underset{A\in \mathscr{C}_k}{\inf}\underset{u\in A}{\sup}\frac{\ds\int_M F^{*2}(du)d\mathfrak{m}}{\ds\int_M u^2 d\mathfrak{m}} \geq \frac{1}{\Lambda_F\cdot C_\mathfrak{m}^2}\tilde \lambda^{{\hat{g}}}_k=\frac{1}{\Lambda_F\cdot C_\mathfrak{m}^2}{\lambda}^{\Delta_{\hat{g}}}_k.$$
Hence,  $\lambda_k>0$ for $k\geq 2$ and $\ds\lim_{k\rightarrow \infty}\lambda_k=+\infty$ follow from the spectral theory in Riemannian geometry. Since $\lambda_k<+\infty$ for every $k\in \mathbb N^+$ (see Theorem \ref{minmaxth}), the latter limit implies the finiteness of the multiplicity issue; thus properties (ii) and (iii) are verified.

Now we show (i). If $\partial M\neq\emptyset$, Theorem \ref{properties1}/(i) together with $\mathscr{X}=\mathscr{X}_0$ yields
\[
0<\lambda_1=\inf_{u\in \mathcal {S}}E(u)=\inf_{u\in \mathscr{X}\backslash\{0\}}E(u)=\inf_{u\in \mathscr{X}_0\backslash\{0\}} E(u).
\]
When $\partial M=\emptyset$, we recall that
 $\lambda_2>0$.
Thus Theorem \ref{properties1}/(iii) yields an eigenfunction $f\in \mathscr{X}_0\backslash\{0\}$ corresponding to $\lambda_2$. In particular,
 $\lambda_2=E(f)\geq \inf_{u\in \mathscr{X}_0\backslash\{0\}}E(u)$.
 On the other hand, for each $u\in \mathscr{X}_0\backslash\{0\}$, set $V_u=\text{Span}\{1,\, u\}$. Since $A_u:=\mathcal {S}\cap V_u\in \mathscr{C}_2$ and $\ds\int_Mud\mathfrak{m}=0$, it turns out that
\[
\lambda_2\leq \sup_{v\in A_u}E(v)=\sup_{(a,b)\neq (0,0)}\frac{\ds\int_M F^{*2}(d(a+bu))d\mathfrak{m}}{\ds\int_M (a+bu)^2 d\mathfrak{m}}=\sup_{(a,b)\neq (0,0)}\frac{b^2\ds\int_M F^{*2}(du))d\mathfrak{m}}{a^2\mathfrak{m}(M)+b^2 \ds\int_M u^2 d\mathfrak{m}}=E(u).
\]
Therefore, $ \lambda_2\leq \inf_{u\in \mathscr{X}_0\backslash\{0\}}E(u),$
which concludes the proof.
\end{proof}

\vspace{-0.2cm}

\begin{proof}[Proof of Theorem {\rm \ref{th1.1}}]
 Theorem \ref{th1.1} directly follows by Theorems \ref{properties1} and \ref{properties2}, respectively.
\end{proof}

\subsection{Examples of dimension pairs}\label{examplest}
In this subsection we present
 some faithful dimension pairs for which Theorem \ref{properties2} applies.
First, we introduce some notions and notations.

\medskip

  Let $\mathbb{P}(\mathscr{X})$ be the quotient space $\mathcal {S}/\mathbb{Z}_2$. Thus, $\mathfrak{p}:\mathcal {S}\rightarrow \mathbb{P}(\mathscr{X})$ is a $2$-fold covering as $\mathbb{Z}_2$ acts freely and properly discontinuously on $\mathcal {S}$; in particular, $\mathbb{P}(\mathscr{X})$ is a normal ANR (see Proposition  \ref{finalPX}). The following result is trivial.

\begin{proposition}\label{projesec}
$\mathbb{P}(\mathscr{X})$ is homeomorphic to the projective space $(\mathscr{X}\backslash\{0\})/\sim$, where $u\sim v$ if and only if there exists $\mu\neq 0$ such that $u=\mu\cdot v$.
\end{proposition}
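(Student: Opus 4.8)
\textbf{Proof proposal for Proposition \ref{projesec}.}

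The plan is to exhibit an explicit homeomorphism between $\mathbb{P}(\mathscr{X}) = \mathcal{S}/\mathbb{Z}_2$ and the projective space $(\mathscr{X}\backslash\{0\})/\sim$, where $u\sim v$ iff $u = \mu v$ for some $\mu\neq 0$. The natural candidate is induced by the radial normalization map $\rho:\mathscr{X}\backslash\{0\}\to\mathcal{S}$, $\rho(u) := u/\|u\|_{L^2}$, which is continuous (since $\|\cdot\|_{L^2}$ is a continuous non-vanishing function on $\mathscr{X}\backslash\{0\}$, being dominated by $\|\cdot\|_{\Ho}$) and which descends to a map $\bar\rho:(\mathscr{X}\backslash\{0\})/\sim\ \to\ \mathbb{P}(\mathscr{X})$: indeed if $u = \mu v$ with $\mu\neq 0$, then $\rho(u) = \mathrm{sgn}(\mu)\,\rho(v)$, so $\rho(u)$ and $\rho(v)$ agree modulo the $\mathbb{Z}_2$-action $u\mapsto -u$ on $\mathcal{S}$, hence have the same image in $\mathbb{P}(\mathscr{X})$.

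First I would check that $\bar\rho$ is well-defined and continuous: well-definedness is the computation above, and continuity follows because $\rho$ is continuous, the quotient maps $\mathscr{X}\backslash\{0\}\to(\mathscr{X}\backslash\{0\})/\sim$ and $\mathfrak{p}:\mathcal{S}\to\mathbb{P}(\mathscr{X})$ are continuous open surjections, and one passes to the quotient in the standard way (the composite $\mathscr{X}\backslash\{0\}\xrightarrow{\rho}\mathcal{S}\xrightarrow{\mathfrak{p}}\mathbb{P}(\mathscr{X})$ factors through $\sim$ by a continuous map by the universal property of the quotient topology). Next I would produce the inverse. The inclusion $\mathcal{S}\hookrightarrow\mathscr{X}\backslash\{0\}$ followed by the quotient $\mathscr{X}\backslash\{0\}\to(\mathscr{X}\backslash\{0\})/\sim$ sends antipodal points $u,-u\in\mathcal{S}$ to the same class, hence descends to a continuous map $\psi:\mathbb{P}(\mathscr{X})\to(\mathscr{X}\backslash\{0\})/\sim$. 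One then verifies $\bar\rho\circ\psi = \mathrm{id}$ and $\psi\circ\bar\rho = \mathrm{id}$ by unwinding the definitions: for $[u]\in(\mathscr{X}\backslash\{0\})/\sim$ one has $\psi(\bar\rho([u])) = \psi(\mathfrak{p}(u/\|u\|_{L^2})) = [u/\|u\|_{L^2}] = [u]$ since $u/\|u\|_{L^2} = \|u\|_{L^2}^{-1}u$ is a nonzero scalar multiple of $u$; conversely for $\mathfrak{p}(u)\in\mathbb{P}(\mathscr{X})$ with $u\in\mathcal{S}$ one has $\bar\rho(\psi(\mathfrak{p}(u))) = \bar\rho([u]) = \mathfrak{p}(u/\|u\|_{L^2}) = \mathfrak{p}(u)$ since $\|u\|_{L^2} = 1$.

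Since $\bar\rho$ and $\psi$ are mutually inverse continuous maps, $\bar\rho$ is the desired homeomorphism. There is no genuine obstacle here — the statement is essentially formal once one observes that the $L^2$-normalization is continuous on $\mathscr{X}\backslash\{0\}$ (which uses only $\|\cdot\|_{L^2}\le\|\cdot\|_{\Ho}$ and the fact that $L^2$-norm does not vanish off $0$) and that the two relevant quotient constructions are compatible; the only point requiring minimal care is to argue the descent of each map through its quotient using the universal property, and to keep track of the sign ambiguity that is precisely absorbed by the $\mathbb{Z}_2$-action. This is why the proposition is labelled trivial in the text.
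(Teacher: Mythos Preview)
Your proof is correct and is precisely the standard argument the paper has in mind; the paper itself omits the proof entirely, declaring the result ``trivial,'' and your normalization map $u\mapsto u/\|u\|_{L^2}$ together with its descent to the quotients is exactly the expected construction. The only point worth a remark is that $\|u\|_{L^2}=0$ indeed forces $u=0$ in $\mathscr{X}$ (so $\rho$ is defined on all of $\mathscr{X}\setminus\{0\}$), which you implicitly use and which holds since $\mathscr{X}\subset L^2(M)$.
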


Given a $k$-dimensional linear  subspace $V$ of $\mathscr{X}$, $\mathbb{P}(V):=\mathfrak{p}(V\cap \mathcal {S})$ is also used to denote the projective space  induced by $V$. All the maps in this subsection are assumed to be continuous.

\subsubsection{Lusternik-Schnirelmann dimension pair}\label{LSDP}
${}$ \\
In this subsection we  construct two  dimension pairs  by means of the Lusternik-Schnirelmann category. First, we recall the relative Lusternik-Schnirelmann (LS) category on $\mathbb{P}(\mathscr{X})$ (cf. \cite{CLOT,F,S}).
\begin{definition}\label{defls}\it
Given a  subset $A\subset \mathbb{P}(\mathscr{X})$,   the {\rm LS category of $A$  relative to} $\mathbb{P}(\mathscr{X})$, $\cat_{\mathbb{P}(\mathscr{X})}(A)$,  is the smallest possible integer value $k$ such that
 $A$ is covered
by $k$ closed sets $A_j$, $1 \leq j \leq k$, which are contractible in $\mathbb{P}(\mathscr{X})$. If no such finite covering exists we write $\cat_{\mathbb{P}(\mathscr{X})}(A)=+\infty$.
\end{definition}

\begin{definition}\label{defls2}\it
Define two optional families $\mathscr{C}^\alpha$, $\alpha=1,2$ by
\[
\mathscr{C}^1:=\{A\subset \mathcal {S}: A \text{ is closed}\},\ \mathscr{C}^2:=\{A\subset \mathcal {S}: A \text{ is compact}\}.
\]
Given a closed set $A\subset \mathcal {S}$, the {\rm Lusternik-Schnirelmann  dimension of} $A$ is defined by
\[
\Dim_{LS}(A):=\cat_{\mathbb{P}(\mathscr{X})}(\mathfrak{p}(A)),
\]
where $\mathfrak{p}:\mathcal {S}\rightarrow \mathbb{P}(\mathscr{X})$ is the natural projection.
\end{definition}

\begin{remark}\rm
Since $\mathcal {S}$ is contractible (see Proposition  \ref{contr}), it is unsuitable to use the  LS category relative
to $\mathcal {S}$ to define dimension pairs.
\end{remark}

\begin{proposition}\label{lsdimp}
For each $\alpha\in\{1,2\}$,
$(\mathscr{C}^\alpha, \Dim_{LS})$ is a dimension pair.
\end{proposition}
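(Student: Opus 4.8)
The plan is to verify, for each $\alpha \in \{1,2\}$, the two defining conditions of a dimension pair: that $\mathscr{C}^\alpha$ is an optional family (conditions (i)--(iii) of Definition \ref{defdim1}) and that $\Dim_{LS}$ is a dimension-like function on it (conditions (D1)--(D4)). Most of this is a routine unwinding of definitions, so I will organize the work around the few points that actually require an argument.

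First I would check that $\mathscr{C}^\alpha$ is an optional family. Condition (i) is immediate since $\emptyset$ is both closed and compact. For condition (ii), given a $k$-dimensional subspace $V \subset \mathscr{X}$, the set $V \cap \mathcal{S}$ is a $(k-1)$-sphere in the finite-dimensional space $V$; it is closed in $\mathcal{S}$ and, being a sphere, compact, so it lies in both $\mathscr{C}^1$ and $\mathscr{C}^2$. For condition (iii): an APH $h : \mathcal{S} \to \mathcal{S}$ is in particular a homeomorphism, hence maps closed sets to closed sets and compact sets to compact sets, so $h(A) \in \mathscr{C}^\alpha$ whenever $A \in \mathscr{C}^\alpha$. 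This settles the optional-family part.

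Next I would verify (D1)--(D4) for $\Dim_{LS}(A) = \cat_{\mathbb{P}(\mathscr{X})}(\mathfrak{p}(A))$. For (D1): $\cat$ is always a nonnegative integer, and $\cat_{\mathbb{P}(\mathscr{X})}(B) = 0$ exactly when $B = \emptyset$ (the empty cover covers only the empty set), while $\mathfrak{p}(A) = \emptyset$ iff $A = \emptyset$ since $\mathfrak{p}$ is surjective onto its image; hence $\Dim_{LS}(A) = 0 \iff A = \emptyset$. For (D2): if $A_1 \subset A_2$ then $\mathfrak{p}(A_1) \subset \mathfrak{p}(A_2)$, and $\cat_{\mathbb{P}(\mathscr{X})}$ is monotone under inclusion (a covering of $\mathfrak{p}(A_2)$ by sets contractible in $\mathbb{P}(\mathscr{X})$ restricts to one of $\mathfrak{p}(A_1)$), so $\Dim_{LS}(A_1) \le \Dim_{LS}(A_2)$. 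For (D4): if $h$ is an APH, it descends to a homeomorphism $\bar h : \mathbb{P}(\mathscr{X}) \to \mathbb{P}(\mathscr{X})$ with $\bar h \circ \mathfrak{p} = \mathfrak{p} \circ h$; since relative LS category is a homeomorphism invariant of the ambient space (a homeomorphism of $\mathbb{P}(\mathscr{X})$ carries a categorical cover of $\mathfrak{p}(A)$ to a categorical cover of $\bar h(\mathfrak{p}(A)) = \mathfrak{p}(h(A))$ and conversely), we get $\Dim_{LS}(h(A)) = \cat_{\mathbb{P}(\mathscr{X})}(\mathfrak{p}(h(A))) = \cat_{\mathbb{P}(\mathscr{X})}(\mathfrak{p}(A)) = \Dim_{LS}(A)$.

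The one condition that is not a pure formality is (D3): for a $k$-dimensional subspace $V \subset \mathscr{X}$ we must show $\cat_{\mathbb{P}(\mathscr{X})}(\mathbb{P}(V)) \ge k$, where $\mathbb{P}(V) = \mathfrak{p}(V \cap \mathcal{S})$ is a copy of $\mathbb{RP}^{k-1}$ sitting inside $\mathbb{P}(\mathscr{X})$. I expect this to be the main obstacle. The standard fact is that $\cat(\mathbb{RP}^{k-1}) = k$, but here we need the \emph{relative} category, i.e. covers by sets contractible in the (contractible, since $\mathscr{X}$ is a Banach space and $\mathcal{S}$ is contractible) ambient space $\mathbb{P}(\mathscr{X})$ — this could a priori be smaller. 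The argument I would give uses $\mathbb{Z}_2$-cohomology / the characteristic-class lower bound: let $w \in H^1(\mathbb{P}(\mathscr{X}); \mathbb{Z}_2)$ be the class classifying the double cover $\mathfrak{p}$; its restriction to $\mathbb{P}(V) \cong \mathbb{RP}^{k-1}$ is the generator $w_1$ of $H^1(\mathbb{RP}^{k-1};\mathbb{Z}_2)$, and $w_1^{k-1} \ne 0$ there. If $\mathbb{P}(V)$ were covered by $k-1$ closed sets each contractible \emph{in} $\mathbb{P}(\mathscr{X})$, then $w$ restricted to each such set is nullhomotopic hence zero, so by the standard cup-length argument the $(k-1)$-fold product of the restriction of $w$ to $\mathbb{P}(V)$ vanishes — contradicting $w_1^{k-1} \ne 0$. (One must check $\mathbb{P}(\mathscr{X})$ is a reasonable space for this cohomological argument — an ANR, which is recorded in Proposition \ref{finalPX} — and that closed contractible-in-ambient subsets kill the restriction of a cohomology class, which is the usual relative Lusternik--Schnirelmann estimate as in the references \cite{CLOT,F,S}.) Combining this with the easy direction $\cat_{\mathbb{P}(\mathscr{X})}(\mathbb{P}(V)) \le \cat(\mathbb{P}(V)) = k$ if one even wants equality, (D3) follows. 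Having verified all of (i)--(iii) and (D1)--(D4), the proposition is proved; I would remark at the end that the choice $\alpha = 1$ versus $\alpha = 2$ plays no role in these verifications, since closedness and compactness are both preserved by all the operations involved.
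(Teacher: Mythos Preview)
Your proof is correct and follows essentially the same structure as the paper's: verify the optional-family axioms, then (D1), (D2), (D4) from basic properties of relative LS category, and handle (D3) via $\cat_{\mathbb{P}(\mathscr{X})}(\mathbb{P}(V))=k$. The only difference is one of detail: the paper simply asserts $\cat_{\mathbb{P}(\mathscr{X})}(\mathbb{P}(V))=k$ as a known fact, whereas you supply the standard $\mathbb{Z}_2$-cup-length justification for the lower bound; this is a welcome elaboration but not a different route.
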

\begin{proof}Given any $\alpha\in \{1,2\}$, we have to show that $(\mathscr{C}^\alpha, \Dim_{LS})$ satisfies properties (D1)-(D4) in Definition \ref{defdim1}.  (D1) and (D2) clearly follow by
Definitions \ref{defls} and \ref{defls2}. Given a $k$-dimensional linear space $V$, since $\mathfrak{p}(V\cap \mathcal {S})=\mathbb{P}(V)$, one has $\Dim_{LS}(V\cap \mathcal {S})=\cat_{\mathbb{P}(\mathscr{X})}(\mathbb{P}(V))=k$, which implies (D3).
Moreover, each APH $h:\mathcal {S} \rightarrow \mathcal {S}$ induces a homeomorphism  $H:\mathbb{P}(\mathscr{X})\rightarrow \mathbb{P}(\mathscr{X})$, i.e., $H([u]):=\mathfrak{p}\circ h(u)$.
Since $\cat_{\mathbb{P}(\mathscr{X})} (\cdot)$ is invariant under homeomorphism (cf. Cornea,  Lupton,
 Oprea and
Tanr\'e \cite[Lemma 1.13/(5)]{CLOT}), one gets
\[
\Dim_{LS}( h(A))=\cat_{\mathbb{P}(\mathscr{X})}(\mathfrak{p}(h(A)))=\cat_{\mathbb{P}(\mathscr{X})} (H(\mathfrak{p}(A)))=\cat_{\mathbb{P}(\mathscr{X})} (\mathfrak{p}(A))=
\Dim_{LS}(A),\ \forall\,A\in \mathscr{C}^\alpha,
\]
which proves property (D4).
\end{proof}

Let $(M,F, d\mathfrak{m})$ be a compact {\rm FMMM} and let $\alpha\in\{1,2\}$.
According to Theorem \ref{minmaxth},
the $k^{th}$ eigenvalue of $(\mathscr{C}^\alpha, \Dim_{LS})$, denoted by $\lambda_k^{LS,\alpha}$, is
\[
\lambda_k^{LS,\alpha}=\inf_{A\in \mathscr{C}^{LS,\alpha}_k}\sup_{u\in A}E(u),
\]
where $\mathscr{C}^{LS,\alpha}_k:=\left\{A\in \mathscr{C}^\alpha:\, \Dim_{LS}(A)\geq k\right\}$. The collection $\{\lambda_k^{LS,\alpha}\}_{k=1}^\infty$ is called the $(\mathscr{C}^\alpha, \Dim_{LS})$-spectrum.


\begin{lemma}\label{muti2}Let $(M,F, d\mathfrak{m})$ be a compact {\rm FMMM}. Given $\alpha\in\{1,2\}$,
if for some $k\in \mathbb{N}^+$,
\[
0\leq \lambda^{LS,\alpha}_k=\lambda^{LS,\alpha}_{k+1}=\cdots=\lambda^{LS,\alpha}_{k+l-1}=\lambda,
\]
i.e., the multiplicity of the
eigenvalue $\lambda$ is $l$, then $\Dim_{LS}(\mathfrak{K}_{\lambda}) \geq l$ $($see $(\ref{4.10101})).$ In particular, there exist at least $l$ linearly independent eigenfunctions corresponding to the eigenvalue $\lambda$.
Moreover, if $l>1$, then $\mathfrak{K}_{\lambda}$ is an infinite set.
\end{lemma}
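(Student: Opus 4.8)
The plan is to adapt the classical Lusternik–Schnirelmann multiplicity argument to the present Banach–Finsler setting, using the Homotopy Lemma \ref{maintheorem2-0} as the deformation tool and the subadditivity/continuity properties of the relative LS category on the ANR $\mathbb{P}(\mathscr{X})$. Suppose, for contradiction, that $\Dim_{LS}(\mathfrak{K}_\lambda) = \cat_{\mathbb{P}(\mathscr{X})}(\mathfrak{p}(\mathfrak{K}_\lambda)) \le l-1$. By Lemma \ref{funddomain} the eigenset $\mathfrak{K}_\lambda$ is compact, hence $\mathfrak{p}(\mathfrak{K}_\lambda)$ is a compact subset of the ANR $\mathbb{P}(\mathscr{X})$; the continuity property of the relative LS category (it is constant on a neighborhood of any compact set whose category is finite, cf. \cite{CLOT}) gives an open neighborhood $\widetilde{O}$ of $\mathfrak{p}(\mathfrak{K}_\lambda)$ in $\mathbb{P}(\mathscr{X})$ with $\cat_{\mathbb{P}(\mathscr{X})}(\overline{\widetilde{O}}) \le l-1$. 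Pull this back to an APH-invariant (i.e. $\mathbb{Z}_2$-symmetric) open neighborhood $O := \mathfrak{p}^{-1}(\widetilde{O})$ of $\mathfrak{K}_\lambda$ in $\mathcal{S}$, with $\Dim_{LS}(\overline{O}) \le l-1$.

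Next I would apply the Homotopy Lemma \ref{maintheorem2-0} with this $\epsilon=1$ (or any fixed $\epsilon>0$) and this $O$: it produces $\epsilon_0 \in (0,1)$ and an APH $\Phi(\cdot,1):\mathcal{S}\to\mathcal{S}$ with $\Phi(E_{\lambda+\epsilon_0}\setminus O, 1) \subset E_{\lambda-\epsilon_0}$. Since the multiplicity hypothesis gives $\lambda_{k+l-1}^{LS,\alpha}=\lambda$, the min-max characterization (Theorem \ref{minmaxth}) provides, for this $\epsilon_0$, a set $A\in\mathscr{C}^\alpha$ with $\Dim_{LS}(A)\ge k+l-1$ and $A\subset E_{\lambda+\epsilon_0}$. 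Write $A = (A\setminus O) \cup (A\cap \overline{O})$. The first piece is closed (resp. compact) and satisfies $\Phi(A\setminus O,1)\subset E_{\lambda-\epsilon_0}$, and since $\Phi(\cdot,1)$ is an APH, property (D4) gives $\Phi(A\setminus O,1)\in\mathscr{C}^\alpha$. The key structural input is the subadditivity of $\cat_{\mathbb{P}(\mathscr{X})}$ under closed covers together with its invariance under the homeomorphism of $\mathbb{P}(\mathscr{X})$ induced by $\Phi(\cdot,1)$: this yields
\[
k+l-1 \le \Dim_{LS}(A) = \Dim_{LS}(\Phi(A,1)) \le \Dim_{LS}(\Phi(A\setminus O,1)) + \Dim_{LS}(\overline{O}) \le \Dim_{LS}(\Phi(A\setminus O,1)) + (l-1),
\]
so $\Dim_{LS}(\Phi(A\setminus O,1)) \ge k$, i.e. $\Phi(A\setminus O,1)\in\mathscr{C}^{LS,\alpha}_k$. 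But then Theorem \ref{minmaxth} forces $\lambda = \lambda_k^{LS,\alpha} \le \sup_{u\in\Phi(A\setminus O,1)} E(u) \le \lambda-\epsilon_0$, a contradiction. Hence $\Dim_{LS}(\mathfrak{K}_\lambda)\ge l$.

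Finally, the geometric consequences follow from the definition of $\Dim_{LS}$ via $\cat$. If $\mathfrak{K}_\lambda$ were contained in the union of finitely many, say at most $l-1$, "lines" $\mathbb{R}f_j\cap\mathcal{S}$ with $\{f_j\}$ spanning a space of dimension $\le l-1$, then $\mathfrak{p}(\mathfrak{K}_\lambda)$ would lie in a projective space of dimension $\le l-1$, giving $\Dim_{LS}(\mathfrak{K}_\lambda)\le l-1$, contradicting what was just proved; so any maximal linearly independent set of eigenfunctions in $\mathfrak{K}_\lambda$ has at least $l$ elements. Moreover if $l>1$ and $\mathfrak{K}_\lambda$ were finite, then $\mathfrak{p}(\mathfrak{K}_\lambda)$ would be a finite subset of $\mathbb{P}(\mathscr{X})$, each point of which is contractible in $\mathbb{P}(\mathscr{X})$, forcing $\cat_{\mathbb{P}(\mathscr{X})}(\mathfrak{p}(\mathfrak{K}_\lambda))\le 1 < l$, again a contradiction; hence $\mathfrak{K}_\lambda$ is infinite. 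The main obstacle in this argument is making the neighborhood/continuity step rigorous in the infinite-dimensional ANR $\mathbb{P}(\mathscr{X})$ — i.e. justifying that a compact finite-category set has a closed neighborhood of the same category and that this neighborhood can be chosen $\mathbb{Z}_2$-symmetric so that its preimage in $\mathcal{S}$ is an admissible $O$ for the Homotopy Lemma; the rest is a standard deformation/subadditivity bookkeeping.
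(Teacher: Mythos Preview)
Your argument is correct and follows the classical Lusternik--Schnirelmann multiplicity scheme (neighborhood property $+$ deformation $+$ subadditivity), which is also what the paper does. The route differs in one substantive way: the paper first establishes $\Dim_{LS}=\Dim_K$ on symmetric closed sets (Theorem~\ref{therealconnection}, via Fadell's identification of $\mathbb{Z}_2$-genus with $\cat_{\mathbb{P}(\mathscr{X})}$) and then carries out the whole argument in the Krasnoselskii genus, where the neighborhood property is the ready-made Lemma~\ref{bascip}/(vii) and the linear-independence step is done by orthogonal projection in $(\mathscr{X},(\cdot,\cdot))$ onto the span of a maximal orthogonal subset of $\mathfrak{K}_\lambda$, giving an odd map into $\mathbb{R}^s\setminus\{0\}$ and hence $s\ge\Dim_K(\mathfrak{K}_\lambda)\ge l$. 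You instead stay with $\cat_{\mathbb{P}(\mathscr{X})}$ directly and appeal to the continuity of the relative category on the ANR $\mathbb{P}(\mathscr{X})$; this is legitimate (and you rightly flag it as the one step needing care), but the paper's detour through genus buys exactly that: the neighborhood property and the linear-independence count become one-line citations rather than separate ANR/containment arguments. A minor point: your ``union of at most $l-1$ lines'' phrasing is not quite the right contrapositive --- what you need (and what your conclusion uses) is that if the linear span of $\mathfrak{K}_\lambda$ had dimension $s\le l-1$ then $\mathfrak{K}_\lambda\subset V_s\cap\mathcal{S}$ and $\Dim_{LS}(\mathfrak{K}_\lambda)\le\Dim_{LS}(V_s\cap\mathcal{S})=s\le l-1$; the paper's projection argument makes this step cleaner.
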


The proof of Lemma \ref{muti2} will be postponed after Theorem \ref{therealconnection}; this lemma furnishes the following important result.

\begin{theorem}\label{compareRiemannen}
For each $\alpha\in\{1,2\}$,
$(\mathscr{C}^\alpha, \Dim_{LS})$ is a faithful dimension pair.
\end{theorem}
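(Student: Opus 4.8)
The plan is to verify that the $(\mathscr{C}^\alpha,\Dim_{LS})$-spectrum coincides with the Beltrami-Laplace spectrum $\{\lambda_k^\Delta\}$ for every compact Riemannian manifold $(M,g)$ equipped with $d\vol_g$. By Theorem \ref{properties1}/(ii) we already have $\lambda_k^{LS,\alpha}\leq\lambda_k^\Delta$ for all $k$, so the real content is the reverse inequality $\lambda_k^{LS,\alpha}\geq\lambda_k^\Delta$. I would prove this by induction on $k$, exploiting the multiplicity statement of Lemma \ref{muti2} together with the classical Courant min-max characterization \eqref{new4.1}.

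\textbf{Key steps.} First, fix a compact Riemannian $(M,g)$ and write $\mu_k:=\lambda_k^{LS,\alpha}$, $\lambda_k:=\lambda_k^\Delta$. Since $\mu_k\leq\lambda_k$, it suffices to rule out a strict inequality $\mu_k<\lambda_k$ for some $k$. Suppose $\mu_k<\lambda_k$ and let $l\geq 1$ be the multiplicity of the eigenvalue $\mu:=\mu_k$ in the $(\mathscr{C}^\alpha,\Dim_{LS})$-spectrum, say $\mu=\mu_{j}=\mu_{j+1}=\cdots=\mu_{j+l-1}$ with $j\leq k\leq j+l-1$. By Lemma \ref{muti2}, $\Dim_{LS}(\mathfrak{K}_\mu)\geq l$, hence $\cat_{\mathbb{P}(\mathscr X)}(\mathfrak p(\mathfrak K_\mu))\geq l$, and there are at least $l$ linearly independent eigenfunctions (of the Finsler-Laplacian, which here equals the linear Beltrami-Laplacian) with eigenvalue $\mu$. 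In the Riemannian (linear) setting, $E(u)=\mu$ and $DE(u)=0$ for $u\in\mathcal S$ forces $-\Delta u=\mu u$ in the usual sense; so $\mathfrak K_\mu$ consists of unit-norm genuine Beltrami-Laplace eigenfunctions for the eigenvalue $\mu$. But the multiplicity of $\mu$ as a Beltrami-Laplace eigenvalue is exactly $\dim\ker(\Delta+\mu)$; if $\mu$ is \emph{not} in the Beltrami-Laplace spectrum at all, $\mathfrak K_\mu=\emptyset$, contradicting $\Dim_{LS}(\mathfrak K_\mu)\geq l\geq 1$. If $\mu$ \emph{is} a Beltrami-Laplace eigenvalue, say $\mu=\lambda_{i}^\Delta=\cdots=\lambda_{i+m-1}^\Delta$ with eigenspace $W$ of dimension $m$, then $\mathfrak K_\mu=W\cap\mathcal S$, a sphere $S^{m-1}$, whose image in projective space has $\cat=m$; thus $l\leq m$. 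Combining this with $j\leq k\leq j+l-1$ and the monotone interlacing with the numbers $\mu_1\leq\cdots$, one traces indices forward: by induction $\mu_{j-1}=\lambda_{j-1}^\Delta<\mu$ gives $j-1\leq i-1$, i.e. $i\geq j$; and $\mu_{j+l}>\mu$ together with $l\leq m$ and the Courant characterization forces $j+l-1\geq i+m-1$, i.e. $j\geq i$. Hence $i=j$ and $k$ falls in the block $\{i,\dots,i+m-1\}$, so $\mu_k=\mu=\lambda_k^\Delta$, the desired contradiction with $\mu_k<\lambda_k$.

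\textbf{Main obstacle.} The delicate point is the bookkeeping of indices: showing that the multiplicity block for $\mu$ in the LS-spectrum lines up \emph{exactly} with the Beltrami-Laplace eigenspace block, rather than being a proper sub-block. This is where one must combine the lower bound $\cat_{\mathbb{P}(\mathscr X)}(\mathbb P(W))=\dim W$ (which gives $\mu_{i+m-1}^{LS,\alpha}\leq\mu$ via the min-max with test family $W\cap\mathcal S\in\mathscr C_{m}^{LS,\alpha}$, using $i$ shifts) with the upper bound from Lemma \ref{muti2} ($\Dim_{LS}(\mathfrak K_\mu)\geq l$ forces $l$ linearly independent eigenfunctions, so $l\leq m$). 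One should run the argument cleanly by \emph{induction on $k$}: assuming $\mu_j=\lambda_j^\Delta$ for all $j<k$, use the two inequalities $\mu_k\leq\lambda_k^\Delta$ (Theorem \ref{properties1}/(ii)) and the reverse estimate just sketched to conclude equality at $k$; the base case $\mu_1=0=\lambda_1^\Delta$ is Theorem \ref{properties1}/(i). A subsidiary point is to confirm that the test family $W\cap\mathcal S$ indeed lies in both $\mathscr C^1$ and $\mathscr C^2$ (it is a finite-dimensional sphere, hence compact and closed) so the argument is uniform in $\alpha\in\{1,2\}$; this is immediate. I would cite \eqref{new4.1}, Theorem \ref{minmaxth}, Theorem \ref{properties1}, and Lemma \ref{muti2} as the only external inputs.
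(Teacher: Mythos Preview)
Your argument is correct and relies on the same three ingredients as the paper (Theorem~\ref{properties1}, Courant's principle~\eqref{new4.1}, and Lemma~\ref{muti2}), but the paper's route is considerably more direct. Rather than proceed by induction on $k$ and chase the alignment of multiplicity blocks, the paper simply \emph{builds} a $k$-dimensional test space in one shot: for each $j\leq k$ the value $\lambda_j^{LS,\alpha}$ is a genuine Beltrami--Laplace eigenvalue (Theorem~\ref{properties1}/(iii)); eigenfunctions belonging to distinct eigenvalues are automatically $L^2$-orthogonal, and within a multiplicity block of size $l$ Lemma~\ref{muti2} supplies $l$ linearly independent eigenfunctions which one orthogonalises. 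This yields $k$ mutually orthogonal eigenfunctions $u_1,\dots,u_k$ with $E(u_j)=\lambda_j^{LS,\alpha}$, and then a single application of Courant to $V_k=\mathrm{Span}\{u_1,\dots,u_k\}$ gives
\[
\lambda_k^\Delta \;\leq\; \sup_{u\in V_k\setminus\{0\}} E(u)\;=\;\max_j \lambda_j^{LS,\alpha}\;=\;\lambda_k^{LS,\alpha}.
\]
No induction, no contradiction, no index bookkeeping. Your approach recovers the same conclusion, but the ``delicate point'' you flag (lining up the LS-block $\{j,\dots,j+l-1\}$ with the Beltrami--Laplace block $\{i,\dots,i+m-1\}$) is an artefact of the contradiction set-up; the paper bypasses it entirely by constructing the test space directly. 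One small thing to make explicit in your version: the finiteness of $l$ (needed to speak of $\mu_{j+l}$) follows because $\mathfrak K_\mu=W\cap\mathcal S$ with $\dim W=m<\infty$, so $\Dim_{LS}(\mathfrak K_\mu)=m$ caps $l$.
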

\begin{proof}Let $(M,g,d\vol_g)$ be a compact Riemannian manifold endowed with its canonical measure.
Fix $\alpha\in\{1,2\}$ and $k\in \mathbb{N}^+$ arbitrarily. Due to Theorem \ref{properties1}/(ii), it suffices to show $\lambda_k^\Delta\leq \lambda^{LS,\alpha}_k$.

Theorem \ref{properties1}/(iii) together with the spectral theory in Riemannian geometry implies that for each $j$ with $1\leq j \leq k$, there exists $u_j\in   C^\infty(M)$ such that $E(u_j)=\lambda_j^{LS,\alpha}$ and $-\Delta u_j=\lambda_j^{LS,\alpha} u_j$ in the weak sense.
  If $\lambda^{LS,\alpha}_i\neq\lambda^{LS,\alpha}_j$, then
\[
\ds\int_M u_i\, u_j\,d\vol_g=0,\ \ds\int_M g(\nabla u_i,\nabla u_j)  d\vol_g=0.\tag{3.6}\label{4.1}
\]
If  the multiplicity of the eigenvalue $\lambda$ is $l$, Lemma \ref{muti2} provides at least $l$ linearly independent eigenfunctions $\{u_s\}_{s=1}^l$ corresponding to $\lambda$, which still satisfy (\ref{4.1}) (since $\Delta$ is linear). Accordingly, one always obtains $k$ eigenfunctions $\{u_j\}_{j=1}^k$ such that they are mutually orthogonal (in the sense of (\ref{4.1})) and $E(u_j)=\lambda^{LS,\alpha}_j$.

Now let
$V_k:=\text{Span}\{u_1,\ldots,u_k\}\subset \mathscr{X}$.
Thus, $\Dim_C(V_k)= k$ and then Courant's minimax principle (\ref{new4.1})  together with (\ref{4.1}) yields
\begin{eqnarray*}
\lambda_k^\Delta&\leq&\sup_{u\in V_k\backslash\{0\}}E(u)=\sup_{(a_1,...,a_k)\neq 0_{\mathbb R^k}}\frac{{\ds\sum_{i=1}^k}a_i^2\ds\int_M g(\nabla u_i,\nabla u_i)d\vol_g}{{\ds\sum_{i=1}^k}a_i^2\ds\int_M u_i^2d\vol_g}=\sup_{(a_1,...,a_k)\neq 0_{\mathbb R^k}}\frac{{\ds\sum_{i=1}^k}a_i^2\lambda_i^{LS,\alpha}\ds\int_M  u_i^2d\vol_g}{{\ds\sum_{i=1}^k}a_i^2\ds\int_M u_i^2d\vol_g}\\&\leq&\lambda^{LS,\alpha}_k,
\end{eqnarray*}
which concludes the proof.
\end{proof}

\subsubsection{Krasnoselskii dimension pair}
${}$ \\
We now  use   the  Krasnoselskii genus to construct dimension pairs. We also refer to  Ambrosio,  Honda and Portegies \cite{A} for the  spectrum   defined on $L^2(M)$ by the  Krasnoselskii genus where the Cheeger energy is used instead of the Rayleigh quotient.
According to \cite{K2,S}, we recall the Krasnoselskii genus.
\begin{definition}\label{Krasnodef}\it
Set $\mathscr{G}:=\{A\subset \mathscr{X}:\, A\text{ is closed and }A=-A\}$.
The {\rm Krasnoselskii genus} $\Dim_K:\mathscr{G}\rightarrow \mathbb{N}\cup\{+\infty\}$ is defined by
\[
\Dim_K(A):=\left\{
\begin{array}{llll}
 \inf\{m\in \mathbb{N}:\, \exists h\in C^0(A;\mathbb{R}^m\backslash\{0\}),\,h(-u)=-h(u)\},\\
 \\
  +\infty, \ \text{ if }\{m\in \mathbb{N}:\, \exists h\in C^0(A;\mathbb{R}^m\backslash\{0\}),\,h(-u)=-h(u)\}=\emptyset.
\end{array}
\right.
\]
\end{definition}

The Krasnoselskii genus satisfies the following properties; see Struwe \cite[Charpter II,  Proposition 5.2, Proposition 5.4, Observation 5.5]{S}.
\begin{lemma}\label{bascip}
Let $A, B\in \mathscr{G}$ and $h : \mathscr{X} \rightarrow \mathscr{X}$ be a map with $h(-u)=-h(u)$. Then the following properties hold$:$

\begin{itemize}
	\item[(i)] $\mathrm{\Dim}_K(A) \geq0$ with equality if and only if $A=\emptyset;$
	\item[(ii)] $A\subset B$ implies $\Dim_K(A)\leq \Dim_K(B);$
	\item[(iii)] If $A$ is a  finite collection of antipodal
	pairs $u_i,-u_i$, then $\Dim_K(A)=1;$
	\item[(iv)] Given $k\in \mathbb{N}^+$, for any $k$-dimensional linear space $V\subset \mathscr{X}$, one has $\Dim_{K}(\mathcal {S}\cap V)=k;$
	\item[(v)] $\Dim_K(A) \leq \Dim_K(\overline{h(A)});$
	\item[(vi)] $\Dim_K(A\cup B) \leq  \Dim_K(A) + \Dim_K(B);$
	\item[(vii)] If $A$ is compact and $ 0\notin A$, then $\Dim_K(A)<+\infty$ and there is a symmetric neighborhood $O$ of $A$ in
	$\mathscr{X}$ such that $\overline{O}\in \mathscr{G}$ and $\Dim_K(A)=\Dim_K(\overline{O})$.
\end{itemize}
\end{lemma}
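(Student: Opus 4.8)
The plan is to verify each of the seven properties (i)--(vii) directly from Definition \ref{Krasnodef}, most of them by routine arguments on continuous odd maps, and to isolate the two genuinely nontrivial points: property (iv), which computes the genus of a sphere in a $k$-dimensional subspace, and the ``symmetric neighborhood'' part of property (vii). Properties (i), (ii), (iii), (v), (vi) are formal: (i) is immediate since $\emptyset$ admits the empty map into $\mathbb R^0\setminus\{0\}$ while a nonempty $A$ needs $m\ge 1$; (ii) follows because any odd $h\colon B\to\mathbb R^m\setminus\{0\}$ restricts to an odd map on $A$; (iii) follows by sending the finitely many antipodal pairs $\pm u_i$ to $\pm 1\in\mathbb R\setminus\{0\}$; (v) holds because if $h\colon\mathscr X\to\mathscr X$ is odd and $g\colon\overline{h(A)}\to\mathbb R^m\setminus\{0\}$ is odd, then $g\circ h|_A$ is an odd map $A\to\mathbb R^m\setminus\{0\}$ (using continuity of $h$ and that $h(A)\subset\overline{h(A)}$); and (vi) follows by a Tietze-type extension: given odd maps $h_A,h_B$ on $A,B$ with values in punctured $\mathbb R^{m_A},\mathbb R^{m_B}$, one extends $h_A$ to an odd continuous map $\tilde h_A$ on all of $\mathscr X$ (extend arbitrarily by Tietze, then symmetrize via $x\mapsto\tfrac12(\tilde h_A(x)-\tilde h_A(-x))$), similarly $\tilde h_B$, and then $(\tilde h_A,\tilde h_B)\colon A\cup B\to\mathbb R^{m_A+m_B}$ is odd and nonvanishing on $A\cup B$ since at each point one of the two blocks is nonzero.

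For (iv) I would argue $\Dim_K(\mathcal S\cap V)\le k$ by simply taking a linear isomorphism $V\cong\mathbb R^k$; the restriction of this isomorphism to $\mathcal S\cap V$ is a continuous odd map into $\mathbb R^k\setminus\{0\}$ (it is injective and misses $0$ since $0\notin\mathcal S$), so the genus is at most $k$. The reverse inequality $\Dim_K(\mathcal S\cap V)\ge k$ is the Borsuk--Ulam input: if there were an odd continuous map $\mathcal S\cap V\to\mathbb R^{k-1}\setminus\{0\}$, then composing with the homeomorphism $\mathcal S\cap V\cong S^{k-1}$ (the unit sphere of $V$ in any norm, which is $\mathbb Z_2$-equivariantly homeomorphic to the standard sphere) and with the radial retraction $\mathbb R^{k-1}\setminus\{0\}\to S^{k-2}$ would produce an odd map $S^{k-1}\to S^{k-2}$, contradicting Borsuk--Ulam. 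I would cite Struwe \cite[Chapter II, Proposition 5.2]{S} for this rather than reprove it.

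For property (vii), assume $A$ is compact with $0\notin A$. Finiteness of $\Dim_K(A)$: since $A$ is compact and $0\notin A$, there is $\rho>0$ with $\|u\|_{\Ho}\ge\rho$ on $A$; cover $A$ by finitely many balls on which the identity map composed with a suitable finite-dimensional projection stays away from $0$, or more cleanly invoke the general fact (again Struwe) that a compact symmetric set avoiding the origin in a normed space has finite genus because it can be covered by finitely many ``simplex-like'' symmetric pieces each of genus $1$, then apply (vi). For the neighborhood: by continuity of an optimal odd map $h\colon A\to\mathbb R^m\setminus\{0\}$ (where $m=\Dim_K(A)$), extend $h$ oddly to a continuous map $\tilde h$ on $\mathscr X$ via Tietze plus symmetrization as above; the open set $O:=\{u\in\mathscr X:\tilde h(u)\ne 0\}\cap\{u:\|u\|_{\Ho}>\rho/2\}$ is a symmetric open neighborhood of $A$, and on a slightly smaller symmetric closed neighborhood $\overline O$ (take $\overline O$ to be the closure of $\{\,|\tilde h|>\varepsilon\,\}\cap\{\|u\|_{\Ho}\ge 2\varepsilon\}$ for small $\varepsilon$, which still contains $A$ by compactness) the map $\tilde h$ witnesses $\Dim_K(\overline O)\le m=\Dim_K(A)$, while $\Dim_K(\overline O)\ge\Dim_K(A)$ by (ii); hence equality.

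The main obstacle is property (iv): all the other items are soft manipulations with continuous odd maps and Tietze extension, but (iv) rests on the Borsuk--Ulam theorem, which is the one place where the finite-dimensional topology is doing real work. In the write-up I would therefore state (i)--(iii), (v), (vi) briefly, spend a sentence on the Borsuk--Ulam reduction for (iv), and give the neighborhood construction for (vii) with a little more care, referring throughout to Struwe \cite[Chapter II, Section 5]{S} since the lemma is quoted rather than claimed as new.
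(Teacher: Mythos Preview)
Your sketch is correct, and in fact it goes well beyond what the paper does: the paper does not prove this lemma at all, but simply states it and refers the reader to Struwe \cite[Chapter II, Proposition 5.2, Proposition 5.4, Observation 5.5]{S} for the proof. Your outline of each item---in particular, the Borsuk--Ulam reduction for (iv) and the Tietze-plus-symmetrization construction of the neighborhood in (vii)---is the standard argument one finds in Struwe. One small notational slip: in (vii) you use $\overline{O}$ first for the closure of your open set and then redefine it as a smaller closed neighborhood; in a clean write-up you would want distinct symbols there. Otherwise there is nothing to correct, and since the paper treats this as a cited background result, your level of detail is already more than adequate.
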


By  Lemma \ref{bascip}/(i)-(v) one easily gets the following result.
\begin{proposition}
Define two optional families $\mathscr{D}^\alpha$, $\alpha=1,2,$ by
\begin{align*}
\mathscr{D}^1:=\{A\subset \mathcal {S}: A \text{ is closed and }A=-A\},\ \mathscr{D}^2:=\{A\subset \mathcal {S}: A \text{ is compact and }A=-A\}.
\end{align*}
Then for each $\alpha\in\{1,2\}$, $(\mathscr{D}^\alpha,\Dim_K)$ is a dimension pair.
\end{proposition}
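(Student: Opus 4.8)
The plan is to verify directly that each pair $(\mathscr{D}^\alpha,\Dim_K)$ satisfies the axioms in Definition \ref{defdim1}, reducing everything to the abstract properties of the Krasnoselskii genus collected in Lemma \ref{bascip}. First I would check that each $\mathscr{D}^\alpha$ is an optional family: the empty set is closed (resp. compact) and symmetric, so $\emptyset\in\mathscr{D}^\alpha$; for a $k$-dimensional subspace $V\subset\mathscr{X}$ the set $V\cap\mathcal{S}$ is a sphere, hence closed in $\mathcal{S}$, compact (finite-dimensional), and symmetric, so it lies in $\mathscr{D}^\alpha$; finally, if $h:\mathcal{S}\to\mathcal{S}$ is an APH and $A\in\mathscr{D}^\alpha$, then $h(A)$ is closed (resp. compact, since $h$ is a homeomorphism) and $h(-A)=-h(A)$ gives $h(A)=-h(A)$, so $h(A)\in\mathscr{D}^\alpha$. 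Note that membership in $\mathscr{G}$ (the domain of $\Dim_K$) is automatic once $A\subset\mathcal{S}$ is closed and symmetric, so $\Dim_K$ is well-defined on $\mathscr{D}^\alpha$.

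Next I would dispatch the four properties (D1)--(D4). Property (D1) is exactly Lemma \ref{bascip}/(i). Property (D2) is Lemma \ref{bascip}/(ii). Property (D3): for a $k$-dimensional $V$, Lemma \ref{bascip}/(iv) gives $\Dim_K(\mathcal{S}\cap V)=k\geq k$. For (D4), given an APH $h$, extend it in the obvious way or simply apply Lemma \ref{bascip}/(v) with the odd map $h$ (and its odd inverse $h^{-1}$) restricted to $A$: since $A\subset\mathcal{S}$ and $h(A)\subset\mathcal{S}$ are compact (or closed) subsets on which $h$ is an odd homeomorphism, $\Dim_K(A)\le\Dim_K(\overline{h(A)})=\Dim_K(h(A))$ and symmetrically $\Dim_K(h(A))\le\Dim_K(A)$, whence equality. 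One small point to be careful about: Lemma \ref{bascip}/(v) is stated for a map $h:\mathscr{X}\to\mathscr{X}$, so I would either note that an APH of $\mathcal{S}$ can be regarded as acting on the relevant closed symmetric subsets, or invoke the standard fact that the genus is invariant under odd homeomorphisms between the sets in question (which is the content of combining (ii) and (v) for $h$ and $h^{-1}$).

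The only genuinely delicate issue — and the one I expect to be the main obstacle — is making sure the closure in Lemma \ref{bascip}/(v) does not cause trouble, i.e.\ that when $A$ is merely closed in $\mathcal{S}$ (the $\alpha=1$ case) and not compact, $h(A)$ is still closed so that $\overline{h(A)}=h(A)$. This is fine because $h$ is a homeomorphism of $\mathcal{S}$ onto itself, so it maps closed subsets of $\mathcal{S}$ to closed subsets of $\mathcal{S}$; for $\alpha=2$ it is even more immediate. With that observed, the odd-homeomorphism invariance of the genus gives (D4) with no loss. Assembling the verifications of (i)--(iii) for the optional family and (D1)--(D4) for $\Dim_K$ completes the proof that $(\mathscr{D}^\alpha,\Dim_K)$ is a dimension pair for each $\alpha\in\{1,2\}$.
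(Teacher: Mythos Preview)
Your proposal is correct and follows exactly the approach the paper indicates: the paper simply states that the proposition follows from Lemma \ref{bascip}/(i)--(v), and your argument spells out precisely how each axiom of Definition \ref{defdim1} is obtained from the corresponding item of that lemma. Your attention to the closure issue in applying Lemma \ref{bascip}/(v) for the $\alpha=1$ case is a nice point that the paper leaves implicit.
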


Let $(M,F, d\mathfrak{m})$ be a compact {\rm FMMM}
and let $\alpha\in\{1,2\}$. In view of Theorem  \ref{minmaxth},
the $k^{th}$ eigenvalue of $(\mathscr{D}^\alpha, \Dim_{K})$, denoted by $\lambda_k^{K,\alpha}$, is equal to
\[
\lambda_k^{K,\alpha}=\inf_{A\in \mathscr{D}^{K,\alpha}_k}\sup_{u\in A}E(u),
\]
where $\mathscr{D}^{K,\alpha}_k:=\{A\in \mathscr{D}^{\alpha}:\, \Dim_K(A)\geq k\}$. The collection $\{\lambda_k^{K,\alpha}\}_{k=1}^\infty$ is called the $(\mathscr{D}^\alpha, \Dim_{K})$-spectrum.


We are going to point out an important relation between the   $(\mathscr{C}^\alpha,\Dim_{LS})$-spectrum  and  the  $(\mathscr{D}^\alpha,\Dim_K)$-spectrum; to do this, we recall the following result.
\begin{lemma}[{Fadell \cite[Theorem (3), p.34]{F}}]\label{imrelation}
Let $\mathcal {E}$ be any contractible paracompact free $G$-space, where
$G$ is a compact Lie group. Let $\Sigma$ denote the collection of closed, invariant subsets
of $\mathcal {E}$ and set $\mathcal {B} = \mathcal {E}/G$.
Then for any $A\in \Sigma$, we have
\[
\cat_\mathcal {B}(A/G)= G\text{-genus } A .
\]
In particular, if $G=\mathbb{Z}_2$, the $G$-genus is precisely the Krasnoselskii genus.
\end{lemma}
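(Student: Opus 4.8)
The plan is to prove the identity $\cat_{\mathcal{B}}(A/G)=G\text{-genus}(A)$ by establishing both inequalities at once through a single dictionary between ``closed covers contractible in $\mathcal{B}$'' and ``closed invariant covers on which the orbit map splits''. The engine behind this dictionary is that $\pi\colon\mathcal{E}\to\mathcal{B}=\mathcal{E}/G$ is a \emph{universal} principal $G$-bundle. Indeed, since $G$ is a compact Lie group acting freely on the paracompact space $\mathcal{E}$, Gleason's slice theorem yields local sections, so $\pi$ is a principal $G$-bundle, and paracompactness of $\mathcal{B}$ (inherited from $\mathcal{E}$) makes it numerable; since $\mathcal{E}$ is contractible, Dold's theorem says this bundle is universal, i.e. $\mathcal{B}$ is a model for $BG$ and, for every paracompact $X$, pullback along $\pi$ identifies $[X,\mathcal{B}]$ with the set of isomorphism classes of numerable principal $G$-bundles over $X$.

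The key lemma is then: for a closed set $C\subset\mathcal{B}$ the following are equivalent: (a) $C$ is contractible in $\mathcal{B}$; (b) the restricted bundle $\pi^{-1}(C)\to C$ is trivial; (c) $\pi^{-1}(C)$ admits a $G$-equivariant map onto $G$ (acting on itself by translation). For (a)$\Leftrightarrow$(b), note $\pi^{-1}(C)\to C$ is the pullback of the universal bundle along the inclusion $C\hookrightarrow\mathcal{B}$, so by the classification statement it is trivial iff that inclusion is null-homotopic, which is exactly the contractibility of $C$ in $\mathcal{B}$ (using that a closed subset of the paracompact $\mathcal{B}$ is again paracompact). For (b)$\Leftrightarrow$(c) one uses the classical fact that a principal $G$-bundle is trivial iff it has a section iff its total space carries an equivariant map onto $G$.

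Granting the lemma, both inequalities are immediate. If $\cat_{\mathcal{B}}(A/G)=n$, write $A/G=\bigcup_{j=1}^{n}C_j$ with each $C_j$ closed and contractible in $\mathcal{B}$; then $A=\bigcup_{j=1}^{n}\pi^{-1}(C_j)$ is a cover by closed invariant sets, each admitting a $G$-map onto $G$ by the lemma, so $G\text{-genus}(A)\le n$. Conversely, if $G\text{-genus}(A)=n$, write $A=\bigcup_{j=1}^{n}B_j$ with $B_j$ closed, invariant, and carrying a $G$-map onto $G$; put $C_j:=\pi(B_j)=B_j/G$, which is closed in $A/G$ with $\pi^{-1}(C_j)=B_j$, so the lemma makes $C_j$ contractible in $\mathcal{B}$, giving $\cat_{\mathcal{B}}(A/G)\le n$. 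For the final clause, when $G=\mathbb{Z}_2$ a $\mathbb{Z}_2$-map of a closed invariant set onto $\mathbb{Z}_2=S^0$ is an odd $\pm1$-valued function; combining these over the cover with an invariant partition of unity produces an odd map $A\to\mathbb{R}^n\setminus\{0\}$, and conversely an odd $F\colon A\to\mathbb{R}^n\setminus\{0\}$ gives, via the ``dominant coordinate'' closed invariant sets $A_j=F^{-1}\{x:\,|x_j|\ge|x_i|\ \forall i\}$, a cover on which $u\mapsto\operatorname{sign}F(u)_j$ is an odd map onto $S^0$; this matches $G\text{-genus}$ with the Krasnoselskii genus of Definition \ref{Krasnodef}, which is the case applied in this paper (where $\mathcal{E}=\mathcal{S}$ is contractible, see Proposition \ref{contr}, and $\mathbb{Z}_2$ acts freely by antipodes).

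The main obstacle I anticipate is not the homotopy-theoretic core but the point-set bookkeeping: one must be able to take the covers by \emph{closed} sets on both sides, since the relative LS category of Definition \ref{defls} uses closed sets whereas genus-type constructions naturally yield open ones, which forces the use of shrinking lemmas valid in paracompact (normal) spaces, together with verifying paracompactness of $\mathcal{B}$ and of the relevant subspaces so that the numerable-bundle classification applies. A secondary, more bureaucratic point is to fix the precise definition of ``$G$-genus'' being used (cover by invariant sets equivariantly mapping to $G$, rather than a join-model or cohomological index) and check it is the one appearing in Fadell's statement; once this is settled the argument above goes through verbatim, and for the applications only the transparent $\mathbb{Z}_2$ case is needed.
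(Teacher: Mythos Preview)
The paper does not give its own proof of this lemma: it is quoted verbatim as a result of Fadell \cite[Theorem (3), p.34]{F} and used as a black box in the proof of Theorem \ref{therealconnection}. So there is no ``paper's proof'' to compare against beyond the citation itself.

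That said, your argument is essentially the standard proof of Fadell's theorem and is correct in outline. The key observation that $\pi:\mathcal{E}\to\mathcal{B}$ is a universal principal $G$-bundle, together with the equivalence ``$C$ contractible in $\mathcal{B}$ $\Leftrightarrow$ $\pi^{-1}(C)\to C$ trivial $\Leftrightarrow$ $\pi^{-1}(C)$ admits an equivariant map to $G$'', is exactly the mechanism Fadell uses, and your translation of covers in both directions is the right way to conclude. Your caveats are also well placed: the point-set care (paracompactness of $\mathcal{B}$ and of closed subsets, shrinking open covers to closed ones to match Definition \ref{defls}) is where the honest work lies, and one must check that Fadell's notion of $G$-genus is the ``cover by invariant sets with equivariant maps to $G$'' version you are using. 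For the application in this paper only the $\mathbb{Z}_2$ case on $\mathcal{E}=\mathcal{S}$ is needed, and there your identification with the Krasnoselskii genus of Definition \ref{Krasnodef} via the dominant-coordinate cover is the classical argument.
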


\begin{theorem}\label{therealconnection}
For any compact {\rm FMMM}, one has
\[
\lambda^{LS,\alpha}_k=\lambda_k^{K,\alpha},\ \forall\,\alpha\in \{1,2\}, \ \forall\,k\in \mathbb{N}^+.
\]
In particular, $\Dim_{LS}(A)=\Dim_{K}(A)$ for any $A\in \mathscr{D}^\alpha$.
\end{theorem}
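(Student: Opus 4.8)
The key is Fadell's Lemma~\ref{imrelation} applied to the free $\mathbb{Z}_2$-space $\mathcal{S}$: since $\mathcal{S}$ is contractible (Proposition~\ref{contr}) and paracompact (being metrizable), and $\mathbb{Z}_2$ acts freely on it with quotient $\mathbb{P}(\mathscr{X}) = \mathcal{S}/\mathbb{Z}_2$, the lemma yields
\[
\cat_{\mathbb{P}(\mathscr{X})}(\mathfrak{p}(A)) = \Dim_K(A)
\]
for every closed symmetric $A \subset \mathcal{S}$. The left-hand side is by definition $\Dim_{LS}(A)$, so this gives $\Dim_{LS}(A) = \Dim_K(A)$ for all $A \in \mathscr{D}^1$, and a fortiori for all $A \in \mathscr{D}^2$. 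The only technical wrinkle is that Fadell's lemma is stated for closed \emph{invariant} subsets of $\mathcal{E}$, while the Krasnoselskii genus in Definition~\ref{Krasnodef} is defined for closed symmetric subsets of $\mathscr{X}$ (not of $\mathcal{S}$); but for $A \subset \mathcal{S}$ with $A = -A$, the two notions of genus coincide, since a test map $h \in C^0(A;\mathbb{R}^m\setminus\{0\})$ with $h(-u)=-h(u)$ is the same object regardless of which ambient space we regard $A$ as sitting in. One must also make sure that $\mathbb{Z}_2$ acts \emph{properly discontinuously} (hence the action is free and the quotient map is a covering), which is already recorded in the text just before Proposition~\ref{projesec}.

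\textbf{From the equality of dimensions to the equality of spectra.} Once $\Dim_{LS} = \Dim_K$ on $\mathscr{D}^\alpha$, I would observe that the optional family $\mathscr{C}^\alpha$ (all closed, resp.\ compact, subsets of $\mathcal{S}$) strictly contains $\mathscr{D}^\alpha$ (those that are in addition symmetric), so a priori the min-max classes differ. However, the min-max value over symmetric sets equals the min-max value over all sets for both dimension functions. More precisely: for any $A \in \mathscr{C}^\alpha$ with $\Dim_{LS}(A) \geq k$, the symmetrized set $\widehat{A} := A \cup (-A)$ is symmetric, lies in $\mathscr{D}^\alpha$, satisfies $\sup_{\widehat{A}} E = \sup_A E$ (because $E(-u) = E(u)$ by evenness of the Rayleigh quotient, as $F^*$ is reversible), and $\Dim_{LS}(\widehat{A}) \geq \Dim_{LS}(A) \geq k$ by monotonicity (D2). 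Hence passing to symmetrized sets does not increase the min-max value, which shows
\[
\lambda_k^{LS,\alpha} = \inf_{A \in \mathscr{C}_k^{LS,\alpha}} \sup_{u \in A} E(u) = \inf_{A \in \mathscr{D}_k^{LS,\alpha}} \sup_{u \in A} E(u),
\]
where $\mathscr{D}_k^{LS,\alpha} = \{A \in \mathscr{D}^\alpha : \Dim_{LS}(A) \geq k\}$; and the same manipulation with $\Dim_K$ in place of $\Dim_{LS}$ gives the analogous identity for $\lambda_k^{K,\alpha}$. Since on $\mathscr{D}^\alpha$ we have $\Dim_{LS} = \Dim_K$, the index sets $\mathscr{D}_k^{LS,\alpha}$ and $\mathscr{D}_k^{K,\alpha}$ coincide, so the two infima are literally the same number: $\lambda_k^{LS,\alpha} = \lambda_k^{K,\alpha}$ for all $k \in \mathbb{N}^+$ and $\alpha \in \{1,2\}$.

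\textbf{Main obstacle.} The routine calculations are harmless; the one point that needs genuine care is verifying that $\mathcal{S}$ satisfies all the hypotheses of Fadell's lemma in Lemma~\ref{imrelation}. Contractibility is Proposition~\ref{contr}; paracompactness follows since $\mathcal{S}$ is a metric space (it is a subset of the Banach space $\mathscr{X}$, or more precisely a $C^\infty$-Banach-Finsler manifold by Proposition~\ref{compS}, hence metrizable); freeness of the $\mathbb{Z}_2$-action is clear because $u = -u$ is impossible on $\mathcal{S}$ (as $0 \notin \mathcal{S}$); and the properly-discontinuous property is stated in the paragraph preceding Proposition~\ref{projesec}. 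The compact Lie group in question is $\mathbb{Z}_2$. With these checks in place the argument closes; I do not anticipate any obstruction beyond bookkeeping.
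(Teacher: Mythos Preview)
Your proposal is correct and follows essentially the same route as the paper: apply Fadell's Lemma~\ref{imrelation} to the contractible paracompact free $\mathbb{Z}_2$-space $\mathcal{S}$ (via Propositions~\ref{contr} and~\ref{compS}) to get $\Dim_{LS}=\Dim_K$ on symmetric sets, and use the symmetrization $A\mapsto A\cup(-A)$ together with the evenness of $E$ to pass between $\mathscr{C}^\alpha$ and $\mathscr{D}^\alpha$. The only cosmetic difference is that the paper proves the two inequalities $\lambda_k^{LS,\alpha}\le\lambda_k^{K,\alpha}$ and $\lambda_k^{K,\alpha}\le\lambda_k^{LS,\alpha}$ directly, while you first reduce the LS min-max to symmetric sets and then identify the index families; note also that since $\mathfrak{p}(A\cup(-A))=\mathfrak{p}(A)$ you actually get $\Dim_{LS}(\widehat{A})=\Dim_{LS}(A)$ exactly, not just $\ge$, and that no ``same manipulation'' is needed for $\lambda_k^{K,\alpha}$ since it is already defined over $\mathscr{D}^\alpha$.
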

\begin{proof}
According to Propositions \ref{contr} and \ref{compS}, $\mathcal {S}$ is a contractible, paracompact and $\mathbb{Z}_2$-free space. Fix $\alpha\in\{1,2\}$ and $k\in \mathbb{N}^+$ arbitrarily.
 Given $A\in \mathscr{D}^{K,\alpha}_k$,  $A$ is $\mathbb{Z}_2$-invariant and $\mathfrak{p}(A)=A/\mathbb{Z}_2$. Thus,
 Lemma \ref{imrelation}  yields (by setting $\mathcal {E}:=\mathcal {S}$ and $G:=\mathbb{Z}_2$)
\[
\Dim_{LS}(A)=\cat_{\mathbb{P}(\mathscr{X})}(\mathfrak{p}(A))=\Dim_K(A)\geq k,
\]
which implies $A\in \mathscr{C}^{LS,\alpha}_k$ and hence, $\lambda_k^{LS,\alpha}\leq \lambda_k^{K,\alpha}$.

On the other hand, for any $A\in \mathscr{C}^{LS,\alpha}_k$, set $A':=A\cup-A$. Lemma \ref{imrelation} yields that
\[
\Dim_K(A')=\cat_{\mathbb{P}(\mathscr{X})}(A'/\mathbb{Z}_2)=\cat_{\mathbb{P}(\mathscr{X})}(\mathfrak{p}(A))=     \Dim_{LS}(A)\geq k,
\]
which implies $A'\in \mathscr{D}^{K,\alpha}_k$.
Since $F$ is reversible, we have
$$
\lambda_k^{K,\alpha}\leq \sup_{u\in A'}E(u)=\sup_{u\in A}E(u).$$ Taking the infimum w.r.t  $A\in \mathscr{C}^{LS,\alpha}_k$, it turns out that  $ \lambda_k^{K,\alpha}\leq \lambda_k^{LS,\alpha},
$
which  concludes the proof.
\end{proof}

Theorems \ref{therealconnection} and \ref{compareRiemannen} immediately imply the following result.
\begin{theorem}
	For each $\alpha\in\{1,2\}$,
	$(\mathscr{D}^\alpha, \Dim_{K})$ is a faithful dimension pair.
\end{theorem}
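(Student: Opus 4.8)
The plan is to deduce this statement directly from the two preceding results: Theorem \ref{therealconnection}, which identifies the $(\mathscr{D}^\alpha,\Dim_K)$-spectrum with the $(\mathscr{C}^\alpha,\Dim_{LS})$-spectrum on \emph{every} compact FMMM, and Theorem \ref{compareRiemannen}, which asserts that $(\mathscr{C}^\alpha,\Dim_{LS})$ is faithful. Recall that faithfulness (Definition \ref{faithful}) is a statement only about compact Riemannian test manifolds $(M,g)$ equipped with their canonical measure $d\vol_g$: it says $\lambda_k = \lambda_k^\Delta$ for all $k\in\mathbb{N}^+$. So the argument is essentially a two-line chain of equalities applied in the Riemannian case.

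First I would fix $\alpha\in\{1,2\}$ and an arbitrary compact Riemannian manifold $(M,g)$ with its canonical measure, viewed as the FMMM $(M,\sqrt{g},d\vol_g)$; write $\lambda_k^{K,\alpha}$ for its $(\mathscr{D}^\alpha,\Dim_K)$-eigenvalues, $\lambda_k^{LS,\alpha}$ for its $(\mathscr{C}^\alpha,\Dim_{LS})$-eigenvalues, and $\lambda_k^\Delta$ for the usual Beltrami--Laplace eigenvalues. Theorem \ref{therealconnection}, applied to this particular FMMM, gives $\lambda_k^{K,\alpha}=\lambda_k^{LS,\alpha}$ for every $k$. Theorem \ref{compareRiemannen} says $(\mathscr{C}^\alpha,\Dim_{LS})$ is faithful, hence by Definition \ref{faithful} one has $\lambda_k^{LS,\alpha}=\lambda_k^\Delta$ for every $k$. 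Combining, $\lambda_k^{K,\alpha}=\lambda_k^\Delta$ for every $k$ and every such $(M,g)$, which is precisely the assertion that $(\mathscr{D}^\alpha,\Dim_K)$ is faithful.

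There is essentially no obstacle here: the statement is a formal corollary, and indeed the excerpt already flags it as such (``Theorems \ref{therealconnection} and \ref{compareRiemannen} immediately imply the following result''). The only point requiring a sliver of care is that Theorem \ref{therealconnection} is stated for arbitrary compact FMMM, so it does apply to the Riemannian test manifolds used in the definition of faithfulness; one should note this explicitly rather than invoking faithfulness of the $\Dim_{LS}$-pair on a non-Riemannian space, which would be meaningless. Concretely I would simply write: ``Theorems \ref{therealconnection} and \ref{compareRiemannen} immediately imply the following result,'' and the proof would be the displayed chain $\lambda_k^{K,\alpha}\overset{\text{Thm }\ref{therealconnection}}{=}\lambda_k^{LS,\alpha}\overset{\text{Thm }\ref{compareRiemannen}}{=}\lambda_k^\Delta$ holding on every compact Riemannian manifold with its canonical measure, which is the definition of $(\mathscr{D}^\alpha,\Dim_K)$ being faithful.
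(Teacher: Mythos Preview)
Your proposal is correct and matches the paper's approach exactly: the paper simply states that Theorems \ref{therealconnection} and \ref{compareRiemannen} immediately imply the result, and your chain $\lambda_k^{K,\alpha}=\lambda_k^{LS,\alpha}=\lambda_k^\Delta$ on every compact Riemannian test manifold is precisely that implication spelled out.
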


Due to Theorem \ref{therealconnection}, we give a simple proof of Lemma  \ref{muti2}.

\begin{proof}[Proof of Lemma  {\rm \ref{muti2}}] On account of Theorem \ref{therealconnection}, it suffices to show that Lemma  \ref{muti2} holds for the Krasnoselskii dimension pairs.

Fix $\alpha\in\{1,2\}$.
Since $F$ is reversible and $\mathfrak{K}_{\lambda}$ is compact (see Lemma \ref{funddomain}), we have $\mathfrak{K}_{\lambda}\in \mathscr{D}^\alpha$. Then Lemma \ref{bascip}/(vii) yields a  symmetric neighborhood $O$ of $\mathfrak{K}_{\lambda}$ with $$\Dim_K(\overline{O})=\Dim_K(\mathfrak{K}_{\lambda})<+\infty.$$ Set $\epsilon=1$ and let $\epsilon_0$ (resp., $\Phi(\cdot,t)$) be the constant (resp., the family of APH's) in the Homotopy Lemma (Lemma \ref{maintheorem2-0}). By the assumption on $\lambda$, one can
choose  $A\in \mathscr{D}^\alpha$ with $\Dim_{K}(A)\geq k+l-1$ and $\sup_{u\in A}E(u)<\lambda+\epsilon_0$. Homotopy Lemma together with the min-max principle (Theorem \ref{minmaxth}) then yield
\[
\Phi(A,1)\subset \overline{E_{\lambda-\epsilon_0}\cup O},\ \Dim_K(\overline{E_{\lambda-\epsilon_0}})\leq k-1.
\]
Now it follows by Lemma \ref{bascip}/(vi) that
\begin{align*}
\Dim_K(\mathfrak{K}_{\lambda})&=\Dim_K(\overline{O})\geq  \Dim_K(\overline{E_{\lambda-\epsilon_0}\cup O})-\Dim_K(\overline{E_{\lambda-\epsilon_0}})\\
&\geq \Dim_K(\Phi(A,1))-k+1= \Dim_K(A)-k+1\geq l.
\end{align*}
Recall that $(\mathscr{X},(\cdot,\cdot))$ is a complete Hilbert space, where $(\cdot,\cdot)$ is defined by  (\ref{3.1}). In particular, $\mathfrak{K}_{\lambda}$ is still compact  with $\Dim_K(\mathfrak{K}_{\lambda})\geq l$ in $(\mathscr{X},(\cdot,\cdot))$.
Now
let $\{u_1,\ldots, u_s\}$ be a maximal set of mutually orthogonal vectors in $\mathfrak{K}_{\lambda}$,
set $V := \text{span} \{u_1,\ldots, u_s\}\approx\mathbb{R}^s$,  and let $\pi:\mathscr{X}\rightarrow V$ be the  orthogonal projection onto $V$. Since $h:=\pi|_{\mathfrak{K}_{\lambda}}:\mathfrak{K}_{\lambda}\rightarrow \mathbb{R}^s\backslash\{0\}$ is a map with $h(-u)=-h(u)$, we have $s\geq \Dim_K(\mathfrak{K}_{\lambda})\geq l$. The cardinality ${\rm card} \mathfrak{K}_{\lambda}=+\infty$ directly follows by Lemma \ref{bascip}/(iii) whenever $l>1$.
\end{proof}

\subsubsection{Essential dimension pair}
${}$ \\
Inspired by Gromov \cite{G}, we utilize the essential dimension to define dimension pairs.
In the sequel,
a subset $A\subset \mathbb{P}(\mathscr{X})$ is said to be {\it contractible in $\mathbb{P}(\mathscr{X})$ onto a subset $B\subset \mathbb{P}(\mathscr{X})$} if there exists a map $h:A\times [0,1]\rightarrow\mathbb{P}(\mathscr{X})$ with $h(\cdot,0)=\id_A$ and $h(A,1)=B$.
For simplicity, such an $h$ is called a {\it homotopy}.

\begin{definition}[Gromov\cite{G}]\label{GROMESS}\it
Given a closed nonempty set $A\subset \mathbb{P}(\mathscr{X})$,  the {\rm essential dimension of} $A$ is defined by
\begin{align*}
\text{ess}(A):=&\text{the smallest integer $i$ such that $A$ is contractible in $\mathbb{P}(\mathscr{X})$ onto }\\
 &\text{a subset $B\subset \mathbb{P}(\mathscr{X})$ with $\Dim_C(B)=i$},
\end{align*}
and set ${\ess}(\emptyset):=-1$.
\end{definition}


Now we define the {\it essential  dimension} of a closed set $A\subset \mathcal {S}$ as
\[
\Dim_{ES}(A):=\ess(\mathfrak{p}(A))+1.
\]

\begin{lemma}\label{Gropro}
Given the closed subsets $A, B\subset \mathcal {S}$,  we have$:$
\begin{itemize}
	\item[(i)] $\Dim_{ES}(A)\geq 0$ with equality if and only if $A=\emptyset;$
	\item[(ii)] If $A\subset B$, then $\Dim_{ES}(A)\leq \Dim_{ES}(B);$
	\item[(iii)] $\Dim_{ES}(A\cup B)\leq \Dim_{ES}(A)+\Dim_{ES}(B);$
	\item[(iv)] For any APH $h:\mathcal {S}\rightarrow \mathcal {S}$, $\Dim_{ES}(A)=\Dim_{ES}(h(A));$
	\item[(v)] Given $k\in \mathbb{N}^+$, for any $k$-dimensional linear space $V \subset \mathscr{X}$, $\Dim_{ES}(V\cap \mathcal {S} )=k.$
\end{itemize}
\end{lemma}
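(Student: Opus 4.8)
The plan is to verify the five properties directly from the definition $\Dim_{ES}(A)=\ess(\mathfrak{p}(A))+1$, reducing each one to a statement about Gromov's essential dimension $\ess(\cdot)$ on closed subsets of $\mathbb{P}(\mathscr{X})$, for which I would lean on the elementary properties of the Lebesgue covering dimension $\Dim_C$ (monotonicity, the sum formula $\Dim_C(B_1\cup B_2)\le \Dim_C(B_1)+\Dim_C(B_2)+1$, topological invariance) together with the fact that $\mathbb{P}(\mathscr{X})$ is a normal ANR (Proposition~\ref{finalPX}). Parts (i) and (ii) are essentially immediate: if $A=\emptyset$ then $\mathfrak{p}(A)=\emptyset$ and $\ess(\emptyset)=-1$, so $\Dim_{ES}(A)=0$; conversely if $A\neq\emptyset$ then $\mathfrak{p}(A)$ is a nonempty closed set, so any $B$ onto which it contracts is nonempty, $\Dim_C(B)\ge 0$, hence $\ess(\mathfrak{p}(A))\ge 0$ and $\Dim_{ES}(A)\ge 1$. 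For (ii), if $A\subset B$ then $\mathfrak{p}(A)\subset\mathfrak{p}(B)$, and any homotopy contracting $\mathfrak{p}(B)$ onto a low-dimensional set restricts to a homotopy contracting $\mathfrak{p}(A)$ onto a subset of that set, whose covering dimension is no larger by monotonicity of $\Dim_C$; hence $\ess(\mathfrak{p}(A))\le\ess(\mathfrak{p}(B))$.

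For (iii), given homotopies contracting $\mathfrak{p}(A)$ onto $B_1$ with $\Dim_C(B_1)=\ess(\mathfrak{p}(A))$ and $\mathfrak{p}(B)$ onto $B_2$ with $\Dim_C(B_2)=\ess(\mathfrak{p}(B))$, I would first run the first homotopy on $\mathfrak{p}(A)\cup\mathfrak{p}(B)$ (extending by the identity on $\mathfrak{p}(B)$, using the homotopy extension / ANR structure of $\mathbb{P}(\mathscr{X})$ to make this legitimate) to reach $B_1\cup\mathfrak{p}(B)$, then the second homotopy to reach $B_1\cup B_2$, obtaining $\ess(\mathfrak{p}(A\cup B))\le \Dim_C(B_1\cup B_2)\le \Dim_C(B_1)+\Dim_C(B_2)+1=\ess(\mathfrak{p}(A))+\ess(\mathfrak{p}(B))+1$; adding the "$+1$" corrections from the definition of $\Dim_{ES}$ gives exactly $\Dim_{ES}(A\cup B)\le\Dim_{ES}(A)+\Dim_{ES}(B)$. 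Part (iv) follows because an APH $h:\mathcal{S}\to\mathcal{S}$ descends to a homeomorphism $H:\mathbb{P}(\mathscr{X})\to\mathbb{P}(\mathscr{X})$ (as in the proof of Proposition~\ref{lsdimp}); conjugating any contracting homotopy for $\mathfrak{p}(A)$ by $H$ produces one for $\mathfrak{p}(h(A))=H(\mathfrak{p}(A))$ onto a homeomorphic image of the target, and $\Dim_C$ is a topological invariant, so $\ess$ is preserved.

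Part (v) is the main obstacle and deserves the most care. The inequality $\Dim_{ES}(V\cap\mathcal{S})\le k$ is easy: $\mathfrak{p}(V\cap\mathcal{S})=\mathbb{P}(V)\cong\mathbb{RP}^{k-1}$ has $\Dim_C=k-1$, so taking the constant homotopy shows $\ess(\mathbb{P}(V))\le k-1$, whence $\Dim_{ES}(V\cap\mathcal{S})\le k$. The reverse inequality $\ess(\mathbb{P}(V))\ge k-1$ is where the real content lies: one must show $\mathbb{RP}^{k-1}$ cannot be contracted inside $\mathbb{P}(\mathscr{X})$ onto a subset of covering dimension $<k-1$. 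The standard way, going back to Gromov, is a cohomological obstruction: let $w\in H^1(\mathbb{P}(\mathscr{X});\mathbb{Z}_2)$ be the class pulling back to the generator of $H^1(\mathbb{RP}^{k-1};\mathbb{Z}_2)$ under the inclusion (such a $w$ exists because $\mathbb{P}(\mathscr{X})$ is the classifying-space-like quotient $\mathcal{S}/\mathbb{Z}_2$ with $\mathcal{S}$ contractible, so $\mathbb{P}(\mathscr{X})\simeq\mathbb{RP}^\infty$ and $H^*(\mathbb{P}(\mathscr{X});\mathbb{Z}_2)=\mathbb{Z}_2[w]$). If $\mathbb{RP}^{k-1}$ were contractible onto $B\subset\mathbb{P}(\mathscr{X})$ with $\Dim_C(B)=m<k-1$, then the restriction of $w$ to $B$ would satisfy $w^{m+1}|_B=0$ by the dimension bound on \v{C}ech cohomology of a space of covering dimension $m$; but homotopy invariance forces $w^{k-1}$ to restrict nontrivially to $\mathbb{RP}^{k-1}$, and $k-1>m$ gives $w^{k-1}|_{\mathbb{RP}^{k-1}}=(w|_B)^{k-1}$ pulled back through the homotopy $=0$, a contradiction. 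I would cite Gromov \cite{G} (or the analogous Krasnoselskii-genus computation in Lemma~\ref{bascip}/(iv), transported via Lemma~\ref{imrelation}) for this, since the identity $\Dim_{ES}(A)=\Dim_K(A)=\Dim_{LS}(A)$ on symmetric sets is already in hand and would immediately yield (v) from Lemma~\ref{bascip}/(iv). This last route is probably the cleanest: once one observes $V\cap\mathcal{S}\in\mathscr{D}^\alpha$ is symmetric, property (v) reduces to the already-established $\Dim_K(\mathcal{S}\cap V)=k$.
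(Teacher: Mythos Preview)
Your overall strategy matches the paper's: reduce each item to a property of $\ess$ on $\mathbb{P}(\mathscr{X})$. For (i), (ii), (iv) your arguments coincide with the paper's (which simply says ``by definition'' and writes out the induced homeomorphism $H$ for (iv)). For (iii) and (v) the paper gives no argument at all and merely cites Gromov \cite[0.4B$_1$ and 0.4B(v)]{G}; you instead try to supply proofs, so the comparison is really between your sketches and the underlying arguments behind those citations.

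Your cohomological argument for the lower bound in (v) is correct and is exactly what justifies Gromov's statement $\ess(\mathbb{P}(V))=\Dim_C(\mathbb{P}(V))=k-1$: the contractibility of $\mathcal{S}$ makes $\mathbb{P}(\mathscr{X})$ a $K(\mathbb{Z}_2,1)$, and the nonvanishing cup power $w^{k-1}$ obstructs contracting $\mathbb{RP}^{k-1}$ onto anything of covering dimension $<k-1$. However, your proposed shortcut via $\Dim_K$ does not work as stated: the paper never establishes $\Dim_{ES}(A)=\Dim_K(A)$ for symmetric $A$. Theorem~\ref{therealconnection} only gives $\Dim_{LS}=\Dim_K$, and the comparison $\Dim_{ES}\ge\Dim_{LS}$ (Theorem~\ref{comparsiones}) comes \emph{after} Lemma~\ref{Gropro} and only gives one inequality for compact sets. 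So that route is circular or at best incomplete; stick with the cohomological argument.

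Your sketch for (iii) has a genuine gap. ``Extending by the identity on $\mathfrak{p}(B)$'' is inconsistent on $\mathfrak{p}(A)\cap\mathfrak{p}(B)$, and invoking the homotopy extension property does not obviously repair this: if you extend $h_1$ from $\mathfrak{p}(A)$ to $\mathfrak{p}(A)\cup\mathfrak{p}(B)$ via HEP, the time-$1$ image of $\mathfrak{p}(B)$ is no longer $\mathfrak{p}(B)$, so you cannot simply apply $h_2$ to it next. A correct proof of the subadditivity $\ess(\mathfrak{A}\cup\mathfrak{B})\le\ess(\mathfrak{A})+\ess(\mathfrak{B})+1$ needs a more careful construction (or, as the paper does, a direct appeal to Gromov).
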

\begin{proof}
(i) and (ii) follow directly by the definition  and (iii) follows from Gromov \cite[0.4B1]{G}, i.e., for any $\mathfrak{A},\mathfrak{B}\subset \mathbb{P}(\mathscr{X})$,
$\ess(\mathfrak{A}\cup \mathfrak{B})\leq \ess (\mathfrak{A})+\ess(\mathfrak{B})+1$.
To prove (iv), set $H([u]):=\mathfrak{p}\circ h(u)$. It is easy to check that $H:\mathbb{P}(\mathscr{X})\rightarrow\mathbb{P}(\mathscr{X})$ is a homeomorphism   with $H\circ \mathfrak{p}=\mathfrak{p}\circ h$.
Since $\ess$ is invariant under homeomorphisms, we get
\[
\Dim_{ES}(h(A))=\ess(\mathfrak{p}(h(A)))+1=\ess(H(\mathfrak{p}(A)))+1=\ess(\mathfrak{p}(A))+1=\Dim_{ES}(A).
\]
Property  (v) follows directly by $\ess(\mathbb{P}(V))=\Dim_C(\mathbb{P}(V))=k-1$, see Gromov \cite[0.4B/(v)]{G}.
\end{proof}

%
%
%

Lemma \ref{Gropro} immediately yields the following result.
\begin{proposition}
For each $\alpha\in\{1,2\}$,
$(\mathscr{C}^\alpha, \Dim_{ES})$ is a dimension pair.
\end{proposition}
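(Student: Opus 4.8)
The statement to prove is that for each $\alpha\in\{1,2\}$, the pair $(\mathscr{C}^\alpha,\Dim_{ES})$ is a dimension pair, i.e.\ it satisfies the axioms of Definition~\ref{defdim1}. The plan is to verify directly the two structural requirements on the optional family $\mathscr{C}^\alpha$ and the four properties (D1)--(D4) of the dimension-like function $\Dim_{ES}$, reading everything off from Lemma~\ref{Gropro}.

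First I would record that $\mathscr{C}^1$ (closed subsets of $\mathcal{S}$) and $\mathscr{C}^2$ (compact subsets of $\mathcal{S}$) are optional families: (i) $\emptyset$ is closed/compact, so $\emptyset\in\mathscr{C}^\alpha$; (ii) for a $k$-dimensional subspace $V\subset\mathscr{X}$, the set $V\cap\mathcal{S}$ is the unit sphere of a finite-dimensional space, hence closed in $\mathcal{S}$ and in fact compact, so it lies in both $\mathscr{C}^1$ and $\mathscr{C}^2$; (iii) any APH $h:\mathcal{S}\to\mathcal{S}$ is a homeomorphism, so it preserves closedness and compactness, whence $h(A)\in\mathscr{C}^\alpha$ for all $A\in\mathscr{C}^\alpha$. (This is exactly the same bookkeeping already used for $\mathscr{C}^\alpha$ with $\Dim_{LS}$ in Proposition~\ref{lsdimp}, so it can be stated briefly.)

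Next I would check (D1)--(D4) for $\Dim_{ES}$, and each is an immediate translation of an item of Lemma~\ref{Gropro}. Property (D1) --- $\Dim_{ES}(A)\geq 0$ with equality iff $A=\emptyset$ --- is Lemma~\ref{Gropro}(i). Property (D2) --- monotonicity under inclusion --- is Lemma~\ref{Gropro}(ii). Property (D3) --- $\Dim_{ES}(V\cap\mathcal{S})\geq k$ for every $k$-dimensional subspace $V$ --- follows from Lemma~\ref{Gropro}(v), which gives the sharper equality $\Dim_{ES}(V\cap\mathcal{S})=k$. Property (D4) --- invariance under APH's --- is precisely Lemma~\ref{Gropro}(iv). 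Thus all required conditions hold and $(\mathscr{C}^\alpha,\Dim_{ES})$ is a dimension pair.

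There is essentially no obstacle here: the proof is a one-line bookkeeping argument once Lemma~\ref{Gropro} is in hand, and indeed the excerpt states that ``Lemma~\ref{Gropro} immediately yields the following result.'' The only mild care needed is in the optional-family part: one should note that $V\cap\mathcal{S}$, being the unit sphere of the finite-dimensional normed space $(V,\|\cdot\|_{\Ho})$, is compact, so it belongs to $\mathscr{C}^2$ as well as $\mathscr{C}^1$, ensuring (ii) holds for both values of $\alpha$; and one should use that an APH, being a homeomorphism of $\mathcal{S}$, carries closed sets to closed sets and compact sets to compact sets. Accordingly the whole argument can be presented in a couple of sentences: verify $\emptyset\in\mathscr{C}^\alpha$, the subspace condition, and the APH-invariance of $\mathscr{C}^\alpha$; then cite Lemma~\ref{Gropro}(i), (ii), (v), (iv) for (D1), (D2), (D3), (D4) respectively.
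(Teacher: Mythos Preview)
Your proposal is correct and matches the paper's approach exactly: the paper simply states that Lemma~\ref{Gropro} immediately yields the result, and you have spelled out precisely how each axiom of Definition~\ref{defdim1} follows from the corresponding item of that lemma (together with the routine verification that $\mathscr{C}^\alpha$ is an optional family). One small slip: $V\cap\mathcal{S}$ is the unit sphere of $(V,\|\cdot\|_{L^2})$, not of $(V,\|\cdot\|_{\Ho})$, though of course on the finite-dimensional space $V$ these norms are equivalent, so compactness holds either way.
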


Let $(M,F, d\mathfrak{m})$ be a compact {\rm FMMM} and let $\alpha\in\{1,2\}$.
On account of Theorem \ref{minmaxth}, the $k^{th}$ eigenvalue of $(\mathscr{C}^\alpha,  \Dim_{ES})$, denoted by $\lambda_k^{ES,\alpha}$, equals to
\[
\lambda_k^{ES,\alpha}=\inf_{A\in \mathscr{C}^{ES,\alpha}_k}\sup_{u\in A}E(u),
\]
where $\mathscr{C}^{ES,\alpha}_k:=\{A\in \mathscr{C}^{\alpha}:\, \Dim_{ES}(A)\geq k\}$. The collection $\{\lambda_k^{ES,\alpha}\}_{k=1}^\infty$ is called the $(\mathscr{C}^\alpha, \Dim_{ES})$-spectrum.


Now we  show the following result.
\begin{theorem}
For each $\alpha\in\{1,2\}$,
$(\mathscr{C}^\alpha, \Dim_{ES})$ is a faithful dimension pair.
\end{theorem}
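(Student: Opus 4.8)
The plan is to show that for every compact Riemannian manifold $(M,g)$ equipped with $d\vol_g$ one has $\lambda_k^\Delta\le\lambda_k^{ES,\alpha}$ for all $k\in\mathbb{N}^+$; since the reverse inequality $\lambda_k^{ES,\alpha}\le\lambda_k^\Delta$ is already furnished by Theorem~\ref{properties1}/(ii) applied to the {\rm FMMM} $(M,\sqrt{g},d\vol_g)$, this yields faithfulness in the sense of Definition~\ref{faithful}. For $k=1$ the equality $\lambda_1^{ES,\alpha}=\lambda_1^\Delta$ is immediate from Theorem~\ref{properties1}/(i) together with the Rayleigh quotient formula for $\lambda_1^\Delta$, so I would fix $k\ge 2$, an arbitrary $\lambda<\lambda_k^\Delta$ and (recalling Definition~\ref{spect2}) an arbitrary $A\in\mathscr{C}^\alpha$ with $A\subset E^{-1}[0,\lambda]$; it then suffices to prove $\Dim_{ES}(A)\le k-1$, that is $\ess(\mathfrak{p}(A))\le k-2$ by Definition~\ref{GROMESS}.

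To produce the required contraction I would project onto the low eigenvalues of the Beltrami--Laplacian: let $f_1,f_2,\dots$ be an $L^2$-orthonormal system of eigenfunctions of $\Delta_g$, put $V_{k-1}:=\text{Span}\{f_1,\dots,f_{k-1}\}\subset\mathscr{X}$, and let $P_{k-1}\colon L^2(M)\to V_{k-1}$ be the $L^2$-orthogonal projection. The standard spectral theory of $\Delta_g$ (via the $L^2$-orthonormal eigenfunction expansion) gives $E(u)\ge\lambda_k^\Delta$ for every nonzero $u\perp_{L^2}V_{k-1}$, hence $P_{k-1}u\ne 0$ for all $u\in A$. I would then deform $\mathfrak{p}(A)$ inside $\mathbb{P}(\mathscr{X})\cong(\mathscr{X}\setminus\{0\})/\!\sim$ along
\[
h([u],t):=\big[(1-t)\,\|P_{k-1}u\|_{L^2}\,u+t\,P_{k-1}u\big],\qquad (u,t)\in A\times[0,1].
\]
Pairing the bracketed vector with $u$ in $L^2$ yields $\|P_{k-1}u\|_{L^2}\big((1-t)+t\|P_{k-1}u\|_{L^2}\big)>0$, so it never vanishes and $h$ is a well-defined homotopy; it is $\|\cdot\|_{\Ho}$-continuous (because each $u\mapsto(u,f_i)_{L^2}$ is), it is antipode-invariant and therefore descends from $A$ to $\mathfrak{p}(A)$, it satisfies $h(\cdot,0)=\id$, and $h(\mathfrak{p}(A),1)\subset\mathbb{P}(V_{k-1})$. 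Since $\Dim_C(\mathbb{P}(V_{k-1}))=k-2$ and the covering dimension is monotone, this gives $\ess(\mathfrak{p}(A))\le k-2$, i.e.\ $\Dim_{ES}(A)\le k-1$, as wanted.

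Taking the supremum over all admissible $A$ then gives $\Dim_{ES}E^{-1}[0,\lambda]\le k-1<k$ for every $\lambda<\lambda_k^\Delta$, whence $\lambda_k^{ES,\alpha}\ge\lambda_k^\Delta$; combined with the opposite inequality this proves $\lambda_k^{ES,\alpha}=\lambda_k^\Delta$ for every $k$, so $(\mathscr{C}^\alpha,\Dim_{ES})$ is faithful. I expect the only genuinely delicate point to be the construction of the homotopy $h$ on the sublevel set $\{E<\lambda_k^\Delta\}$ --- specifically, checking that the straight segment from $u$ to its (rescaled) projection $P_{k-1}u$ avoids the origin, which reduces to the positivity $\langle P_{k-1}u,u\rangle_{L^2}=\|P_{k-1}u\|_{L^2}^2>0$ used above; everything else is bookkeeping with the covering dimension and with the definitions of $\ess$ and $\Dim_{ES}$. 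This is essentially the argument of Gromov~\cite{G} transported to the present functional-analytic setting, and it works identically for $\alpha=1$ and $\alpha=2$, since it only uses that $A$, and hence $\mathfrak{p}(A)$, is closed.
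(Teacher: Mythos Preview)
Your argument is correct, and it takes a genuinely different route from the paper's.

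The paper proceeds indirectly: it first shows (Step~1) that in the Riemannian case the eigenset $\mathfrak{K}_\lambda=\mathcal{S}\cap V_\lambda$ has $\Dim_{ES}(\mathfrak{K}_\lambda)=\dim V_\lambda$ and admits a neighborhood $\overline{O}$ with the same essential dimension (via an explicit retraction of a tubular neighborhood of $V_\lambda$); then (Step~2) it feeds this neighborhood into the Homotopy Lemma~\ref{maintheorem2-0} to obtain a multiplicity bound $\Dim_{ES}(\mathfrak{K}_\lambda)\ge l$ whenever $\lambda$ has multiplicity $l$ in the $(\mathscr{C}^\alpha,\Dim_{ES})$-spectrum, and finishes by Courant's minimax exactly as in Theorem~\ref{compareRiemannen}.

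Your proof is the direct Gromov-style argument: for $\lambda<\lambda_k^\Delta$ you contract $\mathfrak{p}(A)$ inside $\mathbb{P}(\mathscr{X})$ onto a subset of $\mathbb{P}(V_{k-1})$ via the linear projection $P_{k-1}$, and read off $\ess(\mathfrak{p}(A))\le k-2$ from $\Dim_C(\mathbb{P}(V_{k-1}))=k-2$. This bypasses the pseudo-gradient flow and the neighborhood construction entirely, and is shorter. The paper's route, by contrast, isolates the multiplicity statement as a separate structural fact (parallel to Lemma~\ref{muti2}), which is of independent interest and is reused elsewhere; it also makes explicit that $\Dim_{ES}(\mathfrak{K}_\lambda)$ equals the Riemannian eigenspace dimension. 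Both approaches ultimately hinge on the same linear-algebraic fact that $E(u)\ge\lambda_k^\Delta$ once $u\perp_{L^2}V_{k-1}$, but yours exploits it at the level of the sublevel set while the paper exploits it at the level of the critical set.
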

\begin{proof}Fix $\alpha\in\{1,2\}$ and let $(M,g, d\vol_g)$ be a compact Riemannian manifold equipped with its canonical measure. Due to Theorem \ref{properties1}, it suffices to show $ \lambda^\Delta_{k }\leq \lambda^{ES,\alpha}_k$; the proof is divided into two steps.

\textbf{Step 1.} Let $\lambda$ be an eigenvalue of $(\mathscr{C}^\alpha,\Dim_{ES})$.  We claim that
 there exists an open neighbourhood $O$ of the eigenset $ \mathfrak{K}_{\lambda}$ in $\mathcal {S}$ such that  $\Dim_{ES}(\overline{O})= \Dim_{ES}(\mathfrak{K}_{\lambda})<+\infty$.

Since the metric is Riemannian, the eigenspace of $\lambda$, say $V_\lambda$, is a (finite) $s$-dimensional linear space which is spanned by the eigenfunctions $u_1,\ldots,u_s$ satisfying (\ref{4.1}). Since $\Delta$ is linear, it turns out that $\mathfrak{K}_{\lambda}=\mathcal {S}\cap V_\lambda$ and hence,  Lemma \ref{Gropro}/(v) implies $\Dim_{ES}(\mathfrak{K}_{\lambda})=s$ (i.e., $ {\ess}(\mathfrak{p}(\mathfrak{K}_{\lambda}))=s-1$).

Recall that $(\mathscr{X},(\cdot,\cdot))$   is a separable Hilbert space, see (\ref{3.1}). For a fixed $\epsilon\in (1/4,1/2)$, define
\[
O':=\{u+\rho:\,u\in V_\lambda,\, \rho\in V_\lambda^\perp, \, \sqrt{(\rho,\rho)}<\epsilon\}\subset \mathscr{X}.
\]
Using a   complete orthonormal basis, it is easy to check that $O'$ is an open neighbourhood of $V_\lambda$ in $\mathscr{X}$. Thus, ${{O'}}\cap \mathcal {S}$ is a open neighbourhood of $ \mathfrak{K}_{\lambda}$ in $\mathcal {S}$. In the sequel, we show  $\Dim_{ES}(\overline{{{O'}}\cap \mathcal {S}})=s$, i.e., $O:={{O'}}\cap \mathcal {S}$ verifies our claim.

Note that for each $v\in \overline{{O'}\cap \mathcal {S}}$, the representation  $v=u+\rho$ is unique, where $u\in V_\lambda\backslash\{0\}$ and $ \rho\in V_\lambda^\perp$ with $\sqrt{(\rho,\rho)}<\epsilon$. Hence, we can define a homotopy $H: \overline{{O'}\cap \mathcal {S}} \times[0,1]\rightarrow \mathcal {S}$ by
\[
H(u+\rho,t):=\frac{u+(1-t)\rho}{\|u+(1-t)\rho\|_{L^2}}.
\]
Since $H(-v,t)=-H(v,t)$, $H$ induces
a homotopy $H':\mathfrak{p}(\overline{{O'}\cap \mathcal {S}})\times [0,1]\rightarrow \mathbb{P}(\mathscr{X})$ defined by
\[
H'(\mathfrak{p}(v),t):=\mathfrak{p}\circ H(v,t), \text{ for }v\in \overline{{O'}\cap \mathcal {S}},\ t\in [0,1].
\]
It turns out that $\mathfrak{p}(\overline{{O'}\cap \mathcal {S}})$ is contractible onto $\mathfrak{p}(\mathfrak{K}_{\lambda})$ by means of $H'$.  Thus Definition \ref{GROMESS} yields
$$
s-1= {\ess}(\mathfrak{p}(\mathfrak{K}_{\lambda}))\leq  {\ess}(\mathfrak{p}(\overline{{O'}\cap \mathcal {S}}))\leq {\ess}(H'(\mathfrak{p}(\overline{{O'}\cap \mathcal {S}}),1))= {\ess}(\mathfrak{p}(\mathfrak{K}_{\lambda}))=s-1,$$ which implies $ \Dim_{ES}(\overline{{{O'}}\cap \mathcal {S}})=s.
$
Therefore, the claim holds with the choice $O:={O'}\cap \mathcal {S}$.

\textbf{Step 2.} Suppose that for some $i\in \mathbb{N}^+$,
 $\lambda^{ES,\alpha}_i=\lambda^{ES,\alpha}_{i+1}=\cdots=\lambda^{ES,\alpha}_{i+l-1}=:\lambda$,
i.e., the multiplicity of the
eigenvalue $\lambda$ is $l$. Using the  open neighbourhood $O$ of $ \mathfrak{K}_{\lambda}$  constructed in Step 1 and the same argument as in the proof of  Lemma  \ref{muti2}, one can show that $\Dim_{ES}(\mathfrak{K}_{\lambda})\geq l$. By recalling $\Dim_{ES}(\mathfrak{K}_{\lambda})=s$ from Step 1,
we get $l$ linearly independent eigenfunctions $u_i$'s corresponding to $\lambda$.
The rest of the proof is the same as
in Theorem \ref{compareRiemannen}.
\end{proof}

We have shown that   the $(\mathscr{C}^\alpha,\Dim_{LS})$-spectrum  is exactly the  $(\mathscr{D}^\alpha,\Dim_K)$-spectrum (see Theorem \ref{therealconnection}). In order to investigate
the relationship between the $(\mathscr{C}^\alpha,\Dim_{ES})$-spectrum  and the $(\mathscr{C}^\alpha,\Dim_{LS})$-spectrum, we recall the following results, see Cornea,  Lupton,
Oprea and
Tanr\'e  \cite[Remark 1.12, Lemma 1.13]{CLOT} for the proofs.
\begin{lemma}\label{ANRlemmals}
Let $X$ be a normal {\rm  ANR}. For any closed subset $A\subset X$, we have

\begin{itemize}
	\item[(i)] $\cat_X(A)-1\leq \Dim_C(A);$
	\item[(ii)] For any homotopy $h: A\times I\rightarrow X$, $\cat_X(A)\leq \cat_X(h(A,1))$.
\end{itemize}
\end{lemma}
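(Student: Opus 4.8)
Both parts are standard facts of Lusternik--Schnirelmann theory, and I would prove them along the classical lines behind the bound $\cat(Y)\le \Dim_C(Y)+1$ for normal ANR's.

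For (i), I would first dispose of the case $\Dim_C(A)=\infty$ and assume $n:=\Dim_C(A)<\infty$. The plan is: since $X$ is an ANR it is locally contractible, so it carries an open cover whose members are contractible in $X$; restricting it to $A$ keeps this property (any subset of a set contractible in $X$ is contractible in $X$). Using $\Dim_C(A)=n$ I would pass to an open refinement of order $\le n+1$, and then, via the usual colouring argument based on the normality of $A$, reorganise it into $n+1$ subfamilies $\mathcal W_0,\dots,\mathcal W_n$ each consisting of pairwise disjoint open sets. Writing $W_i:=\bigcup\mathcal W_i$, which topologically is the free union of its (contractible-in-$X$) members, I would argue that $W_i$ is contractible in $X$ by contracting each member to a point and sliding these points to a common basepoint along paths of $X$ (path connected, being a connected ANR, which covers the cases arising in the paper). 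This gives an open cover $\{W_0,\dots,W_n\}$ of $A$ of size $n+1$ by sets contractible in $X$; by normality of $A$ I would shrink it to a closed cover $\{F_0,\dots,F_n\}$ with $F_i\subset W_i$, noting that each $F_i$ is closed in $A$ hence in $X$ (because $A$ is closed in $X$) and contractible in $X$. This yields $\cat_X(A)\le n+1=\Dim_C(A)+1$.

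For (ii) --- read with the standing convention that $h(\cdot,0)$ is the inclusion $A\hookrightarrow X$, which is how the statement is used in the paper --- I would assume $\cat_X(h(A,1))=k<\infty$ and fix a cover $h(A,1)=B_1\cup\dots\cup B_k$ with each $B_j$ closed in $X$ and contractible in $X$. Setting $A_j:=\{a\in A\colon h(a,1)\in B_j\}$, each $A_j$ is closed in $A$ (hence in $X$), and $A=A_1\cup\dots\cup A_k$ since $h(A,1)=\bigcup_j B_j$. Finally $h|_{A_j\times I}$ is a homotopy in $X$ from the inclusion $A_j\hookrightarrow X$ to a map $A_j\to B_j\subset X$, so composing with a contraction of $B_j$ in $X$ exhibits $A_j$ as contractible in $X$; hence $\cat_X(A)\le k=\cat_X(h(A,1))$.

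I expect the real work to sit entirely in part (i): the two genuinely non-formal inputs are the local contractibility of ANR's and the covering-dimension facts (a refinement of order $\le n+1$ and its splitting into $n+1$ disjoint subfamilies), plus the small but necessary passage from an open cover to a closed one using the normality of $A$ and its closedness in $X$. Part (ii) is then a routine pull-back argument once the convention on $h(\cdot,0)$ is fixed. Both are spelled out in Cornea--Lupton--Oprea--Tanr\'e \cite[Remark 1.12, Lemma 1.13]{CLOT}.
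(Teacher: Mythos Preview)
Your sketch is correct and follows the standard argument; note, however, that the paper does not give its own proof of this lemma but simply recalls it from Cornea--Lupton--Oprea--Tanr\'e \cite[Remark 1.12, Lemma 1.13]{CLOT}, which is also what you cite at the end. So there is nothing to compare beyond the fact that you have fleshed out the CLOT proof that the paper merely invokes.
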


\begin{remark}\rm
The LS category defined in \cite{CLOT} is smaller than the one in Definition \ref{defls} by the factor $1$. So the first statement in \cite{CLOT} reads as $\cat_X(A)\leq \Dim_C(A)$ and the homotopy we defined before is called a {\it deformation}.
\end{remark}

\begin{theorem}\label{comparsiones}For any $k\in \mathbb{N}^+$, we have
\[
\min\{\lambda_k^{LS,1},\,\lambda_k^{ES,1}\}\leq \lambda^{ES,2}_k\leq \lambda^{LS,2}_k.
\]
\end{theorem}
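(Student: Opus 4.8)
I would prove the chain of inequalities in Theorem~\ref{comparsiones} by comparing the relevant collections $\mathscr{C}^{LS,\alpha}_k$, $\mathscr{C}^{ES,\alpha}_k$ and using the min-max characterization from Theorem~\ref{minmaxth}. The rightmost inequality $\lambda^{ES,2}_k\leq \lambda^{LS,2}_k$ is the easiest: by Lemma~\ref{ANRlemmals}(i), applied with $X=\mathbb{P}(\mathscr{X})$ (which is a normal ANR by Proposition~\ref{finalPX}), for any compact $A\subset\mathcal{S}$ one has $\cat_{\mathbb{P}(\mathscr{X})}(\mathfrak{p}(A))-1\leq \Dim_C(\mathfrak{p}(A))$; since $\mathfrak{p}(A)$ is trivially contractible in $\mathbb{P}(\mathscr{X})$ onto itself, $\ess(\mathfrak{p}(A))\leq \Dim_C(\mathfrak{p}(A))$, hence $\Dim_{ES}(A)=\ess(\mathfrak{p}(A))+1\leq \Dim_C(\mathfrak{p}(A))+1$. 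Wait—I need the reverse direction: I want $\Dim_{ES}(A)\leq \Dim_{LS}(A)$, i.e. $\ess(\mathfrak{p}(A))+1\leq \cat_{\mathbb{P}(\mathscr{X})}(\mathfrak{p}(A))$. Here is the correct argument: if $\cat_{\mathbb{P}(\mathscr{X})}(\mathfrak{p}(A))=m$, then $\mathfrak{p}(A)$ is covered by $m$ closed sets each contractible in $\mathbb{P}(\mathscr{X})$; a standard argument (subordinate to this cover) contracts $\mathfrak{p}(A)$ onto a subcomplex of dimension $\leq m-1$, so $\ess(\mathfrak{p}(A))\leq m-1$, giving $\Dim_{ES}(A)\leq \Dim_{LS}(A)$. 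Therefore $\mathscr{C}^{LS,2}_k\subset \mathscr{C}^{ES,2}_k$, and taking infima over the larger class makes $\lambda^{ES,2}_k$ smaller: $\lambda^{ES,2}_k\leq \lambda^{LS,2}_k$.

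\textbf{The left inequality.} For $\min\{\lambda_k^{LS,1},\lambda_k^{ES,1}\}\leq \lambda^{ES,2}_k$, I would fix $\epsilon>0$ and pick a \emph{compact} symmetric-type set $A\in\mathscr{C}^{ES,2}_k$ with $\sup_{u\in A}E(u)<\lambda^{ES,2}_k+\epsilon$. The goal is to produce from $A$ either a closed set $B_1$ with $\Dim_{LS}(B_1)\geq k$ or a closed set $B_2$ with $\Dim_{ES}(B_2)\geq k$, in both cases with $\sup_{u\in B_j}E(u)$ still controlled by $\lambda^{ES,2}_k+\epsilon$; the natural candidate is simply $B_1=B_2=A$ itself, viewed now as an element of $\mathscr{C}^{1}$ (closed sets), since compact sets are closed. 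The only thing to check is that $\Dim_{ES}(A)\geq k$ \emph{as an element of} $\mathscr{C}^1$ coincides with its value in $\mathscr{C}^2$—which is immediate, since $\Dim_{ES}$ is defined the same way on closed sets. Hence $A\in\mathscr{C}^{ES,1}_k$ and $\lambda_k^{ES,1}\leq \sup_{u\in A}E(u)<\lambda^{ES,2}_k+\epsilon$; letting $\epsilon\to 0$ gives $\lambda_k^{ES,1}\leq \lambda^{ES,2}_k$, which already implies $\min\{\lambda_k^{LS,1},\lambda_k^{ES,1}\}\leq \lambda^{ES,2}_k$. (The $LS,1$ term appears in the minimum only because the left-hand side is stated symmetrically; one does not need to invoke it separately, though the analogous bound $\lambda_k^{LS,1}\leq\lambda_k^{LS,2}\leq$ something could also be recorded.)

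\textbf{Expected main obstacle.} The delicate point is the combinatorial/topological step behind $\ess(\mathfrak{p}(A))+1\leq \cat_{\mathbb{P}(\mathscr{X})}(\mathfrak{p}(A))$: one must argue that a space covered by $m$ closed sets, each contractible in the ambient ANR, can be contracted in the ambient space onto a set of covering dimension $\leq m-1$ (a "nerve"-type deformation). This is exactly the content of Gromov's inequalities \cite[0.4B]{G} relating $\ess$ to $\cat$, together with Lemma~\ref{ANRlemmals}(i)-(ii); I would cite these rather than reprove them, using Lemma~\ref{ANRlemmals}(ii) to ensure the deformation does not increase $\cat$ and hence does not destroy the lower dimension bound. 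Modulo invoking these cited facts, the rest is bookkeeping with the min-max formula and the elementary inclusions $\mathscr{C}^{LS,2}_k\subset\mathscr{C}^{ES,2}_k$ and $\mathscr{C}^{ES,2}_k\subset\mathscr{C}^{ES,1}_k$.
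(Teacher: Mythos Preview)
Your argument for the left inequality $\min\{\lambda_k^{LS,1},\lambda_k^{ES,1}\}\leq \lambda^{ES,2}_k$ is correct and matches the paper: since compact sets are closed, $\mathscr{C}^{ES,2}_k\subset\mathscr{C}^{ES,1}_k$, whence $\lambda_k^{ES,1}\leq\lambda_k^{ES,2}$ and the minimum is bounded accordingly.

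However, your argument for $\lambda^{ES,2}_k\leq \lambda^{LS,2}_k$ has the dimension inequality reversed, and the subsequent step is a logical slip. Your nerve-type argument (cover by $m$ contractible sets $\Rightarrow$ contract onto an $(m-1)$-dimensional complex) yields $\ess(\mathfrak{p}(A))\leq \cat_{\mathbb{P}(\mathscr{X})}(\mathfrak{p}(A))-1$, i.e.\ $\Dim_{ES}(A)\leq \Dim_{LS}(A)$. From this you \emph{cannot} conclude $\mathscr{C}^{LS,2}_k\subset\mathscr{C}^{ES,2}_k$: if $\Dim_{ES}\leq\Dim_{LS}$, then $\Dim_{LS}(A)\geq k$ does not force $\Dim_{ES}(A)\geq k$; it forces the reverse inclusion $\mathscr{C}^{ES,2}_k\subset\mathscr{C}^{LS,2}_k$, which would give $\lambda^{LS,2}_k\leq\lambda^{ES,2}_k$, the wrong direction.

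What is actually needed is $\Dim_{ES}(A)\geq\Dim_{LS}(A)$, and the paper's mechanism is different from a nerve construction. Given compact $A$, choose a homotopy contracting $\mathfrak{p}(A)$ onto a set $B$ with $\Dim_C(B)=\ess(\mathfrak{p}(A))$; since $\mathfrak{p}(A)$ is compact, so is $B$, hence $B$ is closed. Now apply Lemma~\ref{ANRlemmals}(i) to $B$ (not to $\mathfrak{p}(A)$): this gives $\cat_{\mathbb{P}(\mathscr{X})}(B)\leq\Dim_C(B)+1=\Dim_{ES}(A)$. Then Lemma~\ref{ANRlemmals}(ii) says category cannot decrease under a deformation, so $\cat_{\mathbb{P}(\mathscr{X})}(\mathfrak{p}(A))\leq\cat_{\mathbb{P}(\mathscr{X})}(B)$. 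Chaining these yields $\Dim_{LS}(A)\leq\Dim_{ES}(A)$, hence $\mathscr{C}^{LS,2}_k\subset\mathscr{C}^{ES,2}_k$ and $\lambda^{ES,2}_k\leq\lambda^{LS,2}_k$. Note that compactness of $A$ is genuinely used (to make $B$ closed so Lemma~\ref{ANRlemmals}(i) applies), which is why the statement only compares the $\alpha=2$ spectra on the right.
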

\begin{proof}It suffices to show $\lambda^{ES,2}_k\leq \lambda^{LS,2}_k$. Given $A\in \mathscr{C}^{LS,2}_k$, let $B$ be the homotopic image of  $\mathfrak{p}(A)$ with $\Dim_C(B)=\ess(\mathfrak{p}(A))$. Note that $B$ is compact (closed) and $\mathbb{P}(\mathscr{X})$ is a normal ANR (see Proposition  \ref{finalPX}).
Then Lemma \ref{ANRlemmals}/(i) yields
\[
\Dim_{ES}( A)=\Dim_{C}(B)+1\geq \cat_{\mathbb{P}(\mathscr{X})}(B).
\]
Moreover,
 it follows by  Lemma \ref{ANRlemmals}/(ii) that
 \[
 \cat_{\mathbb{P}(\mathscr{X})}(B)\geq \cat_{\mathbb{P}(\mathscr{X})}(\mathfrak{p}(A))=\Dim_{LS}(A).
 \]
 Accordingly, one has $\Dim_{ES}(A)\geq \Dim_{LS}(A)\geq k$, thus $A\in \mathscr{C}^{ES,2}_k$, which implies  $\lambda^{ES,2}_k\leq \lambda^{LS,2}_k$.
\end{proof}

\subsubsection{Lebesgue covering dimension pair}\label{lebsdiem}
\begin{definition}\it Let
$\mathscr{C}^\alpha$, $\alpha=1,2$ be two optional families, i.e.,
\[
\mathscr{C}^1:=\{A\subset \mathcal {S}: A \text{ is closed}\},\ \mathscr{C}^2:=\{A\subset \mathcal {S}: A \text{ is compact}\}.
\]
Given a closed set $A\subset \mathcal {S}$, we define the {\rm modified Lebesgue covering dimension of} $A$ by
\[
\Dim_{MC}(A):=\left\{
\begin{array}{llll}
 \Dim_C(A)+1,&\text{ if }A\neq\emptyset,\\
 \\
 0,&\text{ if }A=\emptyset.
\end{array}
\right.
\]
\end{definition}
\noindent Since $\mathscr{X}$ is a  separable metric space,  one has $\Dim_C(A)=\ind(A)$ for any subset  $A\subset \mathcal {S}\subset \mathscr{X}$, where $\ind(\cdot)$ denotes the  inductive dimension.
Consequently, we have the following result.
\begin{lemma}\label{someimd} Given $\alpha\in \{1,2\}$,
let $A, B\in \mathscr{C}^{\alpha}$ and $h:\mathcal {S}\rightarrow \mathcal {S}$ be a   homeomorphism. The following properties hold$:$

\begin{itemize}
	\item[(i)] $\Dim_{MC}(A)\geq 0$ with equality if and only if $A=\emptyset;$
	\item[(ii)] If $A\subset B$, then $\Dim_{MC}(A)\leq \Dim_{MC}(B);$
	\item[(iii)] $\Dim_{MC}(A\cup B)\leq \Dim_{MC}(A)+\Dim_{MC}(B);$
	\item[(iv)] $\Dim_{MC}(A)=\Dim_{MC}(h(A));$
	\item[(v)] Given $k\in \mathbb{N}^+$, for any $k$-dimensional linear space $V \subset \mathscr{X}$, one has $\Dim_{MC}(\mathcal {S}\cap V )=k$.
\end{itemize}
\end{lemma}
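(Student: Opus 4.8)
The plan is to verify the five properties of $\Dim_{MC}$ one by one, each reducing to a standard fact about the Lebesgue covering dimension $\Dim_C$ (equivalently, by separability, the small inductive dimension $\ind$) together with the observation that the modification $A\mapsto \Dim_C(A)+1$ on nonempty sets only shifts values by a constant.

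First I would dispose of (i) and (ii), which are immediate: $\Dim_C(A)\geq 0$ for every nonempty subset of a metric space, so $\Dim_{MC}(A)\geq 1>0$ when $A\neq\emptyset$, while $\Dim_{MC}(\emptyset)=0$ by definition; monotonicity of $\Dim_C$ under inclusion (a basic property, see Hurewicz--Wallman \cite{HW}) gives (ii) after noting that the $+1$ shift preserves inequalities and the case $A=\emptyset$ is trivial. For (iii), I would invoke the countable sum theorem / finite additivity estimate for covering dimension: for closed subsets one has $\Dim_C(A\cup B)\leq \Dim_C(A)+\Dim_C(B)+1$ (this is the analogue used for $\ess$ in Lemma \ref{Gropro}(iii)); hence when both $A,B$ are nonempty, $\Dim_{MC}(A\cup B)=\Dim_C(A\cup B)+1\leq \Dim_C(A)+\Dim_C(B)+2=\Dim_{MC}(A)+\Dim_{MC}(B)$, and the cases where one of them is empty are trivial. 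Property (iv) is just topological invariance of $\Dim_C$ under homeomorphisms, which again survives the shift and the empty-set convention.

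The only step with any content is (v), the normalization $\Dim_{MC}(\mathcal{S}\cap V)=k$ for a $k$-dimensional linear subspace $V\subset\mathscr{X}$. Here $\mathcal{S}\cap V$ is the unit sphere (in the $L^2$-norm) of a $k$-dimensional normed space, hence homeomorphic to the standard sphere $\mathbb{S}^{k-1}$, which has covering dimension $k-1$; therefore $\Dim_{MC}(\mathcal{S}\cap V)=(k-1)+1=k$. I would spell out the homeomorphism: all norms on the finite-dimensional space $V$ are equivalent, so $V$ with the restriction of $\|\cdot\|_{\Ho}$ (equivalently $\|\cdot\|_{L^2}$) is linearly homeomorphic to $\mathbb{R}^k$, carrying $\mathcal{S}\cap V$ onto the Euclidean unit sphere $\mathbb{S}^{k-1}$; then cite $\Dim_C(\mathbb{S}^{k-1})=k-1$ (Hurewicz--Wallman \cite{HW}), exactly as Gromov uses $\Dim_C(\mathbb{P}(V))=k-1$.

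I expect no genuine obstacle here — this lemma is a bookkeeping exercise whose sole purpose is to record that $(\mathscr{C}^\alpha,\Dim_{MC})$ satisfies (D1)--(D4) (so it is a dimension pair), while leaving open — this is the real point of the subsection — that it fails to be \emph{faithful}. The mild subtlety to keep an eye on is the $+1$ normalization: it is chosen precisely so that (v) yields $k$ rather than $k-1$, matching (D3), and so that (i) distinguishes $\emptyset$; one must check that this same shift does not break subadditivity, which is why (iii) uses the sharper $\Dim_C(A\cup B)\leq\Dim_C(A)+\Dim_C(B)+1$ rather than the naive bound. Everything else is a direct appeal to classical dimension theory together with the identification $\Dim_C=\ind$ valid because $\mathscr{X}$ is a separable metric space.
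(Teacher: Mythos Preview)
Your proposal is correct and matches the paper's approach: the paper simply records, immediately before the lemma, that $\Dim_C=\ind$ on the separable metric space $\mathscr{X}$ and then states the lemma without proof as an immediate consequence of classical dimension theory (Hurewicz--Wallman \cite{HW}). Your write-up just makes explicit the standard facts the paper leaves implicit---monotonicity, the addition inequality $\Dim_C(A\cup B)\leq\Dim_C(A)+\Dim_C(B)+1$, topological invariance, and $\Dim_C(\mathbb{S}^{k-1})=k-1$---and correctly tracks the $+1$ shift through each item.
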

\noindent In particular, $(\mathscr{C}^\alpha,\Dim_{MC})$, $\alpha=1,2$ are dimension pairs. By Theorem \ref{minmaxth}, we obtain
the $(\mathscr{C}^\alpha,\Dim_{MC})$-spectrum, that is,
\[
\lambda^{MC,\alpha}_k=\inf_{A\in \mathscr{C}^{MC,\alpha}_{k}}\sup_{u\in  A}E(u), \ \text{ for } k\in \mathbb{N}^+,
\]
where $\mathscr{C}^{MC,\alpha}_{k}:=\{A\in \mathscr{C}^{\alpha}:\,  \Dim_{MC}(A)\geq k\}$.

\begin{proposition}\label{flawex1}For each $\alpha\in \{1,2\}$, one has $\lambda_k^{MC,\alpha}=\lambda_1^{MC,\alpha}$ for every $k\in \mathbb{N}^+$. Hence $(\mathscr{C}^\alpha,\Dim_{MC})$ is not faithful.
\end{proposition}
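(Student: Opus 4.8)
The plan is to show that, once $\lambda$ exceeds $\lambda_1^{MC,\alpha}=\inf_{u\in\mathcal S}E(u)$, the sublevel set $E^{-1}[0,\lambda]$ already has \emph{infinite} modified covering dimension; by the definition of $\lambda_k^{MC,\alpha}$ this forces the whole $(\mathscr C^\alpha,\Dim_{MC})$-spectrum to collapse onto $\lambda_1^{MC,\alpha}$. The underlying reason is that $\mathcal S$ is an infinite-dimensional manifold, so every nonempty open subset of it contains topological copies of cubes of all dimensions.

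First I would record that $(\mathscr C^\alpha,\Dim_{MC})$ is a dimension pair (Lemma \ref{someimd}), so Theorem \ref{properties1} applies: the spectrum is monotone and $\lambda_1^{MC,\alpha}=\inf_{u\in\mathcal S}E(u)$; moreover, by Definition \ref{spect2}, $\lambda_k^{MC,\alpha}=\sup\{\lambda\in\mathbb R^+\cup\{+\infty\}:\Dim_{MC}E^{-1}[0,\lambda]<k\}$. If $\lambda<\lambda_1^{MC,\alpha}$ then $E^{-1}[0,\lambda]=\emptyset$, hence $\Dim_{MC}E^{-1}[0,\lambda]=0<k$. Thus everything reduces to the claim: \emph{for every $k\ge 2$ and every $\lambda>\lambda_1^{MC,\alpha}$ there exists $A\in\mathscr C^{MC,\alpha}_k$ with $A\subset E^{-1}[0,\lambda]$.}

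To construct such an $A$, pick $u_0\in\mathscr X$ with $\|u_0\|_{L^2}=1$ and $E(u_0)<\lambda$ (possible since $\lambda>\inf_{\mathcal S}E$). As $\mathscr X$ is infinite-dimensional, choose $v_1,\dots,v_{k-1}\in\mathscr X$ so that $u_0,v_1,\dots,v_{k-1}$ are linearly independent, and for small $\delta>0$ define
\[
\Psi\colon[-\delta,\delta]^{k-1}\longrightarrow\mathcal S,\qquad
\Psi(t):=\frac{u_0+\sum_{i=1}^{k-1}t_iv_i}{\bigl\|\,u_0+\sum_{i=1}^{k-1}t_iv_i\,\bigr\|_{L^2}}.
\]
For $\delta$ small the denominator never vanishes and $\Psi$ is continuous; it is injective, since $\Psi(t)=\Psi(s)$ forces $u_0+\sum t_iv_i=\mu(u_0+\sum s_iv_i)$ with $\mu>0$, whence $\mu=1$ and then $t=s$ by linear independence. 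A continuous injection of a compact space into a Hausdorff space is an embedding, so $A:=\Psi([-\delta,\delta]^{k-1})$ is compact with $\Dim_C(A)=\Dim_C([-\delta,\delta]^{k-1})=k-1$, i.e. $\Dim_{MC}(A)=k$. Since $E$ is continuous (Proposition \ref{srongDE}) and $E(\Psi(0))=E(u_0)<\lambda$, shrinking $\delta$ once more gives $E<\lambda$ on $A$, so $A\subset E^{-1}[0,\lambda]$; being compact, $A$ lies in both $\mathscr C^1$ and $\mathscr C^2$, hence $A\in\mathscr C^{MC,\alpha}_k$ for $\alpha\in\{1,2\}$.

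Combining these two facts, the set $\{\lambda:\Dim_{MC}E^{-1}[0,\lambda]<k\}$ contains $[0,\lambda_1^{MC,\alpha})$ and is contained in $[0,\lambda_1^{MC,\alpha}]$, so $\lambda_k^{MC,\alpha}=\lambda_1^{MC,\alpha}$ for every $k\in\mathbb N^+$. For non-faithfulness it then suffices to exhibit one compact Riemannian manifold where this disagrees with the classical spectrum: for any closed $(M,g)$ with its canonical measure (e.g. the round $\mathbb S^n$), Theorem \ref{properties1}/(i) gives $\lambda_1^{MC,\alpha}=\inf_uE(u)=0$, so $\lambda_k^{MC,\alpha}=0$ for all $k$, while $\lambda_2^\Delta>0$; hence $\lambda_2^{MC,\alpha}\ne\lambda_2^\Delta$ and $(\mathscr C^\alpha,\Dim_{MC})$ is not faithful. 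The only step requiring genuine care is the embedding argument — verifying that the normalized affine perturbation $\Psi$ is injective and that its image has covering dimension exactly $k-1$ while staying inside the prescribed sublevel set — which is precisely where infinite-dimensionality of $\mathscr X$ is used; the rest is bookkeeping against the definitions.
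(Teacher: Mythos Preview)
Your proof is correct and in fact slightly stronger than what the paper writes. Both arguments exploit the same phenomenon: since $\Dim_{MC}$ ignores the antipodal symmetry, any small patch of $\mathcal S$ near a (near-)minimizer of $E$ already contains compact sets of arbitrarily large covering dimension, forcing $\lambda_k^{MC,\alpha}$ to collapse to $\lambda_1^{MC,\alpha}$.

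The difference lies in how that patch is built. The paper restricts to a compact Riemannian manifold $(M,g,d\vol_g)$, takes orthonormal Laplace eigenfunctions $u_1,\dots,u_k$, and uses the spherical cap
\[
A_n=\Bigl\{\textstyle\sum_{i=1}^k \cos\alpha_i\,u_i : |\alpha_1|\le \pi/2^n\Bigr\}\subset W_k\cap\mathcal S,
\]
on which $E(u)=\sum\cos^2\!\alpha_i\,\lambda_i^\Delta\le \cos^2\!\alpha_1\,\lambda_1^\Delta+\sin^2\!\alpha_1\,\lambda_k^\Delta\to\lambda_1^\Delta$ as $n\to\infty$. This suffices for non-faithfulness but, as written, proves the first sentence only in the Riemannian case. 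Your construction --- normalizing the affine slice $u_0+\sum t_iv_i$ for arbitrary linearly independent $u_0,v_1,\dots,v_{k-1}$ --- is more elementary (no spectral decomposition needed) and works verbatim on any compact FMMM, so it establishes $\lambda_k^{MC,\alpha}=\lambda_1^{MC,\alpha}$ in the generality actually asserted by the proposition. The embedding step you flagged is handled cleanly: linear independence gives injectivity of $\Psi$, compactness upgrades the continuous bijection to a homeomorphism, and topological invariance of $\Dim_C$ gives $\Dim_{MC}(A)=k$.
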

\begin{proof}Let $(M,g,d\vol_g)$ be a compact Riemannian manifold equipped with its canonical measure and fix $\alpha\in\{1,2\}$.
Theorem \ref{properties1}/(i) implies $\lambda^{MC,\alpha}_1=\lambda^{\Delta}_1$.
For $k\geq 2$, let $u_i\in \mathcal {S}$, $i=1,\ldots,k$ be the  eigenfunctions corresponding to $\lambda^\Delta_i$
with (\ref{4.1}).
Set
$W_k:=\text{Span}\{u_1,\ldots,u_k\}$.
Let $(\cos\alpha_1,\ldots,\cos\alpha_k)$ denote the direction  cosines of a nonzero vector  in $(W_k,(\cdot,\cdot)_{L^2})$ with respect to $\{u_i\}$, i.e., $\cos \alpha_i:=(\cdot, u_i)_{L^2}/\|\cdot\|_{L^2}$, where $(\cdot,\cdot)_{L^2}$ is defined in (\ref{innernormL2}).
Set
\[
A_n:=\left\{ \sum_{i=1}^k\cos \alpha_i\cdot u_i: \, -\frac{\pi}{2^n}\leq \alpha_1\leq \frac{\pi}{2^n}\right\}.
\]
Clearly, $A_n\in \mathscr{C}_k^{MC,\alpha}$ and since $\sum_{i=1}^k\cos^2\alpha_i=1$, we have
\begin{align*}
\lambda_1^{MC,\alpha}\leq \lambda^{MC,\alpha}_k\leq \sup_{u\in A_n}E(u)
\leq \cos^2\alpha_1\cdot \lambda^{\Delta}_1+\sin^2\alpha_1\cdot \lambda_k^\Delta\rightarrow \lambda^\Delta_1=\lambda^{MC,\alpha}_1 \text{ as }n\rightarrow \infty,
\end{align*}
which implies $\lambda^{MC,\alpha}_1=\cdots=\lambda^{MC,\alpha}_k$. In particular, $\lim_{k\rightarrow\infty}\lambda_k^{MC,\alpha}\neq+\infty$ and the statement follows by using Theorem \ref{properties2}/(ii).
\end{proof}

\section{Upper bounds for eigenvalues}\label{section4}
Let $(M,F, d\mathfrak{m})$ be an $n$-dimensional complete FMMM and let
 $(\mathscr{C},\Dim)$ be a dimension pair.
Given $r_0>0$ and $p\in M$,
$(\overline{B_p(r_0)}, F, d\mathfrak{m})$ is an $n$-dimensional compact FMMM and the  corresponding Banach space  $\mathscr{X}$  is denoted by $\mathscr{X}(B_p(r_0))$.
Now  denote by $\lambda_1(B_p(r_0))$ the first eigenvalue  with respect to $(\mathscr{C},\Dim)$ on $(\overline{B_p(r_0)}, F, d\mathfrak{m})$. According to Theorem \ref{properties1}/(i), one has
\[
\lambda_1(B_p(r_0))=\inf_{u\in \mathscr{X}(B_p(r_0))\backslash\{0\} }E(u).
\]

On the other hand, given $N\in [n,+\infty)\cap \mathbb N$,
let $B^N_K(r_0)$ denote a geodesic ball with radius $r_0$ in the $N$-dimensional Riemannian space form of constant sectional curvature $K$, and let $\lambda^\Delta_1(B^N_K(r_0))$ be the usual first eigenvalue of the Beltrami-Laplacian on the compact Riemannian manifold $\overline{B^N_K(r_0)}$.

Inspired by Cheng \cite{C}, we have the following lemma.
\begin{lemma}\label{Chenglemma}
Given $N\in [n,+\infty)\cap \mathbb N$, let $(M,F,d\mathfrak{m})$ be an $n$-dimensional   complete {\rm FMMM} with $\mathbf{Ric}_N\geq (N-1)K$. Then for every  dimension pair $(\mathscr{C},\Dim)$, we have
\[
\lambda_1(B_p(r_0))\leq   \lambda^\Delta_1(B^N_K(r_0)),\ \forall\,p\in M.
\]
\end{lemma}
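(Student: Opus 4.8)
## Proof proposal

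\textbf{The plan.} The statement is a Finslerian Faber--Krahn/Cheng-type comparison for the first Dirichlet eigenvalue of a metric ball, and it should follow the classical Cheng argument via the Rayleigh quotient together with the Laplacian comparison (Lemma \ref{Rho}), adapted to the nonlinear Finsler setting. By Theorem \ref{properties1}/(i), $\lambda_1(B_p(r_0)) = \inf_{u} E(u)$ over $u \in \mathscr{X}(B_p(r_0))\setminus\{0\}$, so it suffices to exhibit a single competitor $u$ with $E(u) \le \lambda_1^\Delta(B^N_K(r_0))$. The natural candidate is the radialization of the first Dirichlet eigenfunction of the model ball: let $\varphi \colon [0,r_0] \to \mathbb{R}$ be the (positive, radial, decreasing on $(0,r_0)$) first eigenfunction on $\overline{B^N_K(r_0)}$, so $\varphi'' + (N-1)\frac{\mathfrak{s}_K'}{\mathfrak{s}_K}\varphi' + \lambda_1^\Delta(B^N_K(r_0))\varphi = 0$ with $\varphi(r_0)=0$, and set $u(x) := \varphi(r(x))$ where $r(x) = d_F(p,x)$. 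This $u$ lies in $\mathscr{X}(B_p(r_0))$ since $\varphi(r_0)=0$ and $r$ is Lipschitz.

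\textbf{Key steps.} First I would record that $F^*(du) = |\varphi'(r)|\,F^*(dr) = |\varphi'(r)|$ a.e., using $F^*(dr)=1$ (the distance function has unit gradient in the Finsler sense) — this holds off $\mathrm{Cut}_p \cup \{p\}$, a null set. Hence
\[
E(u) = \frac{\int_{B_p(r_0)} \varphi'(r)^2\, d\mathfrak{m}}{\int_{B_p(r_0)} \varphi(r)^2\, d\mathfrak{m}}.
\]
Second, using the polar coordinate decomposition $d\mathfrak{m} = \hat\sigma_p(r,y)\,dr\,d\nu_p(y)$ from (\ref{volumeforminpolar}), both integrals become iterated integrals over $S_pM$ and over $r \in (0, \min\{r_0, i_y\})$ against the density $\hat\sigma_p(r,y)$. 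Third — the crux — I would invoke the comparison: by Lemma \ref{Rho} (or the one-dimensional comparison underlying it), for each fixed $y$ the density $\hat\sigma_p(r,y)$ satisfies a differential inequality forcing $\frac{\partial}{\partial r}\log\hat\sigma_p(r,y) = \Delta r \le (N-1)\frac{\mathfrak{s}_K'(r)}{\mathfrak{s}_K(r)}$ on $(0,i_y)$, which is exactly the model density's logarithmic derivative $\frac{\partial}{\partial r}\log \mathfrak{s}_K^{N-1}(r)$. The standard Cheng calculation then shows that, because $\varphi'^2$ and $\varphi^2$ are monotone in a way compatible with this density comparison (this is where one uses that $\varphi$ is decreasing and $\varphi'$ is controlled — more precisely one shows the function $r \mapsto \int \varphi'^2 \hat\sigma / \int \varphi^2 \hat\sigma$ is dominated by the model ratio, via an integration-by-parts/Gronwall-type estimate comparing $\hat\sigma(r,y)$ with $\mathfrak{s}_K^{N-1}(r)$ up to the cut value and handling $r > i_y$ trivially by positivity), one obtains the pointwise-in-$y$ bound, and then integrating over $S_pM$ gives $E(u) \le \lambda_1^\Delta(B^N_K(r_0))$.

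\textbf{Main obstacle.} The delicate point is \emph{not} the curvature comparison per se — Lemma \ref{Rho} hands that to us — but rather the handling of the cut locus and the fact that the radial competitor $u = \varphi \circ r$ is only Lipschitz, so that $F^*(du)$ must be computed a.e.\ and the polar integration must be carried out only up to $i_y$ for each direction $y$. One must argue that truncating each radial integral at $\min\{r_0, i_y\}$ rather than at $r_0$ can only \emph{decrease} the numerator relative to the model (since we are integrating a nonnegative quantity over a smaller set) while the denominator comparison still goes the right way — this is the usual subtlety in Cheng's proof and it works because $\varphi, \varphi' $ are bounded and $\mathrm{Cut}_p$ is null. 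A secondary point is that $F^*$ is only $p$-homogeneous and not induced by an inner product, but since $u$ is radial everything reduces to the scalar identity $F^*(\psi(r)\,dr) = |\psi(r)|\cdot F^*(dr)$, so the nonlinearity of the Finsler--Laplace operator never actually enters the estimate; the only Finslerian input is the weighted Laplacian comparison for $r$, already established. I would therefore present the proof as: (i) choose the radial competitor; (ii) reduce $E(u)$ to polar-coordinate integrals; (iii) apply Lemma \ref{Rho} and the classical Cheng one-variable comparison direction by direction; (iv) integrate over $S_pM$ and conclude.
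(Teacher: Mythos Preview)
Your proposal is correct and follows essentially the same route as the paper's proof: transplant the radial model eigenfunction $\varphi$ via $u=\varphi\circ r$, reduce the Rayleigh quotient to polar-coordinate integrals against $\hat\sigma_p(r,y)$ on $[0,\min\{i_y,r_0\}]$, and use the Laplacian comparison (Lemma~\ref{Rho}) direction by direction. The paper makes the ``integration-by-parts/Gronwall'' step you allude to completely explicit---one integrates $\int_0^{a(y)}(\varphi')^2\hat\sigma_p\,dr$ by parts, uses $\varphi'<0$ together with $\partial_r\log\hat\sigma_p\le \partial_r\log\mathfrak{s}_K^{N-1}$ and the model ODE for $\varphi$ to bound the bulk term by $\lambda_1^\Delta(B^N_K(r_0))\int_0^{a(y)}\varphi^2\hat\sigma_p\,dr$, and observes that the boundary term $\varphi\varphi'\hat\sigma_p\big|_0^{a(y)}$ is nonpositive (it vanishes at $0$ and at $r_0$, and is $\le 0$ at a cut value since $\varphi>0$, $\varphi'<0$ there)---which is exactly the mechanism your ``truncation can only help'' remark is pointing at.
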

\begin{proof}
Let $\varphi$ be a nonnegative eigenfunction corresponding to $\lambda^\Delta_1(B^N_K(r_0))$, which is always a radial function.
Let $r(x):=d_F(p,x)$; thus, $f(x):=\varphi\circ r(x)\in \mathscr{X}(B_p(r_0))$.
Let
$\mathcal {S}(B_p(r_0)):=\{u\in \mathscr{X}(B_p(r_0)): \|u\|_{L^2}=1\}$.
Clearly, $A:=\{\pm  f/\|f\|_{L^2}\}\subset \mathcal {S}(B_p(r_0))$. Since $\Dim(A)\geq 1$, Theorem \ref{minmaxth} yields
\[
\lambda_1(B_p(r_0))\leq \frac{\ds\int_{B_p(r_0)} F^{*2}(df)\, d\mathfrak{m}}{\ds\int_{B_p(r_0)}f^2\, d\mathfrak{m}}.\tag{4.1}\label{v14-4.1}
\]
Let $(r,y)$ be the polar coordinate system at $p$ and set $a(y):=\min\{i_y, r_0\}$ for any $y\in S_pM$. Thus, $f(r,y):=f(x)=\varphi(r)$ and $\partial f/\partial r=d\varphi/dr<0$. Moreover, since we have the eikonal equation $F^*(dr)=1$ (cf. Shen \cite{Sh1}), (\ref{volumeforminpolar}) yields
\begin{align*}
\ds\int_{B_p(r_0)}f^2\, d\mathfrak{m}&=\ds\int_{S_pM}d\nu_p(y)\ds\int_{0}^{a(y)}\varphi^2(r) \,\hat{\sigma}_p(r,y) dr,\tag{4.2}\label{finter}\\
\ds\int_{B_{p}(r_0)}F^{*2}(df)\, d\mathfrak{m}
&=\ds\int_{S_pM}d\nu_p(y)\ds\int_{0}^{a(y)}\left( \frac{d \varphi}{dr} \right)^2   \hat{\sigma}_p(r,y) dr.\tag{4.3}\label{dfinter}
\end{align*}
A direct calculation furnishes
\begin{align*}
\ds\int_{0}^{a(y)}\left( \frac{d \varphi}{dr} \right)^2   \hat{\sigma}_p(r,y) dr=\left. \varphi\frac{d\varphi}{dr}\hat{\sigma}_p(r,y)\right|^{a(y)}_0-\int^{a(y)}_0\frac{1}{\hat{\sigma}_p(r,y)}\frac{\partial}{\partial r}\left[ \frac{d\varphi}{dr}\hat{\sigma}_p(r,y) \right]\varphi\hat{\sigma}_p(r,y)dr.\tag{4.4}\label{5.1newsee}
\end{align*}
Relation (\ref{Laplacainr})   together with Lemma \ref{Rho} and $d\varphi/d r<0$ implies
\begin{eqnarray*}
\frac{1}{\hat{\sigma}_p(r,y)}\frac{\partial}{\partial r}\left[ \frac{d\varphi}{dr}\hat{\sigma}_p(r,y) \right]&=&\frac{d^2\varphi}{dr^2}+\frac{d\varphi}{dr}\cdot\frac{\partial}{\partial r}\log \hat{\sigma}_p(r,y)\\
&\geq &\frac{d^2\varphi}{dr^2}+\frac{d\varphi}{dr}\cdot\frac{d}{d r}\left( \log\mathfrak{s}_K^{N-1}(r) \right)\\&=&-\lambda^\Delta_1(B^N_K(r_0))\cdot\varphi,
\end{eqnarray*}
which combined with (\ref{5.1newsee}) and (\ref{v14-2.1})   yields
\begin{align*}
\ds\int_{0}^{a(y)}\left( \frac{d \varphi}{dr} \right)^2   \hat{\sigma}_p(r,y) dr
\leq\lambda^\Delta_1(B^N_K(r_0))\ds\int_{0}^{a(y)}\varphi^2(r) \,\hat{\sigma}_p(r,y) dr.
\end{align*}
Integrating the above inequality over $S_pM$, by (\ref{finter}) and (\ref{dfinter}),  one gets
\[
\ds\int_{B_{p}(r_0)}F^{*2}(df)\, d\mathfrak{m} \leq \lambda^\Delta_1(B^N_K(r_0))\ds\int_{B_p(r_0)}f^2\, d\mathfrak{m},\tag{4.5}\label{7.4}
\]
which together with (\ref{v14-4.1}) concludes the proof.
\end{proof}

According to Kronwith \cite{K3}, we introduce   convex Finsler manifolds.
\begin{definition}\it
Let $(M,F)$ be a   complete reversible Finsler manifold and let $C$ be a subset in $M$. $C$ is called {\rm convex} if for any $p,q\in \overline{C}$, there exists a minimal geodesic in $M$ from $p$ to $q$, which is contained in $\overline{C}$.
An $n$-dimensional compact reversible Finsler manifold $M$
 $($with or without
 boundary$)$  is said to be {\rm convex} if there are an $n$-dimensional  complete reversible Finsler manifold $W$
   and an isometric imbedding $i: M \hookrightarrow W$ such
that $i(M)$ is a convex subset of $W$.
\end{definition}

We have the following Cheng type estimate.
\begin{theorem}\label{Chees}Given $N\in [n,+\infty)\cap \mathbb N$, $K\in \mathbb{R}$ and $d>0$,
let $(M,F,d\mathfrak{m})$ be an $n$-dimensional compact convex {\rm FMMM} with $\mathbf{Ric}_N\geq (N-1)K$ and $\diam(M)=d$. Thus, for any dimension pair $(\mathscr{C},\Dim)$, the corresponding spectrum satisfies
\[
\lambda_k\leq  \lambda^\Delta_1\left(B^N_K\left(\frac{d}{2k }\right)\right), \ \forall k\in \mathbb{N}^+.
\]
Moreover, 
there exists a constant $C=C(N)>0$  depending only on $N$ such that
\[
\lambda_{k}\leq  \frac{(N-1)^2}{4}|K|+C(N)\left(\frac{k}{d}\right)^2, \  \forall\,k\in \mathbb{N}^+.
\]
\end{theorem}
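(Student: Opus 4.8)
The plan is to run a Cheng-type ball-packing argument: pack $k$ disjoint metric balls of radius $\tfrac{d}{2k}$ into $M$, build from their first Dirichlet eigenfunctions a $k$-dimensional test subspace of $\mathscr{X}$, and feed it into the min-max principle of Theorem~\ref{minmaxth}; the Finsler comparison Lemma~\ref{Chenglemma} then yields the first displayed inequality, and the second follows by invoking the classical estimate for the first Dirichlet eigenvalue of a geodesic ball in a space form. The feature that makes this work in spite of the nonlinearity of $\eta\mapsto F^{*2}(\eta)$ is that the pieces of the test subspace will have pairwise disjoint supports, so the Rayleigh quotient is additive over them.

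First I would produce the balls. Since $M$ is compact, $\diam(M)=d$ is attained by some pair $x,y\in M$, and since $M$ is convex there is a unit-speed minimal geodesic $\gamma\colon[0,d]\to M$ from $x$ to $y$ lying in $M$; this is the only place the convexity hypothesis enters. Put $p_i:=\gamma\!\left(\tfrac{(2i-1)d}{2k}\right)$ for $i=1,\dots,k$. Because $\gamma$ is minimizing, $d_F(p_i,p_j)=\tfrac{|i-j|\,d}{k}\ge\tfrac{d}{k}$ for $i\neq j$, hence the open balls $B_{p_i}\!\left(\tfrac{d}{2k}\right)$ are pairwise disjoint. Convexity also ensures that on each such ball the distance function $d_F(p_i,\cdot)$ behaves as in the complete case, so Lemma~\ref{Chenglemma} applies and gives $\lambda_1\!\left(B_{p_i}\!\left(\tfrac{d}{2k}\right)\right)\le\lambda_1^{\Delta}\!\left(B_K^{N}\!\left(\tfrac{d}{2k}\right)\right)$ for every $i$ (when $K>0$, Bonnet--Myers applied to $\mathbf{Ric}_N\ge(N-1)K$ forces $d\le\pi/\sqrt{K}$, so all geodesic balls involved are proper).

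Next I would assemble the test subspace. Fix $\epsilon>0$. By Theorem~\ref{properties1}/(i), $\lambda_1\!\left(B_{p_i}(\tfrac{d}{2k})\right)=\inf\{E(v):v\in\mathscr{X}(B_{p_i}(\tfrac{d}{2k}))\setminus\{0\}\}$, so I may pick $u_i\in\mathscr{X}(B_{p_i}(\tfrac{d}{2k}))\setminus\{0\}$ with $E(u_i)<\lambda_1^{\Delta}(B_K^{N}(\tfrac{d}{2k}))+\epsilon$; extension by zero identifies $u_i$ with an element of $\mathscr{X}=\mathscr{X}(M)$ supported in the open ball $B_{p_i}(\tfrac{d}{2k})$. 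As these balls are pairwise disjoint, $u_1,\dots,u_k$ are linearly independent, and for $u=\sum_i a_iu_i\neq0$ one has, pointwise a.e., $F^{*2}(du)=\sum_i a_i^2F^{*2}(du_i)$ and $u^2=\sum_i a_i^2u_i^2$; hence $E(u)=\dfrac{\sum_i a_i^2\int_M F^{*2}(du_i)\,d\mathfrak{m}}{\sum_i a_i^2\int_M u_i^2\,d\mathfrak{m}}\le\max_i E(u_i)<\lambda_1^{\Delta}(B_K^{N}(\tfrac{d}{2k}))+\epsilon$. Setting $V_k:=\mathrm{Span}\{u_1,\dots,u_k\}\subset\mathscr{X}$, Definition~\ref{defdim1} gives $V_k\cap\mathcal{S}\in\mathscr{C}_k$, so the min-max principle yields $\lambda_k\le\sup_{u\in V_k\cap\mathcal{S}}E(u)<\lambda_1^{\Delta}(B_K^{N}(\tfrac{d}{2k}))+\epsilon$; letting $\epsilon\downarrow0$ proves $\lambda_k\le\lambda_1^{\Delta}(B_K^{N}(\tfrac{d}{2k}))$.

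Finally, the second inequality reduces to the Riemannian estimate $\lambda_1^{\Delta}(B_K^{N}(r))\le\tfrac{(N-1)^2}{4}|K|+\tfrac{C(N)}{r^2}$ applied with $r=\tfrac{d}{2k}$. For $K\le0$ this comes from the Rayleigh quotient of the radial ODE on $[0,r]$ with weight $\mathfrak{s}_K^{N-1}$, tested against $\psi(t)=e^{-\frac{N-1}{2}\sqrt{|K|}\,t}\eta(t)$ for a fixed Lipschitz cut-off $\eta$ with $\eta(r)=0$: one integration by parts handles the cross term, and a short case distinction according to the size of $\sqrt{|K|}\,r$ bounds the remainder by $C(N)/r^2$. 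For $K\ge0$ one has $\lambda_1^{\Delta}(B_K^{N}(r))\le\lambda_1^{\Delta}(B_0^{N}(r))=j_{N/2-1,1}^2/r^{2}$ (the first step by Lemma~\ref{Chenglemma} applied to the $N$-sphere of curvature $K$, whose weighted Ricci curvature dominates $0$, with $j_{N/2-1,1}$ the first positive zero of the Bessel function $J_{N/2-1}$), again of the required form. Combining with the first inequality and $r=\tfrac{d}{2k}$ gives $\lambda_k\le\tfrac{(N-1)^2}{4}|K|+C(N)(k/d)^2$. I expect the only genuinely delicate point to be the construction in the middle step — namely, checking that a test subspace built from functions supported in disjoint balls is an admissible competitor in the min-max scheme of Section~\ref{section3}, which is exactly what neutralizes the nonlinearity of the Finsler co-norm; the space-form eigenvalue bound used at the end is classical, cf.\ Cheng~\cite{C} and Chavel~\cite{Ch}.
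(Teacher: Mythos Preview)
Your proposal is correct and follows essentially the same approach as the paper: pack $k$ disjoint balls of radius $d/(2k)$, build a $k$-dimensional test space from functions supported in these balls, exploit the disjoint supports to reduce $E$ on the span to the individual Rayleigh quotients, and then apply the min-max principle together with Lemma~\ref{Chenglemma}. The only cosmetic differences are that the paper uses the transplanted space-form eigenfunction $\varphi\circ r_i$ directly (invoking the inequality~\eqref{7.4} from the proof of Lemma~\ref{Chenglemma}) rather than $\epsilon$-near-minimizers, and that the paper simply cites Cheng~\cite[p.~294]{C} for the final estimate on $\lambda_1^\Delta(B^N_K(d/2k))$ instead of sketching it; your explicit placement of the centers along a minimizing geodesic is the standard way to justify the existence of the disjoint balls (which the paper takes for granted).
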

\begin{proof}
Let $k\in \mathbb N^+$ and choose $k$ distinct points $\{p_i\}_{i=1}^k$ in $M$ such that $B_i:=B_{p_i}(d/(2k ))$ are pairwise disjoint. Let $r_i(x):=d_F(p_i,x)$ and let
 $\varphi$ be the first eigenfunction corresponding to $\lambda_1^{\Delta}(B^N_K(d/(2k )))$. Now we define the functions $f_i$, $i=1,\ldots k$ on $M$ by
\[
f_i(x):=\left\{
\begin{array}{llll}
 \varphi\circ r_i(x),\ &\text{ if }x\in B_i;\\
 \\
 0,\ &\text{ if }x\in M\backslash B_i.
\end{array}
\right.
\]
Clearly, $f_i\in \mathscr{X}$.
Set $V_k:=\text{Span}\{f_1,\ldots,f_k\}$.
Since $\Dim(V_k\cap \mathcal {S})\geq k$, Theorem \ref{minmaxth} furnishes
\[
\lambda_k\leq \sup_{u\in V_k\cap \mathcal {S}}E(u)=\sup_{u\in V_k\backslash\{0\}}E(u).\tag{4.6}\label{5.4}
\]
Since   $\text{supp}(f_i)\subset \overline{B_i}$ and $B_i$ are pairwise disjoint sets, $i=1,\ldots k$, for any choice $u=\sum_{i=1}^k a_i f_i\in V_k $ with $a_i\in \mathbb R$, the estimate (\ref{7.4})  yields
\begin{eqnarray*}
\ds\int_M F^{*2}(du)d\mathfrak{m}&=&\ds\int_M\sum_{i,j=1}^ka_i\,a_j\, g_{du}(df_i,df_j)d\mathfrak{m}
 = \sum_{i=1}^k a^2_i\ds\int_{B_i} F^{*2}(df_i)d\mathfrak{m}\\
 \\&\leq&  \lambda^\Delta_1\left(B^N_K\left(\frac{d}{2k }\right)\right)\sum_{i=1}^k a^2_i\ds\int_{B_i} f^2_id\mathfrak{m}\\&=& \lambda^\Delta_1\left(B^N_K\left(\frac{d}{2k }\right)\right)\ds\int_M u^2d\mathfrak{m},
\end{eqnarray*}
which together with (\ref{5.4}) gives $\lambda_k\leq \lambda^\Delta_1\left(B^N_K\left(\frac{d}{2k }\right)\right).$ It remains to use the estimate of Cheng \cite[p.294]{C} for $\lambda^\Delta_1\left(B^N_K\left(\frac{d}{2k }\right)\right)$ which concludes the proof.
\end{proof}

\begin{proof}[Proof of Theorem {\rm \ref{th1.2}}]
Since a closed reversible Finsler manifold is always compact and  convex,
Theorem \ref{th1.2} follows by Theorem \ref{Chees}.
\end{proof}

\section{Lower bounds for eigenvalues}\label{section5}
In this section we study the lower bounds of eigenvalues for  faithful dimension pairs $(\mathscr{C},\Dim)$ on a closed FMMM $(M,F,d\mathfrak{m})$.
Due to Theorem \ref{properties1}, the first eigenvalue of $(\mathscr{C},\Dim)$ is always zero. For convenience, we use $\overline{\lambda}_k$ to denote the {\it $k^{th}$ positive eigenvalue} of  $(\mathscr{C},\Dim)$, i.e., $\overline{\lambda}_k$ is the $(k+1)^{th}$ eigenvalue (see Theorem \ref{properties2}). As we already pointed out, in order to provide lower bounds for the eigenvalues, we necessarily have to deal with faithful dimension pairs, see Proposition \ref{flawex1}.

\subsection{General faithful dimension pairs}
We first study lower bounds of eigenvalues of  general faithful dimension pairs by means of the Ricci curvature $\mathbf{Ric}$, the distortion $\tau$ and the uniformity constant $\Lambda_F$, respectively.

\subsubsection{Dirichlet region and Cheeger's constant}
We naturally extend the concepts of Dirichlet region  (cf. Buser \cite{B}) and Cheeger's constant (cf. Cheeger \cite{Cheeger}) to Finsler geometry, both playing key roles in our arguments.

\begin{definition}\label{Dirdef}\it Let $(M,F)$ be a closed reversible Finsler manifold. Given $r>0$,
a sequence of points $\{p_i\}_{i=1}^m$ is called a {\rm complete $r$-package} if $\{B_{p_i}(r)\}_{i=1}^m$ is   a maximal family of disjoint $r$-balls in $(M,F)$.
The {\rm Dirichlet regions} corresponding to a complete $r$-package $\{p_i\}$ is defined as
\[
D_i:=\{q\in M: d_F(p_i,q)\leq d_F(p_j,q), \text{ for all }j=1,\ldots,m \}, \ 1\leq i\leq m.
\]
\end{definition}

Let $D$ be a domain and let $p\in M$. $D$ is called {\it starlike with respect to $p$} if
 each minimizing geodesic from $p$ to an arbitrary point $q\in D$ is always contained in $D$. We have the following lemma whose
proof will be provided in Appendix \ref{propertiesDirdomAb}.
\begin{lemma}\label{propertiesDirdom}
Let $\{D_i\}_{i=1}^m$ be the Dirichlet regions corresponding to $\{p_i\}_{i=1}^m$ defined as in Definition {\rm \ref{Dirdef}}. Then $\{D_i\}_{i=1}^m$ is a  covering of $M$ with $\mathfrak{m} (D_i\cap D_j)=0$ for any $i\neq j$. In particular, for each $i\in\{1,\ldots,m\}$,
$B_{p_i}\left({r}  \right)\subset D_i\subset B_{p_i}(2r)$, $\mathfrak{m}(\text{int}(D_i))=\mathfrak{m}(D_i)$ and $\text{int}(D_i)$ is starlike with respect to $p_i$,
where $\text{int}(D_i)$ denotes the interior of $D_i$.
\end{lemma}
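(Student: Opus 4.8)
The plan is to establish the five assertions in roughly increasing order of difficulty, using throughout only the reversibility of $F$, the triangle inequality, completeness of $(M,F)$, and the standard regularity theory of the Finsler distance function. Write $r_i:=d_F(p_i,\cdot)$. The covering property and the two ball inclusions are elementary. For any $q\in M$ the minimum of the finite set $\{r_1(q),\dots,r_m(q)\}$ is attained at some $i$, whence $q\in D_i$ and $M=\bigcup_i D_i$. Disjointness of the balls $B_{p_i}(r)$ forces $d_F(p_i,p_j)\ge 2r$ for $i\neq j$ (otherwise the midpoint of a minimal geodesic from $p_i$ to $p_j$ would lie in $B_{p_i}(r)\cap B_{p_j}(r)$, using reversibility), so for $q\in B_{p_i}(r)$ the triangle inequality gives $r_j(q)\ge d_F(p_i,p_j)-r_i(q)>r>r_i(q)$, i.e. $B_{p_i}(r)\subset D_i$; since $B_{p_i}(r)$ is open, $B_{p_i}(r)\subset\text{int}(D_i)$, in particular $p_i\in\text{int}(D_i)$. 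Maximality of the package gives, for every $q\in M$, an index $l$ with $B_q(r)\cap B_{p_l}(r)\neq\emptyset$, hence $d_F(p_l,q)<2r$; if in addition $q\in D_i$ then $r_i(q)\le r_l(q)=d_F(p_l,q)<2r$, proving $D_i\subset B_{p_i}(2r)$.

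The core of the argument is the null-measure claim $\mathfrak{m}(D_i\cap D_j)=0$ for $i\neq j$. Since $D_i\cap D_j\subset Z_{ij}:=\{q\in M:r_i(q)=r_j(q)\}$, it suffices to prove $\mathfrak{m}(Z_{ij})=0$. The cut loci $\Cut_{p_i}$ and $\Cut_{p_j}$ are closed and of null measure (see Section \ref{section2}), so it is enough to bound $Z_{ij}\setminus(\Cut_{p_i}\cup\Cut_{p_j})$, which lies in the open set $\Omega:=M\setminus(\Cut_{p_i}\cup\Cut_{p_j}\cup\{p_i,p_j\})$, on which both $r_i$ and $r_j$ are $C^\infty$ (note $p_i,p_j\notin Z_{ij}$ because $d_F(p_i,p_j)\ge 2r>0$). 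On $\Omega$ the eikonal equation gives $F^*(dr_i)=F^*(dr_j)=1$, and I claim $d(r_i-r_j)\neq 0$ on $Z_{ij}\cap\Omega$: if $dr_i(q)=dr_j(q)$ then $\nabla r_i(q)=\nabla r_j(q)$ (the Legendre transform $\mathfrak{L}$ is bijective), so the geodesic issued from $q$ with initial velocity $-\nabla r_i(q)$ — a genuine geodesic because $F$ is reversible — reaches $p_i$ at parameter $r_i(q)$ and $p_j$ at parameter $r_j(q)$; since $r_i(q)=r_j(q)$ on $Z_{ij}$, this forces $p_i=p_j$, a contradiction. Hence $r_i-r_j$ is a submersion near every point of $Z_{ij}\cap\Omega$, so that set is contained in a $C^\infty$ hypersurface and has null measure; therefore $\mathfrak{m}(Z_{ij})=0$. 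For $\mathfrak{m}(\text{int}(D_i))=\mathfrak{m}(D_i)$ one writes $D_i=\text{int}(D_i)\sqcup\partial D_i$ and checks $\partial D_i\subset\bigcup_{l\neq i}(D_i\cap D_l)$: a boundary point $q$ is a limit of points at which $r_l<r_i$ for some $l\neq i$, so by continuity and finiteness of the index set $r_l(q)\le r_i(q)$ for some fixed $l\neq i$, while $q\in D_i$ gives the reverse inequality, hence $q\in D_i\cap D_l$; the right-hand side has null measure by the previous step.

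For starlikeness, let $q\in\text{int}(D_i)$ and let $\gamma:[0,L]\to M$ be a unit-speed minimal geodesic from $p_i$ to $q$ with $L=r_i(q)$ (such a geodesic exists by completeness). For $s\in(0,L)$ the subsegment $\gamma|_{[0,s]}$ is minimal, so $r_i(\gamma(s))=s$, and for every $j$ the triangle inequality together with $q\in D_i$ yields $r_j(\gamma(s))\ge r_j(q)-(L-s)\ge r_i(q)-(L-s)=s$; hence $\gamma(s)\in D_i$. To upgrade this to $\gamma(s)\in\text{int}(D_i)$, suppose $\gamma(s_0)\in\partial D_i$ for some $s_0\in(0,L)$; by the boundary description above, $\gamma(s_0)\in D_i\cap D_l$ for some $l\neq i$, so $d_F(p_l,\gamma(s_0))=r_l(\gamma(s_0))=s_0$. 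Concatenating a unit-speed minimal geodesic $\eta$ from $p_l$ to $\gamma(s_0)$ with $\gamma|_{[s_0,L]}$ produces a curve from $p_l$ to $q$ of length $s_0+(L-s_0)=L$; since $q\in D_i$ gives $d_F(p_l,q)\ge r_i(q)=L$, this curve is a minimizer, hence a smooth geodesic, so it has no corner at $\gamma(s_0)$ and $\dot\eta(s_0)=\dot\gamma(s_0)$. By uniqueness of geodesics, $\eta$ and $\gamma$ then coincide on $[0,s_0]$, giving $p_l=\eta(0)=\gamma(0)=p_i$, a contradiction. Thus $\gamma([0,L])\subset\text{int}(D_i)$ (the endpoint $s=0$ being covered by $p_i\in\text{int}(D_i)$). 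The main obstacle is the null-measure statement: it is the only place requiring the fine structure of Finsler distance functions — their $C^\infty$ regularity off the cut locus, the eikonal identity, the fact that $\nabla r_i$ is the unit tangent to the minimal geodesics from $p_i$, and that the cut locus is closed of measure zero — together with the transversality computation $d(r_i-r_j)\neq0$ that converts the equidistant set into a hypersurface; the remaining assertions are triangle-inequality bookkeeping plus the elementary fact that a length-minimizing curve is a smooth geodesic.
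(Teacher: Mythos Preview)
Your proof is correct and matches the paper's on the key step---the null-measure of $D_i\cap D_j$ via the submersion argument for $r_i-r_j$ on the equidistant set off the cut loci---as well as on the covering, the ball inclusions, and the boundary-measure claim $\mathfrak{m}(\partial D_i)=0$.

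The one place you diverge is the starlikeness argument. The paper first establishes the explicit characterization $\text{int}(D_i)=\bigcap_{j\neq i}\{r_i<r_j\}$, then differentiates $t\mapsto f_{ji}(\gamma(t))=r_j(\gamma(t))-t$ along the ray from $p_i$ and uses the fundamental inequality $g_{\nabla r_j}(\nabla r_j,\dot\gamma)\le F(\nabla r_j)F(\dot\gamma)=1$ to get $\frac{d}{dt}f_{ji}(\gamma(t))\le 0$; since $f_{ji}(\gamma(L))>0$ by the interior characterization, monotonicity propagates the strict inequality backward (with an approximation step to handle cut points of $p_i$). Your route---triangle inequality to place $\gamma$ in $D_i$, then a concatenation argument showing that $\gamma(s_0)\in\partial D_i$ would produce a broken minimizer from $p_l$ to $q$ whose smoothness forces $p_l=p_i$---is more elementary and sidesteps differentiability of $r_j$ at cut points of $p_j$ lying on $\gamma$. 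The paper's monotonicity computation, on the other hand, is the sort of inequality one reuses when quantitative control of $r_j-r_i$ along rays is needed; for the bare starlikeness statement your argument is the cleaner one.
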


Let $\Ca_M(r)$ be the {maximum number of disjoint $r$-balls in $M$} and  $\Co_M(r)$ be the {minimum number of $r$-balls it takes to cover $M$}.
Given  a   complete $r$-package $\{p_i\}_{i=1}^m$, it is easy  to see
\[
m=\Ca_M(r)\leq  \Co_M(r)\leq \Ca_M\left(\frac{r}{2}\right).\tag{5.1}\label{coveringDir}
\]

Let
$i:\Gamma{\hookrightarrow}M$ be a smooth hypersurface
embedded in $(M,F,d\mathfrak{m})$. For each $x\in\Gamma$, there exist a $1$-form
$\omega(x)\in T^*_xM$ satisfying
$i^*(\omega(x))=0$ and $F^*(\omega(x))=1$. Then $\textbf{n}(x):=\mathfrak{L}^{-1}(\omega_\pm(x))$ is a unit normal
vector on $\Gamma$. The induced measure on $\Gamma$ is defined by $dA=i^*(\textbf{n}\rfloor d\mathfrak{m})$ (cf. Shen \cite{Sh1}).
The Cheeger's constant can be defined in the Finsler setting; see \cite{GS,Sh1,YZ,ZS} for more details.

\begin{definition}\label{Cheergerconstant}\it Let $(M,F,d\mathfrak{m})$ be an $n$-dimensional complete {\rm FMMM}.
Given an open subset $D\subset M$,
its {\rm Cheeger constant}
${\mathbbm{h}}(D)$ is defined by
\[
\mathbbm{h}(D)=\underset{\Gamma}{\inf}\frac{A(\Gamma)}{\min\{\mathfrak{m}(D_1),\mathfrak{m}(D_2)\}},
\]
where $\Gamma$ varies over compact $(n-1)$-dimensional submanifolds
of $M$ which divide $D$ into disjoint open subsets $D_1$, $D_2$
of $D$ with common boundary $\partial D_1 \cap \partial D_2=\Gamma$.
\end{definition}

{\begin{lemma}\label{Th1old}Given  $\Theta\geq 1$,
let $(M,F,d\mathfrak{m})$ be an $n$-dimensional complete {\rm  FMMM} with $|\tau|\leq \log \Theta$.
For any  open subset $D\subset M$   of $M$, we have
\[
\inf_{\left\{f\in C^\infty(M) :\,f|_D\neq0,\  \int_D fd\vol_{\hat{g}}=0\right\}}\frac{\ds\int_D \hat{g}(d f,df)d\vol_{\hat{g}}}{\ds\int_Df^2d\vol_{\hat{g}}} \geq\frac{1}{4\Lambda_F^{1+n}\Theta^2}{\mathbbm{h}^2(D)},
\]
where   $d\vol_{\hat{g}}$ is the canonical Riemannian measure induced by the average Riemannian metric $\hat{g}$ and $\Lambda_F$ is the uniformity constant.
\end{lemma}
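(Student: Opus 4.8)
This is a Cheeger-type inequality, and there are two natural routes: one could prove the classical Cheeger inequality for the Riemannian manifold $(D,\hat g)$ and then compare the resulting Cheeger constant with $\mathbbm{h}(D)$; instead I would convert the Rayleigh quotient to the Finsler data $(F,d\mathfrak{m})$ and run Cheeger's argument directly, which keeps the comparison constants under tighter control. Two comparisons bridge the two sets of data. First, the pointwise norm bound \eqref{averagenorm}, dualized, reads $\Lambda_F^{-1}F^{*2}(\eta)\leq\hat{g}(\eta,\eta)\leq\Lambda_F F^{*2}(\eta)$ for every $1$-form $\eta$. Second, a comparison of volume densities $c^{-1}d\mathfrak{m}\leq d\vol_{\hat{g}}\leq c\,d\mathfrak{m}$, where $c$ is a definite power of $\Lambda_F$ times $\Theta$: indeed, by \eqref{averagenorm} the Euclidean unit balls of $\hat{g}$ and of each $g_y$ in a tangent space are sandwiched between the $\Lambda_F^{-1/2}$- and $\Lambda_F^{1/2}$-dilates of the unit ball of $F$, so $\sqrt{\det\hat{g}_{ij}(x)}$ and $\sqrt{\det g_{ij}(x,y)}$ are comparable, while the identity $e^{\tau(y)}=\sqrt{\det g_{ij}(x,y)}/\sigma(x)$ and the hypothesis $|\tau|\leq\log\Theta$ compare $\sqrt{\det g_{ij}(x,y)}$ with $\sigma(x)$.

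The heart of the argument is a Cheeger estimate for the relative anisotropic perimeter underlying $\mathbbm{h}(D)$. Fix $f\in C^\infty(M)$ with $\int_D f\,d\vol_{\hat{g}}=0$ and $f|_D\not\equiv0$, and let $m$ be a median of $f$ on $(D,d\mathfrak{m})$, i.e.\ $\mathfrak{m}(\{f>m\}\cap D)\leq\frac12\mathfrak{m}(D)$ and $\mathfrak{m}(\{f<m\}\cap D)\leq\frac12\mathfrak{m}(D)$; such an $m$ always exists. Put $u_\pm:=(f-m)_\pm$, so $(f-m)^2=u_+^2+u_-^2$ pointwise and, as $df=0$ a.e.\ on $\{f=m\}$, $\int_D F^{*2}(df)\,d\mathfrak{m}=\sum_\pm\int_D F^{*2}(du_\pm)\,d\mathfrak{m}$ and $\int_D(f-m)^2\,d\mathfrak{m}=\sum_\pm\int_D u_\pm^2\,d\mathfrak{m}$. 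For each sign, the co-area formula on $(M,F,d\mathfrak{m})$ applied to $u_\pm^2$ — whose relevant level sets are level sets of $f$, smooth compact hypersurfaces for almost every value of $f$ by Sard's theorem — together with the fact that the support of $u_\pm$ in $D$, hence each superlevel set $\{u_\pm^2>t\}$, has $\mathfrak{m}$-measure $\leq\frac12\mathfrak{m}(D)$ by the choice of $m$, gives via Definition \ref{Cheergerconstant} that $A_{\mathfrak{m}}(\{u_\pm^2=t\})\geq\mathbbm{h}(D)\,\mathfrak{m}(\{u_\pm^2>t\})$ for a.e.\ $t>0$. Integrating in $t$, using $F^*(d(u_\pm^2))=2u_\pm F^*(du_\pm)$ and the Cauchy--Schwarz inequality, yields $\int_D F^{*2}(du_\pm)\,d\mathfrak{m}\geq\frac14\mathbbm{h}(D)^2\int_D u_\pm^2\,d\mathfrak{m}$, and summing the two signs,
\[
\int_D F^{*2}(df)\,d\mathfrak{m}\ \geq\ \frac{\mathbbm{h}(D)^2}{4}\int_D(f-m)^2\,d\mathfrak{m}.
\]

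It remains to assemble the bound. By the two bridges, $\int_D\hat{g}(df,df)\,d\vol_{\hat{g}}\geq(\Lambda_F c)^{-1}\int_D F^{*2}(df)\,d\mathfrak{m}$, while $\int_D f\,d\vol_{\hat{g}}=0$ gives $\int_D f^2\,d\vol_{\hat{g}}\leq\int_D(f-m)^2\,d\vol_{\hat{g}}\leq c\int_D(f-m)^2\,d\mathfrak{m}$ for the above median $m$. Dividing and inserting the previous display,
\[
\frac{\int_D\hat{g}(df,df)\,d\vol_{\hat{g}}}{\int_D f^2\,d\vol_{\hat{g}}}\ \geq\ \frac{1}{4\Lambda_F c^2}\,\mathbbm{h}(D)^2,
\]
and taking the infimum over admissible $f$ and substituting the value of $c$ coming from the density comparison gives the stated estimate. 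The hardest step will be the middle one: it requires the co-area formula in the Finsler setting and the identification, for a.e.\ level, of the relative anisotropic perimeter of $\{u_\pm^2>t\}$ in $D$ with the area $A_{\mathfrak{m}}$ of the smooth level hypersurface occurring in Definition \ref{Cheergerconstant} (here the reversibility of $F$ matters), together with the observation that the correct centering of $f$ is its $d\mathfrak{m}$-median and not its mean — precisely because the competitor is only required to have vanishing $d\vol_{\hat{g}}$-mean. Once these points are in place, tracking the powers of $\Lambda_F$ and $\Theta$ through the unit-ball comparisons above is routine.
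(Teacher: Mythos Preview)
Your proposal is correct and follows essentially the same route as the paper's proof: the paper also first establishes the density comparison $(\Lambda_F^{n/2}\Theta)^{-1}d\vol_{\hat g}\leq d\mathfrak m\leq \Lambda_F^{n/2}\Theta\,d\vol_{\hat g}$ (your ``second bridge'', with $c=\Lambda_F^{n/2}\Theta$), then runs the Cheeger median argument on $(D,F,d\mathfrak m)$ with the $d\mathfrak m$-median of $f$ and the layer-cake/co-area formula exactly as you describe to obtain $\int_D F^{*2}(df)\,d\mathfrak m\geq\tfrac14\mathbbm h(D)^2\int_D(f-m)^2\,d\mathfrak m$, and finally converts both sides back to $(\hat g,d\vol_{\hat g})$ using the dualized norm bound and the density comparison, together with the variance inequality $\int_D f^2\,d\vol_{\hat g}\leq\int_D(f-m)^2\,d\vol_{\hat g}$ coming from $\int_D f\,d\vol_{\hat g}=0$. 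The constants line up to give exactly $4\Lambda_F^{1+n}\Theta^2$.
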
}
\begin{proof}The proof is divided into two steps.

\textbf{Step 1.} We first provide a quantitative form of (\ref{v14-3.3}), i.e.,
\[
\left(\Lambda_F^\frac{n}2\Theta\right)^{-1}d\vol_{\hat{g}}\leq d\mathfrak{m}\leq \Lambda_F^\frac{n}2\Theta\, d\vol_{\hat{g}}.\tag{5.2}\label{volumecomparison}
\]
In order to do this,
 choose a local coordinate system $(x^i)$ around an arbitrary point $x$ and write
\[
d\mathfrak{m}:=\sigma(x)\, dx^1\dots dx^n,\ d\vol_{\hat{g}}:=\sqrt{\det \hat{g}(x)}\,dx^1\dots dx^n.
\]
We are going to estimate
$f(x):=\frac{\sigma(x)}{\sqrt{\det \hat{g}(x)}}$.
For any $y\in S_xM$,
express
\[
f(x)=\frac{\sigma(x)}{\sqrt{\det g(x,y)}}\frac{\sqrt{\det g(x,y)}}{\sqrt{\det \hat{g}(x)}}=e^{-\tau(y)}\sqrt{\frac{\det g(x,y)}{\det\hat{g}(x)}}.
\]
Select a $\hat{g}$-orthonormal basis $\{E_i\}$ at $x$ such that each $E_i$ is an eigenvector of $(g_{ij}(x,y))$. Thus, (\ref{averagenorm}) implies
\[
\Lambda^{-\frac{n}2}_F\leq\sqrt{\frac{\det g(x,y)}{\det\hat{g}(x)}}\leq \Lambda^{\frac{n}2}_F,
\]
which combined with the definition of the distortion $\tau$ of $M$, we obtain the estimate  (\ref{volumecomparison}).

\textbf{Step 2.} Given $f\in C^\infty(M)$ with $f|_D \neq0$ and $\ds\int_D fd\vol_{\hat{g}}=0$. Let $\alpha$ be a median of $f|_D$, i.e.,
\[
\mathfrak{m}(\{x\in D: f(x)\geq \alpha\})\geq \frac12\mathfrak{m}(D), \ \mathfrak{m}(\{x\in D: f(x)\leq \alpha\})\geq \frac12\mathfrak{m}(D).
\]
It is easy to see that such an $\alpha$ always exists.
Set $f_+:=\max\{f-\alpha,0\}$ and
$f_-:=\min\{f-\alpha,0\}$. By the definition of median, one can check that for any $t>0$,
\[
\mathfrak{m}(\{x\in D: f_+^2(x)\geq t\})\leq \frac12\mathfrak{m}(D), \ \mathfrak{m}(\{x\in D: f_-^2(x)\geq t\})\leq \frac12\mathfrak{m}(D).
\]
The above inequalities together with the layer cake representation (see Lieb and Loss \cite[Theorem 1.13]{LL}) and Lemma \ref{chele} yield
\begin{eqnarray*}
{\mathbbm{h}}(D)\int_D|f-\alpha|^2d\mathfrak{m}&=&{\mathbbm{h}}(D)\int_D(f^2_++f^2_-)d\mathfrak{m}\\
&=&{\mathbbm{h}}(D)\left(\int_0^\infty\mathfrak{m}(\{x\in D: f_+^2(x)\geq t\})dt+\int_0^\infty\mathfrak{m}(\{x\in D: f_-^2(x)\geq t\})dt\right)\\
&\leq& \int_DF^*(df^2_+)d\mathfrak{m}+\int_DF^*(df^2_-)d\mathfrak{m}=2\int_Df_+F^*(df_+)+(-f_-)F^*(-df_-)d\mathfrak{m}\\
&\leq &2  \int_D|f-\alpha|F^*(df)d\mathfrak{m}\leq 2 \left(\int_D|f-\alpha|^2d\mathfrak{m}\right)^{\frac12}\left(\int_DF^{*2}(df)d\mathfrak{m}\right)^{\frac12}.
\end{eqnarray*}
Accordingly, we have
\[
\int_DF^{*2}(df)d\mathfrak{m}\geq \frac{{\mathbbm{h}^2}(D)}{4}\int_D|f-\alpha|^2d\mathfrak{m},
\]
which together with (\ref{volumecomparison}) yields
\begin{align*}
\int_D\hat{g}(df,df)d\vol_{\hat{g}}\geq \frac{1}{\Lambda_F^{1+n}\Theta^2}\frac{{\mathbbm{h}^2(D)}}{4}\int_D|f-\alpha|^2d\vol_{\hat{g}}.
\end{align*}
Since $\ds\int_Dfd\vol_{\hat{g}}=0$, it follows that
$
\underset{\alpha\in \mathbb{R}}{\inf}\ds\int_D|f-\alpha|^2d\vol_{\hat{g}} \geq \int_Df^2d\vol_{\hat{g}},
$ which ends the proof.
\end{proof}

Inspired by Buser \cite{B}, we have the following estimate; since the proof is similar to Buser's original, we postpone its proof to Appendix \ref{cheegerconst}.
\begin{lemma}\label{infact}
\label{imle}Given $K\in \mathbb R$ and $\Theta\geq 1$,
let $(M,F,d\mathfrak{m})$ be an $n$-dimensional complete {\rm  FMMM} with
\[
\mathbf{Ric}\geq (n-1)K,\ |\tau|\leq \log \Theta.
\]
Suppose that  $D\subset M$ is a starlike open set with respect to a point $p$ such that
 $B_p(r)\subset D\subset B_p(R)$.   Then
\begin{align*}
\mathbbm{h}(D)\geq \frac{C^{1+\sqrt{|K|}R} }{\Theta^4}\frac{r^{n-1}}{R^n},
\end{align*}
where  $C=C(n)<1$ is a positive constant depending only  on $n$.
\end{lemma}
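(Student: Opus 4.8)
The plan is to adapt Buser's original argument to the Finsler--measured setting, using as the only curvature input the volume comparison of Lemma~\ref{distorlemma-0}, which holds under exactly the present hypotheses $\mathbf{Ric}\ge(n-1)K$ and $|\tau|\le\log\Theta$ (so, in particular, the segment inequality of Theorem~\ref{CheegerColding}, whose hypothesis is a bound on $\mathbf{Ric}_N$, will \emph{not} be used). By Definition~\ref{Cheergerconstant} it suffices to prove that every compact hypersurface $\Gamma$ dividing $D$ into disjoint open sets $D_1,D_2$ satisfies
\[
A(\Gamma)\ \ge\ \frac{C^{1+\sqrt{|K|}R}}{\Theta^{4}}\,\frac{r^{n-1}}{R^{n}}\,\min\{\mathfrak{m}(D_1),\mathfrak{m}(D_2)\}.
\]
Since $\mathbbm{h}(D)$ is an infimum and a small perturbation moving $\Gamma$ off $p$ changes $A(\Gamma)$ and $\mathfrak{m}(D_i)$ arbitrarily little, I would first reduce to the case $p\notin\Gamma$; then $p$ lies in exactly one of the two pieces, say $p\in D_0$, and since $\min\{\mathfrak{m}(D_1),\mathfrak{m}(D_2)\}\le\mathfrak{m}(D_\ast)$ for the complementary piece $D_\ast$, it is enough to bound $\mathfrak{m}(D_\ast)$ from above by a suitable multiple of $A(\Gamma)$.

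For the piece $D_\ast$ the key is starlikeness: every $x\in D_\ast$ is joined to $p$ by a minimal unit--speed geodesic $c_x:[0,\ell_x]\to D$ with $\ell_x=d_F(p,x)\le R$, and since $c_x(0)=p\in D_0$ while $c_x(\ell_x)=x\in D_\ast$, it meets the closed set $\Gamma$; letting $z_x=c_x(t_x)\in\Gamma$ be its last meeting with $\Gamma$, the arc $c_x|_{[t_x,\ell_x]}$ is the continuation, by length $\ell_x-t_x\le R$, of the minimal geodesic issuing from $p$ through $z_x$. Writing $\Psi(z,s)$ for the point reached by flowing distance $s$ past $z\in\Gamma$ along the geodesic from $p$ through $z$, one gets $D_\ast\subseteq\Psi(\Gamma\times(0,R])$, whence the area formula gives $\mathfrak{m}(D_\ast)\le\int_{\Gamma}\int_0^R J_\Psi(z,s)\,ds\,dA(z)$ (the cut locus of $p$ being $\mathfrak{m}$--null). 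In the polar coordinates $(\rho,y)$ at $p$ one has $z=\gamma_y(\rho)$, $\Psi(z,s)=\gamma_y(\rho+s)$, and $J_\Psi(z,s)$ equals $\hat{\sigma}_p(\rho+s,y)$ times the density of the radial projection $\Gamma\to S_pM$; bounding this last density by $\hat{\sigma}_p(\rho,y)^{-1}$ up to a dimensional constant, and then invoking Lemma~\ref{distorlemma-0}(i) together with $|\tau|\le\log\Theta$, one arrives at
\[
J_\Psi(z,s)\ \le\ C(n)\,\frac{\hat{\sigma}_p(\rho(z)+s,\,y(z))}{\hat{\sigma}_p(\rho(z),\,y(z))}\ \le\ C(n)\,\Theta^{2}\,\frac{\mathfrak{s}_{K}^{\,n-1}(\rho(z)+s)}{\mathfrak{s}_{K}^{\,n-1}(\rho(z))},
\]
where $C(n)$ denotes, here and below, a dimensional constant, not necessarily the one in the statement.

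The hard part — and the reason this proof is deferred to an appendix — is that the radial integral $\int_0^R\mathfrak{s}_{K}^{\,n-1}(\rho+s)/\mathfrak{s}_{K}^{\,n-1}(\rho)\,ds$ diverges as $\rho=d_F(p,z)\to0^{+}$, i.e.\ when $\Gamma$ comes close to $p$, and this is exactly where the hypothesis $B_p(r)\subseteq D$ has to enter. I expect to handle it, as Buser does, by iterating over the concentric balls $B_p(2^{-j}R)$ and descending only down to the scale $r$: using that $(\mathfrak{s}_K(r)/\mathfrak{s}_K(s))^{n-1}\le 2^{n-1}e^{(n-1)\sqrt{|K|}r}$ for $s\in[r/2,r]$, the comparisons are essentially Euclidean on scales $\lesssim1/\sqrt{|K|}$, so roughly $1+\sqrt{|K|}R$ of them must be chained to reach scale $R$; bookkeeping the resulting losses produces the dimensional factor $C(n)^{1+\sqrt{|K|}R}$, the volume ratio $r^{n-1}/R^{n}$, and the remaining powers of $\Theta$ (beyond the $\Theta^{2}$ above, again from $|\tau|\le\log\Theta$ applied at incommensurate radii), for the claimed $\Theta^{4}$. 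A further delicate point — needed for the assertion that \emph{no} power of the uniformity constant $\Lambda_F$ occurs — is that every comparison has to be carried out directly with $d\mathfrak{m}$ and the density $\hat{\sigma}_p$, rather than by passing to the average Riemannian metric $\hat g$ as in Lemma~\ref{Th1old}; in particular, the density of the radial projection $\Gamma\to S_pM$ must be shown to cost only a constant depending on $n$. Once $\mathfrak{m}(D_\ast)\lesssim A(\Gamma)$ with the correct constant is established, $\mathbbm{h}(D)$ inherits the desired lower bound upon taking the infimum over $\Gamma$.
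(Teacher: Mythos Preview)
Your reduction contains a genuine gap. You choose $D_\ast$ to be the piece \emph{not} containing $p$ and propose to bound $\mathfrak{m}(D_\ast)$ by a fixed multiple of $A(\Gamma)$. But this is impossible in general: take $\Gamma$ to be the sphere $\partial B_p(\varepsilon)$ for some $\varepsilon\ll r$. Then $D_0=B_p(\varepsilon)$, $D_\ast=D\setminus\overline{B_p(\varepsilon)}$, and every $z\in\Gamma$ has $\rho(z)=\varepsilon$, so your radial integral gives
\[
\mathfrak{m}(D_\ast)\ \lesssim\ A(\Gamma)\cdot\int_0^R\Big(\tfrac{\mathfrak{s}_K(\varepsilon+s)}{\mathfrak{s}_K(\varepsilon)}\Big)^{n-1}ds\ \sim\ \varepsilon^{n-1}\cdot\frac{R^n}{\varepsilon^{n-1}}\ =\ R^n,
\]
which is vacuous. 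No dyadic descent ``down to scale $r$'' helps here, since $\Gamma$ lies entirely inside $B_p(\varepsilon)\subset B_p(r)$. Of course the Cheeger ratio is perfectly fine in this example---the \emph{minimum} is $\mathfrak{m}(D_0)\sim\varepsilon^n$, not $\mathfrak{m}(D_\ast)$---but your scheme never sees this: once you commit to bounding $\mathfrak{m}(D_\ast)$, you have discarded the information that the small side is the one containing $p$.

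This is exactly why Buser's argument (and the paper's adaptation of it) does not fix a single piece but instead splits into two cases according to how much of the smaller-in-the-ball piece $D_1$ sits inside $B_p(r/2)$. In Case~1 ($\mathfrak{m}(D_1\cap B_p(r/2))\le\alpha\,\mathfrak{m}(D_1)$) one runs your shadow argument, but only over those $q\in D_1$ whose last intersection $q^*$ with $\Gamma$ lies \emph{outside} $B_p(\beta)$---the set of $q$ with $q^*\in B_p(\beta)$ is controlled separately via the volume comparison (Lemma~\ref{le2}), and this is where the parameter $\beta\in(0,r/2)$ and the hypothesis $B_p(r)\subset D$ actually enter. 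In Case~2 ($\mathfrak{m}(D_1\cap B_p(r/2))\ge\alpha\,\mathfrak{m}(D_1)$), which covers your small-sphere example, a completely different bilinear argument is used: one integrates over $W_1\times W_2$ with $W_i=D_i\cap B_p(r/2)$, connects pairs by minimal geodesics, and counts their crossings of $\Gamma$. The two bounds are then balanced by optimizing over $\alpha$ and $\beta$. Your proposal is essentially Case~1 alone, without the truncation at $\beta$ and without any replacement for Case~2; as the example shows, that cannot yield the lemma.
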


\subsubsection{Gromov type  estimate}

\begin{lemma}\label{firslem}Given $\Theta\geq 1$,
let $(M,F,d\mathfrak{m})$ be an $n$-dimensional closed {\rm FMMM} with $|\tau|\leq \log \Theta$.
Then for any faithful dimension pair  $(\mathscr{C},\Dim)$,
the $k^{th}$ positive eigenvalue satisfies
\begin{align*}
\overline{\lambda}_k\geq  \frac{1}{\Lambda^{2n+2}_F\Theta^4} \sup_{\{u_i\}_{i=1}^{k}\in L^2(M)}\inf_{\left\{f\in \mathscr{X}:\,\textcolor[rgb]{0.00,0.00,0.00}{\langle f,u_i\rangle_{L^2}}=0 \right\}}E(f),\tag{5.3}\label{new5.1CC**}
\end{align*}
where the supremum is taken over any set of $k$ functions $\{u_i\}_{i=1}^{k}\in L^2(M)$, the infimum is taken  over all functions $f$ which are perpendicular to $u_i$ in the sense that $\textcolor[rgb]{0.00,0.00,0.00}{\langle u_i,f\rangle_{L^2}:=\ds\int_M u_ifd\vol_{\hat{g}}}=0$.
\end{lemma}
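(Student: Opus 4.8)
The idea is to sandwich the Finsler Rayleigh quotient $E$ between multiples of the Rayleigh quotient
\[
R_{\hat g}(u):=\frac{\int_M\hat{g}(du,du)\,d\vol_{\hat g}}{\int_M u^2\,d\vol_{\hat g}},\qquad u\in\mathscr{X}\setminus\{0\},
\]
of the Beltrami--Laplacian $\Delta_{\hat g}$ of the average Riemannian metric $\hat g$ induced by $F$, then to run the Min-max Principle (Theorem \ref{minmaxth}) for $(\mathscr{C},\Dim)$ on the Riemannian manifold $(M,\sqrt{\hat g},d\vol_{\hat g})$ --- where faithfulness enters --- and finally to invoke the classical dual (max--min) formula for the eigenvalues of $\Delta_{\hat g}$.

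First I would record a two-sided comparison between $E$ and $R_{\hat g}$. Dualizing (\ref{averagenorm}) yields $\Lambda_F^{-1}\hat{g}(du,du)\le F^{*2}(du)\le\Lambda_F\hat{g}(du,du)$ for every covector $du$, and since $|\tau|\le\log\Theta$ one has the quantitative volume comparison $(\Lambda_F^{n/2}\Theta)^{-1}\,d\vol_{\hat g}\le d\mathfrak{m}\le\Lambda_F^{n/2}\Theta\,d\vol_{\hat g}$, which is exactly (\ref{volumecomparison}). Estimating numerators and denominators separately, these two facts combine into
\[
\frac{1}{\Lambda_F^{n+1}\Theta^2}\,R_{\hat g}(u)\ \le\ E(u)\ \le\ \Lambda_F^{n+1}\Theta^2\,R_{\hat g}(u),\qquad\forall\,u\in\mathscr{X}\setminus\{0\}.
\]

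Now $\overline\lambda_k=\lambda_{k+1}=\inf_{A\in\mathscr{C}_{k+1}}\sup_{u\in A}E(u)$ by Theorem \ref{minmaxth}, so the left half of the sandwich gives
\[
\overline\lambda_k\ \ge\ \frac{1}{\Lambda_F^{n+1}\Theta^2}\,\inf_{A\in\mathscr{C}_{k+1}}\sup_{u\in A}R_{\hat g}(u)\ =\ \frac{1}{\Lambda_F^{n+1}\Theta^2}\,\lambda_{k+1}^{\Delta_{\hat g}},
\]
the last equality being the faithfulness of $(\mathscr{C},\Dim)$ applied to $(M,\sqrt{\hat g})$ with its canonical measure $d\vol_{\hat g}$, whose energy functional is precisely $R_{\hat g}$ (cf. the proof of Theorem \ref{properties2}); this is exactly the point at which faithfulness is indispensable, since Theorem \ref{properties1}/(ii) alone yields only ``$\le$'', the wrong direction. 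Next, since $M$ is closed we have $\mathscr{X}=H^1(M)$, and $\Delta_{\hat g}$ has discrete spectrum $0=\lambda_1^{\Delta_{\hat g}}\le\lambda_2^{\Delta_{\hat g}}\le\cdots$, so the classical Courant--Fischer (Poincar\'e) max--min principle reads
\[
\lambda_{k+1}^{\Delta_{\hat g}}\ =\ \sup_{\{u_i\}_{i=1}^k\subset L^2(M)}\ \inf_{\substack{f\in\mathscr{X}\setminus\{0\}\\ \langle f,u_i\rangle_{L^2}=0}}R_{\hat g}(f),
\]
where allowing arbitrary (not necessarily independent) $k$-tuples is harmless, because a tuple spanning a $j$-dimensional subspace with $j<k$ only produces the smaller value $\lambda_{j+1}^{\Delta_{\hat g}}$. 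Feeding the lower bound $R_{\hat g}(f)\ge\Lambda_F^{-(n+1)}\Theta^{-2}E(f)$ (the right half of the sandwich) into the inner infimum over $f$ --- it holds for each admissible $f$, hence for the infimum --- and combining with the two previous displays yields
\[
\overline\lambda_k\ \ge\ \frac{1}{\Lambda_F^{n+1}\Theta^2}\,\lambda_{k+1}^{\Delta_{\hat g}}\ \ge\ \frac{1}{\Lambda_F^{2n+2}\Theta^4}\,\sup_{\{u_i\}_{i=1}^k\subset L^2(M)}\ \inf_{\substack{f\in\mathscr{X}\setminus\{0\}\\ \langle f,u_i\rangle_{L^2}=0}}E(f),
\]
which is (\ref{new5.1CC**}).

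The computations are elementary; the only points needing care are the correct dualization of the uniformity inequality (\ref{averagenorm}) from $F$ to $F^*$, and keeping track of which half of the sandwich is used where (the lower bound both times, but with the roles of $E$ and $R_{\hat g}$ interchanged between the two uses), together with the remark that the max--min formula for $\Delta_{\hat g}$ is unaffected by replacing $k$-dimensional test subspaces with arbitrary $k$-tuples in $L^2(M)$. I do not foresee a genuine obstacle: conceptually, faithfulness has already done the essential work, and what remains is bookkeeping of the uniformity and distortion constants.
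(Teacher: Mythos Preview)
Your proof is correct and follows essentially the same route as the paper: sandwich $E$ between multiples of the Rayleigh quotient $R_{\hat g}$ using (\ref{averagenorm}) and (\ref{volumecomparison}), use faithfulness to identify $\inf_{A\in\mathscr{C}_{k+1}}\sup_{u\in A}R_{\hat g}(u)$ with $\overline{\lambda}_k^{\Delta_{\hat g}}$, invoke the Riemannian max--min formula, and sandwich back. The paper cites Chavel's max--min theorem with an inequality rather than your equality, but this is an inessential difference.
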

\textcolor[rgb]{0.00,0.00,0.00}{\begin{remark}\rm
		We notice that the inner produce $\langle \cdot,\cdot\rangle_{L^2}$ is w.r.t. the measure $d\vol_{\hat{g}}$, which is not the same as $( \cdot,\cdot)_{L^2}$ where the measure is  $d\mathfrak{m}$, see (\ref{innernormL2}).
\end{remark}}
\begin{proof}
Theorem \ref{minmaxth}
together with relation (\ref{volumecomparison}) and Definition \ref{faithful} yields
\[
\overline{\lambda}_k=\inf_{A\in \mathscr{C}_{k+1}}\sup_{u\in A} E(u)\geq \frac{1}{\Lambda_F^{n+1}\Theta^2}\inf_{A\in \mathscr{C}_{k+1}}\sup_{u\in A}\frac{\ds\int_M\hat{g}(du,du)d\vol_{\hat{g}}}{\ds\int_Mu^2d\vol_{\hat{g}}}=\frac{\overline{\lambda}^{\Delta_{\hat{g}}}_{k}}{\Lambda_F^{n+1}\Theta^2},\tag{5.4}\label{5.1newCCC}
\]
where  $\overline{\lambda}^{\Delta_{\hat{g}}}_k$ denote the $k^{\rm th}$  positive eigenvalue of $(M,\hat{g},d\vol_{\hat{g}})$, i.e., $\overline{\lambda}^{\Delta_{\hat{g}}}_k={\lambda}^{\Delta_{\hat{g}}}_{k+1}$; here, we explored the fact that $(\mathscr{C},\Dim)$ is a faithful dimension pair.

The max-min theorem in Riemanian geometry (cf. Chavel \cite[p.17]{Ch}) together with (\ref{volumecomparison}) yields
\begin{align*}
\overline{\lambda}^{\Delta_{\hat{g}}}_k&\geq \sup_{\{u_i\}_{i=1}^{k}\in L^2(M)}\inf_{\left\{f\in \mathscr{X}:\,\textcolor[rgb]{0.00,0.00,0.00}{\langle f,u_i\rangle_{L^2}}=0 \right\}}\frac{\ds\int_M\hat{g}(df,df)d\vol_{\hat{g}}}{\ds\int_Mf^2d\vol_{\hat{g}}}
\geq \frac{1}{\Lambda_F^{n+1}\Theta^2}\sup_{\{u_i\}_{i=1}^{k}\in L^2(M)}\inf_{\left\{f\in \mathscr{X}:\,\textcolor[rgb]{0.00,0.00,0.00}{\langle f,u_i\rangle_{L^2}}=0 \right\}}E(f),
\end{align*}
which combined with (\ref{5.1newCCC}) yields (\ref{new5.1CC**}).
\end{proof}

Now we have a Gromov type lower estimate for eigenvalues.

\begin{theorem}\label{thefisrttheorem}Given $K\in \mathbb R$, $\Theta\geq 1$ and $d>0$,
let $(M,F,d\mathfrak{m})$ be an $n$-dimensional closed {\rm FMMM} with
\[
\mathbf{Ric}\geq (n-1)K, \ |\tau|\leq \log\Theta,\ \diam(M)=d.
\]
Then there is a positive constant  $\mathfrak{C}_1=\mathfrak{C}_1(n)$ such that for any faithful dimension pair $(\mathscr{C},\Dim)$, the $k^{th}$ positive eigenvalue satisfies
\textcolor[rgb]{0.00,0.00,0.00}{\[
\overline{\lambda}_{k}\geq \frac{\mathfrak{C}_1^{1+d\sqrt{|K|}}}{\Lambda^{4n+4}_F\Theta^{18}\cdot d^2}\, k^\frac{2}{n},\ \forall\,k\in \mathbb{N}^+.
\]}
\end{theorem}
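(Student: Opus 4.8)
The plan is to deduce the estimate from the Riemannian max--min principle applied to the average metric $\hat g$, packaged in Lemma~\ref{firslem} (cf.\ also \eqref{5.1newCCC}), which reduces the problem to producing, for each $k$, a family of $k$ test functions $u_1,\dots,u_k\in L^2(M)$ such that every $f\in\mathscr{X}$ orthogonal to all of them (in the $d\vol_{\hat g}$ pairing) has a controlled lower bound for $E(f)$. The production of such test functions is the classical Gromov/Buser device: chop $M$ into pieces of controlled shape, take $u_i$ to be the indicators of the pieces, and run a local Poincar\'e (Cheeger) inequality on each piece; the scale of the chopping is then optimized via Bishop--Gromov volume comparison to extract the $k^{2/n}$ growth.

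Concretely, I would first fix $r>0$ and take a complete $r$-package $\{p_i\}_{i=1}^m$ in the sense of Definition~\ref{Dirdef}, with associated Dirichlet regions $D_1,\dots,D_m$. By Lemma~\ref{propertiesDirdom} these cover $M$, satisfy $\mathfrak m(D_i\cap D_j)=0$, $B_{p_i}(r)\subset D_i\subset B_{p_i}(2r)$, $\mathfrak m(\text{int}(D_i))=\mathfrak m(D_i)$, and $\text{int}(D_i)$ is starlike with respect to $p_i$; hence the Buser-type estimate (Lemma~\ref{infact}, with inner radius $r$ and outer radius $2r$) gives a uniform bound $\mathbbm{h}(\text{int}(D_i))\geq c(n)\,C(n)^{\,2r\sqrt{|K|}}\,\Theta^{-4}\,r^{-1}$, where $C(n)<1$. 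Assuming $m\leq k$, I set $u_i:=\mathbbm{1}_{D_i}$ for $1\leq i\leq m$ and $u_i:=0$ otherwise. For $f\in\mathscr{X}$ orthogonal to all $u_i$ one has $\int_{D_i} f\,d\vol_{\hat g}=0$, so Lemma~\ref{Th1old} applied on each $\text{int}(D_i)$ yields $\int_{D_i} f^2\,d\vol_{\hat g}\leq 4\Lambda_F^{1+n}\Theta^2\,\mathbbm{h}(\text{int}(D_i))^{-2}\int_{D_i}\hat g(df,df)\,d\vol_{\hat g}$; summing over $i$ and using that the $D_i$ cover $M$ with null overlaps converts this into the global inequality $\int_M f^2\,d\vol_{\hat g}\leq 4\Lambda_F^{1+n}\Theta^2(\min_i\mathbbm{h}(\text{int}(D_i)))^{-2}\int_M\hat g(df,df)\,d\vol_{\hat g}$. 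Converting between $E$ and the $\hat g$-Rayleigh quotient via \eqref{averagenorm} and \eqref{volumecomparison} and feeding the result back into Lemma~\ref{firslem} gives a bound of the form $\overline{\lambda}_k\geq c(n)\,C(n)^{\,4r\sqrt{|K|}}\,\Lambda_F^{-(4n+4)}\,\Theta^{-c}\,r^{-2}$ for an explicit $c$, the exact powers being obtained by tracking constants through the above chain.

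It then remains to choose $r$. Since disjoint $r$-balls have total measure at most $\mathfrak m(M)=\mathfrak m(B_p(d))$, the volume comparison of Lemma~\ref{distorlemma-0}(ii) together with the explicit bound in Lemma~\ref{Rho} gives $m=\Ca_M(r)\leq \Theta^2\,V_{n,K}(d)/V_{n,K}(r)\leq \Theta^2 e^{(n-1)d\sqrt{|K|}}(d/r)^n$. Hence $r:=d\,\Theta^{2/n}e^{(n-1)d\sqrt{|K|}/n}k^{-1/n}$ already forces $m\leq k$; substituting this $r$ into the displayed lower bound, bounding $C(n)^{\,4r\sqrt{|K|}}\geq C(n)^{\,4d\sqrt{|K|}}$ in the regime $r\leq d$, and absorbing the residual factors $\Theta^{\pm 2/n}$ and $e^{\pm d\sqrt{|K|}}$ into the constants $\mathfrak C_1(n)$ and $\Theta^{18}$, produces $\overline{\lambda}_k\geq \mathfrak C_1^{1+d\sqrt{|K|}}\Lambda_F^{-(4n+4)}\Theta^{-18}d^{-2}k^{2/n}$. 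The complementary regime $r>d$ (i.e.\ $k$ small compared with $\Theta^2 e^{(n-1)d\sqrt{|K|}}$) is treated by simply taking the trivial one-piece decomposition $D_1=M$: there $k^{2/n}$ is itself dominated by $\Theta^{4/n}e^{2(n-1)d\sqrt{|K|}/n}$, which can be absorbed into $\mathfrak C_1^{1+d\sqrt{|K|}}$ and $\Theta^{18}$, while the one-piece bound $\overline{\lambda}_k\gtrsim C(n)^{\,O(d\sqrt{|K|})}\Lambda_F^{-(4n+4)}\Theta^{-c}d^{-2}$ is available from the same argument.

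The main obstacle I expect is not conceptual but the constant bookkeeping: one must propagate the $\Lambda_F$, $\Theta$ and $e^{d\sqrt{|K|}}$ weights through Lemmas~\ref{firslem}, \ref{Th1old}, \ref{infact} and the Bishop--Gromov step so that the final exponents $\Lambda_F^{4n+4}$, $\Theta^{18}$ and the curvature base $\mathfrak C_1^{1+d\sqrt{|K|}}$ come out as stated, and, relatedly, one must make the estimate genuinely uniform in $k\in\mathbb N^+$ — in particular handling small $k$ and large $|K|$ where the natural scale $r$ would exceed the diameter. A secondary technical point needing care is that the packing number $\Ca_M(r)$ is integer-valued, so one cannot impose $m=k$ exactly; this is what forces the inequality $m\leq k$ together with the use of a \emph{cover} of $M$ by the $D_i$ (rather than a partition), so that orthogonality to only $m$ indicators still controls $\int_M f^2$.
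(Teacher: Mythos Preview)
Your proposal is correct and follows essentially the same route as the paper: Dirichlet regions from a complete $r$-package, indicator test functions fed into Lemma~\ref{firslem}, local control via Lemma~\ref{Th1old} combined with the Cheeger estimate of Lemma~\ref{infact}, and Bishop--Gromov (Lemma~\ref{distorlemma-0}(ii)) to relate $k$ and $r$. The only cosmetic difference is that the paper uses monotonicity $\overline{\lambda}_k\geq\overline{\lambda}_m$ (with $m=\Ca_M(\varepsilon)\leq\Co_M(\varepsilon)=:k$) instead of your zero-padding of the test family, and handles the small-$k$/large-$r$ regime by the same dichotomy you describe.
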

\begin{proof}  Without loss of generality, we can assume $K< 0$; moreover, due a scaling, we may consider $K=-1$. Given any $\varepsilon>0$, let $\{p_i\}_{i=1}^m$ be a complete $\varepsilon$-package and let $\{D_i\}_{i=1}^m$ be the  Dirichlet regions.
Denote by ${u_i}$  the characteristic function of $D_i$. For each $f\in \mathscr{X}$ with \textcolor[rgb]{0.00,0.00,0.00}{$ \ds\int_{D_i}fd\vol_{\hat{g}}  =\langle f,{u_i}\rangle_{L^2}=0$}, $i=1,\ldots,m$, Lemmas \ref{propertiesDirdom} and \ref{Th1old} with relation (\ref{volumecomparison}) provide
\textcolor[rgb]{0.00,0.00,0.00}{\begin{align*}
\ds\int_M F^{*2}(df)d\mathfrak{m}=&\sum_{i=1}^m\ds\int_{\text{int}(D_i)}F^{*2}(df)d\mathfrak{m}\geq \frac{1}{\Lambda_F^{1+\frac{n}2}\Theta}\sum_{i=1}^m\ds\int_{\text{int}(D_i)} \hat{g}(df,df)  d\vol_{\hat{g}}      \geq \sum_{i=1}^m\frac{\mathbbm{h}^2(\text{int}(D_i))}{4 (\Lambda_F^{1+n}\Theta^2)^2}\ds\int_{\text{int}(D_i)} f^2 d\mathfrak{m}\\
\geq& \frac{\underset{1\leq i\leq m}{\min} \mathbbm{h}^2\left(\text{int}(D_i)\right)}{4(\Lambda_F^{1+n}\Theta^2)^2}\sum_{i=1}^m\ds\int_{\text{int}(D_i)} f^2 d\mathfrak{m}=\frac{\underset{1\leq i\leq m}{\min} \mathbbm{h}^2\left(\text{int}(D_i)\right)}{4(\Lambda_F^{1+n}\Theta^2)^2}\ds\int_M f^2 d\mathfrak{m}.
\end{align*}}
The latter relation combined with Lemma \ref{firslem} furnishes
\textcolor[rgb]{0.00,0.00,0.00}{\begin{align*}
\overline{\lambda}_{m}\geq  \frac{ \min\{\mathbbm{h}^2(\text{int}(D_i)),\,i=1,\ldots,m \}}{4(\Lambda^{1+n}_F\Theta^2)^4}.
\end{align*}
}

Now (\ref{coveringDir}) implies $m=\Ca_M(\varepsilon)\leq \Co_M(\varepsilon)=:k$ and Lemma \ref{propertiesDirdom} yields
\[
\left\{
\begin{array}{llll}
B_{p_i}(\varepsilon)\subset D_i\subset B_{p_i}(2 \varepsilon),  && \text{ if }0<\varepsilon\leq \frac{d}{2 },\\
 \\
B_{p_i}(\varepsilon)\subset D_i\subset B_{p_i}(d),  && \text{ if }\varepsilon\geq \frac{d}{2 },
\end{array}
\right.
\]
which together with Lemma \ref{infact} implies the existence of  a positive constant $C_1=C_1(n)<1$ such that
\textcolor[rgb]{0.00,0.00,0.00}{\[
\overline{\lambda}_{k}\geq \overline{\lambda}_{m}\geq \frac{C_1^{1+d}}{\Lambda^{4n+4}_F\Theta^{16}}\frac{1}{\varepsilon^2},\ \forall\, \varepsilon>0.\tag{5.5}\label{3.3newcC*}
\]}
Furthermore, (\ref{coveringDir}) and Lemma  \ref{distorlemma-0}/(ii) yield
\[
k=\Co_M(\varepsilon)\leq\Ca_M\left(\frac{\varepsilon}{2}\right)\leq \frac{\mathfrak{m}(M)}{\ds\min_{i=1,...,k}\mathfrak{m}\left(B_{p_i}(\frac{\varepsilon}{2})\right)}=\ds\max_{i=1,...,k}\frac{\mathfrak{m}(B_{p_i}(d))}{\mathfrak{m}(B_{p_i}(\frac{\varepsilon}{2}))}\leq
 \Theta^{2}\frac{V_{n,-1}\left( d \right)}{V_{n,-1}\left(\frac{\varepsilon}{2}\right)},\ \forall\,0<\varepsilon\leq d.
\]
Thus there is a number $C_2=C_2(n)>0$ such that
\[
\left\{
\begin{array}{llll}
\varepsilon\leq C_2 \, \Theta^{\frac2n}  \,d\,e^d\,k^{-\frac1n}, &&\text{ if } 0<\varepsilon\leq d;\\
 \\
k=1, &&\text{ if }\varepsilon\geq d,
\end{array}
\right.
\]
which combined with (\ref{3.3newcC*}) yields
\textcolor[rgb]{0.00,0.00,0.00}{\[
\overline{\lambda}_{k}\geq \frac{\mathfrak{C}_1^{1+d}}{\Lambda^{4n+4}_F\Theta^{18}\cdot d^2} k^\frac{2}{n}, \ \forall\, k\in \mathbb{N}^+,
\]}
which concludes the proof.
\end{proof}

\begin{proof}[Proof of Theorem \ref{GromovtypBH}] According to Lemma \ref{uniformBuHo}, Theorem \ref{GromovtypBH} directly follows by Theorem \ref{thefisrttheorem}.
\end{proof}

\subsubsection{Buser type estimate}
In order to give a Buser type lower bounds of eigenvalues, we need the following Croke type inequality.
\begin{lemma}[{Zhao and Shen \cite[Proposition 6.3]{ZS}}]\label{Crokineq}
Let $(M,F,d\mathfrak{m})$ be an $n$-dimensional closed {\rm FMMM}, where $d\mathfrak{m}$ is either the Busemann-Hausdorff measure or the Holmes-Thompson measure.
Then there is a  constant $C=C(n)>0$  depending only on $n$ such that
for any $p\in M$, we have
\[
\mathfrak{m}\left(B_p(r)\right)\geq  \frac{C}{ \Lambda_F^{5n^2}}\,  r^n, \ \forall\,0<r\leq\frac{\mathfrak{i}_M}2,
\]
where $\mathfrak{i}_M$ is the injectivity radius of $M$.
\end{lemma}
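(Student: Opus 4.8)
The plan is to transplant Croke's integral-geometric proof of the Riemannian ball-volume lower bound — which uses only the injectivity radius and no curvature hypothesis — to the Finsler setting, absorbing the anisotropy of $F$ into powers of the uniformity constant $\Lambda_F$; concretely I would proceed in two steps.

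\textbf{Step 1 (set-up inside the injectivity ball).} Fix $p\in M$ and $0<r\le\mathfrak{i}_M/2$. Since $\mathfrak{i}_M=\inf_x\mathfrak{i}_x\le\mathfrak{i}_x$, any two points $x,z\in B_p(r)$ satisfy $d_F(x,z)<2r\le\mathfrak{i}_M\le\mathfrak{i}_x$ (the boundary case $r=\mathfrak{i}_M/2$ by a limiting argument, using $\mathfrak{m}(B_p(r))=\lim_{r'\uparrow r}\mathfrak{m}(B_p(r'))$), so there is a unique minimal $F$-geodesic joining them, $\exp_x$ is a $C^1$-diffeomorphism of $\{v\in T_xM:F(v)<2r\}$ onto $B_x(2r)\supset B_p(r)$, and every such minimal geodesic stays in $B_p(2r)$ (since for a point $w$ on it, $d_F(p,w)\le\frac12(d_F(p,x)+d_F(p,z))+\frac12 d_F(x,z)<2r$). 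In particular the polar volume density $\hat{\sigma}_x(t,v)$ is strictly positive for $0<t<2r$ and $\hat{\sigma}_x(t,v)/t^{n-1}\to e^{-\tau(\dot{\gamma}_v(0))}$ as $t\to0^+$ by (\ref{v14-2.1}), where $e^{-\tau}\in[\Lambda_F^{-n},\Lambda_F^{n}]$ by Lemma \ref{uniformBuHo} for the Busemann--Hausdorff or the Holmes--Thompson measure. I would also record once and for all the comparisons $\Lambda_F^{-1/2}d_{\hat{g}}\le d_F\le\Lambda_F^{1/2}d_{\hat{g}}$ (from (\ref{averagenorm})), $\Lambda_F^{-3n/2}d\vol_{\hat{g}}\le d\mathfrak{m}\le\Lambda_F^{3n/2}d\vol_{\hat{g}}$ (the quantitative bound (\ref{volumecomparison}) with $\Theta=\Lambda_F^{n}$), and a two-sided comparison of the indicatrix measure $d\nu_x$ on $S_xM$ with the round measure on $S^{n-1}$, each losing only powers of $\Lambda_F$.

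\textbf{Step 2 (the integral-geometric inequality).} On the unit sphere bundle $SM$ over the base $B_p(2r)$ I would use the natural Liouville/Sasaki measure, the fact that the $F$-geodesic flow preserves it (Liouville's theorem for the geodesic flow, which transfers verbatim from the Riemannian case since it only uses that $F$-geodesics form a flow), and the Santal\'o-type coarea identity attached to the change of variables $(x,v,t)\mapsto(x,\exp_x(tv))$ whose fibre Jacobian is precisely $\hat{\sigma}_x(t,v)$. Following Croke's argument --- combining the trivial bound ``exit time $\le$ diameter $\le 2r$'' for geodesics issuing from $B_p(r)$, this change of variables, and the round-sphere isoperimetric inequality applied on each $S_xM$ (after the $\Lambda_F$-comparison of Step 1), with reversibility of $F$ making the forward/backward bookkeeping symmetric --- one reaches an inequality of the shape $\mathfrak{m}(B_p(r))^{2}\ge c(n)\,\Lambda_F^{-O(n^2)}\,r^{2n}$, and collecting exponents yields $\mathfrak{m}(B_p(r))\ge C(n)\Lambda_F^{-5n^2}r^n$; equivalently one may first prove the inequality for $d\vol_{\hat{g}}$ and then pass to $d\mathfrak{m}$ through Step 1.

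The structural ingredients --- preservation of the Liouville measure by the geodesic flow and the fact that $\exp_x$ is a diffeomorphism up to $\mathfrak{i}_M$ --- carry over with no real change, since they treat $F$-geodesics only as a flow. The genuinely technical obstacle is to run Croke's estimate while keeping the \emph{only} non-metric quantity that appears equal to $\Lambda_F$: Croke's Riemannian proof silently uses the Euclidean normalization $\hat{\sigma}_x(t,v)\sim t^{n-1}$ and the exact round-sphere isoperimetric constant, and in the Finsler case both must be replaced by their $\Lambda_F$-perturbed versions, whose errors compound along geodesics of length up to $2r$ and over the $(n-1)$-sphere of directions. Doing this bookkeeping so that neither the flag curvature nor the conjugate radius nor anything beyond $\Lambda_F$ survives --- and so that the accumulated exponent stays of order $n^2$ (the paper's $5n^2$) rather than degrading --- is the crux; this is precisely what Zhao and Shen \cite{ZS} carry out, which is the result invoked here.
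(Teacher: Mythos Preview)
The paper does not prove this lemma; it is quoted verbatim from Zhao and Shen \cite[Proposition 6.3]{ZS} and used as a black box in the proof of Theorem \ref{compBuse}. So there is no ``paper's own proof'' to compare against here.

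Your sketch correctly identifies the strategy of the cited reference: transplant Croke's visibility/Santal\'o argument to the Finsler setting, using Liouville's theorem for the geodesic flow and absorbing all anisotropy into powers of $\Lambda_F$. You are also honest that the real content---tracking the $\Lambda_F$-exponents through the Santal\'o formula and the spherical isoperimetric step so that nothing beyond $\Lambda_F$ and $n$ survives---is precisely the work carried out in \cite{ZS} (and \cite{YZ} for the Santal\'o-type identities), and that you are not reproducing it. As a proof \emph{proposal} this is fine; as a self-contained proof it is not, since Step 2 is a high-level outline and the exponent bookkeeping (why $5n^2$ suffices) is asserted rather than done. One small slip: in Step 1 the containment ``every such minimal geodesic stays in $B_p(2r)$'' follows directly from $d_F(p,w)\le d_F(p,x)+d_F(x,w)<r+d_F(x,z)<3r$, not from the symmetrized inequality you wrote; but this is cosmetic and the conclusion you need (geodesics stay inside a ball of radius comparable to $r$) is correct.
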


Now we have the following estimate.

\begin{theorem}\label{compBuse}
Let $(M,F,d\mathfrak{m})$ be an $n$-dimensional closed {\rm FMMM}, where $d\mathfrak{m}$ is either the Busemann-Hausdorff measure or the Holmes-Thompson measure. Given  $K\in \mathbb R$ and $V>0$, suppose
\[
 \mathbf{Ric}\geq(n-1)K,\ \mathfrak{m}(M)=V.
\]
Then there exist two constants $\mathfrak{C}_2=\mathfrak{C}_2(n)>0$ and $\mathfrak{C}_3=\mathfrak{C}_3(n)>0$ such that the elements in the spectrum of any faithful dimension pair $(\mathscr{C},\Dim)$ satisfy
\textcolor[rgb]{0.00,0.00,0.00}{\[
\overline{\lambda}_{k}\geq\left\{
\begin{array}{llll}
\frac{{\mathfrak{C}_2}^{1+\frac{\sqrt{|K|}\Lambda_F^{5n^2}V}{ \mathfrak{i}_M^{n-1}}}}{\Lambda^{21n^2}_F} \left(\frac{\mathfrak{i}_M^{n-1} }{V}\right)^2,&& \text{ if }k< \mathfrak{C}_3\,\Lambda_F^{5n^2}\frac{V}{\mathfrak{i}_M^n},\\
 \\
   \frac{\mathfrak{C}_2^{1+ \sqrt{|K|}\Lambda_F^{5n} {V}^{\frac1n}}}{\Lambda_F^{32n}}\left( \frac{k}{V} \right)^{\frac{2}{n}},&& \text{ if }k\geq\mathfrak{C}_3\,\Lambda_F^{5n^2}\frac{V}{\mathfrak{i}_M^n}.
\end{array}
\right.
\]}
\end{theorem}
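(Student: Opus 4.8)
The plan is to run the argument of the proof of Theorem~\ref{thefisrttheorem}, but to replace the Bishop--Gromov volume comparison Lemma~\ref{distorlemma-0}/(ii), which needs a diameter bound, by Croke's inequality Lemma~\ref{Crokineq}, which bounds the volume of balls of radius $\le\mathfrak{i}_M/2$ from below; this radius restriction is exactly what forces the two regimes in the statement. Since $d\mathfrak{m}$ is the Busemann--Hausdorff or the Holmes--Thompson measure, Lemma~\ref{uniformBuHo} gives $|\tau|\le n\log\Lambda_F$, so we take $\Theta=\Lambda_F^{n}$ in Lemmas~\ref{Th1old}, \ref{infact}, \ref{firslem}, and we may assume $K\le0$. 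Below $C=C(n)$ is a positive constant depending only on $n$, possibly changing from line to line. Fixing $\varepsilon>0$, a complete $\varepsilon$-package $\{p_i\}_{i=1}^m$ and the associated Dirichlet regions $\{D_i\}_{i=1}^m$, and testing Lemma~\ref{firslem} with the characteristic functions of the $D_i$, then combining with Lemmas~\ref{propertiesDirdom}, \ref{Th1old}, \ref{infact} exactly as in the proof of Theorem~\ref{thefisrttheorem} (this uses only $B_{p_i}(\varepsilon)\subset\text{int}(D_i)\subset B_{p_i}(2\varepsilon)$ and the unweighted bound $\mathbf{Ric}\ge(n-1)K$), one reaches, with a constant $C_1=C_1(n)\in(0,1)$,
\[
\overline{\lambda}_m\ \ge\ \frac{C_1^{\,1+\varepsilon\sqrt{|K|}}}{\Lambda_F^{\,20n+4}\,\varepsilon^2},\qquad m=\Ca_M(\varepsilon).
\]

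In the regime $k\ge\mathfrak{C}_3\Lambda_F^{5n^2}V\mathfrak{i}_M^{-n}$ I would fix $\varepsilon$ with $\Co_M(\varepsilon)=k$ (up to integer rounding), so that $m=\Ca_M(\varepsilon)\le\Co_M(\varepsilon)=k$ and $\overline{\lambda}_k\ge\overline{\lambda}_m$ by monotonicity. A maximal disjoint family of $(\varepsilon/2)$-balls has pairwise disjoint members, each of volume $\ge C\Lambda_F^{-5n^2}(\varepsilon/2)^n$ by Croke's inequality, whence $k\le\Ca_M(\varepsilon/2)\le C\Lambda_F^{5n^2}V\varepsilon^{-n}$, i.e. $\varepsilon\le C\Lambda_F^{5n}(V/k)^{1/n}$; applying the same estimate at scale $\mathfrak{i}_M$ shows $\Co_M(\mathfrak{i}_M)\le C\Lambda_F^{5n^2}V\mathfrak{i}_M^{-n}$, so taking $\mathfrak{C}_3$ to be this constant the case hypothesis forces $\varepsilon\le\mathfrak{i}_M$ and Croke's inequality was legitimately available. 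Substituting $\varepsilon\le C\Lambda_F^{5n}(V/k)^{1/n}\le C\Lambda_F^{5n}V^{1/n}$ into the displayed bound (so $\varepsilon\sqrt{|K|}\le C\sqrt{|K|}\Lambda_F^{5n}V^{1/n}$ and $\varepsilon^{-2}\ge C^{-1}\Lambda_F^{-10n}(k/V)^{2/n}$), and absorbing the $n$-constants — using $0<C_1<1$ and $k\ge1$ — into $\mathfrak{C}_2=\mathfrak{C}_2(n)\in(0,1)$ and the powers of $\Lambda_F$ into $\Lambda_F^{32n}$, yields the second alternative.

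In the regime $k<\mathfrak{C}_3\Lambda_F^{5n^2}V\mathfrak{i}_M^{-n}$ the $\varepsilon$ above would exceed $\mathfrak{i}_M$, Croke is unavailable at that scale, and the rate $(k/V)^{2/n}$ cannot be reached; I would instead use $\overline{\lambda}_k\ge\overline{\lambda}_1$ and bound $\overline{\lambda}_1$ directly. By Theorem~\ref{properties2}/(i), $\overline{\lambda}_1=\inf\{E(u):u\in\mathscr{X},\ \int_M u\,d\mathfrak{m}=0\}$, and rerunning Step~2 of the proof of Lemma~\ref{Th1old} with $D=M$ and a $d\mathfrak{m}$-median gives the Cheeger-type inequality $\overline{\lambda}_1\ge\mathbbm{h}(M)^2/4$. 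One then needs $\mathbbm{h}(M)\ge C^{-1}\bigl(C(n,K,\diam(M))\,\diam(M)\bigr)^{-1}$, which I would obtain from the segment inequality Theorem~\ref{CheegerColding} applied with $A_1,A_2$ the two pieces cut off by a dividing hypersurface $\Gamma$ and $f$ the indicator of a thin tube around $\Gamma$, letting the tube width tend to zero so that its measure over its width tends to $A(\Gamma)$. Finally $\diam(M)\le C\Lambda_F^{5n^2}V\mathfrak{i}_M^{-(n-1)}$ by placing $\lfloor\diam(M)/\mathfrak{i}_M\rfloor+1$ points spaced $\mathfrak{i}_M$ apart along a minimizing geodesic realizing the diameter: their $(\mathfrak{i}_M/2)$-balls are pairwise disjoint, so Croke's inequality bounds their total volume from below by a multiple of $(\diam(M)/\mathfrak{i}_M)\,\mathfrak{i}_M^{n}\Lambda_F^{-5n^2}$, which cannot exceed $V$. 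Plugging in, using $C(n,K,\diam(M))\le2^{n-1}e^{(n-1)\sqrt{|K|}\diam(M)/2}$ and $\sqrt{|K|}\diam(M)\le C\sqrt{|K|}\Lambda_F^{5n^2}V\mathfrak{i}_M^{-(n-1)}$, and absorbing constants into $\mathfrak{C}_2(n)$ and powers of $\Lambda_F$ into $\Lambda_F^{21n^2}$, gives the first alternative; the constants $\mathfrak{C}_2,\mathfrak{C}_3$ are fixed at the end so that every absorption above is legitimate.

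I expect the main obstacle to be the lower bound $\mathbbm{h}(M)\ge C^{-1}/\diam(M)$ in the small-$k$ regime: the hypotheses supply only the unweighted bound $\mathbf{Ric}\ge(n-1)K$ — the $\mathbf{S}$-curvature of the Busemann--Hausdorff and Holmes--Thompson measures need not vanish, so a weighted bound $\mathbf{Ric}_N\ge(N-1)K$ is not automatic — and one must therefore either verify that Theorem~\ref{CheegerColding} remains usable in the form needed here, or replace it by a covering/chaining argument built from the local Poincaré inequalities that Lemma~\ref{infact} supplies on balls of radius $\sim\mathfrak{i}_M$ (which are starlike about their centres) together with the volume doubling of Lemma~\ref{distorlemma-0}/(ii). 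Tracking the exact exponents of $\Lambda_F$ (namely $32n$ and $21n^2$) and the precise form of $\mathfrak{C}_2(n),\mathfrak{C}_3(n)$ is then routine bookkeeping.
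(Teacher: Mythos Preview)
Your large-$k$ regime is essentially the paper's argument: bound $m=\Ca_M(\varepsilon)\le k$ via Croke's inequality when $\varepsilon\le\mathfrak{i}_M/2$, and feed $\varepsilon\le C\Lambda_F^{5n}(V/k)^{1/n}$ into the displayed bound $\overline{\lambda}_m\ge C_1^{1+\varepsilon\sqrt{|K|}}/(\Lambda_F^{20n+4}\varepsilon^2)$.

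For the small-$k$ regime, however, the paper does \emph{not} switch to a global Cheeger-constant argument. It keeps the very same machinery and simply bounds $m(\varepsilon)$ differently when $\varepsilon>\mathfrak{i}_M/2$: inside each $B_{p_i}(\varepsilon)$ one can place $s=\lfloor\varepsilon/\mathfrak{i}_M+1/2\rfloor$ disjoint $(\mathfrak{i}_M/2)$-balls along a minimizing geodesic from a boundary point to $p_i$, so by Croke $\mathfrak{m}(B_{p_i}(\varepsilon))\ge C\,\varepsilon\,\mathfrak{i}_M^{n-1}/\Lambda_F^{5n^2}$ and hence $m\le C\,\Lambda_F^{5n^2}V/(\mathfrak{i}_M^{n-1}\varepsilon)$. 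One then \emph{chooses} $\varepsilon(k)=C\,\Lambda_F^{5n^2}V/(\mathfrak{i}_M^{n-1}k)$ to force $m(\varepsilon(k))\le k$, and plugs this $\varepsilon(k)$ into the same displayed bound, getting $(\mathfrak{i}_M^{n-1}k/V)^2\ge(\mathfrak{i}_M^{n-1}/V)^2$. No diameter bound, no $\mathbbm{h}(M)$, no segment inequality. Your ``balls along a geodesic'' idea is exactly the right tool, but applied to $B_{p_i}(\varepsilon)$ rather than to the whole manifold.

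Your alternative route via $\overline{\lambda}_1\ge\mathbbm{h}(M)^2/4$ would also work, and the obstacle you flag is avoidable without Theorem~\ref{CheegerColding}: since $M$ is closed, $M=B_p(d)$ is starlike with respect to any $p$, so Lemma~\ref{infact} applied with $D=M$, $r=R=d=\diam(M)$ already gives $\mathbbm{h}(M)\ge C^{1+\sqrt{|K|}d}/(\Theta^4 d)$ directly---no weighted Ricci bound needed. Combined with your diameter estimate $d\le C\Lambda_F^{5n^2}V/\mathfrak{i}_M^{n-1}$, this closes the argument. So your proposal is salvageable, but the paper's unified approach is shorter and sidesteps the issue you identified entirely.
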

\begin{proof}
The proof is divided into two steps.

 \textbf{Step 1.}
Given $\varepsilon>0$, let $\{p_i\}_{i=1}^m$ be a complete $\varepsilon$-package, where $m=\Ca_M(\varepsilon)$. We are going to show that
\[
m=m(\varepsilon)\leq \left\{
\begin{array}{llll}
  \frac{\Lambda_F^{5n^2}\, V}{C\,\varepsilon^n},&& \text{ if }\varepsilon\leq  {\mathfrak{i}_M}/{2},\\
 \tag{5.6}\label{mestimate}\\
 \frac{2^{n+1}\,\Lambda_F^{5n^2} \, V}{C\, \mathfrak{i}_M^{n-1}\, \varepsilon},&& \text{ if }\varepsilon>  {\mathfrak{i}_M}/{2},
\end{array}
\right.
\]
where $C=C(n)>0$ is defined in Lemma \ref{Crokineq}.

 \textbf{Case 1.} If $\varepsilon\leq  {\mathfrak{i}_M}/{2}$, then Lemma \ref{Crokineq} yields
$
\mathfrak{m}(B_{p_i}(\varepsilon))\geq  \frac{C}{ \Lambda_F^{5n^2}}\,\varepsilon^n$ which implies $ m=m(\varepsilon)\leq \frac{\Lambda_F^{5n^2}\, V}{C\,\varepsilon^n}.
$

 \textbf{Case 2.} Suppose  $\varepsilon> {\mathfrak{i}_M}/{2}$. An elementary construction implies that  each ball $B_{p_i}(\varepsilon)$ contains at least $s:=\lfloor \frac{\varepsilon}{\mathfrak{i}_M}+\frac12 \rfloor$ disjoint ${\mathfrak{i}_M}/{2}$-balls, where $\lfloor a \rfloor$ denotes the greatest integer not larger than $ a$; such a construction can be performed by placing ${\mathfrak{i}_M}/{2}$-balls with centers $q_l$, $l=1,...,s$, along a unit speed minimizing geodesic joining any element of the boundary $\partial B_{p_i}(\varepsilon)$ to $p_i$.
Hence, by Lemma \ref{Crokineq} one has that
\begin{align*}
\mathfrak{m}(B_{p_i}(\varepsilon))&\geq s\cdot\min_{1\leq l\leq s}\mathfrak{m}\left(B_{q_l}\left(\frac{\mathfrak{i}_M}{2}\right)\right)\geq   \frac{C\,\mathfrak{i}^{n-1}_M\, \varepsilon}{2^{n+1}\Lambda_F^{5n^2}},
\end{align*}
and
\begin{align*}
m=m(\varepsilon)\leq \frac{\mathfrak{m}(M)}{\min_{i}\mathfrak{m}(B_{p_i}(\varepsilon))}\leq \frac{2^{n+1}\,\Lambda_F^{5n^2} \, V}{C\, \mathfrak{i}_M^{n-1}\, \varepsilon}.
\end{align*}

 \textbf{Step 2.}
Let $C=C(n)>0$ be defined in Lemma \ref{Crokineq} and set
\[
\varepsilon(k):=\left\{
\begin{array}{llll}
 \frac{2^{n+1}\Lambda_F^{5n^2} V}{C\, \mathfrak{i}_M^{n-1}\, k},&& \text{ if }k <\frac{2^{n+2} \Lambda_F^{5n^2}V}{C\,\mathfrak{i}_M^n},
\tag{5.7}\label{new3.6}\\
 \left(\frac{ \Lambda_F^{5n^2} V}{C\, k}\right)^{\frac1n},&& \text{ if }k \geq \frac{2^{n+2} \Lambda_F^{5n^2}V}{C\,\mathfrak{i}_M^n}.
\end{array}
\right.
\]
We can easily  check that
\[
\left\{
\begin{array}{llll}
 \varepsilon(k)> \frac{\mathfrak{i}_M}{2},&& \text{ if }k < \frac{2^{n+2} \Lambda_F^{5n^2}V}{C\,\mathfrak{i}_M^n},\\
 \\
 \varepsilon(k)\leq \frac{\mathfrak{i}_M}{2},&& \text{ if }k \geq \frac{2^{n+2} \Lambda_F^{5n^2}V}{C\,\mathfrak{i}_M^n},
\end{array}
\right.
\]
which together with  (\ref{mestimate}) furnishes $m(\varepsilon(k))\leq k$, for any $k\in \mathbb{N}^+$.

The argument similar to the one of (\ref{3.3newcC*}) together with Lemma \ref{uniformBuHo} yields
\textcolor[rgb]{0.00,0.00,0.00}{\[
\overline{\lambda}_{k}\geq \overline{\lambda}_{m(\varepsilon(k))}\geq \frac{C'^{1+ \sqrt{|K|}\varepsilon(k)}}{\Lambda_F^{20n+4}}\frac1{\varepsilon^{2}(k)},\tag{5.8}\label{estimatelambdam}
\]}
where $C'=C'(n)<1$ is a positive constant.

\textcolor[rgb]{0.00,0.00,0.00}{The above inequality together with (\ref{new3.6}) yields
\[
\overline{\lambda}_{k}\geq \frac{{C_1}^{1+C_2\frac{\sqrt{|K|}\Lambda_F^{5n^2}V }{ \mathfrak{i}_M^{n-1}}}}{\Lambda^{10n^2+20n+4}_F} \left(\frac{\mathfrak{i}_M^{n-1} k}{V}\right)^2\geq  \frac{{C_3}^{1+\frac{\sqrt{|K|}\Lambda_F^{5n^2}V}{ \mathfrak{i}_M^{n-1}}}}{\Lambda^{21n^2}_F} \left(\frac{\mathfrak{i}_M^{n-1} k}{V}\right)^2, \text{ if }k < \frac{2^{n+2}\Lambda_F^{5n^2}V}{C\,\mathfrak{i}_M^n},
\]
while
\[
\overline{\lambda}_{k}\geq \frac{C_4^{1+C_5\sqrt{|K|}\Lambda_F^{5n}{V}^{\frac1n}}}{\Lambda_F^{30n+4}}\left( \frac{k}{V} \right)^{\frac{2}{n}}\geq \frac{C_6^{1+ \sqrt{|K|}\Lambda_F^{5n} {V} ^{\frac1n}}}{\Lambda_F^{32n}}\left( \frac{k}{V} \right)^{\frac{2}{n}}, \text{ if }k \geq \frac{2^{n+2}  \Lambda_F^{5n^2}V}{C\,\mathfrak{i}_M^n},
\]
where $C_1,\ldots,C_6$ are constants  depending only on $n$.
Therefore, we concludes the proof by choosing $\mathfrak{C}_2:=\min\{C_3,C_6\}$} and $\mathfrak{C}_3:=2^{n+2}/C$.
\end{proof}

\begin{proof}[Proof of Theorem \ref{Busertype}] \textcolor[rgb]{0.00,0.00,0.00}{
Let $\varepsilon(k)$  be defined as in (\ref{new3.6}).
Since $\mathbf{Ric}\geq -(n-1)|K|$, we have again relation (\ref{estimatelambdam}).  Now consider
\[
k\geq\max\left\{\frac{2^{n+2} \Lambda_F^{5n^2}V}{C\,\mathfrak{i}_M^n},\frac{|K|^{\frac{n}{2}}\Lambda_F^{5n^2}V}{C}  \right\},
\]
where $C$ is defined as in Lemma \ref{Crokineq}. Then it is easy to check that $\sqrt{|K|}\varepsilon(k)\leq1$ and hence (\ref{estimatelambdam}) together with (\ref{new3.6})  furnishes
\begin{align*}
\overline{\lambda}_{k}\geq  \frac{C'^2}{\Lambda_F^{20n+4}}\frac1{\varepsilon^{2}(k)}\geq \frac{C_3}{\Lambda_F^{32n}}\left( \frac{k}{V} \right)^{\frac{2}{n}}.
\end{align*}
The proof is concluded by choosing $C_4=2^{n+2}/C$.}
\end{proof}

\subsection{Lusternik-Schnirelmann dimension pair} \label{sectnew2}
In this section we provide better estimates for the eigenvalues for  Lusternik-Schnirel\-mann spectrum; see Section \ref{LSDP}. To do this, we need certain properties of the counting function that will be discussed for an arbitrary dimension pair.

\subsubsection{Counting function}
\begin{definition}\label{countfun}\it Let $(\mathscr{C},\Dim)$ be a dimension pair.
Given $\lambda>0$, the {\rm counting function} corresponding to $(\mathscr{C},\Dim)$ is defined by
\begin{align*}
N(\lambda)&:=\sup\left\{\Dim(A):\,A\in \mathscr{C},\, \sup_{u\in A}E(u)< \lambda\right\}.
\end{align*}
\end{definition}

The  relation between  the counting function and the spectrum of a dimension pair can be states as follows.
\begin{lemma}\label{basic}Let $(\mathscr{C},\Dim)$ be a dimension pair.
For any  $\lambda\in(0,+\infty)$, the following properties hold$:$

\begin{itemize}
	\item[(i)] If $N(\lambda)\leq k$ for some $k\in \mathbb{N}^+$, then
	\[
	\lambda_{N(\lambda)}<\lambda\leq \lambda_{N(\lambda)+1}\leq \lambda_{k+1};
	\]
	\item[(ii)] Suppose $\lambda_k\geq f(k)$ for any $k\in \mathbb{N}^+$, where $f$ is a strictly increasing nonnegative function. Thus,
	\[
	N(\lambda)< \left\lfloor f^{-1}(\lambda)\right\rfloor+1.
	\]
\end{itemize}
\end{lemma}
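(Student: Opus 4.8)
The plan is to reduce everything to the min-max characterization $\lambda_k=\inf_{A\in\mathscr{C}_k}\sup_{u\in A}E(u)$ from Theorem \ref{minmaxth}, the monotonicity of the spectrum from Theorem \ref{properties1}/(i), and the elementary fact that, since $\Dim$ takes values in $\mathbb{N}\cup\{+\infty\}$, any supremum of $\Dim$-values that happens to be finite is actually attained. The only genuinely delicate point is keeping track of the difference between the \emph{strict} inequality $\sup_{u\in A}E(u)<\lambda$ appearing in the definition of $N(\lambda)$ and the \emph{non-strict} one $A\subset E^{-1}[0,\mu]$ (i.e. $\sup_{u\in A}E(u)\le\mu$) entering $\Dim E^{-1}[0,\mu]$, which is what defines $\lambda_k$.

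For part (i), set $m:=N(\lambda)$; the hypothesis $m\le k<+\infty$ makes $m$ finite, so there is $A_0\in\mathscr{C}$ with $\Dim(A_0)=m$ and $\mu_0:=\sup_{u\in A_0}E(u)<\lambda$. (If $m=0$ this forces $\lambda\le\lambda_1$, and with the convention $\lambda_0:=0$ all three inequalities are immediate; so assume $m\ge1$.) Since $A_0\in\mathscr{C}_m$, the min-max principle gives $\lambda_m\le\mu_0<\lambda$, which is the first inequality. Next, for any $\mu\in[0,\lambda)$ and any $A\subset E^{-1}[0,\mu]$ one has $\sup_{u\in A}E(u)\le\mu<\lambda$, hence $\Dim(A)\le N(\lambda)=m$; taking the supremum over such $A$ yields $\Dim E^{-1}[0,\mu]\le m<m+1$ for every $\mu\in[0,\lambda)$, so $[0,\lambda)\subset\{\mu:\Dim E^{-1}[0,\mu]<m+1\}$ and therefore $\lambda\le\sup\{\mu:\Dim E^{-1}[0,\mu]<m+1\}=\lambda_{m+1}$. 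The last inequality $\lambda_{m+1}\le\lambda_{k+1}$ is monotonicity (Theorem \ref{properties1}/(i)) applied to $m+1\le k+1$.

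For part (ii), I would argue directly on the sets appearing in the definition of $N(\lambda)$: fix $A\in\mathscr{C}$ with $\sup_{u\in A}E(u)<\lambda$ and put $m':=\Dim(A)$. If $m'\ge1$ then $A\in\mathscr{C}_{m'}$, so the min-max principle gives $\lambda_{m'}\le\sup_{u\in A}E(u)<\lambda$, and the hypothesis $f(m')\le\lambda_{m'}$ forces $f(m')<\lambda$; since $f$ is strictly increasing this gives $m'<f^{-1}(\lambda)$, whence $m'\le\lfloor f^{-1}(\lambda)\rfloor$ because $m'$ is an integer. When $m'=0$ the same bound holds since $f\ge0$ makes $f^{-1}(\lambda)\ge0$. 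Taking the supremum over all admissible $A$ yields $N(\lambda)\le\lfloor f^{-1}(\lambda)\rfloor<\lfloor f^{-1}(\lambda)\rfloor+1$, which is the assertion. I expect the main (modest) obstacle to be purely bookkeeping: ensuring the strict/non-strict inequalities are handled consistently and that the implication $m'<f^{-1}(\lambda)\Rightarrow m'\le\lfloor f^{-1}(\lambda)\rfloor$ is applied correctly, including the boundary case $f^{-1}(\lambda)\in\mathbb{Z}$; no analytic input beyond Theorem \ref{minmaxth} is needed.
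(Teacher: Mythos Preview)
Your argument is correct and follows essentially the same route as the paper. For (i), both you and the paper obtain $\lambda_{N(\lambda)}<\lambda$ from the min-max principle applied to a witness $A_0$ realizing the supremum in $N(\lambda)$; for $\lambda\le\lambda_{N(\lambda)+1}$ you go back to the original $\sup$-definition of $\lambda_{m+1}$, while the paper argues by contradiction via min-max --- these are equivalent one-line manipulations. For (ii), the paper first isolates the auxiliary claim $N(f(k))<k$ and then applies it with $k=\lfloor f^{-1}(\lambda)\rfloor+1$, whereas you bound $\Dim(A)$ directly for each admissible $A$; the content is the same inequality $f(m')\le\lambda_{m'}<\lambda$.

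One small point: in your part (ii) you write ``put $m':=\Dim(A)$. If $m'\ge1$ then $A\in\mathscr{C}_{m'}$, so $\lambda_{m'}\le\ldots$'', which tacitly assumes $m'<+\infty$ (otherwise $\lambda_{m'}$ is undefined). The paper's contrapositive formulation (``if $N(f(k))\ge k$ then there exists $A\in\mathscr{C}_k$ with \ldots'') avoids this. Your argument is trivially repaired by applying it to any finite $j\le m'$: then $A\in\mathscr{C}_j$, so $f(j)\le\lambda_j<\lambda$, hence $j\le\lfloor f^{-1}(\lambda)\rfloor$, and taking the supremum over $j$ gives $m'\le\lfloor f^{-1}(\lambda)\rfloor$ regardless of whether $m'$ is finite.
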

\begin{proof}
(i) It suffices to show  $\lambda_k<\lambda\leq\lambda_{k+1}$ if $N(\lambda)=k$. According to
Definition \ref{countfun}, there exists $B\in \mathscr{C}$ such that $\sup_{u\in B}E(u)<\lambda$ and $k=\Dim(B)$.  Then Theorem \ref{minmaxth} implies
\[
\lambda_k=\inf_{A\in \mathscr{C}_k}\sup_{u\in A}E(u)\leq \sup_{u\in B}E(u)<\lambda.
\]
On the other hand, if $\lambda_{k+1}<\lambda$, then $N(\lambda)\geq k+1$, which is a contradiction. Therefore, $\lambda_k<\lambda\leq\lambda_{k+1}$ as asserted.

(ii) We first claim that $N(f(k))<k$ for every $k\in \mathbb N^+$. In fact, if $N(f(k))\geq k$ for some $k\in \mathbb N^+$, then  there exists $A\in \mathscr{C}_k$ with $\lambda_k\leq \sup_{u\in A}E(u)<f(k)$, which contradicts the assumption $\lambda_k\geq f(k)$. Thus, for any $\lambda>0$, since $\lambda<f(\lfloor f^{-1}(\lambda)\rfloor+1)$, the claim implies $N(\lambda)\leq N(f(\lfloor f^{-1}(\lambda)\rfloor+1))<\lfloor f^{-1}(\lambda)\rfloor+1$.
\end{proof}

\begin{lemma}\label{countrelatoin}Given a compact {\rm FMMM}, for any $\alpha\in\{1,2\}$, we have
\[
\left\{
\begin{array}{llll}
m^\alpha_{LS}(k)=m^\alpha_K(k),\\
 \\
 N^\alpha_{LS}(\lambda)=N^\alpha_K(\lambda), \ \forall\,\lambda\in (0,+\infty),\\
 \\
 m^\alpha_{LS}(k)\leq N^\alpha_{LS}(\lambda^{LS,\alpha}_k+\epsilon),\ \forall\epsilon>0,
\end{array}
\right.
\]
where $m^\alpha_{LS}(k)$ $($resp., $m^\alpha_K(k)$$)$ stands for  the multiplicity  of the $k^{th}$ eigenvalue of $(\mathscr{C}^\alpha,\Dim_{LS})$ $($resp., $(\mathscr{D}^\alpha,\Dim_{K})$$),$ while
$N^\alpha_{LS}$ $($resp., $N^\alpha_K$$)$ denotes the counting function corresponding to $(\mathscr{C}^\alpha,\Dim_{LS})$ $($resp., $(\mathscr{D}^\alpha,\Dim_{K})$$).$
\end{lemma}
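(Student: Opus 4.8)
The plan is to prove the three identities in Lemma~\ref{countrelatoin} in the order they appear, each one reducing to facts already established. First, the equality $m^\alpha_{LS}(k)=m^\alpha_K(k)$ of multiplicities: by Theorem~\ref{therealconnection} we have $\lambda^{LS,\alpha}_k=\lambda^{K,\alpha}_k$ for every $k\in\mathbb N^+$ and every $\alpha\in\{1,2\}$, so the two spectra coincide \emph{as sequences}. Since multiplicity is by definition the number of consecutive indices sharing a common eigenvalue (i.e. the length of the maximal block $\lambda^{LS,\alpha}_k=\cdots=\lambda^{LS,\alpha}_{k+l-1}$), identical sequences have identical multiplicity functions. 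This step is immediate once Theorem~\ref{therealconnection} is invoked.

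Next, the equality of counting functions $N^\alpha_{LS}(\lambda)=N^\alpha_K(\lambda)$ for all $\lambda\in(0,+\infty)$. Here the plan is to compare the two suprema defining the counting functions directly, using the last sentence of Theorem~\ref{therealconnection}, namely $\Dim_{LS}(A)=\Dim_K(A)$ for every $A\in\mathscr{D}^\alpha$ (the symmetric sets). For the inequality $N^\alpha_K(\lambda)\le N^\alpha_{LS}(\lambda)$: any admissible $A$ in the $K$-counting supremum lies in $\mathscr{D}^\alpha\subset\mathscr{C}^\alpha$, and on such $A$ one has $\Dim_{LS}(A)=\Dim_K(A)$, while the sublevel-set constraint $\sup_{u\in A}E(u)<\lambda$ is unchanged; hence $A$ is also admissible for the $LS$-counting supremum with the same dimension value. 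For the reverse inequality $N^\alpha_{LS}(\lambda)\le N^\alpha_K(\lambda)$: given $A\in\mathscr{C}^\alpha$ with $\sup_{u\in A}E(u)<\lambda$, pass to $A':=A\cup(-A)$, which is symmetric and still closed (resp.\ compact), so $A'\in\mathscr{D}^\alpha$; since $F$ is reversible, $E(-u)=E(u)$, so $\sup_{u\in A'}E(u)=\sup_{u\in A}E(u)<\lambda$; and by the argument already used in the proof of Theorem~\ref{therealconnection} one has $\Dim_K(A')=\cat_{\mathbb P(\mathscr X)}(A'/\mathbb Z_2)=\cat_{\mathbb P(\mathscr X)}(\mathfrak p(A))=\Dim_{LS}(A)$, using Lemma~\ref{imrelation} and $\mathfrak p(A')=\mathfrak p(A)$. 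Taking suprema over admissible sets on both sides yields the two inequalities and hence equality.

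Finally, the inequality $m^\alpha_{LS}(k)\le N^\alpha_{LS}(\lambda^{LS,\alpha}_k+\epsilon)$ for every $\epsilon>0$. The plan is as follows. Let $l:=m^\alpha_{LS}(k)$, i.e.\ $\lambda^{LS,\alpha}_k=\cdots=\lambda^{LS,\alpha}_{k+l-1}=:\lambda$ is a maximal block. By the min-max principle (Theorem~\ref{minmaxth}) applied at index $k+l-1$, for any $\epsilon>0$ there exists $A\in\mathscr{C}^\alpha$ with $\Dim_{LS}(A)\ge k+l-1$ and $\sup_{u\in A}E(u)<\lambda+\epsilon=\lambda^{LS,\alpha}_k+\epsilon$; feeding this $A$ into Definition~\ref{countfun} gives $N^\alpha_{LS}(\lambda^{LS,\alpha}_k+\epsilon)\ge k+l-1$. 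This does not quite reach $k+l$, so the sharper route is to use Lemma~\ref{basic}/(i): for $\lambda':=\lambda^{LS,\alpha}_k+\epsilon$ one has $N^\alpha_{LS}(\lambda')\ge k+l-1$; more precisely, since $\lambda'>\lambda_{k+l-1}^{LS,\alpha}$, applying part~(i) of Lemma~\ref{basic} to this $\lambda'$ and the observation that $\lambda_{N(\lambda')}^{LS,\alpha}<\lambda'$ together with monotonicity forces $N^\alpha_{LS}(\lambda')\ge k+l-1$; combined with the strict inequality $\sup_{u\in A}E(u)<\lambda'$ witnessed above one in fact gets $N^\alpha_{LS}(\lambda')\ge\Dim_{LS}(A)\ge k+l-1$, and since $\lambda'$ exceeds $\lambda_{k+l-1}^{LS,\alpha}=\lambda$ strictly while $\lambda_{k+l}^{LS,\alpha}\ge\lambda$, a short case analysis (whether $\lambda_{k+l}^{LS,\alpha}<\lambda'$ or not) yields $N^\alpha_{LS}(\lambda')\ge k+l$ whenever the block is genuinely maximal; but for the stated inequality $m^\alpha_{LS}(k)=l\le N^\alpha_{LS}(\lambda^{LS,\alpha}_k+\epsilon)$ it suffices that $N^\alpha_{LS}(\lambda')\ge k+l-1\ge l$ since $k\ge1$. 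I expect the third identity to be the only place requiring care: one must be attentive to the strict-versus-nonstrict inequalities in the definitions of $N(\lambda)$ (with $\sup E<\lambda$) and of $\lambda_k$ (a supremum over $\Dim E^{-1}[0,\lambda]<k$), and to the role of maximality of the multiplicity block, whereas the first two identities follow mechanically from Theorem~\ref{therealconnection} and the reversibility of $F$.
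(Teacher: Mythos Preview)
Your proof is correct, and for the first two identities it agrees with the paper's approach: both rely directly on Theorem~\ref{therealconnection} (the paper simply says ``Theorem~\ref{therealconnection} implies\ldots'' while you spell out the symmetrization $A'=A\cup(-A)$ for the reverse inequality on counting functions, which is exactly the trick already used in that theorem's proof).

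For the third inequality you take a genuinely different route. The paper invokes Lemma~\ref{muti2}: if the multiplicity of $\lambda:=\lambda^{LS,\alpha}_k$ is $l$, then the eigenset $\mathfrak K_\lambda$ is a compact subset of $\mathcal S$ with $\Dim_{LS}(\mathfrak K_\lambda)\ge l$ and $\sup_{\mathfrak K_\lambda}E=\lambda<\lambda+\epsilon$, so $\mathfrak K_\lambda$ witnesses $N^\alpha_{LS}(\lambda+\epsilon)\ge l$. You instead appeal directly to the min-max principle at index $k+l-1$ to produce some $A\in\mathscr C^\alpha$ with $\Dim_{LS}(A)\ge k+l-1$ and $\sup_A E<\lambda+\epsilon$, giving $N^\alpha_{LS}(\lambda+\epsilon)\ge k+l-1\ge l$. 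Your argument is more elementary in that it avoids Lemma~\ref{muti2} (hence the Homotopy Lemma and the eigenset machinery) entirely; the paper's argument, on the other hand, is cleaner and uses a single concrete witness set. Your detour through Lemma~\ref{basic}/(i) and the case analysis on $\lambda_{k+l}^{LS,\alpha}$ is unnecessary: the one-line observation $N^\alpha_{LS}(\lambda+\epsilon)\ge\Dim_{LS}(A)\ge k+l-1\ge l$ already closes the argument, and you should trim the exposition accordingly.
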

\begin{proof}
 Theorem
\ref{therealconnection} implies that $m^\alpha_K(k)=m^\alpha_{LS}(k)$ and
$N^\alpha_K(\lambda)=N^\alpha_{LS}(\lambda)$. Suppose that the multiplicity of the $k^{\rm th}$ eigenvalue $\lambda:=\lambda^{LS,\alpha}_k$ is $l\in\mathbb N$. According to Lemma \ref{muti2}, there is a compact set $\mathfrak{K}_\lambda\subset \mathcal {S}$ with $\Dim_{LS}(\mathfrak{K}_\lambda)\geq l$ and $\sup_{u\in \mathfrak{K}_\lambda}E(u)=\lambda$. This implies
$m^\alpha_{LS}(k)=l\leq \Dim_{LS}(\mathfrak{K}_\lambda)\leq N^\alpha_{LS}(\lambda+\epsilon)$ for every $\epsilon>0$.
\end{proof}

\subsubsection{Gromov type estimate}

In order to estimate the lower bounds for the Lusternik-Schnirelmann eigenvalues, we need some results concerning the weighted Ricci curvature.
By Lemma \ref{Rho} we immediately have the following result.
\begin{lemma}\label{coveringandmin}
Given $N\in [n,+\infty)$, $K\in \mathbb R$ and $d>0$, let $(M,F,d\mathfrak{m})$ be an $n$-dimensional closed {\rm FMMM} with $\mathbf{Ric}_N\geq(N-1)K$ and $\diam(M)=d$.
Given $r>0$, let $\{p_i\}_{i=1}^m$ be a complete $r$-package. The following properties hold$:$

\begin{itemize}
	\item[(i)]
	$
	m=\Ca_M(r)\leq \max\left\{ e^{(N-1)d\sqrt{|K|}}\left( \frac{d}{r} \right)^N, 1\right\};
	$
	\item[(ii)] For any $x\in M$, the number of balls $B_{p_i}(4r)$ containing $x$ is not larger then $12^Ne^{12(N-1)r\sqrt{|K|}}$, i.e.,
	\[
	{\rm card}\left\{p\in \{p_i\}_{i=1}^m:\, x\in B_{p}(4r)  \right\}\leq 12^Ne^{12(N-1)r\sqrt{|K|}}.
	\]
\end{itemize}
\end{lemma}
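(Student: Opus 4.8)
The plan is to deduce both estimates from the Bishop--Gromov type volume comparison in Lemma \ref{Rho}, which under $\mathbf{Ric}_N\geq (N-1)K$ gives, for every $x\in M$ and every $0<\rho\leq R$,
\[
\frac{\mathfrak{m}(B_x(R))}{\mathfrak{m}(B_x(\rho))}\leq \frac{V_{N,K}(R)}{V_{N,K}(\rho)}\leq e^{(N-1)R\sqrt{|K|}}\left(\frac{R}{\rho}\right)^N .
\]
Both parts are then packing arguments: a family of pairwise disjoint $r$-balls has total $\mathfrak{m}$-measure at most that of any single ball containing all of them, and the volume comparison controls the ratio.

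For (i): the balls $\{B_{p_i}(r)\}_{i=1}^m$ are pairwise disjoint, so $\sum_{i=1}^m\mathfrak{m}(B_{p_i}(r))\leq \mathfrak{m}(M)$ and hence $m\leq \mathfrak{m}(M)/\min_i\mathfrak{m}(B_{p_i}(r))$. If $r\geq d=\diam(M)$ then any two $r$-balls meet (their centres are at distance $\leq d\leq r$, so the midpoint of a connecting minimal geodesic lies in both), and maximality forces $m=1$, matching the bound. If $r<d$, then $\diam(M)=d$ gives $M=B_{p_i}(d)$ up to an $\mathfrak{m}$-null set, so Lemma \ref{Rho} yields
\[
\frac{\mathfrak{m}(M)}{\mathfrak{m}(B_{p_i}(r))}=\frac{\mathfrak{m}(B_{p_i}(d))}{\mathfrak{m}(B_{p_i}(r))}\leq e^{(N-1)d\sqrt{|K|}}\left(\frac{d}{r}\right)^N
\]
for every $i$; minimizing over $i$ gives $m\leq e^{(N-1)d\sqrt{|K|}}(d/r)^N$, and since the right-hand side exceeds $1$ in this regime the $\max\{\cdot,1\}$ is only relevant when $r\geq d$.

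For (ii): fix $x\in M$ and put $I:=\{\,i:\ x\in B_{p_i}(4r)\,\}$, so $d_F(p_i,x)<4r$ for $i\in I$. The triangle inequality gives $B_{p_i}(r)\subset B_x(5r)$ and $B_x(5r)\subset B_{p_i}(9r)$ for each $i\in I$. The balls $\{B_{p_i}(r)\}_{i\in I}$ are still pairwise disjoint and all contained in $B_x(5r)$, so ${\rm card}(I)\cdot\min_{i\in I}\mathfrak{m}(B_{p_i}(r))\leq \mathfrak{m}(B_x(5r))$, whence by Lemma \ref{Rho}
\[
{\rm card}(I)\leq \max_{i\in I}\frac{\mathfrak{m}(B_x(5r))}{\mathfrak{m}(B_{p_i}(r))}\leq \max_{i\in I}\frac{\mathfrak{m}(B_{p_i}(9r))}{\mathfrak{m}(B_{p_i}(r))}\leq 9^N e^{9(N-1)r\sqrt{|K|}}\leq 12^N e^{12(N-1)r\sqrt{|K|}} .
\]

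There is no substantial obstacle here; the only points requiring a little care are isolating the degenerate case $r\geq \diam(M)$ in (i) (and noting that the distance sphere of radius $d$ is $\mathfrak{m}$-null, or passing to radii $d+\varepsilon$ and letting $\varepsilon\downarrow0$), and keeping the two triangle-inequality inclusions in the right direction in (ii) so that one genuinely gets a dilated ball sandwiched between two concentric balls with a controlled radius ratio. If a sharper constant were desired one could re-centre the covering balls, but the crude radii $5r$ and $9r$ already land comfortably inside the target bound.
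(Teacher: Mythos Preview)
Your proof is correct and follows exactly the approach indicated in the paper, which simply states that the lemma follows immediately from the Bishop--Gromov comparison (Lemma \ref{Rho}); you have carried out the packing argument in detail, handling the boundary case $r\geq d$ in (i) and choosing concentric radii in (ii) that fit comfortably within the stated constants.
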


\begin{lemma}\label{grandientes}Given $N\in [n,+\infty)$ and $K\in \mathbb R$,
let $(M,F,d\mathfrak{m})$ be an $n$-dimensional closed {\rm FMMM} with  $\mathbf{Ric}_N\geq (N-1)K$.
Then   for  any  ball $B_p(R)$, we have
\[
\ds\int_{B_p(R)}|u-u_{p,R}|^2d\mathfrak{m}\leq 2^{N+2}  R^2 e^{ {(N-1) \sqrt{|K|}R} }\ds\int_{B_p(2R)}F^{*2}(du)d\mathfrak{m}, \ \forall u\in \mathscr{X},
\]
where $u_{p,R}$ is the mean value of $u$ on $B_p(R)$, i.e.,
\[
u_{p,R}:=\frac{1}{\mathfrak{m}(B_p(R))}\ds\int_{B_p(R)}u d\mathfrak{m}.
\]
\end{lemma}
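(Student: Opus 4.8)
The plan is to prove a local Poincaré inequality on the ball $B_p(R)$ by means of the segment-type inequality of Cheeger--Colding in the Finsler setting, i.e., Theorem \ref{CheegerColding}. First I would reduce to estimating $\int_{B_p(R)\times B_p(R)} |u(x_1)-u(x_2)|^2\, d\mathfrak{m}_\times$, since a standard doubling of the square yields
\[
\int_{B_p(R)}|u-u_{p,R}|^2 d\mathfrak{m} \leq \frac{1}{\mathfrak{m}(B_p(R))}\int_{B_p(R)\times B_p(R)} |u(x_1)-u(x_2)|^2 d\mathfrak{m}_\times,
\]
by Jensen's inequality applied to $u_{p,R}=\frac{1}{\mathfrak{m}(B_p(R))}\int u\, d\mathfrak{m}$. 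Next, for $x_1,x_2\in B_p(R)$ take a normal minimal geodesic $\gamma_{x_1x_2}$; since $B_p(2R)$ is ``geodesically rich enough'' (any such geodesic between two points of $B_p(R)$ stays in $B_p(2R)$ by the triangle inequality), the fundamental theorem of calculus along $\gamma_{x_1x_2}$ combined with the eikonal-type bound $|du(\dot\gamma)|\leq F^*(du)\cdot F(\dot\gamma) = F^*(du)$ gives
\[
|u(x_1)-u(x_2)| \leq \int_0^{d_F(x_1,x_2)} F^*(du)(\gamma_{x_1x_2}(s))\, ds.
\]
Squaring, using Cauchy--Schwarz in the form $|u(x_1)-u(x_2)|^2 \leq d_F(x_1,x_2)\int_0^{d_F(x_1,x_2)} F^{*2}(du)(\gamma_{x_1x_2}(s))\, ds$, and bounding $d_F(x_1,x_2)\leq 2R$ on $B_p(R)$, I reduce the estimate to controlling $\int_{B_p(R)\times B_p(R)}\left(\int_0^{d_F(x_1,x_2)} F^{*2}(du)(\gamma_{x_1x_2}(s))\, ds\right) d\mathfrak{m}_\times$.

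Now I apply Theorem \ref{CheegerColding} with $A_1=A_2=B_p(R)$, $W=B_p(2R)$, and $f=F^{*2}(du)$, which is non-negative and integrable. The theorem yields
\[
\int_{B_p(R)\times B_p(R)}\left(\int_0^{d_F(x_1,x_2)} f(\gamma_{x_1x_2}(s))\, ds\right) d\mathfrak{m}_\times \leq C(N,K,2R)\cdot 2\,\mathfrak{m}(B_p(R))\cdot 2R \int_{B_p(2R)} f\, d\mathfrak{m},
\]
using $\diam(B_p(R))\leq 2R$ and the symmetry in $A_1,A_2$. Collecting everything, one obtains
\[
\int_{B_p(R)}|u-u_{p,R}|^2 d\mathfrak{m} \leq 2R\cdot\frac{1}{\mathfrak{m}(B_p(R))}\cdot 4R\,\mathfrak{m}(B_p(R))\, C(N,K,2R)\int_{B_p(2R)} F^{*2}(du)\, d\mathfrak{m},
\]
and the prefactor is $8R^2\, C(N,K,2R)$. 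Finally I invoke the explicit bound from Theorem \ref{CheegerColding}, namely $C(N,K,2R)\leq 2^{N-1} e^{(N-1)\sqrt{|K|}R}$, so that $8R^2\, C(N,K,2R) \leq 2^{N+2} R^2 e^{(N-1)\sqrt{|K|}R}$, which is exactly the claimed constant.

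The main obstacle I anticipate is bookkeeping the constant so that it matches the stated $2^{N+2}$ precisely rather than merely up to a dimensional factor — in particular being careful that the Cauchy--Schwarz step contributes the factor $d_F(x_1,x_2)\leq 2R$ and not some larger multiple, and that the Jensen reduction from $|u-u_{p,R}|^2$ to the double integral does not lose a factor of $2$. A secondary technical point is justifying the fundamental-theorem-of-calculus step for a general $u\in\mathscr{X}$ rather than a smooth one; this is handled by approximating $u$ by $C^\infty$ functions in the $\|\cdot\|_{\Ho}$-norm (density of $C^\infty(M)$), applying the inequality to each, and passing to the limit, since both sides are continuous in this norm — here one also uses that $\mathrm{Cut}_p$ has null measure so that almost every pair $(x_1,x_2)$ is joined by a unique minimal geodesic depending measurably on the endpoints.
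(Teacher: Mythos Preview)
Your proposal is correct and follows essentially the same route as the paper's proof: reduce to the double integral via Jensen/H\"older, bound $|u(x_1)-u(x_2)|$ along minimal geodesics, apply Cauchy--Schwarz to pick up the factor $2R$, and then invoke Theorem \ref{CheegerColding} with $A_1=A_2=B_p(R)$ and $W=B_p(2R)$; the constants combine exactly as you indicate to give $2^{N+2}R^2 e^{(N-1)\sqrt{|K|}R}$. The density argument for passing from $C^\infty$ to $\mathscr{X}$ is also how the paper handles it (it simply says ``without loss of generality, we can assume $u\in C^\infty(M)$'').
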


\begin{proof}Without loss of generality, we can assume $u\in C^\infty(M)$.
A direct calculation yields
\[
|u-u_{p,R}|(z)\leq \frac{1}{\mathfrak{m}(B_p(R))}\ds\int_{B_p(R)}|u(z)-u(x)| d\mathfrak{m}(x),
\]
which together with the H\"older inequality implies
\[
|u-u_{p,R}|^2(z)\leq \frac{1}{\mathfrak{m}(B_p(R))}\ds\int_{B_p(R)}|u(z)-u(x)|^2 d\mathfrak{m}(x).
\]
Integrating the above inequality over $B_p(R)$, we obtain
\begin{align*}
\ds\int_{B_p(R)}|u-u_{p,R}|^2(z)d\mathfrak{m}(z)
\leq \frac{1}{\mathfrak{m}(B_p(R))}\ds\int_{B_p(R)}d\mathfrak{m}(z) \ds\int_{B_p(R)}|u(z)-u(x)|^2 d\mathfrak{m}(x).\tag{5.8}\label{2.3CC}
\end{align*}
Let $\gamma_{z,x}$ be a unit speed minimal geodesic from $z$ to $x$, both points belonging to $B_p(R)$. Since for a.e. $s\in [0,d_F(z,x)]$ we have $$\frac{d}{ds} u({\gamma}_{z,x}(s))=du({\gamma}_{z,x}(s))\dot{\gamma}_{z,x}(s)\leq F^*(du)\circ{\gamma}_{z,x}(s)F(\dot{\gamma}_{z,x}(s)) = F^*(du)\circ{\gamma}_{z,x}(s),$$ it follows that
\begin{align*}
|u(z)-u(x)|\leq \int^{d_F(z,x)}_0 F^*(du)\circ{\gamma}_{z,x}(s)ds\leq \left( \int^{d_F(z,x)}_0 F^{*2}(du)\circ{\gamma}_{z,x}(s)ds \right)^\frac12 \left( 2R \right)^{\frac{1}{2}},
\end{align*}
 which
together with (\ref{2.3CC}) yields
\begin{align*}
&\ds\int_{B_p(R)}|u-u_{p,R}|^2(z)d\mathfrak{m}(z)\\
\leq
&\frac{2R}{\mathfrak{m}(B_p(R))}\ds\int_{(z,x)\in B_p(R)\times B_p(R)}\left( \int^{d_F(z,x)}_0 F^{*2}(du)\circ{\gamma}_{z,x}(s)ds \right)d\mathfrak{m}_\times(z,x).\tag{5.9}\label{2.4}
\end{align*}
By letting $A_1=A_2:=B_p(R)$ and $W:=B_p(2R)$,
 Theorem \ref{CheegerColding} furnishes
\begin{align*}
&\ds\int_{(z,x)\in B_p(R)\times B_p(R)}\left( \int^{d_F(z,x)}_0 F^{*2}(du)\circ{\gamma}_{z,x}(s)ds \right)d\mathfrak{m}_\times(z,x)\\
\leq &2^{N+1 } R e^{(N-1) {\sqrt{|K|} R} }\mathfrak{m}(B_p(R))\ds\int_{B_p(2R)} F^{*2}(du) d\mathfrak{m},
\end{align*}
which together with (\ref{2.4}) concludes the proof.
\end{proof}

Inspired by Hassannezhad, Kokarev and Polterovich \cite{HKP}, we get the following estimate.
\begin{theorem}\label{lsestimate}Given $N\in [n,+\infty)$, $K\in \mathbb R$ and $d>0$,
let $(M,F,d\mathfrak{m})$ be an $n$-dimensional closed {\rm FMMM} with
 \[
 \mathbf{Ric}_N\geq (N-1)K,\ \diam(M)=d.
 \]
 Then there exists a positive constant $\mathfrak{C}_4=\mathfrak{C}_4(N)$ depending only on $N$ such that for any $\alpha\in \{1,2\}$,
\[
N^\alpha_{K}(\lambda)\leq \max\left\{\mathfrak{C}_4^{1+\sqrt{|K|}d}d^N\lambda^{\frac{N}2},\,1\right\},\ \forall\,\lambda>0.
\]
\end{theorem}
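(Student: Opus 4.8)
The goal is to bound the counting function $N_K^\alpha(\lambda)$ for the Krasnoselskii (equivalently, by Theorem \ref{therealconnection}, the Lusternik--Schnirelmann) dimension pair. The plan is to suppose $A \in \mathscr{D}^\alpha$ with $\sup_{u \in A} E(u) < \lambda$ and produce an upper bound on $\Dim_K(A)$ depending only on $N$, $d$, $K$ and $\lambda$. First I would fix a radius $r > 0$ to be optimized later and choose a complete $r$-package $\{p_i\}_{i=1}^m$; Lemma \ref{coveringandmin}/(i) gives $m \leq \max\{ e^{(N-1)d\sqrt{|K|}}(d/r)^N, 1\}$, so $m$ is controlled. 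The balls $B_{p_i}(4r)$ cover $M$ (by maximality of the $r$-package), and Lemma \ref{coveringandmin}/(ii) bounds the overlap multiplicity by $12^N e^{12(N-1)r\sqrt{|K|}}$.

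\textbf{The averaging map.} The key construction is a continuous, odd map $\Psi : A \to \mathbb{R}^m$ that detects the covering. For $u \in A \subset \mathcal{S}$, set $\Psi(u) := \big( u_{p_1, 4r}, \dots, u_{p_m, 4r}\big)$, the tuple of mean values of $u$ over the balls $B_{p_i}(4r)$. Clearly $\Psi$ is linear, hence continuous and odd. If $\Psi(u) = 0$ for some $u \in A$, then $u$ has zero mean on every $B_{p_i}(4r)$, so Lemma \ref{grandientes} (applied with $R = 4r$) gives
\[
\int_M u^2\, d\mathfrak{m} \leq \sum_{i=1}^m \int_{B_{p_i}(4r)} u^2 \, d\mathfrak{m} \leq 2^{N+2}(4r)^2 e^{4(N-1)\sqrt{|K|}r} \sum_{i=1}^m \int_{B_{p_i}(8r)} F^{*2}(du)\, d\mathfrak{m},
\]
and by the overlap bound the last sum is at most $12^N e^{24(N-1)r\sqrt{|K|}} \int_M F^{*2}(du)\, d\mathfrak{m}$. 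Since $\|u\|_{L^2} = 1$ this yields $1 \leq C(N) r^2 e^{C(N)\sqrt{|K|}r} E(u) < C(N) r^2 e^{C(N)\sqrt{|K|}r}\lambda$. Therefore, as soon as $r$ is chosen small enough that $C(N) r^2 e^{C(N)\sqrt{|K|}r}\lambda < 1$ — which is possible with $r$ comparable to $\lambda^{-1/2}$ up to the exponential correction — the restriction $\Psi|_A : A \to \mathbb{R}^m \setminus \{0\}$ is a well-defined odd continuous map, whence by Definition \ref{Krasnodef} we get $\Dim_K(A) \leq m$.

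\textbf{Optimizing the radius.} It remains to choose $r$ explicitly. Take $r := c(N) \lambda^{-1/2}$ for a suitably small dimensional constant $c(N)$; one must check the exponential factor $e^{C(N)\sqrt{|K|}r}$ does not spoil the inequality, which is handled by noting $\sqrt{|K|} r = c(N)\sqrt{|K|/\lambda}$, and either $|K| \lesssim \lambda$ (so the exponential is bounded by a constant) or one absorbs it; alternatively, choosing $r = \min\{c(N)\lambda^{-1/2}, c(N)|K|^{-1/2}\}$ makes $\sqrt{|K|}r \leq c(N)$ outright, and then $r^2 e^{C(N)\sqrt{|K|}r}\lambda \leq C(N) c(N)^2 < 1$ forces $|K| \leq \lambda$ or else $r$ is determined by $|K|$, a case one treats separately. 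With this $r$, Lemma \ref{coveringandmin}/(i) gives
\[
N_K^\alpha(\lambda) \leq m \leq \max\left\{ e^{(N-1)d\sqrt{|K|}} \left(\tfrac{d}{r}\right)^N, \, 1 \right\} \leq \max\left\{ \mathfrak{C}_4^{1+\sqrt{|K|}d} d^N \lambda^{N/2}, \, 1 \right\},
\]
after absorbing $c(N)^{-N}$ and the constant from $d/r = c(N)^{-1} d \lambda^{1/2}$ (or the $|K|$-driven variant) into the dimensional constant $\mathfrak{C}_4 = \mathfrak{C}_4(N)$; the term $e^{(N-1)d\sqrt{|K|}}$ is exactly of the form $\mathfrak{C}_4^{\sqrt{|K|}d}$. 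Finally $N_{LS}^\alpha = N_K^\alpha$ by Lemma \ref{countrelatoin}.

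\textbf{Main obstacle.} The delicate point is the bookkeeping of the exponential curvature factors: both the Poincaré-type inequality of Lemma \ref{grandientes} and the packing bound of Lemma \ref{coveringandmin} carry factors $e^{c\sqrt{|K|}r}$, and one must verify that the final choice of $r$ leaves a clean bound of the claimed shape $\mathfrak{C}_4^{1+\sqrt{|K|}d}d^N\lambda^{N/2}$ rather than something with $\sqrt{|K|/\lambda}$ in the exponent. Splitting into the regimes $|K|\lesssim\lambda$ and $|K|\gtrsim\lambda$ (with the latter only affecting the constant, since then $\lambda^{N/2} \lesssim |K|^{N/2}$ and the bound is generous) resolves this, but it requires care to present cleanly.
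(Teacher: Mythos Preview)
Your core strategy---construct an odd continuous map $\Psi:E_\lambda\to\mathbb{R}^m$ via mean values over a packing of balls, show it misses zero using the Poincar\'e inequality of Lemma~\ref{grandientes} together with the overlap bound of Lemma~\ref{coveringandmin}/(ii), and conclude $\Dim_K(A)\leq m$---is exactly the paper's approach. Two remarks.

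\emph{Minor.} The paper averages over $B_{p_i}(2r)$ rather than $B_{p_i}(4r)$; Lemma~\ref{grandientes} with $R=2r$ then outputs integrals over $B_{p_i}(4r)$, to which Lemma~\ref{coveringandmin}/(ii) applies verbatim. Your choice of $4r$-balls forces an overlap bound for $8r$-balls, which is not literally the statement of Lemma~\ref{coveringandmin}/(ii); the extension is routine but should be stated.

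\emph{The real gap.} Your case split for the exponential does not close. In the regime $|K|>\lambda$ you take $r=c(N)|K|^{-1/2}$, obtaining $m\leq C(N)\,e^{(N-1)d\sqrt{|K|}}d^N|K|^{N/2}$. You then claim this is dominated by $\mathfrak{C}_4^{1+\sqrt{|K|}d}d^N\lambda^{N/2}$ because ``the bound is generous,'' but the inequality goes the wrong way: $|K|^{N/2}>\lambda^{N/2}$, and the ratio $(|K|/\lambda)^{N/2}$ can be made arbitrarily large while $\sqrt{|K|}\,d$ stays bounded (e.g.\ $|K|=1$, $d=1$, $\lambda\to 0$). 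In that regime the theorem asserts $N_K^\alpha(\lambda)\leq 1$, which your $r$ does not deliver since $d/r=1/c(N)$ is large.

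The paper avoids any case analysis on $|K|$ versus $\lambda$ by a single observation: whenever the relevant $r$ satisfies $r\leq d$, one may replace $e^{C(N)\sqrt{|K|}r}$ by the larger $e^{C(N)\sqrt{|K|}d}$ \emph{before} solving for $r$. Concretely, set
\[
r_0:=\bigl(C(N)\,\lambda\, e^{C(N)\sqrt{|K|}d}\bigr)^{-1/2}.
\]
If $r_0\leq d$, then $C(N)r_0^2 e^{C(N)\sqrt{|K|}r_0}\lambda\leq C(N)r_0^2 e^{C(N)\sqrt{|K|}d}\lambda=1$ holds automatically, and $(d/r_0)^N=d^N\bigl(C(N)\lambda\bigr)^{N/2}e^{C(N)N\sqrt{|K|}d/2}$ combines with the packing factor $e^{(N-1)d\sqrt{|K|}}$ into the claimed form. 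If $r_0>d$, use $r_*=d$ instead: the zero-avoidance condition still holds (since $r_0>d$ means precisely $\lambda<(C(N)d^2 e^{C(N)\sqrt{|K|}d})^{-1}$), and a complete $d$-package has $m=1$ because any two points of $M$ lie at distance $\leq d<2d$, so any two $d$-balls intersect. This yields $N_K^\alpha(\lambda)\leq 1$ with no further work.
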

\begin{proof} Given $\lambda>0$, set
$E_\lambda:=\{u\in \mathcal {S}:\,E(u)<\lambda\}$.
For any $r>0$, let $\{p_i\}_{i=1}^m$ be a complete $r$-package. According to Lemma \ref{propertiesDirdom}, $\{B_i:=B_{p_i}(2r)\}_{i=1}^m$ is a covering of $M$. We define a linear, continuous and odd map $\Phi_{\lambda,r}:\mathscr{X}\rightarrow \mathbb{R}^m$  by
\begin{align*}
u\mapsto \left( \frac{1}{\mathfrak{m}(B_1)}\ds\int_{B_1}u d\mathfrak{m},\ldots,   \frac{1}{\mathfrak{m}(B_m)}\ds\int_{B_m}u d\mathfrak{m}      \right).
\end{align*}

We claim that $0\notin\Phi_{\lambda,r}(E_\lambda)$ provided that $r>0$ satisfies
\[
\lambda\leq \left[ 2^{N+4}\cdot 12^N\cdot r^2 e^{14(N-1)\sqrt{|K|}r}\right]^{-1}.\tag{5.10}\label{2.6}
\]
 By contradiction, assume that there exist $r>0$ and $u\in E_\lambda$ such that (\ref{2.6}) holds and $\Phi_{\lambda,r}(u)=0$. Hence,
\[
u_{B_i}:= \frac{1}{\mathfrak{m}(B_i)}\ds\int_{B_i}u d\mathfrak{m}=0,
\]
thus $|u-u_{B_i}|^2=u^2$ for all $ i\in\{1,\ldots,m\}.$
 Lemmas \ref{grandientes} and \ref{coveringandmin}/(ii) yield
\begin{align*}
\ds\int_Mu^2d\mathfrak{m}\leq& \sum_{i=1}^m\ds\int_{B_i}u^2 d\mathfrak{m}\leq 2^{N+2}\cdot (2r)^2 e^{2 {(N-1)\sqrt{|K|}r} }\sum_{i=1}^m\ds\int_{B_{p_i}(4r)}F^{*2}(du)d\mathfrak{m}\\
\leq &2^{N+4}\cdot 12^N\cdot r^2 e^{14(N-1)\sqrt{|K|}r}\ds\int_{M}F^{*2}(du)d\mathfrak{m}<2^{N+4}\cdot 12^N\cdot r^2 e^{14(N-1)\sqrt{|K|}r}\lambda\ds\int_M u^2 d\mathfrak{m},
\end{align*}
which implies
\[
\lambda>\left[2^{N+4}\cdot 12^N\cdot r^2 e^{14(N-1)\sqrt{|K|}r}\right]^{-1},
\]
contradicting (\ref{2.6}). Thus for every $\lambda,r>0$ verifying (\ref{2.6}), $\Phi_{\lambda,r}: E_\lambda\rightarrow \mathbb{R}^{m}\backslash\{0\}$ is continuous and odd.

Fix $\alpha\in\{0,1\}$. For every $A\in \mathscr{D}^\alpha$ with $A\subset E_\lambda$, by the map $\Phi_{\lambda,r}$ constructed above and   Definition \ref{Krasnodef}, we have
$
\Dim_K(A)\leq m$ which implies \[N^\alpha_K(\lambda)\leq   m.\tag{5.11}\label{3.4CC*}
\]

Let us choose
\[
r_0:=\left(\lambda\cdot2^{N+4}\cdot 12^N\cdot  e^{14(N-1)\sqrt{|K|}d}   \right)^{-\frac12}.\tag{5.12}\label{2.7}
\]

\textbf{Case 1.} If $r_0\leq d$, then
\begin{align*}
\lambda=\left[{  2^{N+4}\cdot 12^N\cdot r_0^2   e^{14(N-1)\sqrt{|K|}d}}\right]^{-1}\leq \left[{  2^{N+4}\cdot 12^N\cdot r_0^2  e^{14(N-1)\sqrt{|K|}r_0}}\right]^{-1},
\end{align*}
which together with (\ref{2.6}) implies  $0\notin \Phi_{\lambda,r_0}(E_\lambda)$. Note that $\Phi_{\lambda,r_0}$ is  constructed by  a  complete $r_0$-package.  Lemma \ref{coveringandmin}/(i) and relations (\ref{3.4CC*}) and (\ref{2.7}) furnish
\begin{align*}
N^\alpha_K(\lambda)\leq m\leq  e^{(N-1)d\sqrt{|K|}}\left( \frac{d}{r_0} \right)^N\leq \mathfrak{C}_4^{1+d\sqrt{|K|}}d^N\lambda^{\frac{N}2},\tag{5.13}\label{3.6CC*}
\end{align*}
where
\[
 \mathfrak{C}_4:=\max\left\{2^{\frac{N(N+4)}{2}}\cdot 12^{\frac{N^2}{2}},\, e^{(7N+1)(N-1)}\right\}.
\]

\textbf{Case 2.}
If $r_0>d=:r_*$, then
\[
B_1(r_0)=M=B_1(r_*),\ 1=m=\Ca_M(r_0)=\Ca_M(r_*).
\]
Now it follows from (\ref{2.7}) that
\[
\lambda<\left[{  2^{N+4}\cdot 12^N\cdot r_*^2  e^{14(N-1)\sqrt{|K|}r_*}}\right]^{-1}.
\]
Now we consider $r_*$ instead of $r_0$, since the complete $r_0$-package coincides with the complete $r_*$-package. The same argument yields $0\notin\Phi_{\lambda,r_*}(E_\lambda)$ and hence, $N^\alpha_K(\lambda)\leq m=1$.
\end{proof}

\begin{theorem}\label{thesecondtheorem}
Given  $N\in [n,+\infty)$, $K\in \mathbb R$ and $d>0$, let $(M,F,d\mathfrak{m})$ be an $n$-dimensional closed {\rm FMMM} with
\[
\mathbf{Ric}_N\geq (N-1)K,\ \diam(M)=d.
\]
Then there exists a  constant $\mathfrak{C}_5=\mathfrak{C}_5(N)>0$ depending only on $N$ such that for any $\alpha\in\{0,1\}$,
\[
\overline{\lambda}^{LS,\alpha}_{k}=\overline{\lambda}^{K,\alpha}_{k}\geq \frac{\mathfrak{C}_5^{1+d\sqrt{|K|}}}{d^2}\, k^{\frac{2}N},\ \forall\,k\in \mathbb{N}^+,\tag{5.14}\label{lseigenestimate}
\]
where $\overline{\lambda}^{LS,\alpha}_{k}$ $($resp., $\overline{\lambda}^{K,\alpha}_{k}$$)$ denotes the $k^{th}$ positive eigenvalue of $(\mathscr{C}^\alpha,\Dim_{LS})$ $($resp., $(\mathscr{D}^\alpha,\Dim_{K})$$).$
Moreover, if $N\in \mathbb N,$ there is a constant $\mathfrak{C}_6=\mathfrak{C}_6(N)>0$ depending only on $N$ such that
\[
m^\alpha_{LS}(k)=m^\alpha_K(k)\leq    \mathfrak{C}_6^{1+d\sqrt{|K|}}\left[ \left(d\sqrt{|K|} \right)^N+k^N \right], \ \forall\,k\in \mathbb{N}^+.\tag{5.15}\label{lsmutiestimate}
\]
\end{theorem}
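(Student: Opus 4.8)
The plan is to deduce both displays from the counting-function estimate of Theorem \ref{lsestimate}, using the bridge provided by Lemma \ref{basic} and Lemma \ref{countrelatoin}. First I would dispose of the equalities $\overline{\lambda}^{LS,\alpha}_k=\overline{\lambda}^{K,\alpha}_k$ and $m^\alpha_{LS}(k)=m^\alpha_K(k)$: these are immediate from Theorem \ref{therealconnection}, so it suffices to argue with the Krasnoselskii dimension pair $(\mathscr{D}^\alpha,\Dim_K)$ and its counting function $N^\alpha_K$. For the eigenvalue bound \eqref{lseigenestimate}, recall that Theorem \ref{lsestimate} gives $N^\alpha_K(\lambda)\le \max\{\mathfrak{C}_4^{1+\sqrt{|K|}d}d^N\lambda^{N/2},1\}$ for all $\lambda>0$. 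Fix $k\in\mathbb{N}^+$ and set $\lambda:=\overline{\lambda}^{K,\alpha}_k$, the $k^{\rm th}$ \emph{positive} eigenvalue, i.e.\ the $(k+1)^{\rm st}$ eigenvalue in the sense of Definition \ref{spect2}. By the min-max principle and the definition of the counting function, for every $\epsilon>0$ one has $N^\alpha_K(\lambda+\epsilon)\ge k+1$ (there is $A\in\mathscr{D}^\alpha_{k+1}$ with $\sup_A E<\lambda+\epsilon$, using that $\lambda_{k+1}<\lambda+\epsilon$), hence letting $\epsilon\downarrow0$ and using the upper bound from Theorem \ref{lsestimate}, $k+1\le \mathfrak{C}_4^{1+\sqrt{|K|}d}d^N(\lambda+\epsilon)^{N/2}$; a limiting argument (or directly working with $\lambda_{k+1}$ and a right-continuity observation) yields $k\le \mathfrak{C}_4^{1+\sqrt{|K|}d}d^N\lambda^{N/2}$, and solving for $\lambda$ gives $\overline{\lambda}^{K,\alpha}_k\ge \mathfrak{C}_5^{1+d\sqrt{|K|}}d^{-2}k^{2/N}$ with $\mathfrak{C}_5:=\mathfrak{C}_4^{-2/N}$. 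The degenerate branch ($N^\alpha_K\le 1$, i.e.\ $k=1$) is handled separately and contributes only to the choice of constant.

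For the multiplicity bound \eqref{lsmutiestimate}, I would combine the third line of Lemma \ref{countrelatoin}, namely $m^\alpha_{LS}(k)\le N^\alpha_{LS}(\lambda^{LS,\alpha}_k+\epsilon)=N^\alpha_K(\lambda^{LS,\alpha}_k+\epsilon)$ for all $\epsilon>0$, with the upper bound of Theorem \ref{lsestimate} and the lower bound \eqref{lseigenestimate} just proved. Indeed, evaluating Theorem \ref{lsestimate} at $\lambda=\lambda^{LS,\alpha}_k+\epsilon$ gives
\[
m^\alpha_{LS}(k)\le \max\!\left\{\mathfrak{C}_4^{1+\sqrt{|K|}d}d^N(\lambda^{LS,\alpha}_k+\epsilon)^{N/2},\,1\right\},
\]
and since $\lambda^{LS,\alpha}_k=\overline{\lambda}^{LS,\alpha}_{k-1}$ is at most of order $\mathfrak{C}^{1+d\sqrt{|K|}}d^{-2}k^{2/N}$ by Theorem \ref{lsestimate} itself (the upper Cheng-type estimate is not needed here; instead one uses that $N^\alpha_K(\lambda)\ge k$ forces $\lambda^{LS,\alpha}_k\le$ the value at which the bound $\mathfrak{C}_4^{1+\sqrt{|K|}d}d^N\lambda^{N/2}$ reaches $k$, i.e.\ $\lambda^{LS,\alpha}_k\le \mathfrak{C}_4^{-2/N}{}^{\,-1}\dots$), one substitutes and lets $\epsilon\downarrow0$. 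After simplification $d^N\cdot(\,d^{-2}k^{2/N}\,)^{N/2}=k$, so the product collapses to a constant times $k$, plus the curvature term $(d\sqrt{|K|})^N$ coming from the exponent $1+d\sqrt{|K|}$ being absorbed; tracking the constants gives $m^\alpha_{LS}(k)\le \mathfrak{C}_6^{1+d\sqrt{|K|}}[(d\sqrt{|K|})^N+k^N]$ for a suitable $\mathfrak{C}_6=\mathfrak{C}_6(N)$. The restriction $N\in\mathbb{N}$ enters only to keep the exponents integral when bounding $(\lambda^{LS,\alpha}_k)^{N/2}$ crudely by a polynomial in $k$ and $d\sqrt{|K|}$.

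The main obstacle I anticipate is \emph{bookkeeping of the constants}, not any conceptual difficulty: one must carefully propagate the base $\mathfrak{C}_4$ (and the exponent $1+\sqrt{|K|}d$) through the inversion $k\le\mathfrak{C}_4^{1+\sqrt{|K|}d}d^N\lambda^{N/2}\Rightarrow\lambda\ge(\cdots)k^{2/N}$, making sure the new base $\mathfrak{C}_5$ still depends only on $N$ and that the exponent stays of the form $1+d\sqrt{|K|}$ (this works because raising $\mathfrak{C}_4^{1+\sqrt{|K|}d}$ to the power $-2/N$ preserves the shape of the exponent). A secondary subtlety is the passage from $\lambda_{k+1}<\lambda+\epsilon$ to $N^\alpha_K(\lambda+\epsilon)\ge k+1$: one must invoke Lemma \ref{basic}(i) or argue directly from Definition \ref{spect2} that for any $\lambda'>\lambda_{k+1}$ there is an admissible set of $\Dim\ge k+1$ on which $E<\lambda'$, which is exactly the content of the min-max principle (Theorem \ref{minmaxth}) combined with the definition of $\lambda_{k+1}$ as a supremum. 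Once these are in place, \eqref{lseigenestimate} and \eqref{lsmutiestimate} follow by substitution.
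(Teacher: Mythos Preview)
Your plan for \eqref{lseigenestimate} is correct and matches the paper exactly: combine Theorem~\ref{lsestimate} with Lemma~\ref{basic}(i) (your argument via $N^\alpha_K(\lambda+\epsilon)\ge k+1$ is just an unpacking of that lemma), and invert the bound $k\le \mathfrak{C}_4^{1+\sqrt{|K|}d}d^N\lambda^{N/2}$.

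Your plan for \eqref{lsmutiestimate}, however, contains a genuine error. After writing
\[
m^\alpha_{LS}(k)\le N^\alpha_K(\lambda^{LS,\alpha}_k+\epsilon)\le \mathfrak{C}_4^{1+\sqrt{|K|}d}d^N(\lambda^{LS,\alpha}_k+\epsilon)^{N/2},
\]
you need an \emph{upper} bound on $\lambda^{LS,\alpha}_k$. You claim this comes ``from Theorem~\ref{lsestimate} itself'' and that ``the upper Cheng-type estimate is not needed,'' but Theorem~\ref{lsestimate} bounds $N^\alpha_K(\lambda)$ from \emph{above}, which yields only a \emph{lower} bound on the eigenvalues (this is precisely \eqref{lseigenestimate}). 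Your parenthetical reasoning that ``$N^\alpha_K(\lambda)\ge k$ forces $\lambda^{LS,\alpha}_k\le\ldots$'' has the inequality in the right direction but the hypothesis $N^\alpha_K(\lambda)\ge k$ is not supplied by Theorem~\ref{lsestimate}; you have no control from that side. Consequently the substitution you describe, ``$d^N\cdot(d^{-2}k^{2/N})^{N/2}=k$,'' is plugging in a lower bound where an upper bound is needed, and the resulting estimate is vacuous.

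The paper closes this gap with the Cheng-type upper bound, Theorem~\ref{Chees}: $\lambda^{LS,\alpha}_k\le \tfrac{(N-1)^2}{4}|K|+C(N)(k/d)^2$. Substituting this and using $(a+b)^{N/2}\le C'(N)(a^{N/2}+b^{N/2})$ gives
\[
d^N\Bigl(\tfrac{(N-1)^2}{4}|K|+C(N)(k/d)^2\Bigr)^{N/2}\le C''(N)\bigl((d\sqrt{|K|})^N+k^N\bigr),
\]
which is exactly the shape of \eqref{lsmutiestimate}. This also explains the restriction $N\in\mathbb{N}$: it is inherited from Theorem~\ref{Chees} (which requires $N\in[n,\infty)\cap\mathbb{N}$), not from any need to keep exponents integral in a polynomial bound.
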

\begin{proof}
On one hand, (\ref{lseigenestimate}) follows by Theorem \ref{lsestimate} and Lemma \ref{basic}/(i). On the other hand, (\ref{lsmutiestimate}) follows by Lemma \ref{countrelatoin}, Theorem \ref{lsestimate} and Theorem \ref{Chees}, respectively.
\end{proof}

\section{Application:  eigenvalues of weighted Riemannian manifolds}\label{section6}
Let $(M,g, e^{-f}d\vol_g)$ be a closed weighted Riemannian manifold, that is, $(M,g)$ is a closed Riemannian manifold and $f\in C^\infty(M)$ is a smooth function. The {\it Bakry-\'Emery Laplacian} is
\[
\Delta_fu:=\Delta_g u-g(\nabla f, \nabla u),\ \forall u\in C^\infty(M).
\]
According to Setti \cite{Se}, the spectrum  $\{\lambda^{\Delta_f}_k\}_{k=1}^\infty$ of $\Delta_f$ is purely discrete and satisfies
\[
0=\lambda^{\Delta_f}_1<\lambda^{\Delta_f}_2\leq \cdots \leq \lambda^{\Delta_f}_k\leq\ldots \nearrow +\infty,
\]
while the corresponding eigenfunctions are smooth and form a basis of $L^2(M)$.

The weighted Riemannian manifold $(M,g, e^{-f}d\vol_g)$ can be viewed as a compact FMMM $(M,F,d\mathfrak{m})$, where $F:=\sqrt{g}$ and $d\mathfrak{m}:=e^{-f}d\vol_g$. In particular, the gradient $\nabla$ of $(M,F,d\mathfrak{m})$ coincides the one of $(M,g, e^{-f}d\vol_g)$, whereas
the Laplacian $\Delta$ of $(M,F,d\mathfrak{m})$ is exactly the  Bakry-\'Emery Laplacian $\Delta_f$. Therefore, $\lambda^{\Delta_f}_k$ is a critical value of the canonical energy functional
\[
E(u)=\frac{\ds\int_MF^{*2}(du)d\mathfrak{m}}{\ds\int_Mu^2 d\mathfrak{m}}=\frac{\ds\int_M g(\nabla u, \nabla u) e^{-f}d\vol_g}{\ds\int_M u^2 e^{-f}d\vol_g}.
\]
This fact yields the following min-max principle.
\begin{theorem}\label{Countminmxatheorem} Given $k\in \mathbb{N}^+$,  we have
\[
\lambda^{\Delta_f}_k=\min_{V\in \mathscr{H}_k}\max_{u\in V\backslash\{0\}}E(u),
\]
where $\mathscr{H}_k=\{V\subset \mathscr{X}:\, V\text{ is a linear subspace with }\Dim_C(V)=k\}$.
\end{theorem}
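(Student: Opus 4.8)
The plan is to reduce the statement to the classical Courant min-max principle for a self-adjoint operator on a Hilbert space, applied to the Bakry-\'Emery setting. First I would fix the relevant Hilbert space structure: on $L^2(M, e^{-f}d\vol_g)$ with inner product $(u,v)_f := \int_M u v\, e^{-f}d\vol_g$, the operator $-\Delta_f$ is self-adjoint and non-negative, since the Dirichlet form associated with it is exactly $u\mapsto \int_M g(\nabla u,\nabla u)\, e^{-f}d\vol_g = \int_M g(\nabla u,\nabla u)\,e^{-f}d\vol_g$, which is the numerator of $E$. Indeed, for $u,v\in C^\infty(M)$ one has the weighted integration by parts identity $\int_M (\Delta_f u) v\, e^{-f}d\vol_g = -\int_M g(\nabla u,\nabla v)\, e^{-f}d\vol_g$, which is the Finsler distributional Laplacian identity from Section \ref{section2} specialized to $F=\sqrt g$, $d\mathfrak{m}=e^{-f}d\vol_g$. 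So $-\Delta_f$ is precisely the operator attached to the quadratic form appearing in the Rayleigh quotient $E$.

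Next I would invoke the spectral information recalled just before the statement, quoting Setti \cite{Se}: the spectrum $\{\lambda^{\Delta_f}_k\}_{k=1}^\infty$ is discrete, each eigenvalue has finite multiplicity, the eigenvalues accumulate only at $+\infty$, and the eigenfunctions $\{e_k\}_{k=1}^\infty$ (chosen $(\cdot,\cdot)_f$-orthonormal) form a Hilbert basis of $L^2(M,e^{-f}d\vol_g)$. With these facts in hand, the classical Courant-Fischer min-max theorem for the quadratic form $Q(u)=\int_M g(\nabla u,\nabla u)\,e^{-f}d\vol_g$ on the form domain $\mathscr{X}=H^1(M)$ gives
\[
\lambda^{\Delta_f}_k=\min_{\substack{V\subset \mathscr{X}\\ \Dim_C(V)=k}}\ \max_{u\in V\setminus\{0\}}\frac{Q(u)}{(u,u)_f}=\min_{V\in \mathscr{H}_k}\ \max_{u\in V\setminus\{0\}}E(u),
\]
where I only need that the topology of $(\mathscr{X},\|\cdot\|_{\Ho})$ coincides with that of $H^1(M)$ (true since $M$ is compact, as noted in Section \ref{section2}) so that the form is closed and the abstract min-max theorem applies. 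The ``$\leq$'' direction is the test-subspace estimate using $V=\mathrm{Span}\{e_1,\dots,e_k\}$, on which $E$ attains its maximum $\lambda^{\Delta_f}_k$ by orthogonality; the ``$\geq$'' direction follows because any $k$-dimensional $V$ meets the $(\cdot,\cdot)_f$-orthogonal complement of $\mathrm{Span}\{e_1,\dots,e_{k-1}\}$ nontrivially, and on that intersection $E(u)\geq \lambda^{\Delta_f}_k$ by the spectral expansion $Q(u)=\sum_i \lambda^{\Delta_f}_i (u,e_i)_f^2$.

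I do not expect a serious obstacle here; the only point requiring a little care is the identification of the weak (distributional) Bakry-\'Emery Laplacian with the self-adjoint operator generated by the closed form $Q$, i.e.\ checking that the form domain is exactly $\mathscr{X}=H^1(M)$ and that the weak eigenvalue equation $-\Delta u=\lambda u$ from Theorem \ref{properties1}/(iii) coincides with the operator eigenvalue equation. This is routine on a closed manifold by elliptic regularity (the eigenfunctions are smooth, cf.\ the remark after Lemma \ref{criciequation} and Setti \cite{Se}), so the proof is essentially a direct citation of the classical Courant principle once the dictionary $(M,g,e^{-f}d\vol_g)\leftrightarrow(M,\sqrt g, e^{-f}d\vol_g)$ of Section \ref{section6} is set up.
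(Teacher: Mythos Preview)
Your proposal is correct and matches the paper's approach: the paper does not give a detailed proof of this theorem at all, but simply states that the min-max principle follows from the identification of $(M,g,e^{-f}d\vol_g)$ with the FMMM $(M,\sqrt{g},e^{-f}d\vol_g)$ together with the linearity and self-adjointness of $\Delta_f$ (via Setti \cite{Se}). Your write-up is precisely the standard Courant--Fischer argument that unpacks this remark.
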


We now show the following result.
\begin{theorem}\label{wegitedLS}Let $(M,g, e^{-f}d\vol_g)$ be a closed weighted Riemannian manifold.
The Lusternik-Schnirelmann spectrum is exactly the spectrum of the Bakry-\'Emery Laplacian.
\end{theorem}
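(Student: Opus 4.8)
The plan is to establish the two inequalities $\lambda_k^{LS,\alpha}\le\lambda_k^{\Delta_f}$ and $\lambda_k^{\Delta_f}\le\lambda_k^{LS,\alpha}$ for every $k\in\mathbb{N}^+$ and $\alpha\in\{1,2\}$, thereby identifying the Lusternik--Schnirelmann spectrum with $\{\lambda_k^{\Delta_f}\}_{k=1}^\infty$. The structural fact that makes everything run, already recorded just before the statement, is that $(M,g,e^{-f}d\vol_g)$ is the FMMM $(M,F,d\mathfrak{m})$ with $F=\sqrt{g}$ and $d\mathfrak{m}=e^{-f}d\vol_g$, that its Finsler--Laplacian is the \emph{linear} operator $\Delta_f$, and that the canonical energy functional $E$ of this FMMM coincides with the Rayleigh quotient of $\Delta_f$; moreover $\Lambda_F=1$, so $\mathscr{X}=H^1(M)$ and the whole apparatus of Section \ref{section3} is available.

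First I would prove $\lambda_k^{LS,\alpha}\le\lambda_k^{\Delta_f}$, exactly as in Theorem \ref{properties1}/(ii) but with Courant's principle replaced by Theorem \ref{Countminmxatheorem}. Given any $k$-dimensional linear subspace $V\subset\mathscr{X}$, property (D3) of Definition \ref{defdim1} gives $V\cap\mathcal{S}\in\mathscr{C}^{LS,\alpha}_k$, so the min--max principle (Theorem \ref{minmaxth}) yields $\lambda_k^{LS,\alpha}\le\sup_{u\in V\cap\mathcal{S}}E(u)=\max_{u\in V\setminus\{0\}}E(u)$; taking the infimum over all $V\in\mathscr{H}_k$ and invoking Theorem \ref{Countminmxatheorem} gives $\lambda_k^{LS,\alpha}\le\lambda_k^{\Delta_f}$.

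For the reverse inequality I would repeat the proof of Theorem \ref{compareRiemannen} almost verbatim, with $d\vol_g$ replaced by $e^{-f}d\vol_g$ and $\Delta$ by $\Delta_f$. By Theorem \ref{properties1}/(iii) each $\lambda_j^{LS,\alpha}$ carries an eigenfunction $u_j$ with $E(u_j)=\lambda_j^{LS,\alpha}$ and $-\Delta_f u_j=\lambda_j^{LS,\alpha}u_j$ in the weak sense; since $\Delta_f$ is linear and self-adjoint in $L^2(M,e^{-f}d\vol_g)$ with purely discrete spectrum of finite multiplicity (Setti \cite{Se}), eigenfunctions attached to distinct eigenvalues are orthogonal both in $L^2(e^{-f}d\vol_g)$ and with respect to the weighted Dirichlet form $\int_M g(\nabla\cdot,\nabla\cdot)\,e^{-f}d\vol_g$, while Lemma \ref{muti2} supplies $l$ linearly independent eigenfunctions whenever the eigenvalue has multiplicity $l$. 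This produces $k$ mutually orthogonal eigenfunctions $u_1,\dots,u_k$ with $E(u_j)=\lambda_j^{LS,\alpha}$; putting $V_k:=\mathrm{Span}\{u_1,\dots,u_k\}\in\mathscr{H}_k$, the same diagonal computation as in Theorem \ref{compareRiemannen} gives $\sup_{u\in V_k\setminus\{0\}}E(u)=\lambda_k^{LS,\alpha}$, whence Theorem \ref{Countminmxatheorem} yields $\lambda_k^{\Delta_f}\le\lambda_k^{LS,\alpha}$, and the proof is complete.

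I do not expect a serious obstacle: the point is precisely that a closed weighted Riemannian manifold is an FMMM with uniformity constant $1$ whose Finsler--Laplacian is linear, so all the spectral bookkeeping of the Riemannian case (discreteness and finite multiplicity of the spectrum, orthogonality of eigenfunctions, the Courant-type minimax) carries over once Courant's principle is upgraded to Theorem \ref{Countminmxatheorem}. The only points needing a little care are the identification of weak solutions of $-\Delta u=\lambda u$ for this FMMM with the classical eigenfunctions of $\Delta_f$ (which follows from the Ge--Shen regularity noted after Lemma \ref{criciequation} together with the explicit form of $\Delta$ when $F=\sqrt{g}$) and the routine bookkeeping required to invoke Lemma \ref{muti2} in the case of higher multiplicity.
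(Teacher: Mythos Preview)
Your proposal is correct and follows essentially the same approach as the paper: both directions are proved by constructing a $k$-dimensional test space and invoking the Courant-type minimax principle (Theorem \ref{Countminmxatheorem}), with Lemma \ref{muti2} handling the multiplicity bookkeeping for the inequality $\lambda_k^{\Delta_f}\le\lambda_k^{LS,\alpha}$. The only cosmetic difference is that for $\lambda_k^{LS,\alpha}\le\lambda_k^{\Delta_f}$ the paper fixes the specific span of the first $k$ eigenfunctions of $\Delta_f$ and computes $E$ on it directly, whereas you argue over all $V\in\mathscr{H}_k$ and take the infimum; the two are equivalent.
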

\begin{proof}
Fix $\alpha\in\{1,2\}$, let $(\mathscr{C}^\alpha,\Dim_{LS})$ be defined as in Definition \ref{defls2}.
 We are going to show $ \lambda^{LS,\alpha}_k=\lambda_k^{\Delta_f}$ for all $k\in \mathbb{N}^+$.

We first claim that $\lambda^{LS,\alpha}_k\leq \lambda^{\Delta_f}_k$. In fact, since $\Delta_f$ is linear and self-adjoint w.r.t $(\cdot,\cdot)_{L^2}$ (see (\ref{innernormL2})),
  we can suppose that the eigenfunctions $\{u_i\}_{i=1}^k$ corresponding to $\{\lambda^{\Delta_f}_i\}_{i=1}^k$ are orthonormal  w.r.t $(\cdot,\cdot)_{L^2}$. Set $V_k:=\text{Span}\{u_1,\ldots,u_k\}$. Thus, for $u\in \mathcal {S}\cap V_k$, we have $u=\sum_{i=1}^k a_i u_i$, where $\sum_{i=1}^k a^2_i=1$.
Hence,
\begin{align*}
E(u)=-\frac{\ds\int_M u \Delta u d\mathfrak{m}}{\ds\int_Mu^2 d\mathfrak{m}}=\sum_{i=1}^k\lambda^{\Delta_f}_i a_i^2\leq \lambda^{\Delta_f}_k\Longrightarrow \sup_{u\in \mathcal {S}\cap V_k} E(u)=\lambda^{\Delta_f}_k.
\end{align*}
Since $\mathcal {S}\cap V_k\in  \mathscr{C}^{LS,\alpha}_k$ (see Proposition \ref{lsdimp}), we have
\[
\lambda^{LS,\alpha}_{k}=\inf_{A\in \mathscr{C}^{LS,\alpha}_k}\sup_{u\in A}E(u)\leq \sup_{u\in \mathcal {S}\cap V_k} E(u)=\lambda^{\Delta_f}_k.
\]

We now show  $\lambda^{\Delta_f}_k\leq \lambda^{LS,\alpha}_k$. Let $\{\phi_j\}_{j=1}^k$ be the eigenfunctions corresponding to $\{\lambda^{LS,\alpha}_j\}_{j=1}^k$. By suitable modification to the proof of Theorem \ref{compareRiemannen}, we can show that $\{\phi_j\}_{j=1}^k$ satisfy
\[
  \ds\int_M \phi_i\phi_j d \mathfrak{m}=\delta_{ij}, \ \ds\int_M g(\nabla \phi_i, \nabla \phi_j) d \mathfrak{m}=\lambda^{LS,\alpha}_i\delta_{ij}.
  \]
Let $W_k:=\text{Span}\{\phi_1,\ldots,\phi_k\}$. Then Theorem \ref{Countminmxatheorem} together with the same argument as above implies
\[
\lambda^{\Delta_f}_k\leq \sup_{u\in W_k\backslash\{0\}}E(u)=\sup_{u\in W_k\cap \mathcal {S}}E(u)=\lambda^{LS,\alpha}_k,
\]
which concludes the proof.
\end{proof}


A direct calculation yields that the weighted Ricci curvature $\mathbf{Ric}_N$ is exaclty
the  {\it $(N-n)$-Bakry-\'Emery Ricci tensor} of $(M,g, e^{-f}d\vol_g)$, i.e.,
\[
\mathbf{Ric}_N=\mathbf{Ric}+\text{Hess}(f)-\frac{1}{N-n}df\otimes df.
\]
This fact together with
Theorems \ref{wegitedLS},  \ref{Chees} and  \ref{thesecondtheorem} yields the following result.
\begin{theorem}Given $N\in (n,+\infty)\cap \mathbb N$, $K\in \mathbb R$ and $d>0$,
let $(M,g, e^{-f}d\vol_g)$ be an $n$-dimensional closed weighted Riemannian manifold with
\[
\mathbf{Ric}_{N}\geq (N-1)K,\ \diam(M)= d.
\]
Then there exist two positive constants $C_1=C_1(N)$ and $C_2=C_2(N)$ such that
\[
 \frac{C_1^{1+d\sqrt{|K|}}}{d^2}\, k^{\frac{2}N}\leq \overline{\lambda}^{\Delta_f}_k\leq \frac{(N-1)^2}{4}|K|+C_2\left(\frac{k}{d}\right)^2, \ \forall\,k\in \mathbb{N}^+.
\]
where $\overline{\lambda}^{\Delta_f}_k$ is the $k^{th}$ positive eigenvalue of $\Delta_f$.
\end{theorem}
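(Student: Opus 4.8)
The plan is to assemble the statement from three results already at our disposal; the proof is essentially bookkeeping. First I would pass to the Finsler picture exactly as described just above the statement: regard $(M,g,e^{-f}d\vol_g)$ as the closed {\rm FMMM} $(M,F,d\mathfrak{m})$ with $F=\sqrt{g}$ and $d\mathfrak{m}=e^{-f}d\vol_g$. For this FMMM the gradient is the Riemannian gradient, the Laplacian $\Delta$ is precisely the Bakry-\'Emery Laplacian $\Delta_f$, the Finsler distance coincides with the Riemannian one (so $\diam(M)=d$ also in the Finsler sense), and the weighted Ricci curvature equals the $(N-n)$-Bakry-\'Emery tensor $\mathbf{Ric}+\text{Hess}(f)-\frac{1}{N-n}df\otimes df$, hence $\mathbf{Ric}_N\geq (N-1)K$. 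Note also that $F$ is Riemannian, in particular reversible, and that $(M,F)$, being closed, is compact and convex.

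Next I would fix $\alpha\in\{1,2\}$ and combine Theorem \ref{wegitedLS} with the monotonicity statements. By Theorem \ref{wegitedLS} the $(\mathscr{C}^\alpha,\Dim_{LS})$-spectrum $\{\lambda_k^{LS,\alpha}\}$ of $(M,F,d\mathfrak{m})$ equals the Bakry-\'Emery spectrum $\{\lambda_k^{\Delta_f}\}$. Since $\partial M=\emptyset$, Theorem \ref{th1.1} (via Theorems \ref{properties1} and \ref{properties2}) gives $0=\lambda_1^{LS,\alpha}<\lambda_2^{LS,\alpha}\le\cdots$, consistent with the recalled behaviour $0=\lambda_1^{\Delta_f}<\lambda_2^{\Delta_f}\le\cdots$ of the Bakry-\'Emery spectrum; thus the $k$-th positive eigenvalues match, $\overline{\lambda}_k^{\Delta_f}=\overline{\lambda}_k^{LS,\alpha}=\lambda_{k+1}^{LS,\alpha}$ for all $k\in\mathbb{N}^+$.

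Finally I would invoke the two one-sided estimates already proven for this faithful dimension pair. For the lower bound, Theorem \ref{thesecondtheorem} (applicable since $\mathbf{Ric}_N\geq(N-1)K$, $\diam(M)=d$ and $N\in\mathbb{N}$) yields a constant $\mathfrak{C}_5=\mathfrak{C}_5(N)>0$ with $\overline{\lambda}_k^{LS,\alpha}\geq \mathfrak{C}_5^{1+d\sqrt{|K|}}d^{-2}k^{2/N}$, and one sets $C_1:=\mathfrak{C}_5$. For the upper bound, Theorem \ref{Chees}, applied to the index $k+1$, gives a constant $C=C(N)>0$ with
\[
\overline{\lambda}_k^{\Delta_f}=\lambda_{k+1}^{LS,\alpha}\leq \frac{(N-1)^2}{4}|K|+C(N)\Big(\frac{k+1}{d}\Big)^2\leq \frac{(N-1)^2}{4}|K|+C_2\Big(\frac{k}{d}\Big)^2,
\]
with $C_2:=4C(N)$ absorbing the elementary bound $(k+1)^2\le 4k^2$ valid for $k\in\mathbb{N}^+$; combining the two displays finishes the proof. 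There is no genuine analytic difficulty here, since all the substantive work sits in Theorems \ref{wegitedLS}, \ref{Chees} and \ref{thesecondtheorem}; the only points needing a moment's care are the translation of the hypotheses (diameter, reversibility, and the identity $\mathbf{Ric}_N=(N-n)$-Bakry-\'Emery tensor), the alignment of the positive-eigenvalue indexing on the two sides (both spectra begin with the simple eigenvalue $0$, so $\overline{\lambda}_k$ is $\lambda_{k+1}$ throughout), and the harmless constant adjustment from $(k+1)^2$ to a multiple of $k^2$.
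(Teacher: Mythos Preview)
Your proposal is correct and follows essentially the same approach as the paper, which simply cites Theorems \ref{wegitedLS}, \ref{Chees} and \ref{thesecondtheorem} in one line. You have carefully spelled out the details the paper leaves implicit: the identification of the two spectra via Theorem \ref{wegitedLS}, the index shift $\overline{\lambda}_k=\lambda_{k+1}$, and the harmless adjustment of the constant to pass from $(k+1)^2$ to $k^2$.
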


\appendix
\section{}\label{section7}
\subsection{Properties of $\mathcal {S}$ and $\mathbb{P}(\mathscr{X})$}\label{B-Fsec}
In this section, we investigate $\mathcal {S}$ and $\mathbb{P}(\mathscr{X})$. First,
 we recall the definition of Banach-Finsler manifolds in the sense of Palais, see  Palais \cite[Definition 2.10, Definition 3.5]{PS} and Struwe \cite[p.\,77]{S}.
\begin{definition}[\cite{PS,S}]\label{PSFinslerde} {\it Given $r\geq1$,
let $X$ be a $C^r$-Banach manifold modeled on a Banach space $V$,  and let $\|\cdot\|:TX\rightarrow \mathbb{R}$ be a function. $(X,\|\cdot\|)$ is called a $C^r$-{\rm Banach-Finsler manifold}
  if for each $k>1$ and each $x_0\in X$, there exists a bundle chart $\varphi:O\times V\approx TX|_O$ for $TX$ with $O$ a neighborhood of $x_0\in X$ such that $\|\cdot\|\circ\varphi$ satisfies:

\begin{itemize}
	\item[(i)] for each $x\in O$, the function $v\in V\mapsto \|\varphi(x,v)\|$ is an admissible norm for $V;$
	\item[(ii)] $\frac{1}{k}\|\varphi(x,v)\|\leq \|\varphi(x_0,v)\|\leq k\|\varphi(x,v)\|$ for all $x\in O$ and $v\in V$.
\end{itemize}
\noindent A Banach-Finsler manifold $(X,\|\cdot\|)$ is said to be {\rm complete} if each component of $X$ is complete under the
 metric induced by $\|\cdot\|$.}
\end{definition}

We also  need the following result, see  Palais \cite[Theorem 8, Corollary, p.3]{PS2}, \cite[Theorem 3.6, Theorem 5.9]{PS} and Zeidler \cite[Theorem 73.C, Example 73.41]{Z} for the proofs.
\begin{lemma}[{\cite{PS2,PS,Z}}]\label{Banachco} The following properties  hold.

\begin{itemize}
	\item[(i)] Let $X$ be a Banach space and $f:X\rightarrow \mathbb{R}$ be a $C^k$-function, $k\geq 1$. If $Df(x)\neq0$ for all the solutions $x$ of the equation $f(x)=0$, then the solution set $S:=f^{-1}(0)$  is a closed submanifold of $X$ and especially, is a
	$C^k$-Banach manifold.
	\item[(ii)] If $(X,\|\cdot\|)$ is a complete $C^1$-Banach-Finsler manifold and $N$ is a closed $C^1$-submanifold  of $X$,  then $(N,\|\cdot\||_{TN})$
	a complete Banach-Finsler manifold as well.
	\item[(iii)] Every paracompact Banach manifold is an ANR $($i.e., absolute neighborhood retract$).$
	\item[(iv)] An ANR is an AR if and only if it is contractible.
\end{itemize}
\end{lemma}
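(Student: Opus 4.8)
The plan is to recall that Lemma \ref{Banachco} assembles four classical facts from infinite-dimensional differential topology and retract theory; I would indicate the mechanism behind each, the complete details being in the cited works of Palais \cite{PS2,PS} and Zeidler \cite{Z}.

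For (i), the hypothesis $Df(x)\neq 0$ at every zero of $f$ says precisely that $0$ is a regular value of $f$. Fixing $x_0\in f^{-1}(0)$, one picks $v_0\in X$ with $Df(x_0)v_0=1$ and splits $X=\ker Df(x_0)\oplus\mathbb{R}v_0$; applying the Banach-space implicit function theorem to the restriction of $f$ to an affine slice through $x_0$ exhibits $f^{-1}(0)$ locally as the graph of a $C^k$ map defined on a neighbourhood in $\ker Df(x_0)$, i.e. as a $C^k$ submanifold of codimension one. Closedness of $f^{-1}(0)$ is immediate from continuity of $f$. This is exactly Palais \cite[Theorem 8, Corollary]{PS2}.

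For (ii), equip $N$ with the Finsler structure $\|\cdot\|\,|_{TN}$ and observe that the induced Palais distance $\rho_N$ dominates the restriction of the ambient distance $\rho_X$, since every path in $N$ is a path in $X$ and the infimum is taken over a smaller family. Hence a $\rho_N$-Cauchy sequence in $N$ is $\rho_X$-Cauchy, so it converges in $X$ to some $p$, and $p\in N$ because $N$ is closed. The delicate step is the converse comparison near $p$: using a bundle chart as in Definition \ref{PSFinslerde}, condition (ii) there yields constants making $\rho_N$ and $\rho_X$ bi-Lipschitz equivalent on a small ball about $p$, so the sequence is $\rho_N$-convergent to $p$ as well. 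I expect this local bi-Lipschitz comparison of $\rho_N$ with $\rho_X$ to be the main technical point of the lemma.

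Parts (iii) and (iv) are pure retract theory. For (iii), a paracompact Banach manifold is metrizable, and it is locally an absolute neighbourhood retract because open subsets of normed spaces are ANRs (they have convex neighbourhoods, and the Dugundji extension theorem applies); the classical local-to-global theorem of Hanner then upgrades ``locally ANR'' to ``ANR'' for metrizable spaces. For (iv), one direction is trivial: an AR is a retract of a convex subset of a normed space, hence contractible, and a retract of a contractible space is contractible. The substantive direction, that a contractible ANR is an AR, rests on the homotopy extension property of ANR pairs (equivalently, on Borsuk/Dugundji-type extension of the contracting homotopy), which is precisely what is recorded in Zeidler \cite[Theorem 73.C, Example 73.41]{Z}; I would simply invoke it.
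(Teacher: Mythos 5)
The paper gives no proof of this lemma at all: it is recorded as a compilation of classical facts and simply cited to Palais \cite[Theorem 8, Corollary]{PS2}, \cite[Theorem 3.6, Theorem 5.9]{PS} and Zeidler \cite[Theorem 73.C, Example 73.41]{Z}, and your sketches correctly reconstruct the standard arguments behind exactly those citations (regular value plus implicit function theorem for (i), metric comparison for (ii), Dugundji--Hanner for (iii), homotopy extension for (iv)), so the proposal is correct and follows the same route. The one point to tighten is in (ii): the local comparison of $\rho_N$ with the ambient Palais distance requires a submanifold chart for $N$ in $X$ (so that nearby points of $N$ can be joined by short paths lying inside $N$), not merely a bundle chart for $TX$ as in Definition \ref{PSFinslerde}; this is precisely what Palais's Theorem 3.6 supplies.
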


\begin{proposition}\label{half2.5}
 $(\mathcal {S},\|\cdot\|\,|_{T\mathcal {S}})$ is a complete $C^\infty$-Banach-Finsler  manifold and an ANR.
\end{proposition}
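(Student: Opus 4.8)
The plan is to verify the two assertions of Proposition~\ref{half2.5} separately, namely that $(\mathcal{S},\|\cdot\||_{T\mathcal{S}})$ is a complete $C^\infty$-Banach--Finsler manifold and that it is an ANR. First I would show that $\mathcal{S}$ is a closed $C^\infty$-submanifold of $\mathscr{X}$. Consider the map $J:\mathscr{X}\to\mathbb{R}$ given by $J(u)=\int_M u^2\,d\mathfrak{m}-1$; it is a smooth (in fact quadratic, hence $C^\infty$) functional on the Banach space $\mathscr{X}$, since $u\mapsto\|u\|_{L^2}^2$ is a bounded quadratic form relative to $\|\cdot\|_{\Ho}$. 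Then $\mathcal{S}=J^{-1}(0)$, and for $u\in\mathcal{S}$ one computes $DJ(u)(v)=2(u,v)_{L^2}$, which does not vanish identically in $v\in\mathscr{X}$ (take $v=u$, giving $DJ(u)(u)=2\|u\|_{L^2}^2=2\neq0$). Hence $DJ(u)\neq0$ at every solution of $J(u)=0$, and Lemma~\ref{Banachco}/(i) applies, showing that $\mathcal{S}$ is a closed $C^\infty$-submanifold of $\mathscr{X}$, in particular a $C^\infty$-Banach manifold.

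Next I would upgrade this to the Banach--Finsler and completeness statements. Recall from the discussion preceding the proposition that $(\mathscr{X},\|\cdot\|)$ is a $C^\infty$-Banach--Finsler manifold, where $\|\cdot\|$ is the trivial metric structure on $T\mathscr{X}$ induced by $\|\cdot\|_{\Ho}$; since $\mathscr{X}$ is a Banach space, it is a complete Banach--Finsler manifold (the metric on each component being exactly the norm metric). Now $\mathcal{S}$ is a closed $C^1$-submanifold of $\mathscr{X}$ by the previous paragraph, so Lemma~\ref{Banachco}/(ii) immediately yields that $(\mathcal{S},\|\cdot\||_{T\mathcal{S}})$ is a complete Banach--Finsler manifold; the restricted structure is $C^\infty$ because the inclusion and hence the induced bundle charts are $C^\infty$.

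For the ANR claim I would argue as follows. Since $\mathscr{X}$ is a separable Banach space, it is metrizable, and $\mathcal{S}$, being a closed subset of a metric space, is itself metrizable; moreover a Banach manifold modeled on a metrizable Banach space is paracompact. Thus $\mathcal{S}$ is a paracompact Banach manifold, and Lemma~\ref{Banachco}/(iii) gives that $\mathcal{S}$ is an ANR. I expect the main point requiring care to be the smoothness and non-degeneracy of the constraint functional $J$ in the Banach setting — one must be sure that $u\mapsto\int_M u^2\,d\mathfrak{m}$ is genuinely $C^\infty$ and that its differential is surjective onto $\mathbb{R}$ (equivalently nonzero) at each point of $\mathcal{S}$ — but this is routine given that $\|\cdot\|_{L^2}\le\|\cdot\|_{\Ho}$ makes the quadratic form bounded. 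Everything else is a direct appeal to Lemma~\ref{Banachco}. (The stronger assertion in Proposition~\ref{compS} that $\mathcal{S}$ is an AR, i.e.\ contractible, together with the statement that $i^*E\in C^1(\mathcal{S})$, would then follow from Lemma~\ref{Banachco}/(iv) combined with the contractibility of $\mathcal{S}$ proved separately in Proposition~\ref{contr} and the $C^1$-regularity of $E$ from Proposition~\ref{srongDE}.)
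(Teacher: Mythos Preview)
Your proposal is correct and follows essentially the same approach as the paper: define the constraint functional $J(u)=\|u\|_{L^2}^2-1$, verify it is $C^\infty$ with nonvanishing differential on $\mathcal{S}$, then invoke Lemma~\ref{Banachco}(i)--(ii) for the complete $C^\infty$-Banach--Finsler structure and Lemma~\ref{Banachco}(iii) together with metrizability/paracompactness for the ANR property. The only cosmetic differences are that the paper computes the first three derivatives of $J$ explicitly (noting $D^3J=0$) and argues $DJ(u)=0\Rightarrow u=0$, whereas you appeal directly to the quadratic nature of $J$ and evaluate $DJ(u)(u)=2$.
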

\begin{proof}
Consider the function $h:\mathscr{X}\rightarrow \mathbb{R}$ defined by $h(u):=\|u\|_{L^2}^2-1$.
It is easy to see that
\begin{align*}
Dh(u)(\phi)&=\left.\frac{d}{dt}\right|_{t=0}h(u+t\phi)=2\ds\int_Mu\phi d\mathfrak{m};\tag{A.1}\label{4.2**!!}\\
D^2h(u)(\phi_1,\phi_2)&=\left.\frac{d}{dt}\right|_{t=0}Dh(u+t\phi_2)(\phi_1)=2\ds\int_M\phi_1\phi_2d\mathfrak{m};\\
D^3h(u)(\phi_1,\phi_2,\phi_3)&=\left.\frac{d}{dt}\right|_{t=0}D^2h(u+t\phi_3)(\phi_1,\phi_2)=0.
\end{align*}
The H\"older inequality together with the compactness of $M$ then yields $h\in C^\infty(\mathscr{X})$. Moreover, if $u\in \mathscr{X}$ satisfies $Dh(u)=0$,  (\ref{4.2**!!}) then implies $u=0$. Thus,  $Dh(u)\neq0$ for any $u\in h^{-1}(0)=\mathcal {S}$. It follows from  Lemma \ref{Banachco}/(i)(ii) that  $(\mathcal {S},\|\cdot\|\,|_{T\mathcal {S}})$ is a complete $C^\infty$-Banach-Finsler  manifold. In particular, ${\mathcal {S}}$ is   paracompact since it is metrizable. Thus,  Lemma \ref{Banachco}/(iii) furnishes that ${\mathcal {S}}$ is an ANR.
\end{proof}

In the sequel, we prove that ${\mathcal {S}}$ is an AR while $\mathbb{P}(\mathscr{X})$ is an ANR. Before doing this, we recall that the unit sphere in an infinite-dimensional Hilbert space is contractible (cf.  Kakutani \cite{K}).

\begin{proposition}\label{contr}
${\mathcal {S}}$ is contractible and hence, an AR.
\end{proposition}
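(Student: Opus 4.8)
The plan is to show that $\mathcal{S}$ is homeomorphic to the unit sphere of the Hilbert space $(\mathscr{X},\|\cdot\|_1)$, where $\|\cdot\|_1$ is the norm induced by the average Riemannian metric $\hat{g}$ (see (\ref{3.1})), and then to conclude by combining the recalled Kakutani theorem with Proposition \ref{half2.5} and Lemma \ref{Banachco}/(iv). The only real content of the statement is contractibility, since $\mathcal{S}$ is already known to be an ANR.

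First I would record two facts about $\mathscr{X}$: since $M$ is compact, the $\|\cdot\|_1$-topology on $\mathscr{X}$ coincides with the $\|\cdot\|_{\Ho}$-topology; and $(\mathscr{X},(\cdot,\cdot))$ is an infinite-dimensional separable Hilbert space. Consequently its unit sphere $S_1:=\{v\in\mathscr{X}:\|v\|_1=1\}$ is contractible by Kakutani's theorem. Next I would introduce the two normalization maps
\[
\Psi:\mathcal{S}\to S_1,\quad \Psi(u):=\frac{u}{\|u\|_1},\qquad\qquad \Xi:S_1\to\mathcal{S},\quad \Xi(v):=\frac{v}{\|v\|_{L^2}}.
\]
Both are well defined: every $u\in\mathcal{S}$ satisfies $\|u\|_{L^2}=1$, hence $u\neq0$ and $\|u\|_1>0$; and every $v\in S_1$ satisfies $\|v\|_1=1$, hence $v\neq0$ and $\|v\|_{L^2}>0$. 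Both are continuous in the ambient $\|\cdot\|_{\Ho}$-topology, because near any fixed point the relevant denominator is bounded away from $0$, while $v\mapsto\|v\|_1$ is $\|\cdot\|_1$-Lipschitz and $u\mapsto\|u\|_{L^2}$ is $\|\cdot\|_{\Ho}$-Lipschitz (using $\|\cdot\|_{L^2}\le\|\cdot\|_{\Ho}$). Finally, homogeneity yields $\Xi\circ\Psi=\id_{\mathcal{S}}$ and $\Psi\circ\Xi=\id_{S_1}$: for $u\in\mathcal{S}$ one has $\|\Psi(u)\|_{L^2}=\|u\|_{L^2}/\|u\|_1=1/\|u\|_1$, so $\Xi(\Psi(u))=u/\|u\|_{L^2}=u$, and symmetrically for $\Psi\circ\Xi$. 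Hence $\Psi$ is a homeomorphism, and since $S_1$ is contractible, so is $\mathcal{S}$.

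With contractibility in hand the conclusion is immediate: Proposition \ref{half2.5} already gives that $\mathcal{S}$ is an ANR, and a contractible ANR is an AR by Lemma \ref{Banachco}/(iv), so $\mathcal{S}$ is an AR. There is no genuine obstacle in this argument; the one point that deserves a line of justification is the continuity of $u\mapsto u/\|u\|_{L^2}$ with respect to the $\|\cdot\|_{\Ho}$-topology, since the normalization is performed in the $L^2$-norm rather than in the norm of $\mathscr{X}$ — this is exactly what the inequality $\|\cdot\|_{L^2}\le\|\cdot\|_{\Ho}$ and the local lower bound for the $L^2$-norm on $S_1$ take care of. (As an alternative route one may instead observe that $\mathcal{S}$ is a retract of $\mathscr{X}\setminus\{0\}$, which deformation-retracts onto $S_1$ via $H(u,t)=\big((1-t)+t/\|u\|_1\big)u$, yielding the same conclusion.)
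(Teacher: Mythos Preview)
Your proof is correct and follows essentially the same approach as the paper: both establish a homeomorphism between $\mathcal{S}$ and the unit sphere of the Hilbert space $(\mathscr{X},\|\cdot\|_1)$ via the normalization map $u\mapsto u/\|u\|_1$, invoke Kakutani's theorem for contractibility, and then conclude AR from Proposition~\ref{half2.5} and Lemma~\ref{Banachco}/(iv). You supply a few more details (the explicit inverse and the continuity verification), but the argument is the same.
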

\begin{proof}
 Recall that $(\mathscr{X},(\cdot,\cdot))$  is a separable Hilbert space, where $(\cdot,\cdot)$ is defined by (\ref{3.1}). Thus,   the unit sphere $\mathscr{S}:=\{u\in \mathscr{X}:\|u\|_1=1\}$ in $(\mathscr{X},(\cdot,\cdot))$
is contractible (cf. \cite{K}), where   $\|\cdot\|_1$  denotes the norm induced by $(\cdot,\cdot)$. It follows from (\ref{new new 5.2})  that
 $f\in \mathscr{X}$ with $\|f\|_{L^2}=0$ if and only if   $\|f\|_1=0$. From this fact, one can easily prove that $\mathcal {S}$ is homeomorphic to $\mathscr{S}$ by considering the map $t: \mathcal {S}\rightarrow \mathscr{S}$, $u\mapsto \frac{u}{\|u\|_{1}}$.
 Hence, ${\mathcal {S}}$ is contractible. Since $\mathcal {S}$ is an ANR,
 the statement follows by Lemma \ref{Banachco}/(iv).
\end{proof}

\begin{proof}[Proof of Proposition \ref{compS}] The first part of  the proposition follows from Propositions \ref{half2.5} and \ref{contr}, which together with
 Proposition \ref{srongDE} furnishes that $i^*E$ is a $C^1$-function on $\mathcal {S}$.
\end{proof}

\begin{proposition}\label{finalPX}
$\mathbb{P}(\mathscr{X})$ is a paracompact Banach topological manifold and hence, a normal ANR.
\end{proposition}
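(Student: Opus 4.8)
The plan is to present $\mathbb{P}(\mathscr{X})$ as the base of a two-fold covering $\mathfrak{p}:\mathcal{S}\to\mathbb{P}(\mathscr{X})$ and to transport both a $C^\infty$-Banach-manifold structure and a metric from $\mathcal{S}$ — which is a complete $C^\infty$-Banach-Finsler manifold by Proposition \ref{half2.5} — down to the quotient.

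First I would observe that the antipodal map $\iota:\mathcal{S}\to\mathcal{S}$, $\iota(u)=-u$, is the restriction of a bounded linear isomorphism of $\mathscr{X}$ preserving both $\|\cdot\|_{\Ho}$ and $\|\cdot\|_{L^2}$, hence a $C^\infty$-diffeomorphism of $\mathcal{S}$ with $\iota^2=\mathrm{id}$ and, since $0\notin\mathcal{S}$, no fixed points. Consequently $\mathbb{Z}_2=\{\mathrm{id},\iota\}$ acts freely and properly discontinuously on $\mathcal{S}$, so $\mathfrak{p}:\mathcal{S}\to\mathbb{P}(\mathscr{X})=\mathcal{S}/\mathbb{Z}_2$ is a two-fold covering, $\mathbb{P}(\mathscr{X})$ is Hausdorff, and $\mathfrak{p}$ is a local homeomorphism. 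Then, using that $\iota$ is a diffeomorphism, I would push the atlas of $\mathcal{S}$ forward: over an evenly covered neighbourhood one selects one of the two sheets in $\mathcal{S}$ and composes a chart there with the local inverse of $\mathfrak{p}$; the resulting transition maps are those of $\mathcal{S}$, possibly precomposed with $\iota$, hence $C^\infty$. This makes $\mathbb{P}(\mathscr{X})$ a $C^\infty$-Banach manifold modelled on the same Banach space as $\mathcal{S}$, in particular a Banach topological manifold.

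For paracompactness and normality I would prove that $\mathbb{P}(\mathscr{X})$ is metrizable. Equip $\mathcal{S}$ with the distance $d$ induced by the Hilbert norm $\|\cdot\|_1$ from (\ref{3.1}); by the remark preceding Proposition \ref{contr} this distance induces the topology of $(\mathcal{S},\|\cdot\|\,|_{T\mathcal{S}})$, and $\iota$ is a $d$-isometry. The quotient of a metric space by a finite group of isometries carries the quotient metric $\bar d([u],[v]):=\min\{d(u,v),\,d(u,-v)\}$, which induces exactly the quotient topology; hence $\mathbb{P}(\mathscr{X})$ is metrizable and therefore paracompact and normal. To conclude I would invoke Lemma \ref{Banachco}/(iii): a paracompact Banach manifold is an ANR, so $\mathbb{P}(\mathscr{X})$ is an ANR, and being metrizable it is normal, which is precisely the assertion.

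The main obstacle — really the only nontrivial point — is the metrizability step, i.e. checking that $\bar d$ is a genuine metric whose topology coincides with the quotient topology. This is the standard argument for quotients by finite isometry groups; the key observation is that $\bar d([u],[v])=0$ forces $v\in\{u,-u\}$, which uses that the orbit $\{u,-u\}$ is finite (hence $d$-closed) together with the fact that $\iota$ preserves $d$. Everything else is routine transport of structure along the covering $\mathfrak{p}$.
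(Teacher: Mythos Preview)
Your proposal is correct and complete; the only real divergence from the paper is in how you obtain paracompactness of $\mathbb{P}(\mathscr{X})$. You argue via metrizability: transport the metric $\|\cdot\|_1$ from $\mathcal{S}$ to the quotient by the finite isometry group $\mathbb{Z}_2$, obtaining the quotient metric $\bar d$, and then invoke Stone's theorem. The paper instead verifies paracompactness directly: given an open cover of $\mathbb{P}(\mathscr{X})$, it pulls back to $\mathcal{S}$ through evenly covered neighbourhoods, uses the paracompactness of $\mathcal{S}$ (which it already knows from metrizability) to obtain a locally finite refinement, and then pushes forward by $\mathfrak{p}$, checking by a counting argument that local finiteness survives because each fibre has exactly two points. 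Your route is shorter and cleaner, since the quotient-metric lemma for finite isometry groups is standard and immediately yields normality as well; the paper's argument is more self-contained in that it avoids citing Stone's theorem and the quotient-metric construction, at the cost of a slightly fiddly cardinality estimate. Both approaches converge on Lemma~\ref{Banachco}/(iii) for the ANR conclusion.
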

\begin{proof}
Since $\mathcal {S}$ is a $C^\infty$-Banach manifold and $\mathfrak{p}:\mathcal {S}\rightarrow \mathbb{P}(\mathscr{X})$ is a twofold covering, a standard argument yields that $\mathbb{P}(\mathscr{X})$ is a topological Banach-Finsler   manifold.

We now show that $\mathbb{P}(\mathscr{X})$ is paracompact. Given any open covering $\{U_\alpha\}$ of $\mathbb{P}(\mathscr{X})$, we can obtain a refinement $\{V_\beta\}$ of $\{U_\alpha\}$  and an open covering $\{+\mathcal {V}_\beta, -\mathcal {V}_\beta\}$ of $\mathcal {S}$ such that $\mathfrak{p}|_{\pm \mathcal {V}_\beta}:\pm \mathcal {V}_\beta\rightarrow V_\beta$ are homeomorphisms. Since $\mathcal {S}$ is paracompact (see Proposition \ref{half2.5}), there exists a locally finite refinement $\{\mathcal {O}_\gamma\}$ of $\{\pm\mathcal {V}_\beta\}$. Thus, each $\mathfrak{p}|_{\mathcal {O}_\gamma}:\mathcal {O}_\gamma\rightarrow \mathfrak{p}(\mathcal {O}_\gamma)$  is a homeomorphism. In particular,  $\{\mathfrak{p}(\mathcal {O}_\gamma)\}$ is a refinement of $\{U_\alpha\}$.

 On the other hand, for each $[u]\in \mathbb{P}(\mathscr{X})$, there are two open neighbourhoods $N_\pm\subset \mathcal {S}$ of $\pm u$ such that each of them intersects  only finitely many of the sets in $\{\mathcal {O}_\gamma\}$. Let $N_{[u]}:=\mathfrak{p}(N_+)\cap \mathfrak{p}(N_-)$, which is an open set. Thus, if $N_{[u]}$ intersects some $\mathfrak{p}(\mathcal {O}_\gamma)$, then $\mathcal {O}_\gamma$ must intersect at least one of $N_\pm$ (but not vice versa), which implies
\[
{\rm card}\{\mathfrak{p}(\mathcal {O}_\gamma):\, \mathfrak{p}(\mathcal {O}_\gamma)\cap N_{[u]}\neq\emptyset\}\leq {\rm card}\{\mathcal {O}_\gamma:\, \mathcal {O}_\gamma\cap (N_+\cup N_-)\neq\emptyset\}<+\infty,
\]
where ${\rm card}$ denotes the cardinality of a set.
Hence, $\{\mathfrak{p}(\mathcal {O}_\gamma)\}$ is locally finite and therefore, $\mathbb{P}(\mathscr{X})$ is paracompact and normal. Now it follows from  Lemma \ref{Banachco}/(iii) that $\mathbb{P}(\mathscr{X})$ is an ANR.
\end{proof}

\subsection{Properties of Dirichlet regions}\label{propertiesDirdomAb}
\begin{proof}[Proof of Lemma \ref{propertiesDirdom}]

 We first show that $B_{p_i}\left({r}  \right)\subset D_i\subset B_{p_i}(2r)$ for each $i\in\{1,\ldots,m\}$. Since $\{B_{p_i}(r)\}_{i=1}^m$ is   a maximal family of disjoint $r$-balls, one gets $B_{p_i}\left({r}  \right)\subset D_i$ for each $i\in\{1,\ldots,m\}$.
On the other hand,
note that $\{B_{p_i}(2r)\}_{i=1}^m$ is a covering of $M$. Thus, given $i\in\{1,\ldots,m\}$, for any $q\in D_i$, we claim $d_F(p_i,q)<2r$. Otherwise,  there would exist a point $p_j\neq p_i$ such that $q\in B_{p_j}(2r)$ and hence, $d_F(p_j,q)< d_F(p_i,q)$, which is a contradiction, hence $D_i\subset B_{p_i}(2r)$.

 By Definition \ref{Dirdef} we have that $\{D_i\}_{i=1}^m$ is a  covering of $M$.  We show
 $\mathfrak{m} (D_i\cap D_j)=0$ if $i\neq j.$
 In order to do this, set
\[
f_{ji}(x):=d_F(p_j,x)-d_F(p_i,x),\ A:=f_{ji}^{-1}(0)\cap (\text{Cut}_{p_i}\cup \text{Cut}_{p_j}),\ B:=f_{ji}^{-1}(0)-A.\tag{A.2}\label{definfij}
\]
Let us equip $f_{ji}^{-1}(0)$ with the induced topology from $M$.
Since $B$ is an open set of $f_{ji}^{-1}(0)$, there exists an open subset $N$ of $M$ such that $B=N\cap f_{ji}^{-1}(0)$.
In the sequel, we show that $B$ is an $(n-1)$-dimensional submanifold of $N$ and hence, $\mathfrak{m}(B)=0$.
Note that $f_{ji}|_N$ is smooth.
Once we show $df_{ji}(q)\neq0$ for any $q\in B$, the claim follows. By contrary, if $df_{ji}(q)=0$, one has that $\nabla d_F(p_i,x)|_{x=q}=\nabla d_F(p_j,x)|_{x=q}$, which yields $p_i=p_j$ due to $q\notin (\text{Cut}_{p_i}\cup \text{Cut}_{p_j})$ and $d_F(p_i,q)=d_F(p_j,q)$.  Since both $A$ and $B$ are zero-measurable, $\mathfrak{m} (D_i\cap D_j)\leq\mathfrak{m} (f_{ji}^{-1}(0))=\mathfrak{m} (A)+\mathfrak{m} (B)=0$ for $i\neq j.$

We now show that
$\mathfrak{m}(\text{int}(D_i))=\mathfrak{m}(D_i)$ for each $i\in\{1,\ldots,m\}$.
For each $j\neq i$, set $D_{ij}:=\{q\in M:\, d_F(p_i,q)\leq d_F(p_j,q)\}$.
Then $D_i=\bigcap_{j\neq i}D_{ij}$ and hence,
\[
\text{int}(D_i)=\bigcap_{j\neq i}\text{int}(D_{ij})=\bigcap_{j\neq i}\{ q\in M:\, d_F(p_i,q)<d_F(p_j,q) \},\tag{A.3}\label{3.1CC**}
\]
which implies
\[
\displaystyle D_i-\text{int}(D_i)={\bigcup}_{ j\neq i} \left(D_i\bigcap f^{-1}_{ji}(0)\right),
\]
where $f_{ji}$ is defined by (\ref{definfij}). The claim follows by $\mathfrak{m}(f^{-1}_{ji}(0))=0$.

Finally we show that
$\text{int}(D_i)$ is starlike with respect to $p_i$ for each $i\in\{1,\ldots,m\}$.
Given any $q\in \text{int}(D_i)$, let $\gamma(t)$ be a unit speed minimal geodesic from $p_i$ to $q$. For any $j\neq i$, consider
\[
f_{ji}(\gamma(t))=d_F(p_j,\gamma(t))-d_F(p_i,\gamma(t))=d_F(p_j,\gamma(t))-t=:\rho_j(\gamma(t))-t.
\]
If $q$ is not a cut point of $p$ along $\gamma(t)$, we have
\begin{align*}
\frac{d}{dt} f_{ji}(\gamma(t))=g_{\nabla \rho_j}(\nabla \rho_j,\dot{\gamma}(t))-1\leq F(\nabla \rho_j)F(\dot{\gamma}(t))-1= 0,\ \forall\, t\in (0,d_F(p_i,q)]. \tag{A.4}\label{4.4new*}
\end{align*}
Since $q\in  \text{int}(D_{ij})$ for all $j\neq i$ (see (\ref{3.1CC**})), we have
$
f_{ji}(\gamma(d_F(p_i,q)))=d_F(p_j,q)-d_F(p_i,q)> 0,
$
which together with (\ref{4.4new*}) yields $f_{ji}(\gamma(t))> 0$ for $t\in [0,d_F(p_i,q)]$. Hence, $\gamma(t)\subset \text{int}(D_{ij})$. Then (\ref{3.1CC**}) implies $\gamma(t)\subset \text{int}(D_i)$. If $q\in {\rm Cut}_{p_i}$, then for any small $\epsilon>0$, the above proof yields that $f_{ji}(\gamma(t))\geq f_{ji}(\gamma(d_F(p_i,q)-\epsilon))$, $t\in [0,d_F(p_i,q)-\epsilon)$ and the same statement follows by the continuity of $f_{ji}$.
\end{proof}

\subsection{Properties of Cheeger's constant}\label{cheegerconst}
In this subsection we study  Cheeger's constant and prove Lemma \ref{infact}.
The co-area formula (cf.\,Shen \cite[Theorem 3.3.1]{Sh1}) yields the following result, which is useful to prove Lemma \ref{Th1old}.
\textcolor[rgb]{0.00,0.00,0.00}{\begin{lemma}\label{chele}Let $(M,F,d\mathfrak{m})$ be an $n$-dimensional complete {\rm FMMM}
and let $D$ be  an open subset of $M.$
Given a  positive function $f\in C^\infty(M)$, we have
$${\mathbbm{h}}(D)\int^\infty_{0}\min\{\mathfrak{m}(\Omega(t)),\mathfrak{m}(D)-\mathfrak{m}(\Omega(t))\}dt\leq \int_DF^*(df)d\mathfrak{m},
$$
where $\Omega(t):=\{x\in D:f(x)\geq t\}$.
\end{lemma}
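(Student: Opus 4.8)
The strategy is the classical Cheeger argument adapted to the Finsler setting, reducing everything to the co-area formula of Shen \cite[Theorem 3.3.1]{Sh1}. First I would fix the positive function $f\in C^\infty(M)$ and, by Sard's theorem, observe that almost every $t>0$ is a regular value of $f|_D$; for such $t$ the level set $\Gamma_t:=f^{-1}(t)\cap D$ is a smooth embedded $(n-1)$-dimensional submanifold of $D$ that separates $D$ into the two disjoint open sets $D_1(t):=\{x\in D:f(x)>t\}$ and $D_2(t):=\{x\in D:f(x)<t\}$, whose common boundary inside $D$ is exactly $\Gamma_t$. Since $\Gamma_t$ is $\mathfrak{m}$-null, we get $\mathfrak{m}(D_1(t))=\mathfrak{m}(\Omega(t))$ and $\mathfrak{m}(D_2(t))=\mathfrak{m}(D)-\mathfrak{m}(\Omega(t))$ for a.e.\ $t>0$. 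In every situation where Lemma \ref{chele} is applied, $M$ is compact, so $\Gamma_t$ is compact and hence an admissible competitor in Definition \ref{Cheergerconstant}.

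Applying the very definition of $\mathbbm{h}(D)$ to the hypersurface $\Gamma_t$ gives, for a.e.\ $t>0$,
\[
A(\Gamma_t)\ \geq\ \mathbbm{h}(D)\,\min\{\mathfrak{m}(\Omega(t)),\ \mathfrak{m}(D)-\mathfrak{m}(\Omega(t))\}.
\]
Integrating over $t\in(0,\infty)$ yields
\[
\mathbbm{h}(D)\int_0^\infty \min\{\mathfrak{m}(\Omega(t)),\ \mathfrak{m}(D)-\mathfrak{m}(\Omega(t))\}\,dt\ \leq\ \int_0^\infty A\big(f^{-1}(t)\cap D\big)\,dt.
\]
Finally I would invoke the Finsler co-area formula with the test function $\mathbbm{1}_D$: recalling that $F^*(df)=F(\nabla f)$ and that $f$ is smooth, Shen \cite[Theorem 3.3.1]{Sh1} gives $\int_D F^*(df)\,d\mathfrak{m}=\int_{-\infty}^{\infty}A(f^{-1}(t)\cap D)\,dt$; since $f>0$ on $M$, one has $f^{-1}(t)\cap D=\emptyset$ for $t\leq 0$, so the right-hand side equals $\int_0^\infty A(f^{-1}(t)\cap D)\,dt$. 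Chaining the last two displays proves the claimed inequality.

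The one delicate point — and the only genuine obstacle — is verifying that the level sets $\Gamma_t$ are legitimate in Definition \ref{Cheergerconstant}: compact $(n-1)$-submanifolds dividing $D$ into two open pieces whose common boundary is precisely $\Gamma_t$. Sard's theorem handles smoothness for a.e.\ $t$; compactness follows from the compactness of $M$ in all the intended applications (e.g.\ in the proof of Lemma \ref{Th1old}, $M$ is closed); and the common-boundary condition holds because, for a regular value $t$, the boundary of $\{f>t\}$ relative to $D$ coincides with $\{f=t\}\cap D=\Gamma_t$. One should also make sure that the density appearing in Shen's co-area formula is exactly the induced measure $A$ of Definition \ref{Cheergerconstant}, i.e.\ $dA=i^*(\mathbf{n}\,\rfloor\,d\mathfrak{m})$; this is precisely the form in which \cite{Sh1} states it.
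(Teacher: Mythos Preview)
Your proof is correct and follows essentially the same route as the paper: apply the Finsler co-area formula to write $\int_D F^*(df)\,d\mathfrak{m}=\int_0^\infty A(\partial\Omega(t))\,dt$, then bound each $A(\partial\Omega(t))$ below using the definition of $\mathbbm{h}(D)$. The paper's version is terser and simply asserts that for a.e.\ $t$ the set $\Omega(t)$ has compact closure and smooth boundary with unit normal $\mathbf{n}=\nabla f/F(\nabla f)$, while you spell out the Sard-theorem justification and the compactness issue more carefully.
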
}

\begin{proof}Without loss of generality, we assume $f$ is nonconstant. For almost every $t$ with $0< \min f\leq t \leq \max f<\infty$, $\Omega(t)$ is a domain
in $D$, with compact closure and smooth boundary.
Note that $\mathbf{n}:={\frac{\nabla f}{F(\nabla f)}}$ is a unit normal vector along $\partial\Omega(t)$. The co-area formula yields that
\begin{align*}
\int_DF^*(df)d\mathfrak{m}=\int^\infty_{0}\A_{\mathbf{n}}(\partial\Omega(t))dt\geq {\mathbbm{h}}(D)\int^\infty_{0}\min\{\mathfrak{m}(\Omega(t)),\mathfrak{m}(D)-\mathfrak{m}(\Omega(t))\}dt,
\end{align*}
which concludes the proof.
\end{proof}

Lemma \ref{distorlemma-0}/(i) implies the following result.
\begin{lemma}\label{le2}Given $K\leq 0$ and $\Theta\geq 1$,
Let $(M,F,d\mathfrak{m})$ be   an $n$-dimensional complete {\rm FMMM} with
\[
\mathbf{Ric}\geq (n-1)K,\ |\tau|\leq \log \Theta.
\]
 Then for any $y\in S_pM$, we have
\begin{align*}
(i)\ &{\hat{\sigma}_p(\min\{i_y,r\},y)}\geq  \Theta^{-2}\frac{A_{n,K}(r)}{V_{n,K}(R)-V_{n,K}(r)}{\ds\int_r^R\hat{\sigma}_p(\min\{i_y,t\},y)dt}, \ \forall\, 0<r\leq R;\\
(ii)\ &{\ds\int_{r_0}^{r_1}\hat{\sigma}_p(\min\{i_y,t\},y)dt}\geq   \Theta^{-2} \frac{V_{n,K}(r_1)-V_{n,K}(r_0)}{V_{n,K}(r_2)-V_{n,K}(r_1)}{\ds\int_{r_1}^{r_2}\hat{\sigma}_p(\min\{i_y,t\},y)dt}, \ \forall\, 0<r_0<r_1<r_2,
\end{align*}
where $A_{n,K}(r)$ and $V_{n,K}(r)$  are defined by {\rm (\ref{volume2.5K})}.
\end{lemma}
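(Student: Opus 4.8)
The plan is to deduce both estimates from a single pointwise comparison along each geodesic issuing from $p$, and then simply integrate. First I would record two elementary facts. The hypothesis $|\tau|\le\log\Theta$ is equivalent to $e^{\pm\tau(y)}\in[\Theta^{-1},\Theta]$ for every $y\in SM$, so any quotient $e^{-\tau(y_1)}/e^{-\tau(y_2)}$ lies in $[\Theta^{-2},\Theta^{2}]$. And since $K\le0$, the solution $\mathfrak{s}_K$ of $f''+Kf=0$, $f(0)=0$, $f'(0)=1$ is positive and non-decreasing on $(0,+\infty)$; hence $\mathfrak{s}_K^{n-1}$ is non-decreasing, and $V_{n,K}(b)-V_{n,K}(a)>0$ whenever $0<a<b$.

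The heart of the proof is the pointwise claim: for every $y\in S_pM$ and all $0<s\le t$,
\[
\frac{\hat\sigma_p(\min\{i_y,s\},y)}{\mathfrak{s}_K^{n-1}(s)}\ \ge\ \Theta^{-2}\,\frac{\hat\sigma_p(\min\{i_y,t\},y)}{\mathfrak{s}_K^{n-1}(t)}.
\]
To prove it, write $\hat\sigma_p(r,y)=f_y(r)\,e^{-\tau(\dot\gamma_y(r))}\,\mathfrak{s}_K^{n-1}(r)$ for $0<r<i_y$, where $f_y$ is the non-increasing function of Lemma \ref{distorlemma-0}(i), and distinguish three cases. If $t\le i_y$, both sides equal $f_y(\cdot)\,e^{-\tau(\dot\gamma_y(\cdot))}$ and the claim follows from $f_y(s)\ge f_y(t)\ge0$ together with $e^{-\tau(\dot\gamma_y(s))}\ge\Theta^{-2}e^{-\tau(\dot\gamma_y(t))}$. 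If $s<i_y\le t$, then $\hat\sigma_p(\min\{i_y,t\},y)=\hat\sigma_p(i_y,y)$ (the left limit), and since $\mathfrak{s}_K^{n-1}$ is non-decreasing,
\[
\frac{\hat\sigma_p(i_y,y)}{\mathfrak{s}_K^{n-1}(t)}\ \le\ \frac{\hat\sigma_p(i_y,y)}{\mathfrak{s}_K^{n-1}(i_y)}\ =\ \lim_{u\uparrow i_y}f_y(u)\,e^{-\tau(\dot\gamma_y(u))}\ \le\ \Theta\lim_{u\uparrow i_y}f_y(u)\ \le\ \Theta f_y(s)\ \le\ \Theta^{2}\,\frac{\hat\sigma_p(\min\{i_y,s\},y)}{\mathfrak{s}_K^{n-1}(s)}.
\]
If $i_y\le s\le t$, both numerators equal the constant $\hat\sigma_p(i_y,y)\ge0$ and the claim reduces to $\mathfrak{s}_K^{n-1}(t)\ge\mathfrak{s}_K^{n-1}(s)$. (When $\gamma_y(i_y)$ is conjugate, i.e.\ $\hat\sigma_p(i_y,y)=0$, the relevant cases are trivial; when $i_y=+\infty$ only the first case occurs.) This bookkeeping across the cut value is the step I expect to require the most care, and it is precisely where $K\le0$ is used decisively: freezing the density at $i_y$ while $\mathfrak{s}_K^{n-1}$ continues to grow only strengthens the inequality.

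Granting the claim, part (i) follows by fixing $s=r$ in the cross-multiplied form $\hat\sigma_p(\min\{i_y,r\},y)\,\mathfrak{s}_K^{n-1}(t)\ge\Theta^{-2}\hat\sigma_p(\min\{i_y,t\},y)\,\mathfrak{s}_K^{n-1}(r)$ and integrating in $t$ over $[r,R]$; multiplying through by $\vol(\mathbb S^{n-1})$ and using $A_{n,K}(r)=\vol(\mathbb S^{n-1})\mathfrak{s}_K^{n-1}(r)$ and $V_{n,K}(R)-V_{n,K}(r)=\vol(\mathbb S^{n-1})\int_r^R\mathfrak{s}_K^{n-1}(t)\,dt$, then dividing by $V_{n,K}(R)-V_{n,K}(r)>0$, yields (i). For part (ii) I would integrate the cross-multiplied claim over the rectangle $(s,t)\in[r_0,r_1]\times[r_1,r_2]$ — valid since $s\le r_1\le t$ there — which gives
\[
\Big(\int_{r_0}^{r_1}\hat\sigma_p(\min\{i_y,s\},y)\,ds\Big)\int_{r_1}^{r_2}\mathfrak{s}_K^{n-1}(t)\,dt\ \ge\ \Theta^{-2}\Big(\int_{r_1}^{r_2}\hat\sigma_p(\min\{i_y,t\},y)\,dt\Big)\int_{r_0}^{r_1}\mathfrak{s}_K^{n-1}(s)\,ds;
\]
replacing each $\mathfrak{s}_K^{n-1}$-integral by the corresponding difference of $V_{n,K}$ (the factors $\vol(\mathbb S^{n-1})$ cancel) and dividing by $V_{n,K}(r_2)-V_{n,K}(r_1)>0$ gives exactly the asserted inequality. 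No further ingredients are needed.
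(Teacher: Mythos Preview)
Your proposal is correct and is essentially the argument the paper has in mind: the paper's entire proof is the sentence ``Lemma~\ref{distorlemma-0}/(i) implies the following result,'' and what you have written is precisely the unpacking of that implication---using the monotonicity of $f_y$ together with the distortion bounds to get a pointwise ratio inequality, then integrating. Your case analysis across the cut value $i_y$ (where you use $K\le 0$ so that $\mathfrak{s}_K^{n-1}$ is non-decreasing and the frozen density can only help) is more careful than anything the paper states explicitly, and is exactly the detail one needs to justify the $\min\{i_y,\cdot\}$ in the statement.
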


Let
$i:\Gamma{\hookrightarrow}M$ be a smooth hypersurface
embedded in $(M,F,d\mathfrak{m})$.
Given $p\in M$, let $(r,y)$ denote the polar coordinate system around $p$. For any $x\in  \Gamma\backslash \text{Cut}_p$, one can define a local measure on $\Gamma$ around $x$ by
\[
d\mathfrak{A}:=i^*(\nabla r\rfloor d\mathfrak{m}).
\]
\begin{lemma}\label{le3}
Let $(M,F,d\mathfrak{m})$ be a complete {\rm FMMM} and let
 $i: \Gamma \hookrightarrow  M$ be a smooth hypersurface.
Then for any $x\in \Gamma\backslash \text{Cut}_p$, we have
$dA|_{x}\geq   d\mathfrak{A}|_{x}$.

\end{lemma}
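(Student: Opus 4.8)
The plan is to localize at the point $x$ and reduce the asserted inequality between the two $(n-1)$-densities to the single scalar estimate $|a|\le 1$, where $a\in\mathbb R$ is the coefficient of $\nabla r$ along the unit normal $\mathbf n$ of $\Gamma$ at $x$. The density $d\mathfrak A=i^*(\nabla r\,\rfloor\, d\mathfrak m)$ differs from $dA=i^*(\mathbf n\,\rfloor\, d\mathfrak m)$ only in that $\nabla r$ replaces $\mathbf n$, and $\nabla r$ is a unit vector (by the eikonal equation) that need not be normal to $\Gamma$; the whole content of the lemma is that its normal component is at most $1$.

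First I would fix $x\in\Gamma\setminus(\text{Cut}_p\cup\{p\})$ and let $\omega:=\mathfrak L(\mathbf n)\in T^*_xM$ be the covector defining $\mathbf n$, so that $i^*\omega=0$ and $F^*(\omega)=1$. Then $\omega(\mathbf n)=g_{\mathbf n}(\mathbf n,\mathbf n)=F^2(\mathbf n)=(F^*(\omega))^2=1$, so $\mathbf n\notin T_x\Gamma$. Hence for any basis $e_1,\dots,e_{n-1}$ of $T_x\Gamma$ the frame $(\mathbf n,e_1,\dots,e_{n-1})$ is a basis of $T_xM$, and we may write uniquely $\nabla r=a\,\mathbf n+v$ with $a\in\mathbb R$, $v\in T_x\Gamma$; evaluating $\omega$ gives $a=\omega(\nabla r)$. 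Since $v,e_1,\dots,e_{n-1}$ are $n$ vectors lying in the $(n-1)$-dimensional space $T_x\Gamma$, the $n$-form $d\mathfrak m$ vanishes on them, so by multilinearity
\[
(\nabla r\,\rfloor\, d\mathfrak m)(e_1,\dots,e_{n-1})=a\,(\mathbf n\,\rfloor\, d\mathfrak m)(e_1,\dots,e_{n-1}).
\]
Pulling back by $i$ yields $d\mathfrak A|_x=|a|\cdot dA|_x$ as densities, with $dA|_x\neq 0$ because $(\mathbf n,e_1,\dots,e_{n-1})$ is a basis of $T_xM$. It thus remains only to prove $|a|\le 1$.

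This last step I would get from the duality inequality $|\eta(X)|\le F^*(\eta)\,F(X)$, which is immediate from the definition $F^*(\eta)=\sup_{X\neq0}\eta(X)/F(X)$ together with the reversibility of $F$; applying it to $\eta=\omega$, $X=\nabla r$ and using the eikonal equation $F^*(dr)=1$ on $M\setminus(\text{Cut}_p\cup\{p\})$,
\[
|a|=|\omega(\nabla r)|\le F^*(\omega)\,F(\nabla r)=F(\mathfrak L^{-1}(dr))=F^*(dr)=1,
\]
so $d\mathfrak A|_x=|a|\,dA|_x\le dA|_x$, as claimed. The computation is short; the only thing that needs care is the Legendre-transform bookkeeping that produces $\omega(\mathbf n)=1$ (which underlies both the transversality of $\mathbf n$ and the splitting of $\nabla r$), and the observation that the argument takes place away from $p$ and $\text{Cut}_p$, where $r$ is smooth and the eikonal equation is available. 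I do not anticipate a genuine obstacle.
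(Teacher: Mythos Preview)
Your proof is correct and is essentially the same as the paper's: the paper condenses your steps into the single line $d\mathfrak{A}=|g_{\mathbf n}(\mathbf n,\nabla r)|\,dA\le dA$, which is exactly your identity $d\mathfrak A|_x=|a|\,dA|_x$ with $a=\omega(\nabla r)=g_{\mathbf n}(\mathbf n,\nabla r)$ followed by the same duality bound $|a|\le F^*(\omega)F(\nabla r)=1$. Your version simply spells out the linear-algebra justification for the factor $|a|$ more carefully.
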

\begin{proof} Let
$\textbf{n}$ denote a unit normal vector field on $\Gamma$. Then we have
\[
d\mathfrak{A}=|i^*(\nabla r\rfloor d\mathfrak{m})|=|g_{\textbf{n}}(\textbf{n},\nabla r)|dA \leq dA,
\]
which is the required relation.
\end{proof}

\begin{proof}[Proof of Lemma \ref{infact}] The proof is almost the same as the one of Chavel \cite[Theorem 6.8]{Ch2} (also see Buser \cite[Lemma 5.1]{B}) and hence we just  sketch it. Let $\Gamma$ be a smooth hypersurface embedded in $D$ which divides $D$ into disjoint open sets $D_1$, $D_2$ in $D$ with common boundary $\partial D_1=\partial D_2=\Gamma$. Without loss of generality, we assume that $\mathfrak{m}(D_1\cap
B_p(r/2))\leq \frac12 \mathfrak{m}(B_p(r/2))\leq \mathfrak{m}(D_2\cap
B_p(r/2))$.
Let $\alpha\in (0,1)$ be a constant which
will be chosen later.

 \textbf{Case 1}: Suppose $\mathfrak{m}(D_1\cap
B_p(r/2))\leq \alpha \mathfrak{m}(D_1)$.
For each $q\in
D_1-\text{Cut}_p$, Let $q^*$ be the last point on the minimal
geodesic segment $\gamma_{pq}$ from $p$ to $q$, where this ray intersects
$\Gamma$. If the whole segment $\gamma_{pq}$ is contained in $D_1$, set
$q^*:=p$. Fix a positive number $\beta\in (0,r/2)$. Let $(t,y)$ denote the polar coordinate system around $p$. Given a point $q=(\rho,y)\in D_1-\text{Cut}_p-B_p(r/2)$, set
\[
\text{rod}(q):=\{(t,y): \beta\leq t\leq \rho\}.
\]
Define
\begin{align*}
&\mathcal {D}^1_1:=\{q\in D_1-\text{Cut}_p-\overline{B_p(r/2)}: q^* \notin B_p(\beta)\};\\
&\mathcal {D}^2_1:=\{q\in D_1-\text{Cut}_p-\overline{B_p(r/2)}: \text{rod}(q)\subset D_1\};\\
&\mathcal {D}^3_1:=\{q\in B_p(r/2)-\overline{B_p(\beta)}: \exists\, x\in \mathcal {D}^2_1 \text{ such that } q\in \text{rod}(x)\}.
\end{align*}

\begin{figure*}[ht]\centering \label{figtt}
\includegraphics[width=4in,height=2.5in]{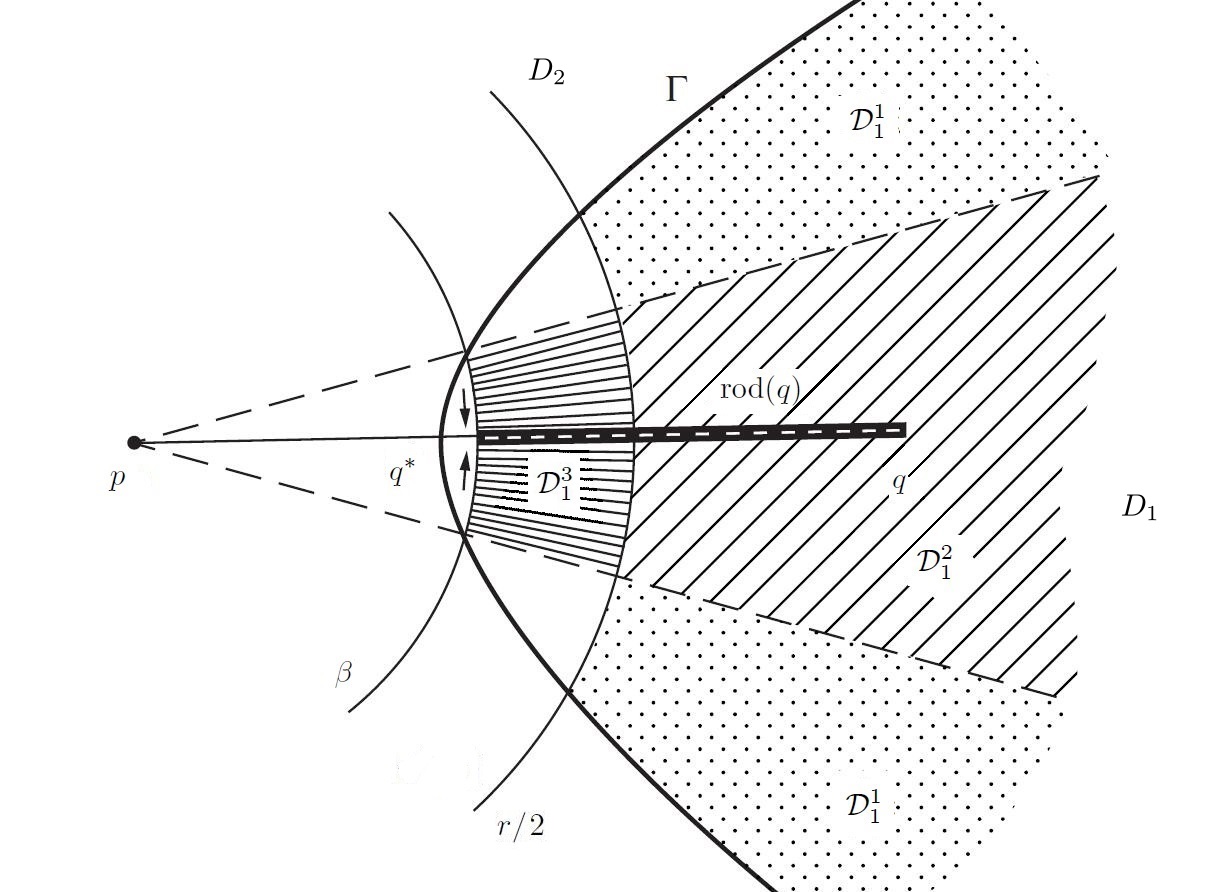}
\caption{}
\end{figure*}

\noindent
By Lemma \ref{le2}/(ii), we obtain that
\[
\frac{\mathfrak{m}(\mathcal {D}^3_1)}{\mathfrak{m}(\mathcal {D}^2_1)}\geq \Theta^{-2}\frac{V_{n,K}(r/2)-V_{n,K}(\beta)}{V_{n,K}(R)-V_{n,K}(r/2)}=:\gamma^{-1}.
\]
It follows from the assumption that
\begin{align*}
\mathfrak{m}(\mathcal {D}^1_1)\geq \left(1-\alpha(1+\gamma)\right)\mathfrak{m}(D_1).\tag{A.5}\label{5.1}
\end{align*}

Set $\mathfrak{D}_1^1 :=\{y\in S_pM: \,\exists\, t>0  \text{ such that } (t,y)\in \mathcal {D}^1_1\}$.
Clearly,
\[
\mathfrak{m}(\mathcal {D}^1_1)=\ds\int_{\mathfrak{D}_1^1}d\nu_p(y)\ds\int_{r/2}^{\min\{R, i_y\}}\chi_{\mathcal {D}^1_1}(\exp_p(ty))\cdot\hat{\sigma}_p(t,y)dt,\tag{A.6}\label{A.2}
\]
where $\chi_{\mathcal {D}^1_1}(x)$ is the characteristic function of $\mathcal {D}^1_1$ and $\exp_p:TM\rightarrow M$ is the usual exponential map at $p$.
The same argument as  Step 3 in the proof of Chavel \cite[Theorem 6.8]{Ch2} together with (\ref{A.2}), Lemmas \ref{le2}/(i) and \ref{le3} then furnishes
\begin{align*}
\mathfrak{m}(\mathcal {D}^1_1)\leq  \Theta^2\frac{V_{n,K}(R)-V_{n,K}(\beta)}{A_{n,K}(\beta)}A(\Gamma). \tag{A.7}\label{5.2}
\end{align*}
Combining (\ref{5.1}) and (\ref{5.2}), we obtain
\begin{align*}
\frac{A(\Gamma)}{\mathfrak{m}(D_1)}=\frac{A(\Gamma)}{\mathfrak{m}(\mathcal {D}_1^1)}\frac{\mathfrak{m}(\mathcal {D}_1^1)}{\mathfrak{m}(D_1)}\geq \Theta^{-2}(1-\alpha(1+\gamma))\frac{A_{n,K}(\beta)}{V_{n,K}(R)-V_{n,K}(\beta)}.\tag{A.8}\label{firstbound}
\end{align*}

\textbf{Case 2}: Suppose $\mathfrak{m}(D_1\cap B_p
({r}/2))\geq \alpha \mathfrak{m}(D_1)$.
For simplicity, set
$W_i:=D_i\cap B_p({r}/2)$, $i=1,2$. Consider the
product space $W_1\times W_2$ with the product measure
$d\mathfrak{m}_{\times}:=d\mathfrak{m} \times d\mathfrak{m}$. Let
\[
N:=\{(q,w)\in W_1\times W_2:\, q\in \text{Cut}_w\}.
\]
Since   the cut locus is a null set,
 Fubini's theorem   yields $\mathfrak{m}_{\times}(N)=0$.
For each $(q,w)\in (W_1\times W_2)\backslash N$, there exists a unique minimal geodesic $\gamma_{wq}$ from $w$ to $q$ with the length $L_F(\gamma_{wq})\leq r$.
The triangle inequality implies  $\gamma_{wq}\subset B_p(r)$. Denote by $q^\natural$ the last point on $\gamma_{wq}$ where $\gamma_{wq}$ intersects $\Gamma$.
Now define
\begin{align*}
V_1&:=\{(q,w)\in W_1\times W_2-N:\, d_F(w,q^\natural)\geq d_F(q^\natural,q)\},\\
V_2&:=\{(q,w)\in W_1\times W_2-N:\, d_F(w,q^\natural)\leq d_F(q^\natural,q)\}.
\end{align*}
Since $\mathfrak{m}_\times(V_1\cup V_2)=\mathfrak{m}_{\times}(W_1\times W_2)$, we have
\[
\mathfrak{m}_{\times}(V_1)\geq \frac12 \mathfrak{m}_\times(W_1\times W_2) \text{ or } \mathfrak{m}_{\times}(V_2)\geq \frac12 \mathfrak{m}_\times(W_1\times W_2).\tag{A.9}\label{twomeasuresmatter}
\]

Since $F$ is reversible, the reverse of a geodesic is still a geodesic. Thus, no matter which one in (\ref{twomeasuresmatter}) holds, a similar argument to  Step 5 in the proof of Chavel \cite[Theorem 6.8]{Ch2} together with Lemmas \ref{le2}/(i) and \ref{le3} yields
\[
\frac{A(\Gamma)}{\mathfrak{m}(D_1)}\geq \frac{\alpha}{2\Theta^2}\frac{A_{n,K}(r/2)}{V_{n,K}(r)-V_{n,K}(r/2)}.\tag{A.10}\label{A.4}
\]

 From (\ref{firstbound}) and (\ref{A.4}), we choose
 \[
 \alpha=\frac{2 \mathcal {A}}{\mathcal {B}+2\mathcal {A}(1+\Theta^2\mathcal {C})},
 \]
where
\[
\mathcal {A}:=\frac{A_{n,K}(\beta)}{V_{n,K}(R)-V_{n,K}(\beta)},\ \mathcal {B}:=\frac{A_{n,K}(r/2)}{V_{n,K}(r)-V_{n,K}(r/2)}, \ \mathcal {C}:=\frac{V_{n,K}(R)-V_{n,K}(r/2)}{V_{n,K}(r/2)-V_{n,K}(\beta)}.
\]
Then
a direct calculation yields
\[
\frac{A(\Gamma)}{\min\{\mathfrak{m}(D_1),\mathfrak{m}(D_2)\}}\geq\frac{A(\Gamma)}{\mathfrak{m}(D_1)}\geq
\sup_{0<\beta<\frac{r}2}\frac{A_{n,K}(\beta)\left[V_{n,K}\left(\frac{r}{2 }\right)-V_{n,K}(\beta)\right]}{4\Theta^4 V_{n,K}(r)V_{n,K}(R)}\geq\frac{C^{1+\sqrt{|K|}R} }{\Theta^4}\frac{r^{n-1}}{R^n},
\]
where $C=C(n)<1$ is a positive number only depending on $n$.
\end{proof}

\proof[Acknowledgements]

The research of A. Krist\'aly is supported by the National Research, Development and Innovation Fund of Hungary, financed under the K$\_$18 funding scheme, Project No.  127926.
This work is also supported by the National Natural Science Foundation of China (No. 11501202, No. 11761058, No. 11671352),  the Natural Science Foundation of Shanghai (No. 17ZR1420900, No. 19ZR1411700) and the grant of China Scholarship Council (No. 201706745006). Work initiated while W. Zhao  was a visiting scholar at
IUPUI.


\begin{thebibliography}{10}




\bibitem{A} L. Ambrosio, S.  Honda and J. W. Portegies, \textsl{Continuity of nonlinear eigenvalues in  $\mathrm{CD}(K,+\infty )$ spaces with respect to measured Gromov-Hausdorff convergence},  Calc. Var. (2018) 57: 34. https://doi.org/10.1007/s00526-018-1315-0.


\bibitem{AlB} J. Alvarez-Paiva, G. Berck, \textsl{What is wrong with the
	Hausdorff measure in Finsler spaces}. Adv.  Math.
{\textbf{204}}(2006), 647--663.

\bibitem{AlT}J. Alvarez-Paiva and A. C. Thompson, \textsl{Volumes in normed and Finsler
	spaces}, A Sampler of Riemann-Finsler geometry (Cambridge) (D. Bao,
R. Bryant, S.S. Chern, and Z. Shen, eds.), Cambridge University
Press, 2004, pp. 1--49.







\bibitem{BCS} D. Bao, S. S. Chern and Z. Shen, \textsl{An introduction
to Riemannian-Finsler geometry}, GTM {\bf{200}}, Springer-Verlag,
2000.

\bibitem{BI} D. Burago and  S. Ivanov, \textit{On asymptotic volume of Finsler tori, minimal surfaces in normed spaces, and symplectic filling volume}. Ann. of Math. (2) \textbf{156}(2002), no. 3, 891--914.


\bibitem{B} P. Buser, \textsl{A note on the isoperimetric constant}, Ann. Sci. \'Ec. Norm. Sup.  \textbf{15}(1982), 213-230.





\bibitem{C4} M. Craioveanu, M. Puta, and Th. M. Rassias, \textsl{Old and new aspects in spectral geometry},
volume \textbf{534} of Mathematics and its Applications, Kluwer Academic Publishers, Dordrecht,
2001.

\bibitem{Ch} I. Chavel, \textsl{Eigenvalues in Riemannian geometry}, Academic Press, New York, 1984.

\bibitem{Ch2} I. Chavel, \textsl{Riemannian geometry: A
modern introduction}, Cambridge Univ., 1993.

\bibitem{Cheeger} J. Cheeger, \textsl{A lower bound for smallest eigenvalue of the Laplacian}, In Problems in
Analysis, pages 195--199, Princeton University Press, 1970.

\bibitem{CC} J. Cheeger and T. H. Colding, \textsl{Lower bounds on Ricci curvature and the almost rigidity
of warped products}, Ann. of Math.  \textbf{144}(1996), 189-237.




\bibitem{C} S. Cheng, \textsl{Eigenvalue comparison theorems and its geometric applications}. Math. Z. \textbf{143}(1975), 289--297.

\bibitem{Chern} S. S. Chern, \textsl{Finsler geometry
	is just Riemannian
	geometry without the
	quadratic restriction}.  Notices Amer. Math. Soc. \textbf{43} (1996), no. 9, 959--963.



\bibitem{CLOT}  O. Cornea,
G. Lupton,
J. Oprea, and
D. Tanr\'e,
\textsl{Lusternik-Schnirelmann
Category}, Mathematical Surveys and Monographs, Volume \textbf{103}, 2003.


\bibitem{E} D. Egloff, \textsl{Uniform Finsler Hadamard manifolds}, Ann. Inst. Henri Poincar\'e  \textbf{66}(1997), 323--357.

\bibitem{F} E. Fadell, \textsl{The relationship between Lusternik-Schnirelmann category and the concept of genus}, Pacific J. Math.  \textbf{89} (1980), 33--42.


\bibitem{G3} M. Gromov, \textsl{Paul Levy's isoperimetric inequality}, Preprint, Inst. Hautes Etudes Sci., Publ. Math., 1980.





\bibitem{G} M. Gromov, \textsl{Dimension, nonlinear spectra and width}, Geometric aspects of functional
analysis, Israel seminar (1986-87), Lecture Notes in Math., \textbf{1317}, Springer, Berlin (1988),
132--184.


\bibitem{G2} M. Gromov, \textsl{Metric structures for Riemannian and non-Riemannian spaces}. Based on the 1981 French original.
With appendices by M. Katz, P. Pansu and S. Semmes. Translated from the French by Sean Michael Bates.
Progress in Mathematics, \textbf{152}. Birkh¡§auser Boston, Inc., Boston, MA, 1999. xx+585 pp.



\bibitem{GS} Y. Ge and Z. Shen, \textsl{Eigenvalues and eigenfuncitons of metric measure manifolds},  Proc.
London Math. Soc. (3) \textbf{82}(2001), 725--746.

\bibitem{HKP} A. Hassannezhad, G. Kokarev, and I. Polterovich, \textsl{Eigenvalue inequalities on Riemannian manifolds with a lower Ricci curvature bound}, Yuri Safarov's memorial volume. J. Spectr. Theory \textbf{16}(2016), 807--835.


\bibitem{H} E. Hebey, \textsl{Sobolev Spaces on Riemannian Manifolds}. Springer, 1996.






\bibitem{HW} W. Hurwicz and H. Wallman, \textsl{Dimension Theory}, Princeton University Press, 1948.




\bibitem{K} S. Kakutani, \textsl{Topological properties of the unit sphere of a Hilbert space}, Proc.
Imp. Acad. Tokyo, \textbf{19}(1943),  269--271.

\bibitem{K3}   S. Kronwith, \textsl{Convex manifolds of nonnegative curvature}, Journal of Differential Geometry
\textbf{14}(1979), 621--628.


\bibitem{K2} M. A. Krasnoselskii, \textsl{Topological Methods in the Theory of Nonlinear Integral
Equations}, MacMillan, N. Y., (1965).

\bibitem{K-R} A. Krist\'aly and I. Rudas, \textsl{Elliptic problems on the ball endowed with Funk-type metrics}.
Nonlinear Anal. \textbf{119}(2015), 199--208.

\bibitem{LL} E.H. Lieb and M. Loss, Analysis. Second edition. Graduate Studies in Mathematics, 14. American Mathematical Society, Providence, RI, 2001.

\bibitem{O} S.-I. Ohta and K.-T. Sturm, \textsl{Heat flow on Finsler manifolds}, Comm. Pure Appl.
Math. \textbf{62}(2009), 1386--1433.


\bibitem{PS2} R.S. Palais, \textsl{Homotopy theory of infinite dimensional manifolds}, Topology \textbf{5}(1966), 1--16.

\bibitem{Se} A.G. Setti, \textsl{Eigenvalue estimates for the weighted Laplacian on a Riemannian manifold},
Rend. Sem. Mat. Univ. Padova. \textbf{100}(1998), 27-55.



\bibitem{PS} R.S. Palais, \textsl{Lusternik-Schnirelman theory on Banach manifolds},  Topology \textbf{5}(1966), 115--132.



\bibitem{Shen-1} Z. Shen, \textit{Projectively flat Finsler metrics of constant flag curvature}. Trans.  Amer. Math. Soc. \textbf{355}(4)(2003), 1713--1728.

\bibitem{Shen-2} Z. Shen, \textit{Finsler metrics with K=0 and S=0}. Canad. J. Math. \textbf{55}(2003), no.1, 112--132.

\bibitem{Shen_Adv_Math} Z. Shen, \textsl{Volume comparison and its applications in Riemann-Finsler geometry}. Adv. Math. \textbf{128}(1997), no. 2, 306--328.



\bibitem{Sh1} Z. Shen, \textsl{Lectures on Finsler geometry}, World
Sci., Singapore, 2001.






\bibitem{S} M. Struwe, \textsl{Variational methods}, volume 34 of Ergebnisse derMathematik und ihrer Grenzgebiete.
3. Folge. A Series of Modern Surveys in Mathematics [Results in Mathematics and Related
Areas. 3rd Series. A Series of Modern Surveys in Mathematics]. Springer-Verlag, Berlin,
fourth edition, 2008. Applications to nonlinear partial differential equations and Hamiltonian
systems.



\bibitem{YZ} L. Yuan and W. Zhao, \textsl{Some formulas of Santal\'o type in Finsler geometry and its applications}, Publicationes Mathematicae Debrecen  \textbf{87}(2015),  79--101.




\bibitem{Z} E. Zeidler, \textsl{Nonlinear Functional Analysis and Its Applications IV: Applications to Mathematical Physics},  Springer-Verlag. Berlin, Germany, 1997.

\bibitem{ZS} W. Zhao and Y. Shen, \textsl{A Universal Volume comparison Theorem
for Finsler Manifolds and Related Results},  Can. J. Math.  \textbf{65}(2013), 1401--1435.

\bibitem{Z22} W. Zhao, \textsl{Integral curvature bounds and diameter estimates on
Finsler manifolds}, Sci. China Math.  (accepted).

%

\end{thebibliography}
\end{document}